\title{Fargues--Scholze correspondence and endoscopic classification for special orthogonal and unitary groups}
\author{Hao Peng}
\keywords{Fargues--Scholze parameters, torsion vanishing, local Langlands correspondence, Shimura variety, local shtuka, endoscopy, automorphic representations.}
\begin{document}
\maketitle

\begin{abstract}
Let $p$ be odd and let $K/\bb Q_p$ be unramified. For a special orthogonal group or a unitary group \(G\) over $K$ that splits over an unramified extension, we prove that the Fargues--Scholze local Langlands correspondence agrees with the semisimplification of the classical correspondence for $G$ constructed in the work of Arthur and others. As applications, we construct an unambiguous local Langlands correspondence for even special orthogonal groups, deduce the strong Kottwitz conjecture and the eigensheaf conjecture of Fargues, and establish new torsion vanishing results for orthogonal and unitary Shimura varieties.
\end{abstract}
\tableofcontents
\section{Introduction}

For a connected reductive group $G$ over a finite extension $K$ of $\bb Q_p$ for some rational prime $p$, the conjectural local Langlands correspondence is a map from the set $\Pi(G)$ of isomorphism classes of irreducible admissible representations of $G(K)$ to the set $\Phi(G)$ of conjugacy classes of $L$-parameters\footnote{Here and throughout the paper we take the Langlands $L$-group $\LL G$ in the Weil form.}
\begin{equation*}
\phi: W_K\times\SL_2(\bb C)\to \LL G,
\end{equation*}
which should have finite fibers called $L$-packets and satisfy various properties; see~\cite{Bor79}. When $G$ is a special orthogonal group or a unitary group over $K$, such a map is constructed by Arthur \cite{Art13}, Chen--Zou \cite{C-Z21}, and Ishimoto \cite{Ish24} in the special orthogonal case, and by Mok \cite{Mok15} and Kaletha--Minguez--Shin--White \cite{KMSW} in the unitary case. When $G$ is a special orthogonal group associated with a quadratic space of even dimension $2n$, the correspondence is only well-defined up to $\bx O_{2n}(\bb C)$-conjugation. In this case, set
\begin{equation*}
\tilde\Phi(G)\defining\Phi(G)/\bx O_{2n}(\bb C).
\end{equation*}
Also, when $G=\GSpin(V)$ for a quadratic space $V$ over $K$, the correspondence is constructed in \cite[Theorem~2.6.1]{G-T19} for those representations $\pi$ whose central character is the square of another character. These constructions ultimately rely on endoscopy and trace formula techniques. We denote the resulting correspondence by
\begin{equation*}
\rec_G: \Pi(G)\longrightarrow
\begin{cases}
\Phi(G), & \text{outside the even special orthogonal case},\\
\tilde\Phi(G), & \text{in the even special orthogonal case}.
\end{cases}
\end{equation*}

On the other hand, fix a rational prime $\ell\ne p$ and an isomorphism $\iota_\ell:\bb C\xr\sim\ovl{\bb Q_\ell}$. Using excursion operators on the moduli stack of $G$-bundles on the Fargues--Fontaine curve, Fargues and Scholze \cite{F-S24} constructed a candidate for a semisimplified local Langlands correspondence for every connected reductive group $G$, namely,
\begin{equation*}
\rec_G^\FS: \Pi(G)\to \Phi^\sems(G),\qquad \pi\mapsto \iota_\ell^{-1}\phi_{\iota_\ell\pi}^\FS.
\end{equation*}
Here $\Phi^\sems(G)$ denotes the set of conjugacy classes of continuous semisimple homomorphisms
\begin{equation*}
\phi: W_K\to \LL G
\end{equation*}
whose composition with the canonical projection $\LL G\to W_K$ is $\id_{W_K}$. Moreover, $\rec_G^\FS$ satisfies some desired properties listed in \cite[Theorem~1.9.6]{F-S24}. It is known that $\rec_G^\FS$ is independent of the choice of $\ell$; see~\cite{Sch26}.

It is both natural and nontrivial to ask whether $\rec_G$ and $\rec_G^\FS$ are compatible when they both exist, in the sense that there exists a commutative diagram
\begin{equation}\label{ieiemfiems}
\begin{tikzcd}[sep=large]
\Pi(G)\ar[r, "\rec_G"]\ar[rd, "\rec_G^\FS"] & \Phi(G)\ar[d, "(-)^\sems"]\\
&\Phi^\sems(G)
\end{tikzcd},
\end{equation}
where $(-)^\sems$ precomposes a parameter $\phi\in \Phi(G)$ with the map
\begin{equation*}
W_K\to W_K\times \SL_2(\bb C): g\mapsto \paren{g, \begin{bmatrix}\largel{g}_K^{1/2} &0\\ 0 & \largel{g}_K^{-1/2}\end{bmatrix}}.
\end{equation*}
Here $\largel{-}_K$ is defined to be the composition $W_K\to W_K^\ab\xr{\Art_K^{-1}}K^\times\xr{\largel{-}_K}\bb R_+$.

Our main result is the following theorem on compatibility of the Fargues--Scholze local Langlands correspondence with the ``classical local Langlands correspondence'' defined in \cite{Art13}, \cite{Mok15}, \cite{KMSW}, \cite{C-Z21}, \cite{C-Z21a}, and \cite{Ish24}:

\begin{mainthm}\label{compaiteiehdnifds}
Suppose $p>2$ and $K/\bb Q_p$ is unramified.
\begin{enumerate}
\item
If $G=\bx U(V)$ where $V$ is a Hermitian space with respect to the unramified quadratic extension $K_1/K$, then the diagram~\eqref{ieiemfiems} is commutative.
\item
If $G=\SO(V)$ where $V$ is a quadratic space over $K$ with $\dim(V)=2n+1$ for some positive integer $n$, then the diagram~\eqref{ieiemfiems} is commutative.
\item
If $G=\SO(V)$ where $V$ is a quadratic space over $K$ of dimension $2n$ for some positive integer $n$ such that $G$ splits over an unramified quadratic extension of $K$ (equivalently, $\ord_K(\disc(V))\equiv 0\modu2$; see~\textup{\S\ref{theogirneidnis}}), then the diagram~\eqref{ieiemfiems} is commutative up to conjugation by $\bx O(2n, \bb C)$.
\end{enumerate}
\end{mainthm}
\begin{rem*}
We have the following remarks concerning Theorem~\ref{compaiteiehdnifds}.
\begin{enumerate}
\item
When $G$ is an inner form of a general linear group over any finite extension of $\bb Q_p$, Hansen, Kaletha and Weinstein established in \cite[Theorem 6.6.1]{HKW22} that the Fargues--Scholze LLC coincides with the usual (semisimplified) LLC for inner forms of general linear groups given in \cite{DKV84, Rog83}.
\item
When $G$ is an inner form of $\Sp_4$ or $\GSp_4$ over an unramified finite extension of $\bb Q_p$ for $p>2$, the compatibility is established in \cite{Ham22}.
\item
For $G=\bx U(V)$ or $\GU(V)$ where $V$ is an odd-dimensional Hermitian space with respect to the unramified quadratic extension $\bb Q_{p^2}/\bb Q_p$, the compatibility is established in \cite[Theorem~1.1]{MHN24}, and their proof is different from ours. In fact, they established the Kottwitz conjecture first by proving an averaging formula of Shin for $\bx{GU}(V)$, and they restricted to the case $K=\bb Q_p$ because the Hasse principle holds for unitary similitude groups over $\bb Q$.
\item
The assumptions that $p>2$ and that the relevant splitting extension over \(\bb Q_p\) is unramified are necessary in order to apply Shen's result \cite{She20} that the relevant local shtuka spaces uniformize Shimura varieties of Abelian type. These assumptions in Theorem~\ref{compaiteiehdnifds} and its corollaries can be removed for such $G$ once the main result of \cite{She20} is established for a Shimura datum $(\bb G, \{\mu\})$ such that $\bb G\otimes\bb Q_p\cong \Res_{K/\bb Q_p}G$. During the review of this paper, we learned that \cite{DHKZ2} had removed these assumptions, building on computations carried out in the present work.
\end{enumerate}
\end{rem*}

Theorem~\ref{compaiteiehdnifds} is proved in \S\ref{secitonpaoroifnienocosmw}. Moreover, in the third case of Theorem~\ref{compaiteiehdnifds} (i.e., even special orthogonal groups), we use the compatibility property to construct an unambiguous version of the local Langlands correspondence for $G$, eliminating the ambiguity up to outer automorphisms by requiring compatibility with the Fargues--Scholze local Langlands correspondence, which is defined canonically without outer automorphisms. 

\begin{mainthm}\label{ioopwpwjn}
In the third case of \textup{Theorem~\ref{compaiteiehdnifds}}, there exists a map
\begin{equation*}
\rec_G^\natural: \Pi(G)\longrightarrow\Phi(G)
\end{equation*}
lifting the correspondence defined in Arthur \textup{\cite{Art13}} and Chen--Zou \textup{\cite{C-Z21}}. It is compatible with the Fargues--Scholze local Langlands correspondence in the sense that diagram~\eqref{ieiemfiems} is commutative. Moreover, $\rec_G^\natural$ satisfies the relevance, temperedness, discreteness, internal parametrization, Langlands-quotient, standard $\gamma$-factor, Plancherel-measure, and local-intertwining properties stated precisely in \textup{Theorem~\ref{LLCienifheis}}. In particular, Vogan's version of the local Langlands conjecture \textup{\cite{Vog93}} holds for unramified even special orthogonal groups.
\end{mainthm}

Theorem~\ref{ioopwpwjn} is proved in Theorem~\ref{LLCienifheis}.

Using the unambiguous local Langlands correspondence, we verify in \S\ref{naturalllFairngiesHOS} the naturality property of the Fargues--Scholze local Langlands correspondence for those $G$ appearing in Theorem~\ref{compaiteiehdnifds}, thereby confirming \cite[Assumption~6.5]{Ham24}. We also establish a weaker result for a central extension of $\Res_{K/\bb Q_p}G$, which will be used to deduce a torsion vanishing result for suitable Shimura varieties.

We next show that the classical Langlands correspondence, together with geometric techniques, provides sufficient input to verify part of the categorical local Langlands conjecture of Fargues--Scholze \cite[Conjecture \Rmnum{10}.1.4]{F-S24}.

\begin{mainthm}\label{teoreneieifes}
Suppose $G$ is one of the groups appearing in \textup{Theorem~\ref{compaiteiehdnifds}}, $p>2$, $K/\bb Q_p$ is unramified, and $\phi\in\Phi(G)$ is supercuspidal.
\begin{enumerate}
\item
The sheaf
\begin{equation*}
\mcl G_\phi=\bplus_{b\in B(G)_\bas}\bplus_{\pi_b\in\Pi_\phi(G_b)}i_{b!}(\pi_b)\in \bx D_{\bx{lis}}(\Bun_G, \ovl{\bb Q_\ell})
\end{equation*}
admits an action of $\mfk S_\phi\defining Z_{\hat G}(\phi)$ satisfying conditions (i)--(iv) of Fargues' conjecture \textup{\cite[Conjecture 4.4]{Far16}} for $G$. In particular, $\mcl G_\phi$ is a Hecke eigensheaf for $\phi$.
\item
The strong Kottwitz conjecture~\textup{\cite[Conjecture~1.0.1]{HKW22}} holds for $G$ and any conjugacy class of geometric cocharacters $\{\mu\}$ for $G_{\ovl K}$, up to a reparametrization of elements of the $L$-packet of $\phi$ that is independent of the choice of $\{\mu\}$. Moreover, assuming \textup{Hypothesis~\ref{hsisieteijeiureis}}, this reparametrization is trivial.
\end{enumerate}
\end{mainthm}
\begin{rem*}
The strong Kottwitz conjecture was previously shown when $G$ is an unramified odd unitary group over $\bb Q_p$ \cite{MHN24}, and a weaker version is known when $G$ is an inner form of $\GSp_4$ or $G=\Sp_4$ over an unramified extension of $\bb Q_p$ \cite{Ham22}. Our proof uses a new globalization method that connects the cohomology of local shtuka spaces with the endoscopic part of the cohomology of Shimura varieties.

Hypothesis~\ref{hsisieteijeiureis} describes the Galois representation appearing in the generic part of the middle degree $\ell$-adic cohomology of orthogonal and unitary Shimura varieties. It is known for orthogonal Shimura varieties over $\bb Q$ by \cite[Corollary~9.8.10]{Zhu24a}. In particular, Theorem~\ref{teoreneieifes} proves the strong form of the Kottwitz conjecture for special orthogonal groups over $\bb Q_p$ for $p>2$, generalizing \cite[Theorem~1.3]{Ham22}. In general, Hypothesis~\ref{hsisieteijeiureis} will follow from a sequel to \cite{KSZ21} as long as the full endoscopic classification for orthogonal and unitary groups is obtained.
\end{rem*}

Theorem~\ref{teoreneieifes} is proved in Theorems~\ref{enifeinifehniiwms} and~\ref{sinieikIInifhfiens}.

\subsection{Torsion vanishing for special orthogonal and unitary Shimura varieties}

We use the compatibility result to establish new torsion vanishing results for Shimura varieties of orthogonal or unitary type. We now introduce the necessary background and notation. Let $\bb G$ be a connected reductive group over $\bb Q$ with a Shimura datum $(\bb G, \bb X)$, and let $E\subset\bb C$ be the associated reflex field. Fix an odd rational prime $p$ that is coprime to $\#\pi_1([\bb G, \bb G])$, together with an isomorphism $\iota_p: \bb C\to \ovl{\bb Q_p}$ inducing an embedding $E\to \ovl{\bb Q_p}$. We denote by $\msf G$ the base change of $\bb G$ to $\bb Q_p$. We assume that $\msf G$ is unramified and equipped with a Borel pair $(\msf B, \msf T)$ and a hyperspecial subgroup $\mdc K_p$ of $\msf G(\bb Q_p)$. Let $\mdc K^p\le \bb G(\Ade_f^p)$ be a compact open subgroup such that  $\mdc K\defining \mdc K_p\mdc K^p\le \bb G(\Ade_f)$ is neat. Let $\ell$ be a rational prime that is coprime to $p\cdot\#\pi_0(Z(\msf G))$, and let the coefficient field $\Lbd$ be either $\ovl{\bb Q_\ell}$ or $\ovl{\bb F_\ell}$.

\begin{defi*}[{\cite[Definition~6.2]{H-L24}}]
Suppose $\msf G$ is an arbitrary quasisplit reductive group over a finite extension $K/\bb Q_p$ with a Borel pair $(\msf B, \msf T)$, and $\phi_{\msf T}\in\Phi^\sems(\msf T, \Lbd)$ is a semisimple $L$-parameter. Let $\phi_{\msf T}^\vee$ denote the Chevalley dual of $\phi_{\msf T}$. Then $\phi_{\msf T}$ is said to be \tbf{generic} (or of Langlands--Shahidi type) if for every positive coroot $\mu\in \Phi^\vee(\msf G, \msf T)^+\subset X_\bullet(\msf T)$, the following Galois cohomology complexes vanish:
\begin{equation*}
\bx R\Gamma(W_K, \LL\mcl T_\mu\circ\phi_{\msf T}), \quad \bx R\Gamma(W_K, \LL\mcl T_\mu\circ\phi_{\msf T}^\vee).
\end{equation*}
Here $\LL\mcl T_\mu$ denotes the extended highest weight tilting module of $\LL\msf T$ with $\Lbd$-coefficients associated with $\mu$; see \eqref{ieneineifeilsws}.
\end{defi*}

We now state the following torsion-vanishing conjecture for Shimura varieties.

\begin{conj*}[\cite{Car23, H-L24}]\label{conjeuejijrimoens}
Suppose $\phi\in\Phi^\sems(\msf G, \ovl{\bb F_\ell})$ is an unramified, semisimple, toral generic $L$-parameter, corresponding via the Satake isomorphism to a maximal ideal $\mfk m\subset \ovl{\bb F_\ell}[\mdc K_p\bsh\msf G(\bb Q_p)/\mdc K_p]$. Then the complex $\bx R\Gamma_c(\bSh_{\mdc K}(\bb G, \bb X)_{\ovl E}, \ovl{\bb F_\ell})_{\mfk m}$ (resp. $\bx R\Gamma(\bSh_{\mdc K}(\bb G, \bb X)_{\ovl E}, \ovl{\bb F_\ell})_{\mfk m}$) is concentrated in degrees $0\le i\le\dim_{\bb C}(\bb X)$ (resp. $\dim_{\bb C}(\bb X)\le i\le 2\dim_{\bb C}(\bb X)$).
\end{conj*}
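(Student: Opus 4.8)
In the generality stated this conjecture is open; here it is to be proved for the orthogonal and unitary Shimura data for which \Cref{compaiteiehdnifds} applies, i.e. those with $\msf G=\bb G\otimes\bb Q_p$ equal, up to its center, to $\Res_{K/\bb Q_p}G$ for $G$ a group as in that theorem. The plan is to run the $\Bun_{\msf G}$-theoretic torsion-vanishing machinery of Caraiani--Scholze, Koshikawa and Hamann--Lee \cite{H-L24}, feeding in the compatibility result \Cref{compaiteiehdnifds} and the Kottwitz conjecture \Cref{teoreneieifes} as the local inputs that were missing. The first step is the $p$-adic uniformization: since we are in an abelian-type situation with $p>2$, $K/\bb Q_p$ unramified, and (in the even orthogonal case) $\ord_K(\disc(V))\equiv 0\bmod 2$, \cite{She20} makes the local shtuka spaces attached to $(\msf G,\{\mu_{\bb X}\})$ uniformize $\bSh_{\mdc K}(\bb G,\bb X)$. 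Combining this with the Mantovan/Caraiani--Scholze product formula and the six-functor formalism of \cite{F-S24}, one rewrites $\bx R\Gamma_c(\bSh_{\mdc K^p}(\bb G,\bb X)_{\ovl E},\Lbd)$, at infinite level at $p$, as $\bx R\Gamma_c(\Bun_{\msf G},T_\mu\mcl A)$ up to a cohomological shift, where $T_\mu$ is the Hecke operator attached by geometric Satake to (a suitable normalization of) the Shimura cocharacter and $\mcl A=\bigoplus_{b\in B(\msf G,\mu)}i_{b!}\,\bx R\Gamma_c(\Igu^{b},\Lbd)$ is the Igusa sheaf built from the cohomology of the perfectoid Igusa varieties $\Igu^{b}$.

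The heart of the argument is to show that, after localizing at the generic ideal $\mfk m$, the sheaf $\mcl A_{\mfk m}$ is (a shift of) a perverse sheaf on $\Bun_{\msf G}$. For each $b$, the localization $\bx R\Gamma_c(\Igu^{b},\Lbd)_{\mfk m}$ only sees those irreducible $\msf G_b(\bb Q_p)$-subquotients $\rho$ whose Fargues--Scholze parameter matches the transfer of $\phi$; by \Cref{compaiteiehdnifds}, applied not only to $G$ but to all the inner forms, Levi subgroups and endoscopic groups that intervene (which are again special orthogonal groups, unitary groups, or products of such together with general linear factors, for which compatibility with the classical correspondence and the local intertwining relations are available), this pins down the \emph{classical} $L$-parameters of the contributing $\rho$. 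The toral-genericity hypothesis on $\phi$---the vanishing of $\bx R\Gamma(W_K,\LL\mcl T_\mu\circ\phi_{\msf T})$ and of its Chevalley-dual analogue for every dominant coroot---then forces the cuspidal support of each such $\rho$ to be maximally nondegenerate, and, together with the generic-packet property (Shahidi's conjecture, known for these classical groups) and the local intertwining relations already used for $\rec_{\msf G}$, confines $\rho$ to a narrow, tempered-type range. Inserting this into the degree estimates for Igusa cohomology of Caraiani--Scholze, sharpened by Koshikawa and Hamann--Lee to $\ovl{\bb F_\ell}$-coefficients (legitimate because $\ell\nmid p\cdot\pi_0(Z(\msf G))$ and $p\nmid\pi_1([\bb G,\bb G])$), yields the claimed perversity of $\mcl A_{\mfk m}$.

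Granting this, the remaining two steps are formal. By the Fargues--Scholze geometric Satake theorem the Hecke operator $T_\mu$ on $\bx D_{\bx{lis}}(\Bun_{\msf G},\Lbd)$ is perverse $t$-exact up to the shift by $d=\dim_{\bb C}\bb X=\langle 2\rho,\mu\rangle$ built into its normalization, so $T_\mu\mcl A_{\mfk m}$ is concentrated in a single perverse degree; since $\bx R\Gamma_c(\Bun_{\msf G},-)$ sends a perverse sheaf into cohomological degrees $\le 0$ (the strata contributing no positive compactly-supported cohomology, by the amplitude bounds Fargues--Scholze use for their partial-compact-support functors), the reduction of the first step yields concentration of $\bx R\Gamma_c(\bSh_{\mdc K},\Lbd)_{\mfk m}$ in degrees $[0,d]$. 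For the $\bx R\Gamma$-statement one applies Poincar\'e--Verdier duality: $\bx R\Gamma(\bSh_{\mdc K},\Lbd)_{\mfk m}$ is, up to a shift by $2d$ and a Tate twist, the $\Lbd$-linear dual of $\bx R\Gamma_c(\bSh_{\mdc K},\Lbd)_{\mfk m^\vee}$, where $\mfk m^\vee$ is attached to the Chevalley-dual parameter $\phi^\vee$. As the genericity condition is symmetric under $\phi\leftrightarrow\phi^\vee$ by the very definition quoted above, the compact-support result applies to $\mfk m^\vee$, and dualizing places $\bx R\Gamma(\bSh_{\mdc K},\Lbd)_{\mfk m}$ in degrees $[d,2d]$.

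The main obstacle is the perversity claim of the second paragraph for the \emph{non-basic} strata $b$, where a priori $\bx R\Gamma_c(\Igu^{b},\Lbd)$ is spread over a wide range of degrees and one must show the $\mfk m$-localization collapses it. This requires \Cref{compaiteiehdnifds} in the strong form that genericity of $\phi$ propagates through Jacquet modules and transfers to the auxiliary groups, the generic-packet conjecture and the local intertwining relations for all of those groups, and a careful integral (mod $\ell$) refinement of Caraiani--Scholze's Igusa-cohomology vanishing cast in the stack-theoretic ``single perverse degree'' form used above. A secondary technical point is to check that the uniformization of \cite{She20} matches, compatibly with the Hecke action at $p$ and all normalizations, the Fargues--Scholze Hecke operators on $\Bun_{\msf G}$ for $\msf G=\Res_{K/\bb Q_p}G$, and that the non-properness of these orthogonal and unitary Shimura varieties is harmless---which it is, because the constant-term functors on $\Bun_{\msf G}$ are perverse $t$-exact and hence keep the Eisenstein contributions inside the same cohomological window.
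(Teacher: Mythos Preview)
The outline has the right general shape but misattributes two of the load-bearing ingredients and sidesteps a third, and this matters because the paper's actual proof is organized differently.

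First, the claim that ``by the Fargues--Scholze geometric Satake theorem the Hecke operator $T_\mu$ is perverse $t$-exact'' is false as stated. Perverse $t$-exactness of $i_\uno^*\bx T_\mu$ is not a general fact; it is the main local theorem the paper has to prove (\Cref{teniherieniehrnis}), and it only holds after localizing at $\phi$ and only under extra hypotheses (weak normalized regularity, $\mu$-regularity, the \cite{Ham24} Assumption 7.5 or the weaker \Cref{axioaisijenifes}). Verifying these for the groups in question occupies \Cref{generisoemisiejfmes}. Conversely, the perversity input the paper uses is not the localized stratum-by-stratum estimate you describe; it is the unconditional perversity of the DHKZ Igusa sheaf $\mcl F=\bx R\pi^{\Igu}_{\bx{HT},*}\ovl{\bb F_\ell}$ (\Cref{vieiniefienmsis}), which holds before any localization and does not require representation-theoretic control of the non-basic Igusa cohomology. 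So the logical flow is: perversity is unconditional (from DHKZ), $t$-exactness of the Hecke operator is conditional (this is where genericity and \Cref{compaiteiehdnifds} enter); you have these reversed.

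Second, you work directly with $\msf G=\bb G\otimes\bb Q_p$, but $(\Res_{F/\bb Q}\mbf G,\mbf X)$ in the orthogonal case is only of Abelian type, and the DHKZ machinery needs Hodge type. The paper therefore passes to an auxiliary $(\mbf G^\sharp,\mbf X^\sharp)$ with $\mbf G^\sharp\otimes\bb Q_p$ involving $\GSpin$, for which the classical LLC of \cite{Ham24}*{Assumption 7.5} is \emph{not} available. This is precisely why the paper isolates the weaker \Cref{axioaisijenifes} and proves it for $G^\sharp$ via \Cref{ienigehifniehsins}; your appeal to ``\Cref{compaiteiehdnifds} applied to all inner forms, Levi subgroups and endoscopic groups'' does not cover $\GSpin$. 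The descent from $\bb G^\sharp$ back to $\bb G$ is then a separate Hochschild--Serre argument.

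Third, the non-properness dismissal is wrong. The paper assumes $\bSh_{\mdc K}(\bb G,\bb X)$ proper (hence $F\ne\bb Q$) and remarks explicitly that removing this requires the as-yet-unavailable minimally compactified Igusa stack for general Hodge type. Your claim that perverse $t$-exactness of constant-term functors on $\Bun_{\msf G}$ handles the boundary conflates the geometry of $\Bun_{\msf G}$ with the boundary of the global Shimura variety; these are unrelated.
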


This torsion vanishing conjecture has been established in \cite{C-S17, C-S24, Kos21} and \cite{H-L24} in the case where $(\bb G, \bb X)$ is a PEL-type Shimura datum of type $\bx A$ or $\bx C_2$ and $\msf G$ is a product of certain groups related either to $\GL_n$ over an unramified extension of $\bb Q_p$, or $\bx U_{2k+1}$ with respect to $\bb Q_{p^2}/\bb Q_p$, or $\bx U_2$ with respect to  a quadratic extension of unramified extensions of $\bb Q_p$, with $p$ and $\ell$ satisfying certain properties.

In this work, we extend the list of known cases, particularly when $(\bb G, \bb X)$ is an orthogonal or unitary Shimura datum associated with a quadratic or Hermitian space over a totally real number field $F$ with standard indefinite signature, $\msf G$ is among the unramified groups listed in Theorem~\ref{compaiteiehdnifds}, and $\ell$ is sufficiently large.

\begin{rem*}
The above torsion vanishing conjecture was established in~\cite{DHKZ} in the case where $\msf G=\bb G\otimes\bb Q_p$ is split and the Shimura variety is compact of Hodge type, under the hypothesis that the Fargues--Scholze correspondence for $\msf G$ is compatible with the so-called classical local Langlands correspondence; see~\cite[Assumption~6.5]{Ham24}. However, their result does not apply directly to the orthogonal Shimura variety $\bSh(\Res_{F/\bb Q}\SO(\mbf V), \mbf X)$ when the quadratic space $\mbf V$ over a totally real field $F$ has large rank, because it is not of Hodge type. A natural approach is to consider a Hodge type Shimura datum $(\mbf G^\sharp, \mbf X^\sharp)$ with a map of Shimura data $(\mbf G^\sharp, \mbf X^\sharp)\to (\Res_{F/\bb Q}\SO(\mbf V), \mbf X)$ such that the morphism $\mbf G^\sharp\to \Res_{F/\bb Q}\SO(\mbf V)$ is a central extension. However, $\mbf G^\sharp$ has derived subgroup $\Res_{F/\bb Q}\Spin(\mbf V)$, for which the so-called classical local Langlands correspondence has not yet been constructed. In this work, we modify the argument of \cite{DHKZ} and weaken the hypothesis of~\cite[Assumption~6.5]{Ham24} to our Hypothesis~\ref{isnishiefjemis} below. We then establish the torsion vanishing for $\bSh(\mbf G^\sharp, \mbf X^\sharp)$ and use the Hochschild--Serre spectral sequence to deduce the torsion vanishing for $\bSh(\Res_{F/\bb Q}\SO(\mbf V), \mbf X)$. We also treat certain cases where $p$ is not split in $F$.
\end{rem*}

If $\msf G_\ad$ is a product of Weil restrictions of split simple groups, then there exists a simpler proof without using geometric Eisenstein series. In this introduction, we state a more general result, which applies to the above-mentioned orthogonal or unitary case by constructing a central extension in \S\ref{seubsienfiehtoenfeis}. We first state a hypothesis on the Fargues--Scholze local Langlands correspondence: 

\begin{mainhyp}\label{isnishiefjemis}
Suppose $\msf G$ is a quasisplit connected reductive group over a $p$-adic number field $K$ with a Borel pair $(\msf B, \msf T)$ and $\phi\in\Phi^\sems(\msf G, \ovl{\bb Q_\ell})$ is a semisimple generic toral $L$-parameter. For any $\msf b\in B(\msf G)$ and any $\rho\in\Pi(\msf G_{\msf b}, \ovl{\bb Q_\ell})$, if the composition
\begin{equation*}
W_K\xr{\phi_\rho^\FS}\LL\msf G_{\msf b}(\ovl{\bb Q_\ell})\longrightarrow\LL\msf G(\ovl{\bb Q_\ell}),
\end{equation*}
where the second arrow is the twisted embedding defined in \textup{\cite[\S\Rmnum{9}.7.1]{F-S24}}, equals $\phi$, then $\msf b$ is unramified.
\end{mainhyp}

By our main theorem and \cite[Lemma~3.14]{Ham24}, this hypothesis holds for those groups $G/K$ appearing in Theorem~\ref{compaiteiehdnifds} that are quasisplit.

We now state our main theorem on torsion vanishing for certain Shimura varieties of Abelian type. This theorem is proved in Theorem~\ref{ieifmeimfeos}.

\begin{mainthm}
Suppose that the following assumptions hold:
\begin{enumerate}
\item
$\bSh_{\mdc K}(\bb G, \bb X)$ is proper, and there exists a Shimura datum of Hodge type $(\bb G^\sharp, \bb X^\sharp)$, and a morphism of Shimura data $(\bb G^\sharp, \bb X^\sharp)\to (\bb G, \bb X)$ such that $\bb G^\sharp_\ad\to \bb G_\ad$ is an isomorphism.
\item
There is an isomorphism
\begin{equation*}
\msf G_\ad\cong
\prod_{i=1}^k\Res_{L_i/\bb Q_p}\msf H_i,
\end{equation*}
where each $\msf H_i$ is a split simple group over $L_i$. Via $\iota_p$, the conjugacy class of Hodge cocharacters associated with $\bb X^\sharp$ induces a dominant cocharacter $\mu_\ad=(\mu_1,\ldots,\mu_k)$ of $\msf G_{\ad,\ovl{\bb Q_p}}$. Under the canonical decomposition
\begin{equation*}
\paren{\Res_{L_i/\bb Q_p}\msf H_i}_{\ovl{\bb Q_p}}\cong\prod_{\tau\in\Hom_{\bb Q_p}(L_i,\ovl{\bb Q_p})}\paren{\msf H_i\otimes_{L_i,\tau}\ovl{\bb Q_p}},
\end{equation*}
each $\mu_i$ is trivial on all but at most one simple factor.
\item
$\ell$ is a rational prime that is coprime to $p\cdot\#\pi_0(Z(\msf G^\sharp))\cdot\#\pi_0(Z(\msf G))$, and $\mfk m$ is a maximal ideal of the $\ell$-torsion Hecke algebra $\mcl H_{\mdc K_p}\defining \ovl{\bb F_\ell}[\mdc K_p\bsh \msf G(\bb Q_p)/\mdc K_p]$.
\end{enumerate}
If the semisimple toral $L$-parameter $\phi_{\mfk m}$ corresponding to $\mfk m$ is generic and \textup{Hypothesis~\ref{isnishiefjemis}} holds for every semisimple generic toral lift of $\phi_{\mfk m}$, then $\etH^i(\bSh_{\mdc K}(\bb G, \bb X)_{\ovl E}, \ovl{\bb F_\ell})_{\mfk m}$ vanishes unless $i=\dim_{\bb C}(\bb X)$.
\end{mainthm}
\begin{rem*}
The hypothesis that $\bSh_{\mdc K}(\bb G, \bb X)$ is proper is expected to be unnecessary, once one has constructed the minimally compactified Igusa stack for $\bSh(\bb G^\sharp, \bb X^\sharp)$ in the sense of \cite{Zha23} and compared the fibers of the Hodge--Tate map on it with the minimally compactified Igusa varieties. This has been done when $\bSh(\bb G^\sharp, \bb X^\sharp)$ is of PEL type A or C in \cite{H-L24}.
\end{rem*}

In particular, this theorem generalizes previous results of \cite{C-S17, Kos21, C-S24, H-L24} to compact orthogonal and unitary Shimura varieties. This is because we may construct a central extension $\mbf G^\sharp$ of $\mbf G\defining \Res_{F/\bb Q}\bx U(\mbf V)^\circ$ with a morphism of Shimura data $(\mbf G^\sharp, \mbf X^\sharp)\to (\mbf G, \mbf X)$, so that $(\mbf G^\sharp, \mbf X^\sharp)$ defines a Shimura datum of Hodge type. In the unitary case, we take
\begin{equation*}
\mbf G^\sharp=\mbf G\times\mbf Z^{\bb Q}.
\end{equation*}
Here
\begin{equation*}
\mbf Z^{\bb Q}=\{z\in \Res_{F_1/\bb Q}\GL_1: \Nm_{F_1/F}(z)\in\bb Q^\times\}
\end{equation*}
and $F_1/F$ is the CM-extension associated with the Hermitian space $\mbf V$. The desired map of Shimura data $(\mbf G^\sharp, \mbf X^\sharp)\to (\mbf G, \mbf X)$ is constructed by Rapoport, Smithling, and Zhang in \cite{RSZ20}. In the orthogonal case, following Carayol~\cite[p.~163]{Car86}, we construct, for each $\daleth\in\bb R_+\ii$ such that $\bb Q(\daleth)$ is an imaginary quadratic field, a group $\mbf G^\sharp$ fitting into an exact sequence
\begin{equation*}
1\to\mbf Z^{\bb Q}\to \mbf G^\sharp\to \mbf G\to 1,
\end{equation*}
where
\begin{equation*}
\mbf Z^{\bb Q}=\{z\in \Res_{F(\daleth)/\bb Q}\GL_1: \Nm_{F(\daleth)/F}(z)\in\bb Q^\times\}.
\end{equation*}
If moreover we assume that $p$ is unramified in $F$ and $\bb Q(\daleth)/\bb Q$ is split at $p$, then there exists an isomorphism
\begin{equation*}
\mbf G^\sharp\otimes\bb Q_p\cong \GL_1\times \Res_{F\otimes\bb Q_p/\bb Q_p}\GSpin(\mbf V\otimes\bb Q_p).
\end{equation*}

In fact, proving this torsion vanishing result for orthogonal Shimura varieties is one of the main motivations for this paper. In Euler system arguments via ``level-raising congruences'' in higher-dimensional Shimura varieties, as pioneered by Bertolini and Darmon \cite{B-D05} for Shimura curves, we need to construct elements in the cohomology of Shimura varieties via the Jacquet--Langlands correspondence. The natural way to do so is to take the Abel--Jacobi map of a globally defined cycle that is cohomologically trivial. The point is that a certain Hecke translate of a global cycle class becomes cohomologically trivial if the target cohomology group is itself trivial. For example, this strategy was applied in \cite{Liu16, Liu19, L-T20, LTXZZ} and will be used in the author's upcoming work~\cite{Pen26} on higher-dimensional analogues of Kolyvagin-type theorems for product of Shimura varieties of orthogonal type in the arithmetic Gross--Prasad setting. For further applications of torsion vanishing results, we refer the reader to Caraiani's ICM report~\cite{Car23}.

\subsection{An overview of the proof}

We summarize the proof of Theorem~\ref{compaiteiehdnifds}, adapting the method of Hamann~\cite{Ham22}. We exclude the case of even orthogonal groups in this introduction, as the additional outer automorphism complicates the notation. The proof proceeds by induction on the geometric rank of $G$, with low-rank cases verified by direct inspection. For higher ranks, if $\pi\in\Pi(G)$ is non-supercuspidal, we invoke the induction hypothesis along with the compatibility of $\rec_G$ (resp. $\rec_G^\FS$) with parabolic induction, since any proper Levi subgroup of $G$ is a product of (Weil restrictions of) general linear groups and a group of the same type as $G$, but with smaller geometric rank. We may therefore assume that $\pi$ is supercuspidal. We establish the compatibility for pure inner forms of $G$ simultaneously. If $\phi$ is the classical $L$-parameter of $\pi$ and $\Pi_\phi(G^*)$ contains a non-supercuspidal representation $\rho_{\bx{nsc}}$, then the compatibility is already known for $\rho_{\bx{nsc}}$. We then propagate this property to other representations in the $L$-packets of pure inner forms of $G$ with classical parameter $\phi$. The crucial input is a description of the cohomology of the local shtuka spaces $\Sht_{G,b,\{\mu\}}$ defined in \cite{S-W20}, where $\{\mu\}$ is a geometric conjugacy class of $G$ related to the Hodge cocharacter of a suitable global Shimura variety of orthogonal or unitary type, and $b\in B(G, \{\mu\})$ is the unique nontrivial basic element. This local shtuka space carries an action of $G_b(K)\times G(K)\times W_{E_{\{\mu\}}}$, where $E_{\{\mu\}}/K$ is the reflex field of $\{\mu\}$. For any $\rho\in \Pi_\phi(G_b)$, the complex $\bx R\Gamma^\flat_c(G, b, \{\mu\})[\rho]$ is isomorphic to the image of \(\rho\) under the relevant Hecke operator. Since Hecke operators and excursion operators commute, it follows that any representation of $G(K)$ occurring in $\bx R\Gamma^\flat_c(G, b, \{\mu\})[\rho]$ has Fargues--Scholze parameter equal to that of $\rho$.

To analyze which representations of $G$ appear, we use the weak Kottwitz conjecture established by Hansen, Kaletha and Weinstein \cite{HKW22}.\footnote{We remark that when $G$ is an even special orthogonal group, the hypothesis in \cite{HKW22} remains unproven, since only a version of the local Langlands correspondence up to conjugacy by the full orthogonal group is available. Instead, we use the weak endoscopic character identity established in \cite{Pen25b} and modify the arguments therein to establish a weaker version of the weak Kottwitz conjecture, Theorem~\ref{coalidhbinifw222}, valid up to conjugacy by the full orthogonal group. This weaker version is enough for the argument to work.} In fact, $\pi$ does not necessarily appear in the complex $\bx R\Gamma^\flat_c(G, b, \{\mu\})[\rho_{\bx{nsc}}]$, but we may iterate this process, replacing $\rho_{\bx{nsc}}$ by those representations that appear, until $\pi$ eventually appears. This ultimately depends on a detailed analysis of the combinatorics of the centralizer of the $L$-parameter $\phi$ in $\hat G$, see~\S\ref{combianosiLoifn}.

We are now left with the case in which $\Pi_\phi(G^*)$ consists entirely of supercuspidal representations. A result of M{\oe}glin and Tadi\'c, recalled in Proposition~\ref{chainsienicusonso}, then implies that $\phi$ is supercuspidal; in particular, it is discrete and trivial on the $\SL_2(\bb C)$-factor. Since Hecke operators commute with the excursion algebra, we may reduce to the quasisplit case, and it suffices to prove compatibility for each $\rho\in\Pi_\phi(G_b)$; see \cite[Lemma~3.15]{Ham22}.

Let $\hat\Std$ denote the standard representation of $\hat{G}\rtimes\Gal(K'/K_1)$, where $K'$ is the splitting field of $G$. Because $\phi$ is discrete, we may write
\begin{equation*}
\hat\Std\circ\phi|_{W_{K_1}}=\phi_1+\cdots+\phi_r,
\end{equation*}
where the $\phi_i$ are pairwise nonisomorphic irreducible representations, each occurring with multiplicity one. Hamann~\cite{Ham22} and Koshikawa~\cite{Kos21} showed that if every $\phi_i$ occurs as a subquotient of
\begin{equation*}
\bplus_{\rho'\in\Pi_\phi(G_b)}\bx R\Gamma^\flat_c(G, b, \{\mu\})[\rho']
\end{equation*}
as a $W_{K_1}$-representation, then every $\phi_i$ occurs in $\hat\Std\circ\phi^\FS_\rho|_{W_{K_1}}$. The latter representation is semisimple and has dimension $\sum_{i=1}^r\dim(\phi_i)$. Thus the containment of all the pairwise nonisomorphic constituents forces
\begin{equation*}
\hat\Std\circ\phi_\rho^\FS|_{W_{K_1}}\cong\hat\Std\circ\phi|_{W_{K_1}}.
\end{equation*}
The results of \cite{GGP12} then imply
$\phi_\rho^\FS=\phi$.

Thus it suffices to prove that
\begin{equation*}
\bplus_{\rho'\in\Pi_\phi(G_b)}\bx R\Gamma^\flat_c(G, b, \{\mu\})[\rho']
\end{equation*}
admits a subquotient isomorphic to $\hat\Std\circ\phi$ as a $W_{K_1}$-representation. This is where global inputs become necessary, i.e., by relating this complex to the cohomology of a relevant global Shimura variety. The cohomology of the relevant global Shimura variety is studied via the Langlands--Kottwitz method and related to automorphic forms, whose local components are governed by classical $L$-parameters via Arthur's multiplicity formula. To be more precise, we elaborate on the case when $G$ is an odd special orthogonal group as an example. According to a result of Shen~\cite{She20}, the local shtuka space uniformizes the basic Newton stratum of the generic fiber of a relevant Shimura variety, as defined in \cite{C-S17}. A relevant Shimura variety is given by $(\mbf G, \mbf X)$, where $\mbf G$ is a standard indefinite special orthogonal group over a totally real field $F$ with $p$ inert and $F_p\cong K$, such that  $\mbf G\otimes_FK\cong G$. Let $\Ade_F$ and $\Ade_{F, f}$ denote the ring of adeles and finite adeles of $F$, respectively, and let $\mdc K_p\le \mbf G(K)$ and $\mdc K^p\le \mbf G(\Ade_{F,f}^p)$ be sufficiently small level subgroups, thereby yielding the adic Shimura variety $\mcl S_{\mdc K_p\mdc K^p}(\Res_{F/\bb Q}\mbf G, \mbf X)$ defined over $\bb C_p$. For an algebraic representation $\xi$ of $\paren{\Res_{F/\bb Q}\mbf G}_{\bb C}$ with sufficiently regular highest weight, let $\mcl L_{\iota_\ell\xi}$ be the associated $\ovl{\bb Q_\ell}$-local system on $\mcl S_{\mdc K_p\mdc K^p}(\Res_{F/\bb Q}\mbf G, \mbf X)$, and consider the cohomology
\begin{equation*}
\bx R\Gamma_c(\mcl S_{\mdc K_p\mdc K^p}(\Res_{F/\bb Q}\mbf G, \mbf X), \mcl L_{\iota_\ell\xi}).
\end{equation*}

The basic uniformization result of Shen implies a $G(K)\times W_K$-equivariant map
\begin{align*}
\Theta: \bx R\Gamma_c(G, b, \uno, \{\mu\})\otimes\iota_\ell\largel{-}_{K_1}^{\frac{-\dim_{\bb C}(\mbf X)}{2}}[\dim_{\bb C}(\mbf X)]&\Ltimes_{G^*_b(K)}\mcl A\paren{\mbf G'(F)\bsh \mbf G'(\Ade_{F,f})/\mdc K^p, \mrs L_{\iota_\ell\xi}}\\
&\longrightarrow \bx R\Gamma_c\paren{\mcl S(\Res_{F/\bb Q}\mbf G, \mbf X)_{\mdc K^p}, \mcl L_{\iota_\ell\xi}},
\end{align*}
where $\mbf G'$ is an inner form of $\mbf G$ satisfying $\mbf G'\otimes_FK\cong G^*_b$ and such that $\mbf G'(F\otimes\bb R)$ is compact. Here
\begin{equation*}
\mcl A(\mbf G'(F)\bsh \mbf G'(\Ade_{F, f})/\mdc K^p, \mrs L_{\iota_\ell\xi})
\end{equation*}
denotes the space of $\mdc K^p$-invariant algebraic automorphic forms valued in $\iota_\ell\xi$. Next, we note that the pair $(\Res_{F/\bb Q}\mbf G, \{\mu_\Hdg\})$ is fully Hodge-Newton reducible as defined in \cite{GHN19}, where $\mu_\Hdg$ is the Hodge cocharacter associated with $\mbf X$. This implies that the flag variety $\Gr_{\Res_{K/\bb Q_p}G, \{\mu_\Hdg\}}$ is parabolically induced as a $G(K)$-space. This is called ``Boyer's trick''. Using the Hodge--Tate period map
\begin{equation*}
\pi_{\bx{HT}}: \mcl S_{\mdc K^p}(\mbf G, \mbf X)\to \Gr_{\Res_{K/\bb Q_p}G, \{\mu_\Hdg\}},
\end{equation*}
defined in~\cite{C-S17}, when we restrict to the summands on both sides of $\Theta$ where $G(K)$ acts by a supercuspidal representation, we obtain a $G(K)\times W_K$-equivariant isomorphism, which is also functorial with respect to $\mdc K^p$.

We globalize the given $\rho\in \Pi(G_b)$ to a cuspidal automorphic representation $\Pi'$ of $\mbf G'$, such that the $\mdc K^p$-fixed subspace of $\Pi'$ occurs as a $G_b(K)$-stable direct summand of
\begin{equation*}
\mcl A\paren{\mbf G'(F)\bsh \mbf G'(\Ade_{F,f})/\mdc K^p, \mrs L_{\iota_\ell\xi}}
\end{equation*}
for some $\xi$ with sufficiently regular highest weight, where
\begin{itemize}
\item
$\Pi'$ is an unramified twist of the Steinberg representation at some nonempty subset $\Pla^\St$ of places of $F$,
\item
$\Pi'$ is supercuspidal at some nonempty subset $\Pla^{\bx{sc}}$ of finite places of $F$ disjoint with $\Pla^\St$, and there exists $v\in \Pla^{\bx{sc}}$ such that $\Pi'_v$ has a simple supercuspidal $L$-parameter $\phi_v$, meaning that $\hat\Std\circ \phi_v$ is irreducible as a representation of $W_{F_v}$.
\item
$\Pi'$ is unramified outside some nonempty subset $\Pla$ of places of $F$ containing $\Pla^{\bx{sc}}\cup\Pla^\St\cup\infPla_F$, and $\mdc K^p$ decomposes as $\mdc K^p=\mdc K_{\Pla\setm\{p\}}\mdc K^\Pla$.
\end{itemize}

These conditions ensure that Arthur's multiplicity formula can be applied to analyze the cuspidal automorphic representations $\dot\Pi'$ in the near equivalence class of $\Pi'$. In particular, for each such $\dot\Pi'$, $\dot\Pi'_p$ has classical $L$-parameter $\phi$. If we consider the maximal ideal $\mfk m\subset \bb T^\Pla$ corresponding to $(\Pi^\prime)^\Pla$, where $\bb T^\Pla$ is the Hecke algebra of $\mbf G'$ away from $\Pla$, then $\mfk m$ is non-Eisenstein in the usual sense. Moreover, after localizing at $\mfk m$ and restricting to the summand on which $G(K)$ acts via a supercuspidal representation, we obtain an isomorphism
\begin{align*}
\Theta_{\mfk m, \bx{sc}}: \bx R\Gamma_c(G, b, \uno, \{\mu\})_{\bx{sc}}\otimes\iota_\ell\largel{-}_{K_1}^{\frac{-\dim_{\bb C}(\mbf X)}{2}}[\dim_{\bb C}(\mbf X)]&\Ltimes_{J(K)}\mcl A\paren{\mbf G'(F)\bsh \mbf G'(\Ade_{F,f})/\mdc K^p, \mrs L_{\iota_\ell\xi}}_{\mfk m}\\
&\xr\sim \bx R\Gamma_c\paren{\mcl S(\Res_{F/\bb Q}\mbf G, \mbf X)_{\mdc K^p}, \mcl L_{\iota_\ell\xi}}_{\mfk m}.
\end{align*}
The assertion then follows if we can prove that the right-hand side is concentrated in the middle degree $\dim_{\bb C}(\mbf X)$ and carries a $W_{K_1}$-action given by
\begin{equation*}
\hat\Std\circ\phi\otimes\iota_\ell\largel{-}_{K_1}^{\frac{-\dim_{\bb C}(\mbf X)}{2}}.
\end{equation*}
To prove this, we apply the Langlands--Kottwitz method in \S\ref{Lanlgan-Kotiemethiems} to compute traces of sufficiently large powers of Frobenius at all but finitely many finite places of $F$. Information at the place $p$ is then obtained from local--global compatibility for the Galois representation attached to the functorial transfer of $\dot\Pi'$, which is a self-dual cuspidal automorphic representation of $\GL_{2\rk(\mbf G_{\ovl{\bb Q}})}(\Ade_F)$.

\begin{rem*}
A natural question is whether the same method can be applied to prove compatibility for other reductive groups, for example $\GSpin_n, \GSp_{2n}, \Sp_{2n}$ and $\bx G_2$. For inner forms of $\GSp_4$ and $\Sp_4$, this is known by \cite{Ham22}. For $\GSpin_n$, it is possible to extend the method to prove compatibility of Fargues--Scholze's construction with the correspondence constructed by M{\oe}glin \cite{Moe14} in the quasisplit case, once the endoscopic character identities, as formulated in~\cite{Kal16a}, are proved for all of their inner twists. On the other hand, new ideas are needed to treat the cases of $\GSp_{2n}$ ($n\ge 3$) and $\bx G_2$, because a crucial step of the proof is to use the compatibility between local and global Shimura varieties to connect the construction of Fargues and Scholze with the so-called classical local Langlands correspondence through the cohomology of global Shimura varieties. The latter is studied via the Langlands--Kottwitz method, which can only give information about the (conjectural) global Galois representation $\rho: \Gal_F\to \LL G$ associated with cohomological automorphic forms after composition with the extended highest weight module $\LL\mcl T_{\{\mu\}}$ of $\LL G$, where $\{\mu\}$ is the conjugacy class of Hodge cocharacters of the Shimura datum. However, $\bx G_2$ admits no Shimura variety, and in the case of $G=\GSp_{2n}$, the extended highest weight module $\mcl T_{\{\mu\}}$ is the spin representation of $\GSpin_{2n+1}(\bb C)$, so it is hard to recover the Galois representation and its local components. See~\cite{K-S23} for related study of the cohomology of Siegel modular varieties.
\end{rem*}

In \S\S\ref{theogirneidnis}--\ref{theoenifniehifensis}, we review the classical local Langlands correspondence for special orthogonal and unitary groups and the statement of the endoscopic character identities. In \S\S\ref{compatibliseniniefs}--\ref{combianosiLoifn}, we analyze more properties of the local Langlands correspondence. In \S\S\ref{icoroenfiehsnw}--\ref{snihsnfifies}, we review the Fargues--Scholze local Langlands correspondence and the spectral action, and recall the related objects. In \S\ref{Kottwisniconeidinfs}, we prove a weaker version of the Kottwitz conjecture. In \S\S\ref{seubsienfiehtoenfeis}--\ref{coniierueiefiuhifhens}, we review the endoscopic classification of automorphic representations of relevant groups, and define a class of cohomological cuspidal automorphic representations with local constraints. In \S\ref{Lanlgan-Kotiemethiems}, we apply the Langlands--Kottwitz method to compute the Galois cohomology of relevant global Shimura varieties. In \S\ref{local-LGoabsiSHimief}, we apply basic uniformization and Boyer's trick to prove a key property of the cohomology of relevant local Shimura varieties, see Corollary~\ref{ckeyeeriefiensiw}. In \S\ref{secitonpaoroifnienocosmw}, we combine the previous results to prove the compatibility theorem, Theorem~\ref{compaiteiehdnifds}. In \S\ref{unambinloclianisske}, we use the compatibility property to construct an unambiguous local Langlands correspondence for even orthogonal groups. In \S\ref{naturalllFairngiesHOS}, we prove the naturality property of the Fargues--Scholze local Langlands correspondence. In \S\ref{Kotinitbeinss}, we prove Theorem~\ref{teoreneieifes} by combining the compatibility result with the spectral actions. In \S\ref{generisoemisiejfmes}, we study certain properties of generic toral $L$-parameters. In \S\ref{peomeofeniemfiesiws}, we use the naturality of Fargues--Scholze local Langlands correspondence to prove the torsion vanishing result for Shimura varieties of orthogonal or unitary type. In \S\ref{oniinssmows}, we review the endoscopy theory used in the main body.

\subsection{Notation and conventions}

We fix the following general notation.

\begin{note}\enskip
\begin{itemize}
\item
Let $\bb Z_+$ denote the set of positive integers and $\bb N$ denote the set of non-negative integers.
\item
For each $n\in\bb Z_+$, we define $[n]_+\defining\{1, 2, \ldots, n\}$. For each $n\in \bb N$, we define $[n]\defining\{0,1, \ldots, n\}$.
\item
For each $n\in \bb Z_+$, let $\Sym_n$ denote the symmetric group on $n$ letters.
\item
Suppose $X$ is a set.
\begin{itemize}
\item
Let $\#X$ denote the cardinality of $X$ and let $\mrs P(X)$ denote the power set of $X$.
\item
If $X$ has a distinguished trivial element, we denote it by $\uno$.
\item
For two elements $a, b$ in a set $X$, we define the Kronecker symbol
\begin{equation*}
\delta_{a, b}\defining\begin{cases} 1 &\If a=b\\ 0 &\If a\ne b\end{cases}.
\end{equation*}
\end{itemize}
\item
Let $\bb Q\subset \bb R\subset \bb C$ denote the fields of rational, real, and complex numbers, respectively. We fix a choice of square root $\ii$ of $-1$ in $\bb C$.
\item
When $A$ is a (topological/algebraic) group, we write $B\le A$ to mean that $B$ is a (closed) subgroup of $A$.
\item
For a finite group $A$, let $\Irr(A)$ denote the set of isomorphism classes of irreducible complex representations of $A$.
\item
All rings are assumed to be commutative and unital, and ring homomorphisms preserve units. Algebras, however, may be non-commutative and non-unital.
\item
The transpose of a matrix $M$ is denoted by $M^\top$. When $M$ is invertible, we write $M^{-\top}$ for $(M^{-1})^\top$.
\item
Let $J_n=(a_{ij})$ denote the anti-diagonal $n\times n$ matrix such that $a_{i, j}=\delta_{i, n+1-j}$ and $J_n'=(b_{ij})$ denote the anti-diagonal $n\times n$ matrix such that  $b_{i, j}=(-1)^{i+1}\delta_{i, n+1-j}$.
\item
If $S$ is a scheme over a commutative ring $R$ and $R'$ is a ring over $R$, we define $S_{R'}\defining S\times_{\Spec R}\Spec R'$.
\item
For a locally algebraic group scheme $G$ over a field $K$, let $Z(G)$ denote the center of $G$ and $G^\circ$ denote the identity component of $G$.
\item
Reductive groups are assumed to be connected.
\item
For a reductive group $G$ over a field $K$, let $W_G$ denote the relative Weyl group and $G^*$ denote the unique quasisplit inner form of $G$. A Borel pair for $G^*$ is defined to be a pair $(B^*, T^*)$ consisting of a Borel subgroup $B^*$ and a maximal torus $T^*$ contained in $B^*$.
\end{itemize}
\end{note}

We fix the following notation for a connected reductive group over a non-Archimedean local field of characteristic zero.

\begin{note}
Suppose $K/\bb Q_p$ is a finite extension and $\msf G$ is a connected reductive group over $K$.
\begin{itemize}
\item
Let $\kappa$ denote the residue field of $K$ with a fixed algebraic closure $\ovl\kappa$, and we fix a uniformizer $\varpi_K\in K^\times$.
\item
Let $\ord_K: K^\times\to \bb Z$ denote the additive valuation map that sends a uniformizer $\varpi_K$ to 1, and let $\largel{-}_K: K^\times\to\bb R_+$ denote the multiplicative valuation map such that  $\largel{x}_K=(\#\kappa)^{-\ord_K(x)}$.
\item
We fix an algebraic closure $\ovl K$ of $K$, and for each subfield $K'\subset\ovl K$, we define $\Gal_{K'}\defining\Gal(\ovl K/K')$.
\item
Denote by $W_K$ the Weil group of $K$ and by $I_K$ the inertia group of $K$. Let $\Art_K: K^\times\to W_K^\ab$ denote the Artin map. Fix an arithmetic Frobenius element $\vp_K\in W_K$. Set $\sigma_K\defining\vp_K^{-1}$, and we use the same symbol $\largel{-}_K$ to denote the composition $W_K\to W_K^\ab\xr{\Art_K^{-1}}K^\times\xr{\largel{-}_K}\bb R_+$.
\item
Let $\breve K$ denote the completion of the maximal unramified extension of $K$.
\item
We use the geometric normalization of the local class field theory, i.e., Artin maps are normalized so that they map uniformizers to geometric Frobenius classes.
\item
Let $\hat{\msf G}$ denote the Langlands dual group of $\msf G$, which is a Chevalley group whose based root data is dual to that of $\msf G$. It is equipped with an action of $\Gal_K$. Denote by $\LL{\msf G}\defining \hat{\msf G}\rtimes W_K$ the Langlands $L$-group of $\msf G$ in the Weil form. We usually conflate $\LL\msf G$ (respectively, $\hat{\msf G}$) with their $\bb C$-valued points, unless we write $\LL\msf G(\Lbd)$ (respectively, $\hat{\msf G}(\Lbd)$), which denotes its $\Lbd$-valued points for some ring $\Lbd$.
\item
Let $\mcl H(\msf G)$ denote the set of compactly supported locally constant $\bb C$-valued functions on $\msf G(K)$ that are bi-$\mdc K$-finite for some compact open subgroup $\mdc K\le \msf G(K)$.
\item
If $\msf G, \msf G'$ are reductive groups over $K$ and $\pi, \pi'$ are irreducible admissible representations of $\msf G(K)$ and $\msf G'(K)$, respectively, let $\pi\boxtimes\pi'$ denote the irreducible admissible representation of $\msf G(K)\times\msf G'(K)$ such that  $(\pi\boxtimes\pi')((g, g'))=\pi(g)\otimes \pi'(g')$.
\item
Let $\Pi(\msf G)$ denote the set of isomorphism classes of irreducible admissible representations of $\msf G(K)$, and let $\Pi_\temp(\msf G)$ (resp. $\Pi_2(\msf G)$, resp. $\Pi_{\bx{sc}}(\msf G)$) denote the subset of $\Pi(\msf G)$ consisting of tempered (resp. essentially square-integrable, resp. supercuspidal) representations. Set $\Pi_{2, \temp}(\msf G)\defining \Pi_2(\msf G)\cap\Pi_\temp(\msf G)$.
\item
If $\msf P\le\msf G$ is a parabolic subgroup with a Levi factor $\msf M$ and $\sigma\in \Pi(\msf M)$, we let $\delta_{\msf P}: \msf P(K)\to\bb R_+$ denote the modulus quasi-character. We define the normalized parabolic induction by
\begin{equation*}
\bx I_{\msf P}^{\msf G}(\sigma)\defining\Ind_{\msf P(K)}^{\msf G(K)}\paren{\delta_{\msf P}^{1/2}\otimes\sigma}.
\end{equation*}
\item
If $\msf G=\GL_n$ is a general linear group, we define a character $\nu\defining\largel{-}_K\circ \det: \msf G(K)\to\bb R_+$.
\item
Suppose $\ell$ is a rational prime different from $p$ and $\Lbd\in\{\ovl{\bb Q_\ell}, \ovl{\bb F_\ell}\}$. Let $\bx D(\msf G, \Lbd)$ denote the derived category of smooth representations of $\msf G(K)$ with coefficients in $\Lbd$, equipped with the natural $t$-structure. Let $\bx D^\adm(\msf G, \Lbd)$ denote the full subcategory of admissible complexes, i.e., those complexes whose invariants under any compact open subgroup $\mdc K\le \msf G(K)$ form a perfect complex.
\item
Suppose $\ell$ is a rational prime different from $p$ and $\Lbd\in\{\ovl{\bb Q_\ell}, \ovl{\bb F_\ell}\}$. For each conjugacy class of cocharacters $\{\mu\}$ for $\msf G_{\ovl K}$, there exists an indecomposable highest weight tilting module $\mcl T_{\{\mu\}}\in \bx{Rep}_\Lbd(\hat{\msf G})$ as defined in \cite{Rin91, Don93}; cf.~\cite[\S 9.1]{Ham24}.
\item
We define
\begin{equation*}
X_*(\msf G)\defining\Hom_K(\GL_{1, K},\msf G), \quad X_\bullet(\msf G)\defining\Hom_{\ovl K}(\GL_{1, \ovl K},\msf G_{\ovl K})
\end{equation*}
for the set of cocharacters and geometric cocharacters of $\msf G$, respectively, and define
\begin{equation*}
X^*(\msf G)\defining\Hom_K(\msf G, \GL_{1, K}), \quad X^\bullet(\msf G)\defining\Hom_{\ovl K}(\msf G_{\ovl K}, \GL_{1, \ovl K})
\end{equation*}
for the set of characters and geometric characters of $\msf G$, respectively.
\item
For any condensed $\infty$-category $\msc C$ and any finite index set $I$, let $\msc C^{\bx BW_K^I}$ denote the category of objects with continuous $W_K^I$-actions, as defined in \cite[\S \Rmnum 9.1]{F-S24}.
\item
For any subfield $\kappa'\subset \ovl\kappa$, let $\Perfd_{\kappa'}$ denote the category of affinoid perfectoid spaces over $\kappa'$.
\item
The six functor formalism of \cite{Sch22} and \cite{F-S24} on $\ell$-adic cohomology of diamonds and small Artin $v$-stacks is freely used. In particular, if $\ell$ is a rational prime different from $p$ and $\Lbd\in\{\ovl{\bb Q_\ell}, \ovl{\bb F_\ell}\}$, then for any small Artin $v$-stack $X$, let $\bx D_\blacksquare(X, \Lbd)$ denote the condensed $\infty$-category of solid $\Lbd$-sheaves on $X$ \cite[\S \Rmnum 7.1]{F-S24}, and let $\bx D_{\bx{lis}}(X, \Lbd)\subset \bx D_\blacksquare(X, \Lbd)$ denote the full subcategory of $\Lbd$-lisse-\eTale sheaves as defined in \cite[\S\Rmnum 7.6]{F-S24}.
\end{itemize}
\end{note}

We fix the following notation for a connected reductive group over a number field $F$.

\begin{note}\label{notaitnosidihe}
Suppose $F$ is a number field with a fixed embedding $\tau_0: F\to \bb C$ and $\bb G$ is an arbitrary connected reductive group over $F$.
\begin{itemize}
\item
We denote by $\fPla_F$ the set of finite places of $F$, by $\infPla_F$ the set of infinite places of $F$. Set $\Pla_F\defining \fPla_F\cup\infPla_F$.
\item
For each finite set $S$ of rational primes, let $\Pla_F(S)\subset \fPla_F$ denote the subset of all finite places of $F$ with residue characteristic in $S$.
\item
Let $\ovl F$ denote the algebraic closure of $F$ in $\bb C$.
\item
For each finite place $v$ of $F$, let $\kappa_v$ denote the residue field of $F_v$. Fix a decomposition group $D_v\subset\Gal_F$ and choose an element $\sigma_v\in D_v$ whose image in $\Gal(\ovl\kappa_v/\kappa_v)$ is geometric Frobenius. We also set $\norml{v}\defining\#\kappa_v$.
\item
Let $\Ade_F$ denote the ring of adeles of $F$, and let $\Ade_{F, f}$ denote the ring of finite adeles of $F$. We also write $\Ade\defining\Ade_{\bb Q}$ and $\Ade_f\defining\Ade_{\bb Q, f}$.
\item
If $\Pla\subset \fPla_F$ is a finite subset, set $\Ade_{F, f}^\Pla\defining \prod'_{v\in\fPla_F\setm\Pla}F_v$.
\item
For each discrete automorphic representation $\Pi$ of $\bb G(\Ade_F)$, let $m(\Pi)$ denote its multiplicity in the discrete automorphic spectrum of $\bb G$.
\end{itemize}
\end{note}

\section{Local Langlands correspondence via endoscopy}\label{ndinlocalLanlgnnis}

We begin by recalling the local Langlands correspondence defined via the theory of endoscopy. Let $K$ be a non-Archimedean local field of characteristic zero, and fix a nontrivial additive character $\uppsi_K$ of $K$, which extends to an additive character of any finite extension $K'/K$ by defining $\uppsi_{K'}\defining\uppsi_K\circ\tr_{K'/K}$. 

\subsection{The groups}\label{theogirneidnis}

Let $K_1/K$ be an unramified extension of degree at most two, and let $\cc\in \Gal(K_1/K)$ be the element with fixed field $K$. Let $\chi_{K_1/K}: K^\times\to\{\pm1\}$ denote the character associated with $K_1/K$ via local class field theory. Let $V$ be a vector space of dimension over $K_1$ equipped with a nondegenerate Hermitian $\cc$-sesquilinear form $\bra{-, -}$ on $V$, that is,
\begin{equation*}
\bra{au+bv, w}=a\bra{u, w}+b\bra{v, w}, \quad\text{and}\quad\bra{v, w}=\bra{w, v}^\cc
\end{equation*}
for all $a, b\in K_1$ and $u, v, w\in V$.

Fix an arbitrary orthogonal basis $\{v_1, \ldots, v_n\}$ of $V$ such that  $\bra{v_i, v_i}=a_i\in K^\times$. Define the discriminant of $V$ as
\begin{equation*}
\disc(V)=(-1)^{\binom{n}{2}}\prod_{i=1}^{\dim(V)}a_i.
\end{equation*}
The class of $\disc(V)$ in $K^\times/(K^\times)^2$ (resp. in $K^\times/\Nm_{K_1/K}(K_1^\times)$) when $K_1=K$ (resp. when $K_1\ne K$) is independent of the choice of orthogonal basis.

The (normalized) Hasse--Witt invariant of $V$ is defined as
\begin{equation*}
\eps(V)=
\begin{cases}\displaystyle
\paren{-1, (-1)^{\binom{\dim(V)}{4}}\cdot\disc(V)^{\binom{\dim(V)-1}{2}}}_K\cdot\prod_{i<j\in [\dim(V)]_+}(a_i, a_j)_K &\If K_1=K,\\
\chi_{K_1/K}(\disc(V)) &\If K_1\ne K.
\end{cases}
\end{equation*}
where
\begin{equation*}
(-, -)_K:\paren{K^\times/(K^\times)^2}\times\paren{K^\times/(K^\times)^2}\to \Br(K)[2]\cong\{\pm1\}
\end{equation*}
denotes the Hilbert symbol.

Recall from \cite[Theorem 2.3.7]{Ser73} that if $K_1=K$, the isometry class of $(V, \bra{-, -})$ is fully determined by the triple
\begin{equation*}
(\dim V, \disc(V), \eps(V))\in \bb Z_+\times\paren{K^\times/(K^\times)^2}\times\{\pm1\},
\end{equation*}
and, moreover, it follows from \cite[Proposition 2.3.6]{Ser73} that all such triples except $(1, d, -1)$ and $(2, 1, -1)$ occur. If $K_1\ne K$, the isometry class of $(V, \bra{-, -})$ is completely characterized by the pair
\begin{equation*}
(\dim(V), \eps(V))\in\bb Z_+\times\{\pm1\},
\end{equation*}
and all such pairs occur; see~\cite{M-H73}.

Let $G(V)$ denote the algebraic subgroup of $\GL(V)$ such that
\begin{equation*}
G(V)=\{g\in \GL(V): \bra{gv, gw}=\bra{v, w}\forall v, w\in V\},
\end{equation*}
and let $G=G(V)^\circ$ denote its identity component. Let $G^*$ denote the unique quasisplit inner form of $G$ over $K$. Exactly one of the following three cases holds:
\begin{itemize}
\item[\textbf{O1}]
$K_1=K$ and $\dim(V)=2n+1$ is odd. Then $G^*=\SO_{2n+1}$, the split orthogonal group in $2n+1$ variables.
\item[\textbf{O2}]
$K_1=K$ and $\dim(V)=2n$ is even. Then $G^*=\SO_{2n}^{\disc(V)}$, the quasisplit special orthogonal group associated with the quadratic space $V^*$ over $K$ of dimension $2n$, discriminant $\disc(V)$ and Hasse--Witt invariant 1.
\item[\textbf{U}]
$K_1\ne K$ and $\dim(V)=n$. Then $G^*=\bx U(n)$, the quasisplit unitary group associated with the Hermitian space of dimension $n$ with respect to the unramified quadratic extension $K_1/K$, with Hasse--Witt invariant 1.
\end{itemize}
We collectively refer to Cases O1 and O2 together as Case O. Note that
\begin{itemize}
\item
In Case O1, $G$ is split if $\eps(V)=1$ and non-quasisplit if $\eps(V)=-1$. In either case, $G$ splits over the unramified quadratic extension of $K$.
\item
In Case O2, $G$ is split if $\disc(V)=1, \eps(V)=1$; non-quasisplit if $\disc(V)=1, \eps(V)=-1$; and quasisplit but non-split if $\disc(V)\ne 1$. Moreover, $G$ splits over the unramified quadratic extension of $K$ if and only if $\ord_K(\disc(V))\equiv 0\modu 2$.
\item
In Case U, $G$ is non-quasisplit if $n$ is even and $\eps(V)=-1$; otherwise it is quasisplit but non-split. In all cases, $G$ splits over $K_1$.
\end{itemize}

To unify notation, let $n(G)=n(G^*)$ denote the geometric rank of $G$. Thus $n(\SO_{2n+1})=n(\SO_{2n}^{\disc(V)})=n(\bx U_n)=n$. We define the following invariants associated with $G$:
\begin{equation}\label{iefieheinfneis}
\begin{aligned}
N(G) &:= 
\begin{cases}
2n(G) & \text{in Case O}, \\
n(G) & \text{in Case U},
\end{cases} \\
d(G) &:= 
\begin{cases}
2n(G) + 1 & \text{in Case O1}, \\
2n(G) & \text{in Case O2}, \\
n(G) & \text{in Case U},
\end{cases} \\
b(G) &:=
\begin{cases}
-1 & \text{in Case O1}, \\
1 & \text{in Case O2}, \\
(-1)^{n(G)-1} & \text{in Case U}.
\end{cases}
\end{aligned}
\end{equation}
Here $N(G)$ is the integer $N$ for the group $\GL_N$ associated with the Langlands dual group of $G$; $d(G)$ is the dimension of the $\cc$-Hermitian space $V$ defining $G$, and $b(G)$ is the sign associated with $G$. In Case O2, we also define $\disc(G)\defining\disc(V)$.

Let $\mcl F=\{0=X'_0\subset X'_1\subset X'_2\subset\cdots\subset X'_r\}$ be a flag of isotropic $K_1$-subspaces of $V$. Then there exists an orthogonal direct sum decomposition
\begin{equation*}
V=(X'_r\oplus Y'_r)\perp V',
\end{equation*}
where $Y_r'$ is an isotropic subspace. The stabilizer $P\le G$ of this flag $\mcl F$ is a parabolic subgroup, and every parabolic subgroup of $G$ arises in this way. Moreover, if $X_i$ is a complement of $X'_{i-1}$ in $X'_i$ for each $i\in [r]_+$, then
\begin{equation*}
M=\Res_{K_1/K}\GL(X_1)\times\cdots\times\Res_{K_1/K}\GL(X_r)\times G(V')^\circ
\end{equation*}
is a Levi subgroup of $G$ (here $G(V')^\circ$ is trivial when $\dim V'=0$). Every Levi subgroup of $G$ arises in this way, and any two such Levi subgroups that are isomorphic are conjugate under $G(V)$.

We fix a pinning $(B^*, T^*, \{X^*_\alpha\}_{\alpha\in\Delta})$ of $G^*$ by identifying it with $G(V^*)^\circ$ for a suitable $\cc$-Hermitian space $V^*$ over $K_1$, and choosing a complete flag of totally isotropic subspaces in $V^*$. Recall that a Whittaker datum for $G^*$ is a $T^*(K)$-conjugacy class of generic characters of $N^*(K)$, where $N^*$ is the unipotent radical of $B^*$. Whittaker data for $G^*$ form a principal homogeneous space over the finite Abelian group
\begin{equation*}
E=\coker\paren{G^*(K)\to G^*_\ad(K)}=\ker(H^1(K, Z(G^*))\to H^1(K, G^*));
\end{equation*}
see~\cite[\S 9]{GGP12}. The fixed pinning of $G^*$, together with the additive character $\uppsi_K$ of $K$, determines a Whittaker datum $\mfk w$ for $G^*$; see~\cite[\S 5.3]{K-S99}. When $G$ is unramified, there exists a unique $G(K)$-conjugacy class of hyperspecial maximal compact open subgroups compatible with $\mfk w$, in the sense of \cite{C-S80}. In this case, ``unramified representations of $G(K)$'' refers to those unramified with respect to such a hyperspecial subgroup.

We define the Witt tower associated with $G$: For each $n_0\in [n(G)]$, let $G(n_0)$ denote the reductive group (possibly the trivial group $\uno$) of geometric rank $n_0$, such that there exists a Levi subgroup of $G$ isomorphic to
\begin{equation*}
\Res_{K_1/K}\GL_{\frac{n(G)-n_0}{[K_1: K]}}\times G(n_0).
\end{equation*}
By \cite[\S 4.4]{Tit79},
\begin{itemize}
\item
In Case O1, $G(n_0)$ exists if and only if $n_0\ge\frac{1-\eps(V)}{2}$,
\item
In Case O2, $G(n_0)$ exists if and only if $n_0\ge 1-\delta_{\disc(V), 1}\cdot \eps(V)$,
\item
In Case U, $G(n_0)$ exists if and only if $n(G)-n_0$ is even and moreover $n_0\ne 0$ when $G$ is non-quasisplit (i.e., when $n(G)$ is even and $\eps(V)=-1$).
\end{itemize}

We fix an isomorphism
\begin{equation*}
\hat G\cong \begin{cases}
\Sp_{N(G)}(\bb C) &\text{in Case O1}\\
\SO_{N(G)}(\bb C) &\text{in Case O2}\\
\GL_{N(G)}(\bb C) &\text{in Case U}\\
\end{cases},
\end{equation*}
and fix a pinning $(\hat T, \hat B, \{X_\alpha\})$ where $\hat T$ is the diagonal torus, $\hat B$ is the group of upper triangular matrices, and $\{X_\alpha\}$ is the set of standard root vectors. Let $\LL G=\hat G\rtimes W_K$ denote the Langlands $L$-group in the Weil form, where $W_K$ acts on $\hat G$ preserving the pinning, with the action as follows:
\begin{itemize}
\item
In Case O1, $W_K$ acts trivially on $\hat G$.
\item
In Case O2, $W_K$ acts via the quotient $\Gal(K(\sqrt{\disc(G)})/K)$. If $\disc(G)\ne 1$ and $\hat G$ is identified with the subgroup of $\SL(N(G), \bb C)$ preserving the nondegenerate bilinear form on $\bb C\{v_1, \ldots, v_{N(G)}\}$ defined by
\begin{equation*}
\bra{v_i, v_j}=\delta_{i, N(G)+1-j},
\end{equation*}
then the nontrivial element acts by conjugation via the element in $\bx O(N(G), \bb C)$ that exchanges $v_{n(G)}$ and $v_{n(G)+1}$ and fixes the others.
\item
In Case U, $W_K$ acts via the quotient $\Gal(K_1/K)$, where $\cc\in \Gal(K_1/K)$ acts by
\begin{equation*}
g\mapsto J_n'g^{-\top}(J_n')^{-1}.
\end{equation*}
\end{itemize}
Finally, note that $\hat G$ has a standard representation
\begin{equation*}
\hat\Std=\hat\Std_G: \hat G\to \GL_{N(G)}(\bb C).
\end{equation*}

Let
\begin{equation*}
\kappa_G: B(G)_\bas\xr\sim X^\bullet(Z(\hat G)^{\Gal_K})
\end{equation*}
denote the Kottwitz map \cite[Proposition 5.6]{Kot85}, which induces an identification of $H^1(K, G)$ with $X^\bullet\paren{\pi_0\paren{Z(\hat G)^{\Gal_K}}}$. There is a canonical isomorphism
\begin{equation*}
\bx H^1(K, G)=B(G)_\bas\cong\bb Z/2,
\end{equation*}
except in the case $G=\SO_2^1\cong \GL_1$, where $H^1(K, G)=1$ while $B(G)_\bas\cong\bb Z$; see~\cite[Lemma 2.1]{GGP12}. We exclude the case $G^*=\SO_2^1$ from now on. Except when $G^*\cong\SO_2^1$, every extended pure inner twist of $G^*$ is canonically a pure inner twist. We henceforth exclude the case $G^*\cong\SO_2^1$.

For every $b\in B(G)_\bas$, we may associate an \tbf{extended pure inner twist} $(G_b, \varrho_b, z_b)$ as defined in~\cite[\S 3.3, 3.4]{Kot97}, where
\begin{itemize}
\item
$G_b(K)=\Brace{g\in G(\breve K)|\dot b\vp_K(g)\dot b^{-1}=g}$, where $\dot b\in G(\breve K)$ maps to $b\in B(G)_\bas$,
\item
$\varrho_b: G \otimes_K \ovl K \xrightarrow{\sim} G_b \otimes_K \ovl K$ is an isomorphism over $\ovl K$, and
\item
$z_b$ is a 1-cocycle in $Z^1(W_K, G_\mathrm{ad})$ representing the class corresponding to $b$.
\end{itemize}
The pointed set $\bx H^1(K,G)$ and the extended pure inner twists of $G$ correspond bijectively to forms $V'$ of the space $V$ with its sesquilinear form $(-, -)$; cf.~\cite[\S29D and \S29E]{KMRT}. Choose an extended pure inner twist $(G^*_{b_0},\varrho_{b_0},z_{b_0})$ associated with $b_0\in B(G)_\bas$ such that
\begin{equation*}
G\cong G^*_{b_0}.
\end{equation*}
For each $b\in B(G)_\bas$, the inner twists $\varrho_{b_0}$ and $\varrho_b$ canonically identify $\hat{G_b}$, $\hat G$, and $\hat{G^*}$.

For every parabolic pair $(M, P)$ of $G$, there exists a unique standard parabolic pair $(M^*, P^*)$ of $G^*$ corresponding to $(M, P)$ under $\varrho_{b_0}$. This determines an equivalence class of extended pure inner twists of $M^*$, which we also denote by $(\varrho_{b_0},z_{b_0})$.

For each $b_0\in B(G^*)_\bas$, we write $\kappa_{b_0}$ for the character of $\pi_0(Z(\hat G)^{\Gal_K})$ corresponding to $b_0$ under the Kottwitz map. Since $Z(\hat G)^{\Gal_K}$ is finite, the Kottwitz maps for $G$ and $G^*$ give isomorphisms
\begin{equation}\label{ienufheiehfniess}
B(G)_\bas\cong X^\bullet(Z(\hat G)^{\Gal_K})\cong B(G^*)_\bas,
\end{equation}
which sends $b$ to $b+b_0$; cf.~\cite[p.~202]{Kot85}. In particular, $G_b\cong G^*_{b_0+b}$.

There is an automorphism $\theta$ of $G^\GL\defining \Res_{K_1/K}\GL_{N(G)}$ given by
\begin{equation*}
\theta(g)=J_{N(G)}'\cc(g)^{-\top}(J_{N(G)}')^{-1}
\end{equation*}
for
\begin{equation*}
g\in G^\GL(K)=\GL_{N(G)}(K_1).
\end{equation*}
We fix a standard $\Gal_{K_1}$-invariant pinning $(B^\GL, T^\GL, \{X_\alpha^\GL\})$ of $G^\GL$ stabilized by $\theta$. 
 
The quasisplit group $G^*$ determines an elliptic twisted endoscopic datum for $G^\GL\rtimes\theta$, hence a class in $\mcl E_\ellip(G^\GL\rtimes\theta)$; see \cite[pp.~3--4, 7]{Mok15} in Case U and \cite[\S1.2]{Art13} in Case O. Denote the group $\OAut_{G^\GL}(G^*)$ from \eqref{equirenrinfids} by $\OAut_N(G^*)$. Then $\OAut_N(G^*)$ is trivial in Case O1 and Case U, and $\OAut_N(G^*)=\bx O_{2n(G^*)}(\bb C)/\SO_{2n(G^*)}(\bb C)$ in Case O2.

We recall the following list of isomorphism classes of elliptic endoscopic triples $\mfk e\in \mcl E_\ellip(G^*)$ from~\cite[\S 4.6]{Rog90} and \cite{Wal10}, where we only describe $G^{\mfk e}$ and $\OAut(\mfk e)$:
\begin{itemize}
\item
In Case U, $G^{\mfk e}\cong \bx U_{K_1/K}(a)\times \bx U_{K_1/K}(b)$. Here $a, b\in [n(G)]$ satisfy $a+b=n(G)$, and $\OAut_{G^*}(\mfk e)$ is trivial except when $a=b$, where there exists a unique nontrivial outer automorphism swapping the two factors of $\hat{G^{\mfk e}}\cong \GL_a\times \GL_b$.
\item
In Case O1, $G^{\mfk e}\cong\SO_{2a+1}\times \SO_{2b+1}$. Here $a, b\in [n(G)]$ satisfy $a+b=n(G)$, and $\OAut_{G^*}(\mfk e)$ is trivial except when $a=b$, in which case there exists a unique nontrivial outer automorphism swapping the two factors of $\hat{G^{\mfk e}}\cong \Sp_{2a}(\bb C)\times \Sp_{2b}(\bb C)$.
\item
In Case O2, $G^{\mfk e}\cong \SO_{2a}^\beta\times\SO_{2b}^\gamma$. Here $a,b\in[n(G)]$ satisfy $a+b=n(G), \beta\gamma=\disc(G)$, and moreover $\beta=1$ if $a=0$, $\gamma=1$ if $b=0$, and $(a, \beta)\ne (1, 1), (b, \gamma)\ne (1, 1)$. If $ab>0$, there exists an outer automorphism acting by conjugation action of an element of $\bx O_{2a}(\bb C)\times \bx O_{2b}(\bb C)$ on the two factors of $\hat{G^{\mfk e}}\cong \SO_{2a}(\bb C)\times \SO_{2b}(\bb C)$. There are no other nontrivial outer automorphisms except when $a=b$ and $\disc(G)=1$, in which case there exists one swapping the two factors of $\hat{G^{\mfk e}}\cong \SO_{2a}(\bb C)\times \SO_{2b}(\bb C)$, and so $\OAut(\mfk e)\cong \bb Z/2\times \bb Z/2$ in this case.
\end{itemize}
For each $\mfk e\in \mcl E_\ellip(G^*)$, we fix a choice of $\LL\xi^{\mfk e}$ as in \cite[\S 1.8]{Wal10}, such that  if $G^{\mfk e}=H_1\times H_2$, then $\hat\Std_G\circ\LL\xi^{\mfk e}$ is conjugate to $(\hat\Std_{H_1}\oplus\hat\Std_{H_2})\circ\iota$, where $\iota: \LL G^{\mfk e}\inj \LL H_1\times\LL H_2$ is the natural inclusion.

Non-elliptic endoscopic triples $\mfk e\in \mcl E(G^*)$ are described similarly, in which case $G^{\mfk e}$ is a product of groups of the same type as $G$ with geometric rank smaller than $n(G)$ and restrictions of scalars of general linear groups, see for example \cite[\S 3.1.3]{Ish24} in Case O1.

\subsection{The \texorpdfstring{$L$}{L}-parameters}\label{IFNieniehifeniws}

We recall the description of $L$-parameters for $G$ and $G^*$, and explain their relation to conjugate self-dual representations of Weil groups. Let $\Phi(G)$ denote the set of $L$-parameters for $G$, in the sense of \cite[\S8.2]{Bor79}. We describe this set explicitly below.

For each positive integer $m\in\bb Z_+$, an $L$-parameter $\phi$ for $\GL_{m, K'}$ over any finite extension $K'/K$ may be regarded as  an isomorphism class of $m$-dimensional representations of $W_{K'}\times \SL_2(\bb C)$. Every such representation is isomorphic to a finite direct sum of representations of the form $\rho\boxtimes\sp_a$ where $\rho$ is an irreducible smooth representation of $W_{K'}$ and $\sp_a$ is the unique irreducible algebraic representation of $\SL_2(\bb C)$ of dimension $a$. 

An $L$-parameter $\phi$ for $\GL_m$ over $K_1$ is called conjugate self-dual and irreducible if $\phi$ is irreducible and isomorphic to $\phi^\theta\defining (\phi^s)^\vee$ as representations, where $s\in W_K$ is a lift of $\cc\in W_K/W_{K_1}\cong \Gal(K_1/K)$ and $\phi^s$ is the conjugate action $\phi^s(g)=\phi(sgs^{-1})$. Following \cite[\S 3]{GGP12}, we introduce the sign of a conjugate self-dual irreducible $L$-parameter $\phi$: There exists an isomorphism $f: \phi\xr\sim \phi^\theta$ such that $(f^\vee)^{s}=b(\phi)f$ for some $b(\phi)\in\{\pm1\}$. The value $b(\phi)$ is independent of the choice of $f$, and is called the sign of $\phi$. If $\phi=\rho\boxtimes\sp_a$, where $\rho$ is an irreducible representation of $W_{K_1}$, then
\begin{equation*}
b(\phi)=b(\rho)(-1)^{a-1};
\end{equation*}
see~\cite[Lemma 3.2]{GGP12}, \cite[\S 1.2.4]{KMSW}.

Then it follows from \cite[Theorem~8.1]{GGP12} and \cite[p. 365]{A-G17} that there exists a natural identification
\begin{equation*}
\Phi(G)=\Phi(G^*)=
\begin{cases}
\{\text{admissible }\phi: W_{K_1}\times\SL_2(\bb C)\to \Sp_{N(G)}(\bb C)\}/\Sp_{N(G)}(\bb C) &\text{in Case O1},\\
\left\{
\begin{aligned}
&\text{admissible }\phi: W_{K_1} \times \SL_2(\bb C) \to \bx O_{2n}(\bb C) \\
&\text{such that } \det(\phi) = (\Art_K^{-1}(-), \disc(V))_K
\end{aligned}
\right\} / \SO_{2n}(\bb C)&\text{in Case O2},\\
\left\{\begin{aligned}
&\text{admissible }\phi: W_{K_1}\times \SL_2(\bb C)\to \GL_{N(G)}(\bb C)\\
&\text{that is conjugate self-dual of sign }(-1)^{n(G)-1}
\end{aligned}
\right\}/\GL_{N(G)}(\bb C) &\text{in Case U}.
\end{cases}
\end{equation*}
Here $\phi$ is called admissible if 
\begin{itemize}
\item
$\phi(\sigma_{K_1})$ is semisimple,
\item
$\phi|_{I_{K_1}}$ is smooth, and
\item
$\phi|_{\SL_2(\bb C)}$ is algebraic.
\end{itemize}
Strictly speaking, in the definition of \cite[\S8.2]{Bor79}, an $L$-parameter for $G$ is a homomorphism from $W_K\times \SL_2(\bb C)$ to $\LL G$ up to $\hat G$-conjugacy. In the description above, we instead encode such a parameter by the corresponding homomorphism with source $W_{K_1}\times \SL_2(\bb C)$. Thus, by a slight abuse of terminology, we will refer to this latter homomorphism as an $L$-parameter and denote it by $\phi$. The associated parameter in the sense of \cite[\S8.2]{Bor79} will be denoted by
\begin{equation*}
\phi^\natural: W_K\times \SL_2(\bb C)\longrightarrow \LL G^*.
\end{equation*}

Then an $L$-parameter $\phi\in \Phi(G^*)$ is
\begin{itemize}
\item
tempered (or bounded) if and only if $\phi(W_{K_1})$ is a relatively compact subset of the target.
\item
discrete if and only if $\Im(\phi)$ is not contained in the normalizer of a proper parabolic subgroup of $\hat G^*$.
\item
semisimple if and only if it is trivial on the $\SL_2(\bb C)$-factor.
\item
supercuspidal if and only if it is discrete and semisimple.
\end{itemize}
The subset of tempered (resp. discrete, resp. semisimple, resp. supercuspidal) $L$-parameters for $G^*$ is denoted by $\Phi_\temp(G^*)$ (resp. $\Phi_2(G^*)$, resp. $\Phi^\sems(G^*)$, resp. $\Phi^{\bx{sc}}(G^*)$).

An $L$-parameter $\phi\in\Phi(G^*)$ can be regarded as an $N(G)$-dimensional conjugate self-dual representation $\phi^\GL$ of $W_{K_1}\times\SL_2(\bb C)$ via the standard representation $\hat\Std_G$. $\phi$ is determined by $\phi^\GL$ in Case O1 and Case U, but only determined up to $\bx O_{N(G)}(\bb C)$-conjugation in Case O2; see~\cite[Theorem 8.1]{GGP12}. We can write $\phi^\GL$ as
\begin{equation*}
\phi^\GL=\bplus_{i\in I_\phi^+}m_i\phi_i\oplus \bplus_{i\in I_\phi^-}2m_i\phi_i\oplus\bplus_{i\in J_\phi}m_i(\phi_i\oplus\phi_i^\theta),
\end{equation*}
where $m_i$ are positive integers, and $I_\phi^+, I_\phi^-, J_\phi$ index mutually inequivalent irreducible representations of $W_{K_1}\times\SL_2(\bb C)$ such that 
\begin{itemize}
\item
for $i\in I_\phi^+$, $\phi_i$ is conjugate self-dual with sign $b(G)$.
\item
for $i\in I_\phi^-$, $\phi_i$ is conjugate self-dual with sign $-b(G)$.
\item
for $i\in J_\phi$, $\phi_i$ is not conjugate self-dual.
\end{itemize}
Then $\phi$ is discrete if and only if $m_i=1$ for $i\in I_\phi^+$, and $I_\phi^-=J_\phi=\vn$. Moreover, we call $\phi$ a \tbf{simple $L$-parameter} if it is discrete and $\#I_\phi^+=1$.

For any $\phi\in\Phi(G^*)$, define
\begin{equation*}
S_\phi^\sharp\defining\prod_{i\in I_\phi^+}\bx O_{m_i}(\bb C)\times\prod_{i\in I_\phi^-}\Sp_{2m_i}(\bb C)\times\prod_{i\in J_\phi}\GL_{m_i}(\bb C).
\end{equation*}
This group formally represents the centralizer of $\phi$ in $\hat G\rtimes\Gal_{K'/K}$, where $K'$ is a minimal splitting field of $G$. Its formal component group is
\begin{equation*}
\mfk S_\phi^\sharp\defining \pi_0(S_\phi^\sharp)\cong \bplus_{i\in I_\phi^+}(\bb Z/2)e_i.
\end{equation*}
For each $i\in I_\phi^+$, let $e_i^\vee\in\Irr(\mfk S_\phi^\sharp)$ be the character determined by
\begin{equation*}
e_i^\vee(e_j)=(-1)^{\delta_{i,j}}\qquad(i,j\in I_\phi^+).
\end{equation*}
Writing tensor products of characters additively, we obtain
\begin{equation*}
\Irr(\mfk S_\phi^\sharp)\cong\bplus_{i\in I_\phi^+}(\bb Z/2)e_i^\vee.
\end{equation*}

In Case O2, define
\begin{equation*}
S_\phi\defining Z_{\hat G}(\phi).
\end{equation*}
It is naturally a subgroup of $S_\phi^\sharp$ of index at most two. The homomorphism
\begin{equation*}
\det\nolimits_\phi: \mfk S_\phi^\sharp\to \bb Z/2, \qquad \sum_{i\in I_\phi^+}x_ie_i\mapsto \sum_{i\in I_\phi^+}x_i\dim(\phi_i)
\end{equation*}
has kernel
\begin{equation*}
\mfk S_\phi\defining\ker(\det_\phi)=\pi_0(S_\phi).
\end{equation*}
To unify notation, in Case O1 and Case U, set $S_\phi\defining S_\phi^\sharp$ and $\mfk S_\phi\defining\mfk S_\phi^\sharp$.

We define the central element
\begin{equation*}
z_\phi \defining\sum_{i \in I_\phi^+} m_i e_i \in \mfk S_\phi,
\end{equation*}
and define the reduced component group
\begin{equation*}
\ovl{\mfk S}_\phi \defining \mfk S_\phi/\bra{z_\phi}.
\end{equation*}

When $(G=G^*_{b_0}, \varrho_{b_0}, z_{b_0})$ is a pure inner form of $G^*$, an $L$-parameter for $G^*$ is called $(G, \varrho_{b_0})$-relevant if every Levi subgroup $\LL M$ of $\LL G$ such that  $\Im(\phi)\subset\LL M$ is relevant, i.e., $\LL M$ is a Levi component of a $(G, \varrho_{b_0})$-relevant parabolic subgroup of $\LL G$; see~\cite[\S 0.4.2]{KMSW}.

Let $M$ be a standard Levi subgroup of $G$ of the form
\begin{equation*}
\Res_{K_1/K}\GL_{n_1}\times\cdots\times\Res_{K_1/K}\GL_{n_k}\times G(n_0),
\end{equation*}
where
\begin{equation*}
[K_1: K](n_1+\cdots+n_k)+n_0=n(G).
\end{equation*}
Define the $K_1$-group
\begin{equation*}
M^\GL\defining\Res_{K_1/K}\paren{\GL_{n_1}\times\cdots\times\GL_{n_k}\times\GL_{\frac{N(G)}{n(G)}n_0}}.
\end{equation*}
Then $\Phi(M^\GL)$ is canonically identified with the set of tuples $(\phi_1, \ldots, \phi_k, \phi_0)$, where
\begin{equation*}
\phi_i\in \Phi(\GL_{n_i, K_1})\quad\text{and}\quad\phi_0\in \Phi(\GL_{\frac{N(G)}{n(G)}n_0, K_1})
\end{equation*}
Then the canonical map $\Phi(M)\to \Phi(G)$ fits into the commutative diagram
\begin{equation}\label{ienfieunienfis}
\begin{tikzcd}[sep=large]
\Phi(M)\ar[r, "(-)^\GL"]\ar[d] &\Phi(M^\GL)\ar[d]\\
\Phi(G)\ar[r, "(-)^\GL"] &\Phi(G^\GL),
\end{tikzcd}
\end{equation}
where the right vertical map is given by
\begin{equation}\label{isniheiherehs}
(\phi_1, \ldots, \phi_k, \phi_0)\mapsto \phi^\GL=\phi_1\oplus\cdots\oplus\phi_k\oplus\phi_0\oplus\phi_1^\theta\oplus\cdots\oplus\phi_k^\theta.
\end{equation}

\subsection{The correspondence}\label{theoenifniehifensis}

We state the local Langlands correspondence for pure inner twist $(G, \varrho, z)$ of $G^*$. In Case O1, it is established by Arthur \cite{Art13} when $G$ is quasisplit, and by Ishimoto \cite[Theorem 3.15]{Ish24} when $G$ is not quasisplit. In Case U it is established by Mok~\cite[Theorem 2.5.1, Theorem 3.2.1]{Mok15} when $G$ is quasisplit, and by Kaletha, Minguez, Shin and White \cite[Theorem 1.6.1]{KMSW} when $G$ is not quasisplit. In Case U, more properties of this correspondence are established in \cite{C-Z21a}.

In Case O2, only a weak version of the local Langlands correspondence is established. This is due to the intrinsic nature of the endoscopy method, because when $G^*$ is regarded as a twisted endoscopic group of $\GL_{N(G)}$, $\phi$ can only be recovered from $\phi^\GL$ up to $\bx O_{N(G)}(\bb C)$-conjugation \cite[Theorem 8.1]{GGP12}. When $G$ is quasisplit, the weak LLC is established by Arthur \cite{Art13} (see also \cite[Theorem 3.6]{A-G17}), and when $G$ is not quasisplit, it is established by Chen and Zou~\cite[Theorem~A.1]{C-Z21}.

To state the weak version in Case O2, for each pure inner twist $(G=G^*_{b_0}, \varrho_{b_0}, z_{b_0})$ of $G^*$, we introduce an equivalence relation $\sim_\varsigma$ on $\Pi(G)$. Note that there exists an outer automorphism $\varsigma$ of $G^*$ which preserves $\mfk w$: in fact $\varsigma$ can be realized as an element of the corresponding orthogonal group of determinant $-1$; see~\cite[p. 847]{Tai19}. Via $\varrho_{b_0}$, the element $\varsigma$ acts by a rational outer automorphism on $G$; see~\cite[Lemma 9.1.1]{Art13}. For each $\pi\in \Pi(G)$, its $\varsigma$-conjugate $\pi^\varsigma$ is defined by $\pi^\varsigma(h)=\pi(h^\varsigma)$, and the equivalence relation $\sim_\varsigma$ is defined by
\begin{equation*}
\pi\sim_\varsigma\pi^\varsigma.
\end{equation*}
For each $\pi\in \Pi(G)$, we write $\tilde\pi$ for the image of $\pi$ under the quotient map $\Pi(G)\to\tilde\Pi(G)$, where $\tilde\Pi(G)\defining\Pi(G)/\sim_\varsigma$. It is clear that the sets $\Pi_\temp(G), \Pi_2(G)$ and $\Pi_{\bx{sc}}(G)$ are preserved under this equivalence relation, so temperedness, discreteness, and supercuspidality are well-defined for equivalence classes $\tilde\pi\in\tilde\Pi(G)$.

Similarly, we define
\begin{equation*}
\tilde\Phi(G^*)=\{\text{admissible }\phi: W_{K_1}\times \SL_2(\bb C)\to \bx O_{N(G)}(\bb C)|\det(\phi)=(\Art_K^{-1}(-), \disc(V))_K\}/\bx O_{N(G)}(\bb C),
\end{equation*}
together with a natural map $\Phi(G^*)\to \tilde\Phi(G^*)$. Note that if $\phi_1$ and $\phi_2$ are conjugate up to $\bx O_{N(G)}(\bb C)$, then $\phi_1^\GL=\phi_2^\GL$. In particular, $\phi^\sems, \phi^\GL, I_\phi^+, I_\phi^-, J_\phi, S_\phi^\sharp, S_\phi, \mfk S_\phi, \ovl{\mfk S}_\phi, z_\phi$ are well-defined functions for $\tilde\phi\in \tilde\Phi(G^*)$. We also set $\tilde\Phi(G)\defining\tilde\Phi(G^*)$.

For uniformity of notation, in Case O1 and Case U set $\varsigma=\id_G$ and $\tilde\Pi(G)=\Pi(G)$, $\tilde\Phi(G)=\Phi(G)$. Define a subspace $\tilde{\mcl H}(G)\subset\mcl H(G)$ of test functions on $G(K)$ as follows. In Case O2, following Arthur \cite{Art13}, let $\tilde{\mcl H}(G)$ denote the subspace of $\mcl H(G)$ consisting of $\varsigma$-invariant functions on $G(K)$; so that irreducible smooth representations of $\tilde{\mcl H}(G)$ correspond to $\bx O(V)$-conjugacy classes of irreducible admissible representations of $G(K)$. Similarly, for each $\mfk e\in \mcl E_\ellip(G)$, set
\begin{equation*}
\tilde{\mcl H}(G^{\mfk e})=\tilde{\mcl H}(G_1)\otimes \tilde{\mcl H}(G_2),
\end{equation*}
since $G^{\mfk e}$ is a product of two (possibly trivial) even special orthogonal groups over $K$. In Case O1 and Case U, simply take $\tilde{\mcl H}(G)=\mcl H(G)$ and $\tilde{\mcl H}(G^{\mfk e})=\mcl H(G^{\mfk e})$.

We can now reformulate the local Langlands correspondence as follows.

\begin{thm}[\cite{Art13, Mok15, KMSW, C-Z21, C-Z21a, Ish24}]\label{coeleninifheihsn}
If $(G=G^*_{b_0}, \varrho_{b_0}, z_{b_0})$ is a pure inner twist of $G^*$, then there exists a map
\begin{equation*}
\rec_G:\tilde\Pi(G)\to\tilde\Phi(G)
\end{equation*}
with finite fibers. For any $\tilde\phi\in\tilde\Phi(G)$, we write $\tilde\Pi_{\tilde\phi}(G)$ for $\rec^{-1}_G(\tilde\phi)$, called the (ambiguous) $L$-packet for $\tilde\phi$. This map satisfies the following properties:
\begin{enumerate}
\item
If $\tilde\phi\in\tilde\Phi(G)$ is not relevant for $G$ in the sense of \textup{\cite[Definition 0.4.14]{KMSW}}, then $\tilde\Pi_{\tilde\phi}(G)=\vn$.
\item
For each $\tilde\phi\in\tilde\Phi(G)$ and $\tilde\pi\in\tilde\Pi_{\tilde\phi}(G)$, $\tilde\pi$ is tempered if and only if $\tilde\phi$ is tempered, and $\tilde\pi$ is a discrete series representation if and only if $\tilde\phi$ is discrete. 
\item
$\rec_G$ only depends on $G$ but not on $\varrho_{b_0}$ and $z_{b_0}$. For the fixed Whittaker datum $\mfk w$ of $G^*$ which induces a Whittaker datum for each standard Levi factor of $G$, there exists a canonical bijection
\begin{equation*}
\iota_{\mfk w, b_0}: \tilde\Pi_{\tilde\phi}(G)\cong\Irr(\mfk S_{\tilde\phi}; \kappa_{b_0})
\end{equation*}
for each $\tilde\phi\in\tilde\Phi(G)$, where $\Irr(\mfk S_{\tilde\phi}; \kappa_{b_0})$ is the set of characters $\eta$ of $\mfk S_{\tilde\phi}$ such that  $\eta(z_{\tilde\phi})=\kappa_{b_0}(-\uno)$. We write $\tilde\pi=\tilde\pi_{\mfk w, b_0}(\tilde\phi, \eta)$ if $\tilde\pi\in\tilde\Pi_{\tilde\phi}(G)$ corresponds to $\eta\in\Irr(\mfk S_{\tilde\phi})$ via $\iota_{\mfk w, b_0}$.
\item(Compatibility with Langlands quotient)
For $\tilde\phi\in\tilde\Phi(G)$, suppose
\begin{equation*}
\tilde\phi^\GL=\paren{\phi_1\otimes\largel{-}_{K_1}^{s_1}}\oplus\cdots\oplus\paren{\phi_r\otimes\largel{-}_{K_1}^{s_r}}\oplus\tilde\phi_0^\GL\oplus\paren{\phi_r^\theta\otimes\largel{-}_{K_1}^{-s_r}}\oplus\cdots\oplus\paren{\phi_1^\theta\otimes\largel{-}_{K_1}^{-s_1}},
\end{equation*}
where 
\begin{itemize}
\item
$\phi_i\in\Phi_{2, \temp}(\GL_{d_i, K_1})$ for each $i\in[r]_+$, where $d_i>0$,
\item
$\tilde\phi_0\in\tilde\Phi_\temp(G^*(n_0))$,
\item
$s_1\ge s_2\ge\cdots\ge s_r>0$,
\item
$[K_1: K](d_1+\cdots+d_r)+n_0=n(G)$.
\end{itemize}
Let $\tau_i\in\Pi_{2, \temp}(\GL_{d_i, K_1})$ correspond to $\phi_i$ for each $i\in [r]_+$, then there is a canonical identification
\begin{equation*}
\tilde\Pi_{\tilde\phi}(G)=\Brace{\ovl{\bx I}_P^G\paren{\paren{\tau_1\otimes\nu^{s_1}}\boxtimes\cdots\boxtimes\paren{\tau_r\otimes\nu^{s_r}}\boxtimes\tilde\pi_0}\bigg| \tilde\pi_0\in \tilde\Pi_{\tilde\phi_0}(G(n_0))}.
\end{equation*}
Here $P$ is a parabolic subgroup of $G$ with a Levi factor
\begin{equation*}
M\cong \Res_{K_1/K}\GL_{d_1}\times\cdots\times\Res_{K_1/K}\GL_{d_r}\times G(n_0),
\end{equation*}
such that $M=\varrho_{b_0}(M^*)$ where $M^*$ is a standard Levi subgroup of $G^*$, and
\begin{equation*}
\ovl{\bx I}_P^G\paren{\paren{\tau_1\otimes\nu^{s_1}}\boxtimes\cdots\boxtimes\paren{\tau_r\otimes\nu^{s_r}}\boxtimes\tilde\pi_0}
\end{equation*}
is the unique irreducible quotient of 
\begin{equation*}
\bx I_P^G\paren{\paren{\tau_1\otimes\nu^{s_1}}\boxtimes\cdots\boxtimes\paren{\tau_r\otimes\nu^{s_r}}\boxtimes\tilde\pi_0},
\end{equation*}
whose $\varsigma$-equivalence class is well-defined in $\tilde\Pi(G)$. Moreover, if $\mfk w_0$ is the induced Whittaker datum on $M^*$, there exists a natural identification $\mfk S_{\tilde\phi_0}\cong\mfk S_{\tilde\phi}$, and $\iota_{\mfk w, b_0}(\tilde\pi)=\iota_{\mfk w_0, b_0}(\tilde\pi_0)$ if
\begin{equation*}
\tilde\pi=\ovl{\bx I}_P^G\paren{\paren{\tau_1\otimes\nu^{s_1}}\boxtimes\cdots\boxtimes\paren{\tau_r\otimes\nu^{s_r}}\boxtimes\tilde\pi_0}.
\end{equation*}
\item(Compatibility with standard $\gamma$-factors)
Suppose $\tilde\pi\in\tilde\Pi(G)$ with $\tilde\phi\defining\rec_G(\tilde\pi)$, then for any quasi-character $\chi$ of $K^\times$, viewed as a character of $W_K$ and then restricted to $W_{K_1}$,
\begin{equation*}
\gamma(\tilde\pi, \chi, \uppsi_K; s)=\gamma(\tilde\phi^\GL\otimes\chi, \uppsi_{K_1}; s).
\end{equation*}
Here the left-hand side is the standard $\gamma$-factor defined by Lapid--Rallis using the doubling zeta integral \textup{\cite{L-R05}} but modified in \textup{\cite{G-I14}}, and the right-hand side is the $\gamma$-factor defined in \textup{\cite{Tat79}}.
\item(Compatibility with Plancherel measures)
Suppose $\tilde\pi\in\tilde\Pi(G)$ with $\tilde\phi\defining \rec_G(\tilde\pi)$, then for any $d\in \bb Z_+$ and any $\tau\in\Pi(\GL_{d, K_1})$ with $L$-parameter $\phi_\tau$,
\begin{align*}
\mu_{\uppsi_K}(\tau\otimes\nu^s\boxtimes\tilde\pi)=&\gamma(\phi_\tau\otimes(\tilde\phi^\GL)^\vee, \uppsi_{K_1}; s)\cdot\gamma(\phi_\tau^\vee\otimes\tilde\phi^\GL, \uppsi_{K_1}^{-1}, -s)\\
&\cdot \gamma(\bx R_{G^*}\circ\phi_\tau, \uppsi_K; 2s)\cdot \gamma(\bx R_{G^*}\circ\phi_\tau^\vee, \uppsi_K^{-1}; -2s).
\end{align*}
Here the left-hand side is the Plancherel measure defined in \textup{\cite[\S 12]{G-I14}} (cf.~\textup{\cite[\S A.7]{G-I16}}), and in the right-hand side $\bx R_{G^*}$ is the representation
\begin{equation*}
\bx R_{G^*}=
\begin{cases}
\Sym^2 &\text{in Case O1}\\
\wedge^2 &\text{in Case O2}\\
\Asai^{(-1)^{N(G)}} &\text{in Case U}\\
\end{cases}
\end{equation*}
where $\Asai^+$ and $\Asai^-$ are the two Asai representations of $\Res_{K_1/K}\GL_{N(G)}$.
\item(Local intertwining relations)
Suppose $\tilde\phi\in\tilde\Phi(G)$ admits a decomposition
\begin{equation*}
\tilde\phi^\GL=\phi_\tau+\tilde\phi_0^\GL+\phi_\tau^\theta,
\end{equation*}
where $\tau\in\Pi_{2, \temp}(\GL_{d, K_1})$ has $L$-parameter $\phi_\tau\in\Phi(\GL_{d, K_1})$, and $\tilde\phi_0\in\tilde\Phi_\temp(G(n(G)-[K_1: K]d))$ is tempered. Let $P\le G$ be a maximal parabolic subgroup with a Levi factor
\begin{equation*}
M\cong \Res_{K_1/K}\GL_d\times G(n(G)-[K_1: K]d).
\end{equation*}
Assume that $M=\varrho_{b_0}(M^*)$, where $M^*$ is a standard Levi subgroup of $G^*$, and let $\mfk w_0$ be the induced Whittaker datum on $M^*$. Then, for each $\eta_0\in\Irr(\mfk S_{\tilde\phi_0})$,
\begin{equation*}
\bx I_P^G(\tau\boxtimes\tilde\pi_{\mfk w_0, b_0}(\tilde\phi_0, \eta_0))=\bplus_\eta\tilde\pi_{\mfk w, b_0}(\tilde\phi, \eta)
\end{equation*}
as a $\tilde{\mcl H}(G)$-module, where $\eta$ runs through characters of $\mfk S_{\tilde\phi}$ that restrict to $\eta_0$ under the natural embedding $\mfk S_{\tilde\phi_0}\inj \mfk S_{\tilde\phi}$. Moreover, if $\phi_\tau$ is conjugate self-dual of sign $b(G)$, let
\begin{equation*}
\bx R_{\mfk w}(w, \tau\boxtimes\tilde\pi_{\mfk w_0, b_0}(\tilde\phi_0, \eta_0))\in\End_{G(K)}\paren{\bx I_P^G(\tau\boxtimes\tilde\pi_{\mfk w_0, b_0}(\tilde\phi_0, \eta_0))}
\end{equation*}
be the normalized intertwining operator defined in \textup{\cite[\S 7.1]{C-Z21}} in Case O2 and in \textup{\cite[\S 5.2]{C-Z21a}} in Case $\bx U$, and in \textup{\cite[\S 4]{Ish24}} in Case O1. Here $w$ is the unique nontrivial element in the relative Weyl group for $M$. Then the restriction of $\bx R_{\mfk w}(w, \tau\boxtimes\tilde\pi_{\mfk w_0, b_0}(\tilde\phi_0, \eta_0))$ to $\tilde\pi_{\mfk w, b_0}(\tilde\phi, \eta)$ is the scalar multiplication by
\begin{equation*}
\begin{cases}
\eta(e_\tau) &\text{in Case O}\\
\eta(e_\tau)\kappa_{b_0}^d(-\uno) &\text{in Case U}
\end{cases}
\end{equation*}
where $e_\tau$ is the element of $\mfk S_{\tilde\phi}$ corresponding to $\phi_\tau$.
\end{enumerate}
\end{thm}
\begin{rem}\enskip
\begin{enumerate}
\item
The independence of $\rec_G$ with respect to $\varrho$ and $z$ is established in Case O by \cite[Remark 4.6(2)]{C-Z21} and in Case U by the argument before \cite[Theorem 2.5.5]{C-Z21a}.
\item
The compatibility of $\rec_G$ with standard $\gamma$-factors and Plancherel measures can be used to characterize $\rec_G$ and show compatibility with LLC constructed via exceptional isomorphisms in low dimensions, via \cite[Lemma A.6]{G-I16}. For example, in Case O2, when $G$ is quasisplit, it is shown that the construction of Chen--Zou using theta correspondence is compatible with that defined by Arthur \cite[Theorem 9.1]{C-Z21}, and in Case U, when $G$ is quasisplit, it is shown that the construction of Chen--Zou using theta correspondence is compatible with that defined by Mok \cite[Theorem 7.1.1]{C-Z21a}.
\item
The local intertwining relation can be used to characterize the finer structure of $L$-packets, i.e., $\iota_{\mfk w, b_0}$. In Case O1, it follows from \cite[Proposition 2.3.1 and Theorems 2.2.1, 2.2.4, 2.4.1 and 2.4.4]{Art13} when $G$ is quasisplit and follows from the corresponding propositions in \cite[\S 4]{Ish24} when $G$ is not quasisplit. In Case U, it follows from \cite[Theorem 3.2.1, 3.4.3]{Mok15} when $G$ is quasisplit and \cite[Theorem 2.5.1]{C-Z21a} when $G$ is not quasisplit. In Case O2, it follows from \cite[Proposition 2.3.1 and Theorems 2.2.1, 2.2.4, 2.4.1 and 2.4.4]{Art13} when $G$ is quasisplit and follows from \cite[Theorem A.1]{C-Z21} when $G$ is not quasisplit. For the details, see, for example, \cite[Theorem 2.2]{Ato17}.
\item
Note that in Case U, the LLC stated in \cite{Mok15} and \cite{KMSW} is compatible with the arithmetic normalization of the local class field theory instead of the geometric normalization of the local class field theory, i.e., the Artin map sends a uniformizer to an arithmetic Frobenius class rather than to a geometric Frobenius class. One can pass between the two normalizations by using the compatibility of local Langlands correspondence with contragredients \cite{Kal13}; cf.~\cite[Theorem 2.5]{M-N23}.
\end{enumerate}
\end{rem}

We will always call this correspondence the \tbf{classical local Langlands correspondence (LLC)}, as opposed to the Fargues--Scholze local Langlands correspondence to be defined in \S\ref{local-abglancorrepsodeifn}.

For later use, we define what a local functorial transfer is from $G$ to $G^\GL$:

\begin{defi}\label{localinsishinfiems}
Let $\pi\in\Pi(G)$ have classical $L$-parameter $\tilde\phi$. The unique irreducible admissible representation $\pi^\GL\in\Pi(G^\GL)$ whose classical $L$-parameter satisfies $\phi_{\pi^\GL}=\tilde\phi^\GL$ is called the \tbf{local functorial transfer} of $\pi$.
\end{defi}

We also need certain endoscopic character identities for the local Langlands correspondence. In Case O1, they are established by Arthur \cite[Theorem 2.2.1]{Art13} when $G$ is quasisplit, and by Ishimoto \cite[Theorem 3.15]{Ish24} when $G$ is not quasisplit. In Case U, they are established by Mok \cite{Mok15} when $G$ is quasisplit, and by Kaletha, Minguez, Shin and White \cite[Theorem 1.6.1]{KMSW} when $G$ is not quasisplit. In Case O2, they are established by Arthur \cite[Theorem 2.2.1]{Art13} when $G$ is quasisplit, and are established by \cite[Theorem A]{Pen25b} when $G$ is not quasisplit. We adopt the notation for endoscopy from \S\ref{oniinssmows}.

\begin{thm}\label{endoslicindiikehrnieiis}
Fix a pure inner twist $(G=G^*_{b_0}, \varrho_{b_0}, z_{b_0})$ of $G^*$.
\begin{enumerate}
\item
Suppose $\mfk e\in \mcl E_\ellip(G)$ and $\phi^{\mfk e}\in \Phi_{2,\temp}(G^{\mfk e})$ with $\phi=\LL\xi^{\mfk e}\circ\phi^{\mfk e}$ and $s^{\mfk e}\in S_\phi=\mfk S_\phi$. Then each $f\in \tilde{\mcl H}(G)$ has a transfer $f^{G^{\mfk e}}$ contained in $\tilde{\mcl H}(G^{\mfk e})$, and
\begin{equation*}
\sum_{\tilde\pi^{\mfk e}\in\tilde\Pi_{\tilde\phi^{\mfk e}}(G^{\mfk e})}\tr\paren{f^{G^{\mfk e}}|\tilde\pi^{\mfk e}}=e(G)\sum_{\tilde\pi\in\tilde\Pi_{\tilde\phi}(G)}\iota_{\mfk w, b_0}(\tilde\pi)(s^{\mfk e})\tr\paren{f|\tilde\pi}.
\end{equation*}
Here $e(G)$ is the Kottwitz sign of $G$ as defined in~\cite{Kot83}, $\Delta[\mfk w, \mfk e, z_{b_0}]$ is the transfer factor normalized in~\textup{\S\ref{transofenrienfieslsss}}, and $\tilde\Pi_{\tilde\phi^{\mfk e}}(G^{\mfk e})$ is defined to be the (ambiguous) $L$-packet associated with $\tilde\phi^{\mfk e}$ as before.\footnote{This $L$-packet is well defined because $G^{\mfk e}$ is a product of special orthogonal or unitary groups.}

In particular, by the Weyl integration formula and its stable analogue (cf.~\cite[Appendix~A.1, especially~(A.1.2)]{HKW22}),\footnote{See also \cite[Lemma 2.11.2]{A-K24} for the same Weyl-integration formula argument.} the preceding distributional identity is equivalent to
\begin{equation*}
\sum_{h\in G^{\mfk e}(K)_\sreg/G^{\mfk e}(\ovl K)\dash\Conj}\Delta[\mfk w, \mfk e, z_{b_0}](h, g)\bx S\Theta_{\tilde\phi^{\mfk e}}(h)=e(G)\sum_{\tilde\pi\in\tilde\Pi_{\tilde\phi}(G)}\iota_{\mfk w, b_0}(\tilde\pi)(s^{\mfk e})\Theta_{\tilde\pi}(g)
\end{equation*}
for any strongly regular semisimple element $g\in G(K)_\sreg$, where
\begin{equation*}
\bx S\Theta_{\tilde\phi^{\mfk e}}\defining\sum_{\tilde\pi\in\tilde\Pi_{\tilde\phi^{\mfk e}}(G^{\mfk e})}\Theta_{\tilde\pi},
\end{equation*}
and $\Theta_{\tilde\pi}$ is the average of $\Theta_\pi$ where $\pi$ runs through the preimage of $\tilde\pi$ under the map
\begin{equation*}
\Pi(G^{\mfk e})\to \tilde\Pi(G^{\mfk e}).
\end{equation*}
\item
For any tempered $L$-parameter $\tilde\phi\in\tilde\Phi_\temp(G^*)$, if $f^\GL\in \mcl H(G^\GL)$, then $f^\GL$ has a transfer $f^*$ to $G^*$ contained in $\tilde{\mcl H}(G^*)$, and
\begin{equation*}
\sum_{\tilde\rho\in\tilde\Pi_{\tilde\phi}(G^*)}\tr(f^*|\tilde\rho)=\tr_\theta(f^\GL|\pi_{\tilde\phi^\GL}).
\end{equation*}
Here the right-hand side is the $\theta$-twisted trace, and $\pi_{\tilde\phi^\GL}\in\Pi(G^\GL)$ is associated with the $L$-parameter $\tilde\phi^\GL$ via LLC, and the left-hand side is a stable distribution of $f^*$, i.e., it depends only on the stable orbital integrals of $f^*$.
\end{enumerate}
\end{thm}

Finally, we remark that the classical local Langlands correspondence for inner twists (or rather $K$-groups) of special orthogonal or unitary groups over $\bb R$ and (ordinary) endoscopic character identities are known in complete generality \cite{ABV92, Vog93, She08, She10, Art13}, where we replace irreducible admissible representations by smooth \Frechet representations of moderate growth whose associated Harish--Chandra modules are admissible, as introduced by Casselman \cite{Cas89} and Wallach \cite{Wal92}. Moreover, the twisted endoscopic character identities are known by Arthur \cite{Art13}. The real case is analogous to the $p$-adic case, except that there are more inner twists. In fact, we will only use results concerning discrete series $L$-packets. For a modern exposition in the discrete series case, see \cite{A-K24}.

\subsection{Compatibility with parabolic induction}\label{compatibliseniniefs}

In this subsection, we recall the definition of extended cuspidal support of a discrete series representation of $G(K)$, and deduce a compatibility property of the classical semisimplified $L$-parameters with parabolic induction. In Case U, the assertion is established in \cite[Proposition 2.11]{MHN24}, but our proof is slightly simpler than the proof given there.

First, we recall that it is a theorem of Bernstein and Zelevinsky~\cite[Theorem 2.5, Theorem 2.9]{B-Z77} that all irreducible smooth representations of $G(K)$ can be constructed by parabolic induction from supercuspidal representations: 

\begin{thm}
For any irreducible smooth representation $\pi\in \Pi(G)$, there exists a unique pair $(M, \sigma)$ up to conjugacy by $G(K)$, where $M$ is a Levi component of some parabolic subgroup $P\le G$ and $\sigma\in \Pi_{\bx{sc}}(M)$ is a supercuspidal representation, such that  $\pi$ is a subrepresentation of $\bx I_P^G(\sigma)$. Such pairs $(M, \sigma)$ are called \tbf{cuspidal supports} of $\pi$.
\end{thm}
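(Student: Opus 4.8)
This is the classical Bernstein--Zelevinski theorem on existence and uniqueness of cuspidal support, so the plan is essentially to cite \cite{B-Z77}*{Theorem 2.5, 2.9}; for completeness I record the argument one would reproduce. For existence, I would fix $\pi\in\Pi(G)$ and consider the (nonempty, since $P=G$ qualifies) collection of conjugacy classes of parabolic subgroups $P=MN\le G$ for which the normalized Jacquet module $r^G_M(\pi)$ is nonzero, and pick one with the semisimple rank of $M$ minimal. Using that $r^G_M$ is exact, preserves finite length and admissibility, and that Jacquet functors are transitive ($r^M_{M'}\circ r^G_M\cong r^G_{M'}$ for a Levi $M'\le M$), minimality forces every irreducible subquotient of the $M(K)$-module $r^G_M(\pi)$ to have trivial Jacquet module along every proper parabolic of $M$, i.e., to be supercuspidal. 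I would then take $\sigma\in\Pi_{\bx{sc}}(M)$ to be an irreducible quotient of $r^G_M(\pi)$; by Frobenius reciprocity $\Hom_G(\pi,\bx I^G_P(\sigma))\cong\Hom_M(r^G_M(\pi),\sigma)\ne 0$, and irreducibility of $\pi$ makes any nonzero such map an embedding $\pi\hookrightarrow\bx I^G_P(\sigma)$.

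For uniqueness, suppose $\pi\hookrightarrow\bx I^G_{P_i}(\sigma_i)$ with $\sigma_i\in\Pi_{\bx{sc}}(M_i)$ for $i=1,2$. Frobenius reciprocity for $i=2$ shows $\sigma_2$ is a quotient, hence a subquotient, of $r^G_{M_2}(\pi)$, while exactness of $r^G_{M_2}$ applied to the embedding for $i=1$ gives $r^G_{M_2}(\pi)\hookrightarrow r^G_{M_2}\bx I^G_{P_1}(\sigma_1)$; so $\sigma_2$ is a subquotient of $r^G_{M_2}\bx I^G_{P_1}(\sigma_1)$. I would then invoke the geometric lemma of \cite{B-Z77}: this $M_2(K)$-module carries a finite filtration whose graded pieces are, up to a normalization twist, of the form $\bx I^{M_2}_{Q_w}\bigl({}^{w}r^{M_1}_{R_w}(\sigma_1)\bigr)$ with $w$ running over minimal-length representatives of the double cosets $W_{M_2}\bsh W_G/W_{M_1}$, where $Q_w=M_2\cap{}^{w}P_1$ and $R_w=M_1\cap{}^{w^{-1}}P_2$ are the induced parabolics. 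Supercuspidality of $\sigma_1$ kills every graded piece with $R_w\ne M_1$, and supercuspidality of $\sigma_2$ allows it to occur in a surviving piece $\bx I^{M_2}_{Q_w}({}^{w}\sigma_1)$ only if $Q_w=M_2$; the two resulting inclusions ${}^{w}M_1\subseteq P_2$ and ${}^{w^{-1}}M_2\subseteq P_1$ force ${}^{w}M_1=M_2$ and then $\sigma_2\cong{}^{w}\sigma_1$, i.e., $(M_1,\sigma_1)$ and $(M_2,\sigma_2)$ are $G(K)$-conjugate.

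The only genuinely substantial ingredients are the standard toolkit for normalized induction and Jacquet functors (exactness, preservation of finite length and admissibility, Frobenius reciprocity, transitivity) and the geometric lemma computing the composite $r\circ\bx I$; granting these, both halves above are short. I expect the step needing the most care to be the combinatorial bookkeeping at the end of the uniqueness argument --- checking that the supercuspidality constraints on $\sigma_1$ and on $\sigma_2$ are jointly satisfied by a double-coset representative $w$ only when it conjugates $M_1$ onto $M_2$ --- which ultimately reduces to an elementary statement about double cosets in the Weyl group of $G$.
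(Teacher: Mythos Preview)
The paper does not give a proof of this statement at all: it is stated as a theorem of Bernstein--Zelevinski with the citation \cite{B-Z77}*{Theorem 2.5, 2.9} and is used as a black box. Your proposal is a correct outline of the standard proof from that reference, so there is nothing to compare; you have simply supplied what the paper chose to cite.
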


For a discrete series representation $\pi\in\Pi_{2, \temp}(G)$, we write $\pi^\GL\in\Pi(\GL_{N(G), K_1})$ for the representation corresponding to the $L$-parameter $\tilde\phi_\pi^\GL$, and write $\bx{SuppCusp}^+(\pi)$ for the cuspidal support of $\pi^\GL$, called the \tbf{extended cuspidal support} of $\pi$.

\begin{lm}\label{cussinifienss}
If $\pi\in\Pi_{2, \temp}(G)$ is a discrete series representation, and if $(M, \pi_M)$ is a cuspidal support of $\pi$, then $\bx{SuppCusp}^+(\pi_M)=\bx{SuppCusp}^+(\pi)$.
\end{lm}
\begin{proof}
If $G$ is quasisplit, this is established in \cite[p. 309]{Moe14}. We note that by the endoscopic character identities Theorem~\ref{endoslicindiikehrnieiis}, the classical $L$-packets are the same as the $L$-packets defined in \cite[\S 6.4]{Moe14}. For details, the reader is referred to \cite[Proposition 2.10]{MHN24}.

When $G$ is not quasisplit, in view of the endoscopic character identities Theorem~\ref{endoslicindiikehrnieiis}, we can reduce the theorem to the quasisplit case using the alternative definition of $\bx{SuppCusp}^+(\pi)$ via Jacquet modules \cite{Moe14} and the fact that transfers are compatible with parabolic induction and the formation of Jacquet modules; see~\cite[Proposition 3.4.2]{She82} and \cite[\S 2.6]{Moe14}.
\end{proof}

\begin{prop}\label{compaitebirelclaisirelsi}
If $P\le G$ is a proper parabolic subgroup with Levi factor $M$, $\pi_M\in\Pi(M)$ with semisimplified $L$-parameter $\tilde\phi_{\pi_M}^\sems: W_{K_1}\to \LL M$,\footnote{The $L$-parameter $\tilde\phi_{\pi_M}^\sems$ is well-defined because $M$ is the product of a special orthogonal or unitary group with restrictions of general linear groups.} and $\pi\in\Pi(G)$ is a subquotient of the normalized parabolic induction $\bx I_P^G(\pi_M)$, then the semisimplified $L$-parameter $\tilde\phi_\pi^\sems$ is given by
\begin{equation*}
\tilde\phi_\pi^\sems: W_{K_1}\xr{\tilde\phi_{\pi_M}^\sems}\LL M\to \LL G.
\end{equation*}
\end{prop}
\begin{proof}
First, the assertion for general linear groups follows from the compatibility of the local Langlands correspondence for general linear groups with cuspidal support, by a result of Bernstein and Zelevinsky; cf.~\cite[Th\'eor\`eme 4]{Rod82}. We argue by induction on $n(G)$.

For nested Levi subgroups $M_1\le M_2$, write
\begin{equation*}
\iota^L_{M_1,M_2}:\LL M_1\longrightarrow\LL M_2
\end{equation*}
for the canonical $L$-homomorphism.

We first reduce to the case in which $(M,\pi_M)$ is a cuspidal support of $\pi$. Let $(M',\pi_{M'})$ be a cuspidal support of $\pi_M$. Transitivity of normalized parabolic induction implies that $(M',\pi_{M'})$ is also a cuspidal support of $\pi$. By the induction hypothesis applied to the classical-group factor of $M$, together with the general linear case,
\begin{equation*}
\tilde\phi_{\pi_M}^\sems=\iota^L_{M',M}\circ\tilde\phi_{\pi_{M'}}^\sems.
\end{equation*}
Consequently, once the assertion is proved for the cuspidal support $(M',\pi_{M'})$ of $\pi$, we obtain
\begin{equation*}
\tilde\phi_\pi^\sems=\iota^L_{M',G}\circ\tilde\phi_{\pi_{M'}}^\sems=\iota^L_{M,G}\circ\tilde\phi_{\pi_M}^\sems.
\end{equation*}
We may therefore assume that $(M,\pi_M)$ is a cuspidal support of $\pi$.

Suppose first that $\pi$ is non-tempered. By the Langlands classification of irreducible representations in terms of irreducible tempered representations (see~\cite{Sil78} and \cite[Theorem 3.5]{Kon03}), there are a parabolic subgroup $P_L\le G$ with Levi factor
\begin{equation*}
M_L\cong\Res_{K_1/K}\GL_{d_1}\times\cdots\times\Res_{K_1/K}\GL_{d_r}\times G(n_0),
\end{equation*}
tempered representations $\tau_i\in\Pi_\temp(\GL_{d_i,K_1})$ and $\pi_0\in\Pi_\temp(G(n_0))$, and real numbers $s_1\ge\cdots\ge s_r>0$, such that $\pi$ is the unique irreducible quotient of
\begin{equation*}
\bx I_{P_L}^G\paren{\paren{\tau_1\otimes\nu^{s_1}}\boxtimes\cdots\boxtimes\paren{\tau_r\otimes\nu^{s_r}}\boxtimes\pi_0}.
\end{equation*}
Compatibility of the classical correspondence with Langlands quotients, Theorem~\ref{coeleninifheihsn}, gives
\begin{align*}
\tilde\phi_\pi^\GL={}&\paren{\phi_{\tau_1}\otimes\largel{-}_{K_1}^{s_1}}\oplus\cdots\oplus\paren{\phi_{\tau_r}\otimes\largel{-}_{K_1}^{s_r}}\oplus\tilde\phi_{\pi_0}^\GL\\
&\oplus\paren{\phi_{\tau_r}^\theta\otimes\largel{-}_{K_1}^{-s_r}}\oplus\cdots\oplus\paren{\phi_{\tau_1}^\theta\otimes\largel{-}_{K_1}^{-s_1}}.
\end{align*}
The general linear case and the induction hypothesis for $G(n_0)$, together with transitivity of normalized parabolic induction and uniqueness of cuspidal support, now give the desired formula for $\tilde\phi_\pi^\sems$.

We may henceforth assume that $\pi$ is tempered. By the classification of tempered representations of classical groups \cite{Jan14}, there are a Levi subgroup $M_L\le G$ and a discrete series representation $\sigma\in\Pi_{2,\temp}(M_L)$ such that $\pi$ is an irreducible constituent of $\bx I_{P_L}^G(\sigma)$,
and
\begin{equation*}
\tilde\phi_\pi=\iota^L_{M_L,G}\circ\tilde\phi_\sigma.
\end{equation*}
More explicitly, if $\pi$ is not a discrete series representation and we write
\begin{equation*}
\tilde\phi_\pi^\GL=\bplus_{i\in I_{\tilde\phi}^+}m_i\phi_i\oplus \bplus_{i\in I_{\tilde\phi}^-}2m_i\phi_i\oplus\bplus_{i\in J_{\tilde\phi}}m_i(\phi_i\oplus\phi_i^\theta),
\end{equation*}
as in \S\ref{IFNieniehifeniws}, then either $m_i>1$ for some $i\in I_{\tilde\phi}^+$ or $m_i>0$ for some $i\in I_{\tilde\phi}^-\cup J_{\tilde\phi}$. In either case we can write
\begin{equation*}
\tilde\phi_\pi^\GL=\phi_\tau+\tilde\phi_{\pi_0}^\GL+\phi_\tau^\theta,
\end{equation*}
where $\tau\in\Pi_{2, \temp}(\GL_{d,K_1})$ is a discrete series representation and $\pi_0\in\Pi_\temp(G(n(G)-[K_1: K]d))$ is tempered. Then it follows from the local intertwining relations, Theorem~\ref{coeleninifheihsn}, that $\pi$ is a subrepresentation of $\bx I_{P_L}^G(\tau\otimes \pi_0)$, where $P_L\le G$ is a maximal parabolic subgroup with Levi factor
\begin{equation*}
M_L\cong\Res_{K_1/K}\GL_d\times G(n(G)-[K_1: K]d).
\end{equation*}

If $M_L$ is proper, its classical-group factor has geometric rank strictly smaller than $n(G)$. The induction hypothesis applied to $\sigma$, followed by transitivity of the canonical $L$-homomorphisms, proves the assertion for $\pi$. Thus it remains only to treat the case in which $\pi$ is a discrete series representation.

Write
\begin{equation*}
M\cong \Res_{K_1/K}\GL_{d_1}\times\cdots\times\Res_{K_1/K}\GL_{d_r}\times G(n_0)
\end{equation*}
and
\begin{equation*}
\pi_M=\tau_1\otimes\cdots\otimes\tau_r\otimes\pi_0\in\Pi_{\bx{sc}}(M).
\end{equation*}
Let
\begin{align*}
M_\diamond^\GL\defining{}&\Res_{K_1/K}\GL_{d_1}\times\cdots\times\Res_{K_1/K}\GL_{d_r}\times\Res_{K_1/K}\GL_{\frac{N(G)}{n(G)}n_0}\\
&\times\Res_{K_1/K}\GL_{d_r}\times\cdots\times\Res_{K_1/K}\GL_{d_1},
\end{align*}
which is a Levi subgroup of $G^\GL$. The diagram \eqref{ienfieunienfis}, together with \eqref{isniheiherehs}, identifies the image of $\tilde\phi_{\pi_M}^{\sems}$ in $\tilde\Phi^\sems(G)$ with a semisimple parameter for $M_\diamond^\GL$.

Let
\begin{equation*}
\tau_{M_\diamond}\in\Pi(M_\diamond^\GL)\quad\text{and}\quad\tau_G\in\Pi(G^\GL)
\end{equation*}
be the representations corresponding, respectively, to
\begin{equation*}
(\tilde\phi_{\pi_M})_\diamond^{\sems,\GL}\quad\text{and}\quad\tilde\phi_\pi^{\sems,\GL}.
\end{equation*}
These representations need not be tempered. By Lemma~\ref{cussinifienss}, the full general linear transfers associated with $\pi_M$ and $\pi$ have the same cuspidal support. In the general linear case, replacing a Weil--Deligne parameter by its semisimplification does not change the cuspidal support. Hence $\tau_{M_\diamond}$ and $\tau_G$ have the same cuspidal support. Compatibility of semisimplified general linear parameters with parabolic induction therefore gives
\begin{equation*}
(\tilde\phi_{\pi_M})_\diamond^{\sems,\GL}=\tilde\phi_\pi^{\sems,\GL}.
\end{equation*}
Finally, the standard representation determines the corresponding class in $\tilde\Phi^\sems(G)$, including in Case O2 after passage to $\bx O_{N(G)}(\bb C)$-conjugacy. Thus
\begin{equation*}
\tilde\phi_\pi^\sems=\iota^L_{M,G}\circ\tilde\phi_{\pi_M}^\sems,
\end{equation*}
as required.
\end{proof}

\subsection{Supercuspidal \texorpdfstring{$L$}{L}-packets}

We recall a result of M{\oe}glin and Tadi\'c that characterizes supercuspidal $L$-parameters in terms of their corresponding $L$-packets.

Let $\tilde\phi\in\tilde\Phi_2(G^*)$ be a discrete $L$-parameter. We write $\bx{Jord}(\tilde\phi)$ for the set of irreducible subrepresentations of $W_{K_1}\times\SL_2(\bb C)$ contained in $\tilde\phi^\GL$.

An $L$-parameter $\tilde\phi$ is said to be \tbf{without gaps} if for every $\rho\boxtimes\sp_a\in\bx{Jord}(\tilde\phi)$ with $a>2$, one also has $\rho\boxtimes\sp_{a-2}\in\bx{Jord}(\tilde\phi)$.

We recall the following characterization of supercuspidal representations of $G(K)$:

\begin{prop}\label{chainsienicusonso}
Suppose $\tilde\phi\in\tilde\Phi_2(G^*)$ is a discrete $L$-parameter and $\eta\in \Irr(\ovl{\mfk S}_{\tilde\phi})$ view as a character of $\mfk S_{\tilde\phi}$ via inflation., then $\tilde\pi_{\mfk w, \uno}(\tilde\phi, \eta)\in\tilde\Pi_{\tilde\phi}(G^*)$ is supercuspidal if and only if the following two conditions hold:
\begin{itemize}
\item
$\tilde\phi$ is without gaps,
\item
$\eta(e_{\rho\boxtimes\sp_a})=-\eta(e_{\rho\boxtimes\sp_{a-2}})$ for each $\rho\boxtimes\sp_a\in\bx{Jord}(\tilde\phi)$ with $a\ge 2$, where we assume that $\eta(e_{\rho\boxtimes\sp_0})=1$.
\end{itemize}
\end{prop}
\begin{proof}
This is established in Case O in \cite{Moe02, M-T02}, see also \cite[Theorem 1.5.1]{Moe11} and \cite[Theorem 3.3]{Xu17}, and in Case U in \cite[Theorem 8.4.4]{Moe07}. Note that by endoscopic character identities Theorem~\ref{endoslicindiikehrnieiis}, the LLC defined in \cite{Mok15} is the same as the LLC defined in \cite{Moe07}; cf.~\cite[Proposition 2.10]{MHN24}.
\end{proof}

We then obtain the following corollary characterizing supercuspidal $L$-packets:

\begin{cor}\label{superisnidnLpaifniesues}
For a discrete $L$-parameter $\tilde\phi\in\tilde\Phi_2(G^*)$, $\tilde\Pi_{\tilde\phi}(G^*)$ consists of supercuspidal representations if and only if $\tilde\phi$ is a supercuspidal $L$-parameter.
\end{cor}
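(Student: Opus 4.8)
The plan is to read this off directly from \Cref{chainsienicusonso}. First recall that, $G^*$ being quasi-split, the base point $b_0=\uno$ is trivial, $\kappa_{\uno}$ is the trivial character, and so the bijection $\iota_{\mfk m,\uno}$ of \Cref{coeleninifheihsn}(3) identifies $\tilde\Pi_{\tilde\phi}(G^*)$ with $\Irr(\ovl{\mfk S}_{\tilde\phi})$; in particular this packet is nonempty and contains the $\mfk m$-generic member $\tilde\pi_{\mfk m,\uno}(\tilde\phi,\uno)$ attached to the trivial character. Recall also that, for a $\tilde\phi$ that is already discrete and tempered, being a supercuspidal $L$-parameter is by definition the same as being trivial on the $\SL(2;\bb C)$-factor, which by the description in \Cref{IFNieniehifeniws} amounts to saying that every element of $\bx{Jord}(\tilde\phi)$ has the form $\rho\otimes\sp_1$, i.e.\ that $\bx{Jord}(\tilde\phi)$ contains no $\rho\otimes\sp_a$ with $a\ge 2$.

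For the ``if'' direction I would argue as follows: if $\tilde\phi$ is supercuspidal, then $\bx{Jord}(\tilde\phi)$ contains no $\rho\otimes\sp_a$ with $a\ge 2$, hence none with $a>2$; so both bullet conditions of \Cref{chainsienicusonso} are vacuously satisfied for every $\eta\in\Irr(\ovl{\mfk S}_{\tilde\phi})$, and therefore every member of $\tilde\Pi_{\tilde\phi}(G^*)$ is supercuspidal. For the ``only if'' direction, suppose $\tilde\phi$ is not supercuspidal; since $\tilde\phi$ is discrete and tempered, this forces $\bx{Jord}(\tilde\phi)$ to contain some $\rho\otimes\sp_a$ with $a\ge 2$. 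Applying \Cref{chainsienicusonso} to $\eta=\uno$ (the trivial character), for such a $\rho\otimes\sp_a$ one has $\uno(e_{\rho\otimes\sp_a})=1\ne -1=-\uno(e_{\rho\otimes\sp_{a-2}})$, using the stated convention $\eta(e_{\rho\otimes\sp_0})=1$ in the case $a=2$; so the second bullet condition fails, whence $\tilde\pi_{\mfk m,\uno}(\tilde\phi,\uno)\in\tilde\Pi_{\tilde\phi}(G^*)$ is not supercuspidal, and the packet does not consist of supercuspidal representations.

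This is a short deduction and I do not anticipate a genuine obstacle; the only points requiring care are the bookkeeping with the parametrization of the packet by $\Irr(\ovl{\mfk S}_{\tilde\phi})$ in the quasi-split case (so that the packet is nonempty and the trivial character actually labels a member), the observation that ``supercuspidal $L$-parameter'' together with ``discrete tempered'' is precisely ``trivial on $\SL(2;\bb C)$'', and correctly invoking the convention $\eta(e_{\rho\otimes\sp_0})=1$ so that the trivial character already witnesses the failure of the Moeglin--Tadic sign condition as soon as some $\rho\otimes\sp_a$ with $a\ge 2$ occurs in $\bx{Jord}(\tilde\phi)$.
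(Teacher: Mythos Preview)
Your proof is correct and follows essentially the same approach as the paper: both directions are read off directly from \Cref{chainsienicusonso}, and for the ``only if'' direction the paper likewise exhibits a character $\eta$ with $\eta(z_{\tilde\phi})=1$ violating the sign condition (of which your $\eta=\uno$ is the natural and simplest choice). One small caveat: when $a>2$, your evaluation $\uno(e_{\rho\otimes\sp_{a-2}})=1$ presupposes $\rho\otimes\sp_{a-2}\in\bx{Jord}(\tilde\phi)$; if it is not, the first bullet (``without gaps'') already fails and the conclusion follows at once, so the argument is complete once this case split is made explicit---the paper handles this by first observing that the packet being entirely supercuspidal forces $\tilde\phi$ to be without gaps before checking the sign condition.
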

\begin{proof}
If $\tilde\phi$ is supercuspidal, then clearly it is without gaps, and it follows from Proposition~\ref{chainsienicusonso} that $\tilde\Pi_{\tilde\phi}(G^*)$ consists of supercuspidal representations.

Conversely, suppose $\tilde\Pi_{\tilde\phi}(G^*)$ consists of supercuspidal representations, then $\tilde\phi$ is without gaps by Proposition~\ref{chainsienicusonso}. If $\rho\boxtimes\sp_a\in \bx{Jord}(\tilde\phi)$ and $a\ge 2$, then there exists a character $\eta$ of $\mfk S_{\tilde\phi}$ such that  $\eta(z_{\tilde\phi})=1$ and $\eta(e_{\rho\boxtimes\sp_a})=\eta(e_{\rho\boxtimes\sp_{a-2}})$. So it follows from Proposition~\ref{chainsienicusonso} that $\tilde\pi_{\mfk w, \uno}(\tilde\phi, \eta)\in \tilde\Pi_{\tilde\phi}(G^*)$ is not supercuspidal, contradiction. Thus we conclude that $a=1$ for each $\rho\boxtimes\sp_a\in \bx{Jord}(\tilde\phi)$, which means that $\tilde\phi$ is supercuspidal.
\end{proof}

\subsection{Combinatorics on \texorpdfstring{$L$}{L}-parameters}\label{combianosiLoifn}

Following \cite[\S 2.2.4]{MHN24}, we give a combinatorial description of discrete $L$-packets that will be used in the explicit computations in the proof of the Kottwitz conjecture.

For $b\in B(G)_\bas$, if $\tilde\phi\in \tilde\Phi_2(G^*)$ is a discrete $L$-parameter and $\tilde\pi\in\tilde\Pi_{\tilde\phi}(G), \tilde\rho\in\tilde\Pi_{\tilde\phi}(G_b)$, we can define a character
\begin{equation*}
\delta[\tilde\pi, \tilde\rho]=\iota_{\mfk w, b_0}(\tilde\pi)^\vee\otimes\iota_{\mfk w, b_0+b}(\tilde\rho)\in \Irr(\mfk S_{\tilde\phi}),
\end{equation*}
which can be thought of as measuring the relative position of $\tilde\pi$ and $\tilde\rho$. This character is independent of the Whittaker datum $\mfk w$ and also $b_0$, because changing those shifts $\iota_{\mfk w, b_0}$ by a character of $\mfk S_{\tilde\phi}$ \cite[Lemma 2.3.3]{HKW22}.

Let $b_1\in B(G)_\bas$ be the unique nontrivial basic element. Because $\tilde\phi$ is discrete, the packet $\tilde\Pi_{\tilde\phi}(G)$ (resp. $\tilde\Pi_{\tilde\phi}(G_{b_1})$) has size $\#\mfk S_{\tilde\phi}/2$ and corresponds via $\iota_{\mfk w, b_0}$ (resp. $\iota_{\mfk w, b_0+b_1}$) to those characters $\eta$ of $\mfk S_{\tilde\phi}$ such that  $\eta(z_{\tilde\phi})=\kappa_{b_0}(-\uno)$ (resp. $\eta(z_{\tilde\phi})=-\kappa_{b_0}(-\uno)$).

Write
\begin{equation*}
\tilde\phi^\GL=\phi_1\oplus\cdots\oplus\phi_k\oplus\phi_{k+1}\oplus\cdots\oplus\phi_r,
\end{equation*}
where the $\phi_i$ are irreducible representations of $W_{K_1}\times\SL_2(\bb C)$ and $\dim\phi_i$ is odd if and only if $i\le k$. For subsets $I, J\subset [r]_+$, define their symmetric difference by
\begin{equation*}
I\oplus J=(I\setm J)\cup(J\setm I),
\end{equation*}
In Case O2, define an equivalence relation $\sim_k$ on $\mrs P([r]_+)$ by
\begin{equation*}
I_1\sim_k I_2\iff I_1=I_2\quad\text{or}\quad I_1=I_2\oplus[k]_+.
\end{equation*}
Then we can define the symmetric differences of equivalence classes in $\mrs P([r]_+)/\sim_k$. Note that we can talk about parity of cardinality of equivalence classes of $\mrs P([r]_+)$, because $k$ is always an even number in Case O2.

To unify notation, in Case O1 or Case U, let $\sim_k$ be the trivial equivalence relation. There is a bijection
\begin{equation*}
\mrs P([r]_+)/\sim_k\xr\sim\Irr(\mfk S_{\tilde\phi}),\qquad[I]\longmapsto\eta_{[I]}\defining\sum_{i\in I}e_i^\vee.
\end{equation*}
via the map
\begin{equation*}
[I]\in\mrs P([r]_+)/\sim_k\mapsto \eta_{[I]}\defining\sum_{i\in I}e_i^\vee\in \Irr(\mfk S_{\tilde\phi}).
\end{equation*}
The right-hand side is independent of the representative $I$ of $[I]$. If
\begin{equation*}
\#[I]\equiv\frac{\kappa_{b_0}(-\uno)-1}{2}\modu2,
\end{equation*}
set
\begin{equation*}
\tilde\pi_{[I]}\defining\tilde\pi_{\mfk w,b_0}(\tilde\phi,\eta_{[I]}).
\end{equation*}
If
\begin{equation*}
\#[I]\equiv\frac{\kappa_{b_0}(-\uno)+1}{2}\modu2,
\end{equation*}
set
\begin{equation*}
\tilde\pi_{[I]}\defining\tilde\pi_{\mfk w,b_0+b_1}(\tilde\phi,\eta_{[I]}).
\end{equation*}
For any $[I],[J]\in\mrs P([r]_+)/\sim_k$, we then have
\begin{equation*}
\delta[\tilde\pi_{[I]},\tilde\pi_{[J]}]=\eta_{[I\oplus J]}=\sum_{i\in I\oplus J}e_i^\vee.
\end{equation*}

Let $\mu_1\in X_\bullet(G)$ be the dominant cocharacter given by
\begin{equation}\label{idnifdujiherheins}
z\mapsto
\begin{cases}\diag(z, \underbrace{1, \ldots, 1}_{d(G)-1\text{-many}}) &\text{in Case U}\\
\diag(z, \underbrace{1, \ldots, 1}_{d(G)-2\text{-many}}, z^{-1})&\text{in Case O}
\end{cases},
\end{equation}
(i.e., $\mu_1=\omega_1^\vee$ in the standard notation), where in Case U we use the standard realization of $G_{\ovl K}\cong \GL_{n(G), \ovl K}$, and in Case O we use the standard realization of $G_{\ovl K}$ as the subgroup of $\SL_{d(G),\ovl K}$ preserving the nondegenerate bilinear form on $\ovl K\{v_1, \ldots, v_{d(G)}\}$ defined by $\bra{v_i, v_j}=(1+\delta_{i, j})\delta_{i, d(G)+1-j}$.

Then the highest weight tilting module $\mcl T_{\mu_1}$ of $\hat G$ is the representation $\hat\Std_G$ of $\hat G$.

\begin{thm}\label{pdidiiidjifenis}
Let $\tilde\phi\in\tilde\Phi_2(G)$ be a discrete $L$-parameter with a decomposition
\begin{equation*}
\tilde\phi^\GL=\phi_1\oplus\cdots\oplus\phi_k\oplus\phi_{k+1}\oplus\cdots\oplus\phi_r,
\end{equation*}
where the $\phi_i$ are distinct irreducible representations of $W_{K_1}\times\SL_2(\bb C)$ of dimension $d_i$, and $d_i$ is odd if and only if $i\le k$. Let $b_1\in B(G)_\bas$ be the unique nontrivial basic element. Suppose that
\begin{equation*}
\tilde\rho=\tilde\pi_{[I]}\in\tilde\Pi_{\tilde\phi}(G_{b_1}),
\end{equation*}
and write $\tilde\pi=\tilde\pi_{[J]}\in\tilde\Pi_{\tilde\phi}(G)$. Then
\begin{equation*}
\Hom_{\mfk S_{\tilde\phi}}\paren{\delta[\tilde\pi,\tilde\rho],\tilde\phi^\GL}\cong\bplus_{\substack{t\in[r]_+\\
[J]=[I\oplus\{t\}]}}\phi_t
\end{equation*}
as representations of $W_{K_1}\times\SL_2(\bb C)$.

In particular, the direct sum on the right has at most one summand, except in Case O2 with $k=2$. In that exceptional case,
\begin{equation*}
[I\oplus\{1\}]=[I\oplus\{2\}],
\end{equation*}
and the corresponding summand is $\phi_1\oplus\phi_2$.
\end{thm}
\begin{proof}
By the preceding description,
\begin{equation*}
\delta[\tilde\pi,\tilde\rho]=\eta_{[I\oplus J]}.
\end{equation*}
The group $\mfk S_{\tilde\phi}$ acts on the summand $\phi_t$ of $\tilde\phi^\GL$ through the character $e_t^\vee=\eta_{[\{t\}]}$. Hence
\begin{align*}
\Hom_{\mfk S_{\tilde\phi}}\paren{\delta[\tilde\pi,\tilde\rho],\phi_t}\ne0
&\iff\eta_{[I\oplus J]}=\eta_{[\{t\}]}\\
&\iff[I\oplus J]=[\{t\}]\\
&\iff[J]=[I\oplus\{t\}].
\end{align*}
This proves the displayed direct-sum formula.

It remains to determine when two different indices can contribute. If
\begin{equation*}
[I\oplus\{t\}]=[I\oplus\{u\}]\qquad (t\ne u),
\end{equation*}
then the equivalence relation must be nontrivial, so we are in Case O2, and
\begin{equation*}
\{t,u\}=[k]_+.
\end{equation*}
Since the left-hand side has cardinality two, this is possible exactly when $k=2$ and $\{t,u\}=\{1,2\}$. Thus the only repeated class contributes $\phi_1\oplus\phi_2$.
\end{proof}

\section{Local Langlands correspondence via moduli of local shtukas}\label{local-abglancorrepsodeifn}

In this section, we review the construction of the Fargues--Scholze local Langlands correspondence.

Let $p$ be a rational prime, let $K/\bb Q_p$ be a finite extension, and let $\msf G$ be a connected reductive group over $K$. Let $\ell$ be a rational prime different from $p$, and let $\Lbd\in\{\ovl{\bb Q_\ell},\ovl{\bb F_\ell}\}$ be such that $\#\pi_0(Z(\msf G))$ is invertible in $\Lbd$.\footnote{This is the coefficient hypothesis used in \cite{F-S24} to avoid additional complications in the $\ell$-modular setting.} Fix an isomorphism $\iota_\ell: \bb C\to \ovl{\bb Q_\ell}$ sending the chosen complex square root of $p$ to a chosen square root in $\ovl{\bb Q_\ell}$, whose reduction determines a square root in $\ovl{\bb F_\ell}$.

We use $L$-groups and $L$-parameters with $\Lbd$-coefficients. Let $\Phi(\msf G, \Lbd)$ denote the set of $L$-parameters
\begin{equation*}
W_K\times \SL_2(\Lbd)\longrightarrow \LL\msf G(\Lbd),
\end{equation*}
and let $\Pi(\msf G, \Lbd)$ denote the set of irreducible smooth representations of $\msf G(K)$ with coefficients in $\Lbd$.

\subsection{The correspondence}\label{icoroenfiehsnw}

We briefly recall the construction of the Fargues--Scholze local Langlands correspondence. The Kottwitz set $B(\msf G)$, as defined in \cite{Kot85}, consists of $\vp_K$-equivalence classes of $\msf G(\breve K)$, i.e.,
\begin{equation*}
\msf b\sim\msf b'\iff\msf b'=\msf g^{-1}\msf b\vp_K(\msf g) \text{ for some }\msf g\in\msf G(\breve K).
\end{equation*}
Each element $\msf b\in B(\msf G)$ is determined by two invariants: The Kottwitz invariant $\kappa_{\msf G}(\msf b)$ and the $\msf G(\breve K)$-conjugacy class of its slope homomorphism (or Newton map)
\begin{equation*}
\nu_{\msf b}:\mbf D_{\breve K}\longrightarrow\msf G_{\breve K},
\end{equation*}
where $\mbf D$ is the pro-algebraic diagonalizable group whose character group is $\bb Q$.

An element $\msf b\in B(\msf G)$ is called \tbf{basic} if $\nu_{\msf b}$ is central in $\msf G$; the set of basic elements in $B(\msf G)$ is denoted $B(\msf G)_\bas$. An element $\msf b\in B(\msf G)$ is called \tbf{unramified} if it lies in the image of the map $B(\msf T)\to B(\msf G)$ for some maximal torus $\msf T\le\msf G$. Denote by $B(\msf G)_{\bx{un}}\subset B(\msf G)$ the subset of unramified elements. These are precisely those $\msf b\in B(\msf G)$ for which the twisted centralizer $\msf G_{\msf b}$ is quasisplit; see~\cite[Lemma~2.12]{Ham24}.

We recall some material from~\cite{S-W20} and~\cite{Far16} regarding the relative Fargues--Fontaine curve. For any $S\in\Perfd_\kappa$, the associated Fargues--Fontaine curve $X_S$ is defined as in~\cite{F-F18}. When $S=\Spa(R, R^+)$ is affinoid with pseudo-uniformizer $\varpi$, the adic space $X_S$ is defined as follows:
\begin{equation*}
Y_S=\Spa(W_{\mcl O_K}(R^+))\setm\{p[\varpi]=0\},
\end{equation*}
\begin{equation*}
X_S=Y_S/\vp_K^{\bb Z},
\end{equation*}

For any affinoid perfectoid space $S$ over $\kappa$, the following sets are canonically in bijection:
\begin{enumerate}
\item
$\Spd(K)(S)$,
\item
Untilts $S^\sharp$ of $S$ over $K$,
\item
Cartier divisors of $Y_S$ of degree 1.
\end{enumerate}
For any untilt $S^\sharp$ of $S$, we denote by $D_{S^\sharp}\subset Y_S$ the corresponding divisor.

By~\cite[Theorem \Rmnum{3}.0.2]{F-S24}, the presheaf $\Bun_{\msf G}$ on $\Perfd_{\ovl\kappa}$, which assigns to each $S$ the groupoid of $\msf G$-torsors on $X_S$, is a small Artin $v$-stack. For any $S\in\Perfd_{\ovl\kappa}$, there exists a functor $\msf b\mapsto \mcl E^{\msf b}$ from the category of isocrystals with $\msf G$-structure to $\Bun_{\msf G}(S)$. When $S=\Spd(\ovl\kappa)$, this map induces a bijection from $B(\msf G)$ to the set of isomorphism classes of $\msf G$-bundles on $X_S$. More precisely, there exists a homeomorphism $\largel{\Bun_{\msf G}}\to B(\msf G)$ sending $\mcl E^{\msf b}$ to $\msf b$; see~\cite{Far20, Ans23, Vie24}.

For any $\msf b\in B(\msf G)$, the sub-functor
\begin{equation*}
\Bun^{\msf b}_{\msf G}\defining \Bun_{\msf G}\times_{\largel{\Bun_{\msf G}}}\{\msf b\}\subset \Bun_{\msf G}
\end{equation*}
is locally closed. It can be identified with the classifying stack $[\Spd(\ovl\kappa)/\tilde{\msf G}_{\msf b}]$, where $\tilde{\msf G}_{\msf b}$ denotes the $v$-sheaf of groups given by $S\mapsto \Aut_{X_S}(\mcl E^{\msf b})$; see~\cite[Proposition \Rmnum3.5.3]{F-S24}. The natural morphism $\Bun_{\msf G}^{\msf b}\to [\Spd(\ovl\kappa)/\udl{\msf G_{\msf b}(K)}]$ induces equivalences of categories
\begin{equation*}
\bx D(\msf G_{\msf b}, \Lbd)\cong \bx D_{\bx{lis}}([\Spd(\ovl\kappa)/\udl{\msf G_{\msf b}(K)}], \Lbd)\cong \bx D_{\bx{lis}}(\Bun_{\msf G}^{\msf b}, \Lbd),
\end{equation*}
see~\cite[Theorem \Rmnum7.7.1]{F-S24}.

Writing $i_{\msf b}$ for the inclusion $\Bun^{\msf b}_{\msf G}\subset \Bun_{\msf G}$, any $\pi\in\Pi(\msf G_{\msf b}, \Lbd)$ may be regarded as an object in $\bx D_{\bx{lis}}(\Bun_{\msf G}, \Lbd)$ via the extension by zero $(i_{\msf b})_!$, which is well-defined by~\cite[Proposition \Rmnum7.7.3, Proposition \Rmnum7.6.7]{F-S24}. Moreover, when $\msf b$ is basic, the map  $\Bun_{\msf G}^{\msf b}\to [\Spd(\ovl\kappa)/\udl{\msf G_{\msf b}(K)}]$ is an isomorphism; see~\cite[Proposition \Rmnum3.4.5]{F-S24}.

We now introduce the Hecke operators. For each finite index set $I$, let $\bx{Rep}_\Lbd(\LL\msf G^I)$ denote the category of algebraic representations of $I$ copies of $\LL\msf G(\Lbd)$ over $\Lbd$, and let $\Div^I$ be the $I$-fold product of the mirror curve $\Div^1\defining\Spd(\breve K)/\vp_K^{\bb Z}$. The diamond $\Div^1$ represents the functor that sends $S\in \Perfd_{\ovl\kappa}$ to the set of degree 1 Cartier divisors on $X_S$.

We then have the Hecke correspondence diagram
\begin{equation}\label{isnihinieifheis}
\begin{tikzcd}
 &\Hec_{\msf G, I}\ar[ld, "h^\from"']\ar[rd, "h^\to\times\supp"]&\\
 \Bun_{\msf G} & & \Bun_{\msf G}\times \Div^I
\end{tikzcd},
\end{equation}
where $\Hec_{\msf G, I}$ represents the functor sending $S\in \Perfd_{\ovl\kappa}$ to the groupoid of tuples
\begin{equation*}
(\mcl E_1, \mcl E_2, \beta, (D_i)_{i\in I}),
\end{equation*}
where $D_i\subset X_S$ are Cartier divisors, $\mcl E_1, \mcl E_2$ are $\msf G$-torsors on $X_S$, together with an isomorphism
\begin{equation*}
\beta: \mcl E_1|_{X_S\bsh\cup_{i\in I}D_i}\xr\sim \mcl E_2|_{X_S\bsh\cup_{i\in I}D_i}.
\end{equation*}
The morphism $h^\from$ sends the tuple to $\mcl E_1$, while $h^\to\times\supp$ sends it to $(\mcl E_2, (D_i)_{i\in I})$.

For each $W\in \bx{Rep}_\Lbd(\LL\msf G^I)$, Fargues and Scholze define a solid $\Lbd$-sheaf $\mcl S'_W\in \bx D_\blacksquare(\Hec_{\msf G, I}, \Lbd)$ via the geometric Satake correspondence; see~\cite[Theorem~\Rmnum1.6.3]{F-S24}. The corresponding Hecke operator
\begin{equation}\label{Heivneinekdnfienis}
\bx T_W: \bx D_{\bx{lis}}(\Bun_{\msf G}, \Lbd)\to \bx D_\blacksquare(\Bun_{\msf G}\times\Div^I, \Lbd): A\mapsto \bx R(h^\to\times\supp)_\natural(h^{\from*}(A)\Ltimes \mcl S'_W).
\end{equation}
Here $\bx R(h^\to\times\supp)_\natural$ is the natural push-forward, left adjoint to restriction; \cite[Proposition \Rmnum{7}.3.1]{F-S24}. The results of \cite[Theorem~\Rmnum{1}.7.2, Proposition~\Rmnum{9}.2.1, Corollary~\Rmnum{9}.2.3]{F-S24} show that, after taking the geometric fiber over $\Div^I$ used in the construction of excursion operators, this object is lisse and carries a canonical continuous $W_K^I$-action. By abuse of notation, we therefore also write
\begin{equation*}
\bx T_W: \bx D_{\bx{lis}}(\Bun_{\msf G},\Lbd)\longrightarrow\bx D_{\bx{lis}}(\Bun_{\msf G},\Lbd)^{\bx BW_K^I}
\end{equation*}
for the resulting functor.

For any $\pi\in\Pi(\msf G, \Lbd)$, the Fargues--Scholze $L$-parameter $\phi_\pi^\FS$ is extracted from the action of excursion operators on $\pi$. Consider a tuple
\begin{equation*}
(I, W, (\gamma_i)_{i\in I}, \alpha, \beta),
\end{equation*}
where
\begin{itemize}
\item
$I$ is a finite index set, and $\Delta_I: \LL\msf G\to\LL\msf G^I$ for the diagonal embedding;
\item
$(r_W, W)\in \bx{Rep}_\Lbd(\LL\msf G^I)$ is an algebraic representation;
\item
$\gamma_i\in W_K$ for every $i\in I$;
\item
\begin{equation*}
\alpha:\uno\longrightarrow\Delta_I^*W,\qquad\beta:\Delta_I^*W\longrightarrow\uno
\end{equation*}
are morphisms in $\bx{Rep}_\Lbd(\LL\msf G)$.
\end{itemize}
The associated excursion operator is the natural transformation of the identity functor given by
\begin{equation*}
\id=\bx T_\uno\xr{\bx T_\alpha}\bx T_{\Delta_I^*W}\cong\bx T_W\xr{(\gamma_i)_{i\in I}}\bx T_W\cong\bx T_{\Delta_I^*W}\xr{\bx T_\beta}\bx T_\uno=\id.
\end{equation*}
Evaluating this natural transformation at $\pi$ gives a scalar by Schur's lemma. The relations among these scalars, together with Lafforgue's reconstruction theorem \cite[Proposition~11.7]{Laf18}, determine the Fargues--Scholze parameter $\phi_\pi^\FS: W_K\to \LL\msf G(\Lbd)$ characterized by the requirement that the above excursion operator acts on $\pi$ through the scalar defined by
\begin{equation*}
\Lbd\xrightarrow{\alpha}W\xrightarrow{r_W\paren{\paren{\phi_\pi^\FS(\gamma_i)}_{i\in I}}}W\xrightarrow{\beta}\Lbd.
\end{equation*}
See~\cite[Definition/Proposition~\Rmnum{9}.4.1]{F-S24}.

Then Fargues and Scholze~\cite[Theorem \Rmnum{1}.9.6]{F-S24} showed that their construction has various desirable properties: 

\begin{thm}\label{compaitbsilFaiirfies}
The Fargues--Scholze LLC $\rec_{\msf G}^\FS: \Pi(\msf G, \Lbd)\to \Phi^\sems(\msf G, \Lbd)$ satisfies the following compatibility properties: 
\begin{enumerate}
\item
(Compatibility with local class field theory) If $\msf G$ is a torus, then $\rec_{\msf G}^\FS$ is the usual local Langlands correspondence for tori constructed from local class field theory,
\item
(Compatibility with natural operations) $\rec_{\msf G}^\FS$ is compatible with character twists, central characters, and taking contragredients,
\item
(Compatibility with products) If $\msf G=\msf G_1\times\msf G_2$ is a product of two groups and $\pi_i\in \Pi(\msf G_i, \Lbd)$ for each $i\in\{1, 2\}$, then $\rec_{\msf G}^\FS(\pi_1\boxtimes\pi_2)=\rec_{\msf G_1}^\FS(\pi_1)\times\rec_{\msf G_2}^\FS(\pi_2)$,
\item
(Compatibility with central extensions) If $\msf G'\to \msf G$ is a map of reductive groups inducing an isomorphism on adjoint groups, $\pi\in \Pi(\msf G, \Lbd)$, and $\pi'\in\Pi(\msf G', \Lbd)$ is an irreducible admissible subquotient of $\pi|_{\msf G'(K)}$, then $\rec_{\msf G'}^\FS(\pi')$ is given by $\rec^\FS_{\msf G}(\pi)$ composed with the natural map $\LL G(\Lbd)\to \LL G'(\Lbd)$. In particular, $\rec_{\msf G'}^\FS$ commutes with contragredients and Chevalley involutions.
\item
(Compatibility with parabolic induction) If $\msf P\le\msf G$ is a parabolic subgroup with Levi factor $\msf M$ and $\pi_{\msf M}\in\Pi(\msf M, \Lbd)$, then for any subquotient $\pi$ of the normalized parabolic induction $\bx I_{\msf P}^{\msf G}(\pi_{\msf M})$, $\rec_{\msf G}^\FS(\pi)$ is given by the composition
\begin{equation*}
W_K\xr{\rec_{\msf M}^\FS(\pi_{\msf M})}\LL\msf M(\Lbd)\to \LL\msf G(\Lbd),
\end{equation*}
where $\LL\msf M(\Lbd)\to \LL\msf G(\Lbd)$ is the canonical embedding.
\item
(Compatibility with Harris--Taylor/Henniart LLC) If $\Lbd=\ovl{\bb Q_\ell}$ and $\msf G=\GL_n$, then $\rec_{\GL_n}^\FS$ coincides with the (semisimplified) local Langlands correspondence given by Harris--Taylor and Henniart in the sense of \textup{Theorem~\ref{compaiteiehdnifds}}.
\item
(Compatibility with Weil restriction) If $\msf G=\Res_{K'/K}\msf G'$ for some reductive group $\msf G'$ over a finite extension $K'/K$, then $\rec_{\msf G'}^\FS(\pi)=\rec_{\msf G}^\FS(\pi)|_{W_{K'}}$ for any $\pi\in \Pi(\msf G', \Lbd)=\Pi(\msf G, \Lbd)$.
\item
(Compatibility with contragredients) For $\pi\in \Pi(\msf G, \Lbd)$, we have $\rec_{\msf G}^\FS(\pi^\vee)=\vartheta_{\hat{\msf G}}\paren{\rec_{\msf G}^\FS(\pi)}$; see~\textup{\cite[Proposition~\Rmnum{9}.5.3]{F-S24}}. Here we recall that for each Borel pair $(\hat{\msf B}, \hat{\msf T})$ of $\hat{\msf G}$, there exists a Cartan involution (also called the Chevalley involution) $\vartheta_{\hat{\msf G}}$ of $\hat{\msf G}$ which preserves the fixed pinning and acts as $t\mapsto w_0(t^{-1})$ on $\hat{\msf T}$, where $w_0$ is the longest Weyl element taking $\hat{\msf B}$ to the opposite Borel group. $\vartheta_{\hat{\msf G}}$ extends to an involution of $\LL\msf G$ because the action of $W_K$ on $\hat{\msf G}$ preserves the pinning.
\item
(Compatibility with reduction modulo $\ell$) If $\pi\in \Pi(\msf G, \ovl{\bb Q_\ell})$ admits a $\msf G(K)$-stable $\ovl{\bb Z_\ell}$-lattice, and the reduction modulo $\ell$ representation of $\pi$ has an irreducible subquotient $\ovl\pi'$, then $\rec_{\msf G}^\FS(\pi): W_K\to \LL\msf G(\ovl{\bb Q_\ell})$ factors through $\LL\msf G(\ovl{\bb Z_\ell})$, and its reduction modulo $\ell$ equals $\rec_{\msf G}^\FS(\ovl\pi')$; see~\textup{\cite[\S \Rmnum{9}.5.2]{F-S24}}.
\end{enumerate}
\end{thm}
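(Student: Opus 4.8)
This is \cite{F-S24}*{Theorem \Rmnum{1}.9.6}, with the $\ell$-modular part of (9) supplied by \cite{F-S24}*{\S \Rmnum{9}.5.2} and the independence of $\rec_{\msf G}^\FS$ from the auxiliary prime $\ell$ by \cite{Sch25}, so the plan is not to reprove it but to recall the shape of the argument of \cite{F-S24}, which we shall use repeatedly below. The common mechanism is that the excursion construction of $\rec_{\msf G}^\FS$ recalled above is natural in the excursion datum $(I, W, (\gamma_i)_{i\in I}, \alpha, \beta)$ and compatible with composition and external product of Hecke operators; from this one reads off (2), since character twists, twisting by a central character and $\pi\mapsto\pi^\vee$ transform the excursion scalars in an explicit way, and (3), since for $\msf G_1\times\msf G_2$ the Hecke stacks and the Satake sheaves are external products and the scalars therefore multiply.

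Next one dispatches the structural items (1), (7) and (9). For (1) one computes directly, as in \cite{F-S24}, that for a torus $\msf T$ geometric Satake together with the description of $\Bun_{\msf T}$ collapses the spectral action to the classical Langlands correspondence for tori, i.e.\ to local class field theory in the fixed normalization. For (7) one uses the identity $\Bun_{\Res_{K'/K}\msf G'}\cong\Bun_{\msf G'}$, over the Fargues--Fontaine curve attached to $K'$ rather than $K$, and matches $\SL(2;\Lbd)$-excursion data along $W_{K'}\hookrightarrow W_K$. For (9) one observes that the Satake sheaves, the Hecke operators and the excursion operators are all defined with $\ovl{\bb Z_\ell}$-coefficients and commute with $-\otimes^{\bx L}_{\ovl{\bb Z_\ell}}\ovl{\bb F_\ell}$.

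The geometrically substantive items are (4), (5) and (8). For (5) I would use the correspondence $\Bun_{\msf G}\xleftarrow{q}\Bun_{\msf P}\xrightarrow{p}\Bun_{\msf M}$, the identification of $q_!\, p^*$ with normalized parabolic induction on $\bx D_{\bx{lis}}$, and the compatibility of the Hecke operators with these constant-term functors under the Levi restriction $\bx{Rep}_\Lbd(\LL\msf G)\to\bx{Rep}_\Lbd(\LL\msf M)$; this forces the excursion scalars of any subquotient of $\bx I_{\msf P}^{\msf G}(\pi_{\msf M})$ to factor through $\LL\msf M(\Lbd)\to\LL\msf G(\Lbd)$. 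Item (4) is the analogous statement for the morphism $\Bun_{\msf G'}\to\Bun_{\msf G}$ induced by $\msf G'\to\msf G$, using that pullback of torsors is compatible with the Hecke correspondences and that geometric Satake intertwines restriction of representations along $\LL\msf G(\Lbd)\to\LL\msf G'(\Lbd)$ with the induced operation on Satake sheaves. Item (8) comes from the behaviour of the Satake sheaves $\mcl S'_W$ under Verdier duality, which twists by the Chevalley involution $\vartheta_{\hat{\msf G}}$ on the dual group, together with the identity $(i_{\msf b})_!(\pi)^\vee\cong(i_{\msf b})_*(\pi^\vee)$ on $\Bun_{\msf G}$.

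The main obstacle is (6): identifying $\rec_{\GL(n)}^\FS$ with the Harris--Taylor/Henniart correspondence. This is established in \cite{F-S24} by comparing the action of the Hecke operators on $\Bun_{\GL(n)}$ with the cohomology of Lubin--Tate space, equivalently of the relevant local Shimura varieties, whose description in terms of the classical correspondence is already known; this is the one step that cannot be obtained from the formalism alone, and it is precisely the prototype of the compatibility that \Cref{compaiteiehdnifds} establishes for the special orthogonal and unitary groups considered here. We therefore simply invoke \cite{F-S24}*{Theorem \Rmnum{1}.9.6}.
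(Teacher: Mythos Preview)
Your proposal is correct in spirit: the paper provides no proof whatsoever for this theorem, treating it purely as a citation of \cite{F-S24}*{Theorem \Rmnum{1}.9.6} (with the supplementary references to \cite{F-S24}*{\S \Rmnum{9}.5.2, \Rmnum{9}.5.3} already embedded in the statement). Your sketch of the mechanism behind each item is accurate and more than the paper offers, so there is nothing to compare; if anything, a one-line ``This is \cite{F-S24}*{Theorem \Rmnum{1}.9.6}'' would already match the paper.
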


Moreover, Hansen, Kaletha and Weinstein established in \cite[Theorem 6.6.1]{HKW22} that the Fargues--Scholze LLC coincides with the usual (semisimplified) LLC for inner forms of general linear groups given in \cite{DKV84, Rog83} via the Jacquet--Langlands correspondence.

When $\msf G^\sharp$ is an inner form of $\GL_n$ and $\msf G=[\msf G^\sharp, \msf G^\sharp]$, the local Langlands correspondence constructed in \cite{G-K82, Tad92, H-S12, ABPS} assigns to each $\pi^\sharp\in\Pi(\msf G^\sharp)$ a parameter $\phi_{\pi^\sharp}$ with the property that, for any irreducible admissible subrepresentation $\pi\subset\pi^\sharp|_{\msf G(K)}$, $\phi_\pi$ equals $\phi_{\pi^\sharp}$ composed with the natural map $\LL\msf G^\sharp\to \LL\msf G$. This prescription uniquely determines the local Langlands correspondence for $\msf G$, since every $\pi\in\Pi(\msf G)$ occurs as a subrepresentation of $\pi^\sharp|_{\msf G(K)}$ for some $\pi^\sharp\in\Pi(\msf G^\sharp)$, by \cite[Lemma 2.3]{G-K82}.

Combining the compatibility of the Fargues--Scholze LLC with the LLC for
inner forms of general linear groups with its compatibility under
homomorphisms inducing an isomorphism on adjoint groups
(see~Theorem~\ref{compaitbsilFaiirfies}), we obtain the following.

\begin{prop}\label{inenfomeieSLIwninsfds}
Let $\msf G$ be an inner form of $\SL_N$ over $K$ for some positive integer $N\in \bb Z_+$, and suppose that
$\Lbd=\ovl{\bb Q_\ell}$. Then the Fargues--Scholze LLC is compatible with the
LLC constructed in \textup{\cite{G-K82, Tad92, H-S12, ABPS}} in the sense of
\textup{Theorem~\ref{compaiteiehdnifds}}.
\end{prop}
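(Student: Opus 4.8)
The plan is to deduce the statement from the known case of inner forms of $\GL(n)$ via the behaviour of the Fargues--Scholze construction under central extensions. Write $\msf G=\SL_m(D)=[\msf G^\sharp,\msf G^\sharp]$, where $\msf G^\sharp=\GL_m(D)$ is the inner form of $\GL(n)$ attached to the same central division algebra $D/K$; the inclusion $\iota\colon\msf G\hookrightarrow\msf G^\sharp$ induces an isomorphism on adjoint groups (both equal $\PGL_m(D)$) and hence dually a natural surjection $p\colon\LL\msf G^\sharp\to\LL\msf G$.

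First I would fix $\pi\in\Pi(\msf G,\ovl{\bb Q_\ell})$ and, using \cite{G-K82}*{Lemma 2.3}, choose a $\pi^\sharp\in\Pi(\msf G^\sharp)$ for which $\pi$ is a sub-representation of $\pi^\sharp|_{\msf G(K)}$. By the definition of the classical LLC for $\msf G$ recalled just above the statement, its parameter satisfies $\phi_\pi=p\circ\phi_{\pi^\sharp}$, where $\phi_{\pi^\sharp}$ is the parameter of $\pi^\sharp$ under the LLC for the inner form $\msf G^\sharp$ of $\GL(n)$ from \cites{DKV84, Rog83}; taking semi-simplifications gives $\phi_\pi^\sems=p\circ\phi_{\pi^\sharp}^\sems$. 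On the Galois side I would then combine two inputs: Hansen--Kaletha--Weinstein \cite{HKW22}*{Theorem 6.6.1}, which gives $\rec_{\msf G^\sharp}^\FS(\pi^\sharp)=\phi_{\pi^\sharp}^\sems$ (i.e.\ compatibility in the sense of \Cref{compaiteiehdnifds} for $\msf G^\sharp$), and the compatibility of the Fargues--Scholze correspondence with central extensions, \Cref{compaitbsilFaiirfies}(4), applied to $\iota$ and to the inclusion $\pi\hookrightarrow\pi^\sharp|_{\msf G(K)}$, which gives $\rec_{\msf G}^\FS(\pi)=p\circ\rec_{\msf G^\sharp}^\FS(\pi^\sharp)$. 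Stringing the three identities together yields $\rec_{\msf G}^\FS(\pi)=p\circ\phi_{\pi^\sharp}^\sems=\phi_\pi^\sems$, which is exactly the asserted compatibility.

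Since every ingredient is already available, I do not expect a genuine obstacle here; the only points requiring care are bookkeeping ones. One must check that the map $\LL\msf G^\sharp\to\LL\msf G$ occurring in the definition of the classical LLC for $\msf G$ coincides with the map appearing in \Cref{compaitbsilFaiirfies}(4) (both are dual to $\iota$), and that the argument is insensitive to the choice of $\pi^\sharp$: two such choices differ by a character of $\msf G^\sharp(K)$ trivial on $\msf G(K)$, which is killed by $p$ on both the classical and the Fargues--Scholze sides, consistently with the well-definedness of $\rec_{\msf G}$ recalled above.
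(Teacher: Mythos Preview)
Your proposal is correct and follows the same route as the paper: both the classical and the Fargues--Scholze parameters of $\pi$ are obtained from those of $\pi^\sharp$ by composing with $p\colon\LL\msf G^\sharp\to\LL\msf G$, and compatibility for $\msf G^\sharp$ is supplied by \cite{HKW22}. The paper's proof is a single sentence noting that $\hat{\msf G^\sharp}$-conjugacy and $\hat{\msf G}$-conjugacy of parameters of $\msf G$ coincide (so that the pushforward $\Phi(\msf G^\sharp)\to\Phi(\msf G)$ is unambiguous); your remarks on the bookkeeping points cover the same ground in more detail.
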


\begin{proof}
Choose an inner form $\msf G^\sharp$ of $\GL_N$ such that $[\msf G^\sharp,\msf G^\sharp]=\msf G$,
and let
\begin{equation*}
q:\LL\msf G^\sharp\longrightarrow\LL\msf G
\end{equation*}
be the natural map. Let $\pi\in\Pi(\msf G)$. By \textup{\cite[Lemma~2.3]{G-K82}}, there exists
$\pi^\sharp\in\Pi(\msf G^\sharp)$ such that $\pi$ is an irreducible
constituent of $\pi^\sharp|_{\msf G(K)}$. By the construction of the classical LLC for inner forms of $\SL_N$, $\rec_{\msf G}(\pi)=q\circ\rec_{\msf G^\sharp}(\pi^\sharp)$ up to $\hat{\msf G}$-conjugacy.

On the other hand, the compatibility of the Fargues--Scholze LLC under maps
inducing an isomorphism on adjoint groups, as stated in
Theorem~\ref{compaitbsilFaiirfies}, gives
\[
\rec_{\msf G}^{\FS}(\pi)
=
q\circ\rec_{\msf G^\sharp}^{\FS}(\pi^\sharp).
\]
By \textup{\cite[Theorem~6.6.1]{HKW22}}, the Fargues--Scholze LLC for
$\msf G^\sharp$ agrees with the semisimplification of the classical LLC.
Consequently,
\[
\begin{aligned}
\rec_{\msf G}^{\FS}(\pi)
&=
q\circ\rec_{\msf G^\sharp}^{\FS}(\pi^\sharp)\\
&=
q\circ\rec_{\msf G^\sharp}(\pi^\sharp)^\sems\\
&=
\paren{q\circ\rec_{\msf G^\sharp}(\pi^\sharp)}^\sems\\
&=
\rec_{\msf G}(\pi)^\sems.
\end{aligned}
\]
This is precisely the compatibility asserted in
Theorem~\ref{compaiteiehdnifds}.
\end{proof}

\subsection{Local shtuka spaces}\label{secitonautnciniauotenries}

In this subsection we introduce the local shtuka spaces whose cohomology will be one of the main objects of study of this paper.

Let $\msf G^*$ be the quasisplit inner form of $\msf G$ with a fixed Borel subgroup $\msf B^*$ containing a maximal torus $\msf T^*$, and let $\mu$ be a dominant cocharacter of $\msf G^*_{\ovl K}$ with reflex field $E_\mu$. We first recall the notion of neutral $\mu$-acceptable elements from \cite[Definition 2.3]{R-V14}: Let $\mu^\natural$ be the image of $\mu$ in $X^\bullet\paren{Z(\hat{\msf G^*})^{\Gal_K}}$, and let
\begin{equation*}
\ovl\mu\defining\frac{1}{[E_\mu: K]}\sum_{\gamma\in \Gal_K/\Gal_{E_\mu}}\gamma(\mu)\in X_\bullet(\msf T^*)_{\bb Q}^{+, \Gal_K}.
\end{equation*}
For $\msf b_0\in B(\msf G^*)$, define the set of neutral $\mu$-acceptable elements by
\begin{equation}\label{seinieunifnws}
B(\msf G^*,\msf b_0, \mu)\defining \{\msf b\in B(\msf G^*)|\kappa_{\msf G^*}(\msf b)-\kappa_{\msf G^*}(\msf b_0)=\mu^\natural,\quad \ovl\mu-(\nu_{\msf b}-\nu_{\msf b_0})\in \bb R_+\Phi^\vee(\msf G^*,\msf T^*)^+\}
\end{equation}
where we regard the Newton maps $\nu_{\msf b}, \nu_{\msf b_0}: \mbf D(\bb Q)\to\msf G^*_{\ovl K}$ as elements of $X_\bullet(\msf T^*)_{\bb Q}^{+, \Gal_K}$.

For example, for the quasisplit group $G^*$ defined in \S\ref{theogirneidnis} with a pure inner twist $(G=G^*_{b_0}, \varrho_{b_0}, z_{b_0})$ and the geometric cocharacter $\mu_1\in X_\bullet(G^*)$ defined in \S\ref{combianosiLoifn}, the unique basic element $b_1\in B(G^*, b_0, \mu_1)_\bas$ is just the unique nontrivial basic element $b_1$ of $B(G)_\bas$ under the isomorphism~\eqref{ienufheiehfniess}.

We then recall Scholze's definition \cite[\S 23]{S-W20} of the local shtuka space in the basic case: For each $\msf b_0\in B(\msf G^*)_\bas$ and $\msf b\in B(\msf G^*, \msf b_0, \mu)_\bas$, the local shtuka space $\Sht(\msf G^*,\msf b,\msf b_0,\mu)$ is a local spatial diamond over $\Spd(\breve KE_\mu)$ that represents the functor that maps $S\in \Perfd_{\breve KE_\mu}$ to the set of isomorphisms
\begin{equation*}
\gamma: \mcl E^{\msf b}|_{X_{S^\flat}\setm D_S}\xr\sim \mcl E^{\msf b_0}|_{X_{S^\flat}\setm D_S},
\end{equation*}
of $\msf G^*$-bundles that are meromorphic along the divisor $D_S\subset X_{S^\flat}$ and bounded by $\mu$ pointwise on $\Spa(S)$, as defined in \cite[p.~11]{HKW22}.\footnote{We note that our definition of $\Sht(\msf G^*,\msf b,\msf b_0,\mu)$ agrees with the definition in \cite[\S 3.1]{Ham22}, \cite[\S2.3.2]{MHN24}, \cite[\S 10]{Ham24}, but differs from the definitions in \cite{S-W20} and \cite[Definition~2.4.1]{HKW22} by changing $\mu$ to $\mu^{-1}$. Our definition simplifies certain presentations.}

The automorphism groups of $\mcl E^{\msf b}$ and $\mcl E^{\msf b_0}$ are the constant group diamonds $\udl{\msf G^*_{\msf b}(K)}$ and $\udl{\msf G^*_{\msf b_0}(K)}$, respectively, so $\Sht(\msf G^*,\msf b,\msf b_0, \mu)$ is equipped with a commuting action of $\msf G^*_{\msf b}(K)$ and $\msf G^*_{\msf b_0}(K)$ by pre- and post-composing on $\gamma$.

We record the compatibility of the local shtuka space under Weil restrictions. Let $K/K_0$ be an unramified extension and set $\msf G^*_0=\Res_{K/K_0}\msf G^*$. Suppose that, under the identification
\begin{equation*}
(\msf G_0^*)_{\ovl K_0}\cong\prod_{\Hom(K,\ovl K_0)}\msf G^*_{\ovl K},
\end{equation*}
the cocharacter $\mu_0$ is trivial on every factor except one, where it equals a cocharacter $\mu$ of $\msf G^*_{\ovl K}$. Then $\mu_0$ and $\mu$ have the same reflex field. For $\msf b_0\in B(\msf G_0^*)_\bas$ and $\msf b\in B(\msf G_0^*, \msf b_0, \mu_0)_\bas$, use the natural isomorphism $B(\msf G_0^*)\cong B(\msf G^*)$ to regard $\msf b_0$ and $\msf b$ as basic elements of $B(\msf G^*)$. The definitions then give a natural isomorphism of diamonds
\begin{equation*}
\Sht(\msf G^*, \msf b, \msf b_0, \mu)\cong \Sht(\msf G_0^*, \msf b, \msf b_0, \mu_0).
\end{equation*}

For each compact open subgroup $\mdc K\le\msf G^*_{\msf b_0}(K)$, we define $\Sht_{\mdc K}(\msf G^*, \msf b, \msf b_0, \mu)=\Sht(\msf G^*,\msf b,\msf b_0, \mu)/\mdc K$, which is also a locally spatial diamond defined over $\Spd(\breve KE_\mu)$ \cite[Theorem 23.1.4]{S-W20}. Let $\mcl S'_\mu$ be the $\Lbd$-sheaf corresponding to the highest weight Tilting module $\mcl T_\mu\in \bx{Rep}_\Lbd(\hat{\msf G^*})$ via the geometric Satake equivalence \cite[Theorem \Rmnum1.6.3]{F-S24}. There is a natural map
\begin{equation*}
\pr: \Sht(\msf G^*,\msf b, \msf b_0, \mu)\to \Hec_{\msf G^*, 1},
\end{equation*}
and we denote the pullback of $\mcl S_\mu'$ along $\pr$ by $\mcl S_\mu$. Note that $\pr$ factors through the quotient of $\Sht(\msf G^*,\msf b, \msf b_0, \mu)$ by the actions of $\msf G_{\msf b}^*(K)$ and $\msf G_{\msf b_0}^*(K)$, so $\mcl S_\mu$ is equivariant with respect to these actions. We define
\begin{equation*}
\bx R\Gamma_c(\Sht_{\mdc K}(\msf G^*,\msf b, \msf b_0, \mu), \mcl S_\mu)\defining\ilim_U\bx R\Gamma_c(U, \mcl S_\mu)
\end{equation*}
where $U$ runs through quasi-compact open subsets of $\Sht_{\mdc K}(\msf G^*,\msf b, \msf b_0, \mu)$, and also
\begin{equation*}
\bx R\Gamma_c(\msf G^*, \msf b, \msf b_0, \mu)\defining \ilim_{\mdc K}\bx R\Gamma_c(\Sht_{\mdc K}(\msf G^*,\msf b, \msf b_0, \mu), \mcl S_\mu)
\end{equation*}
where $\mdc K$ runs through open compact subgroups of $\msf G_{\msf b_0}^*(K)$.\footnote{Note that our $\bx R\Gamma_c(\msf G^*, \msf b, \msf b_0, \mu)$ agrees with $\bx R\Gamma_c(\mcl M_{\msf G^*_{\msf b_0},\msf b_0, \{\mu^{-1}\}, \infty}, \Lbd)$ defined in \cite[\S 8.5.8]{DHKZ}, by \cite[Proposition~8.5.9]{DHKZ}; but differs from $\bx R\Gamma_c(\msf G^*_{\msf b_0}, \msf b, \mu)$ defined in \cite[Definition~3.2]{Ham22} by a Tate twist and a $t$-shift.} 

Following \cite{Shi11}, we now define a map $\Mant_{\msf G^*, \msf b, \msf b_0, \mu}:\bx K_0(\msf G_{\msf b}^*, \Lbd)\to\bx K_0(\msf G_{\msf b_0}^*(K)\times W_{E_\mu}, \Lbd)$ describing the cohomology of $\Sht(\msf G^*,\msf b,\msf b_0,\mu)$, where for each $\msf b\in B(\msf G^*)_\bas$, we denote by $\bx K_0(\msf G^*_{\msf b}, \Lbd)$ the Grothendieck group of the category of finite-length admissible representations of $\msf G^*_{\msf b}(K)$ with $\Lbd$-coefficients, and denote by $\bx K_0(\msf G^*_{\msf b}(K)\times W_{E_\mu}, \Lbd)$ the Grothendieck group of the category of finite-length admissible representations of $\msf G^*_{\msf b}(K)$ with $\Lbd$-coefficients equipped with a continuous action of $W_{E_\mu}$ commuting with the $\msf G^*_{\msf b}(K)$-action.

\begin{defi}
For $\rho\in\Pi(\msf G^*_{\msf b}, \Lbd)$, define
\begin{equation}\label{emfneimfeos}
\bx R\Gamma^\flat_c(\msf G^*,\msf b, \msf b_0, \mu)[\rho]\defining\bx R\Hom_{\msf G^*_{\msf b}(K)}\paren{\bx R\Gamma_c(\msf G^*, \msf b, \msf b_0, \mu), \rho},
\end{equation}
and
\begin{equation*}
\bx R\Gamma_c(\msf G^*,\msf b, \msf b_0, \mu)[\rho]\defining\bx R\Gamma_c(\msf G^*, \msf b, \msf b_0, \mu)\Ltimes_{\msf G^*_{\msf b}(K)}\rho.
\end{equation*}
It follows from \cite[Corollary \Rmnum{1}.7.3 and p.~317]{F-S24} that $\bx R\Gamma^\flat_c(\msf G^*,\msf b,\msf b_0,\mu)[\rho]$ is a finite-length $W_{E_\mu}$-equivariant object of $\bx D(\msf G_{\msf b_0}^*, \Lbd)$. We denote its image in $\bx K_0(\msf G_{\msf b_0}^*(K)\times W_{E_\mu}; \Lbd)$ by
\begin{equation*}
\Mant_{\msf G^*, \msf b, \msf b_0, \mu}(\rho).
\end{equation*}
\end{defi}

Note that $\bx R\Gamma_c(\msf G^*,\msf b, \msf b_0, \mu)[\rho]$ is much more natural from the point of view of geometric arguments on $\Bun_{\msf G^*}$ as it involves the much simpler extension by zero functor, while the complex $\bx R\Gamma^\flat_c(\msf G^*,\msf b, \msf b_0, \mu)[\rho]$ is studied in \cite{HKW22}. It follows from Hom-Tensor duality that
\begin{equation}\label{IIEHienitehiremids}
\bx R\Gamma^\flat_c(\msf G^*,\msf b, \msf b_0, \mu)[\rho^*]\cong\bx R\Hom\paren{\bx R\Gamma_c(\msf G^*,\msf b, \msf b_0, \mu)[\rho], \Lbd}.
\end{equation}
Moreover, we have the following result of Meli, Hamann and Nguyen~\cite[Proposition~2.25]{MHN24} for representations with supercuspidal Fargues--Scholze $L$-parameters: 

\begin{prop}\label{speeorjoiehgiejmsiws}
If $\rho\in\Pi(\msf G_{\msf b}^*, \Lbd)$ has supercuspidal Fargues--Scholze $L$-parameter $\phi_\rho^\FS$, then there exists an isomorphism
\begin{equation*}
\bx R\Gamma_c(\msf G^*,\msf b, \msf b_0, \mu)[\rho]\cong \bx R\Gamma^\flat_c(\msf G^*,\msf b, \msf b_0, \mu)[\rho]
\end{equation*}
of representations of $\msf G_{\msf b_0}^*(K)\times W_{E_\mu}$.
\end{prop}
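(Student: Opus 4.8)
The plan, following \cite{Ham22} and \cite{MHN24}, is to realize both complexes as restrictions to the basic Newton stratum of a single Hecke operator on $\Bun_{\msf G^*}$, and to deduce the isomorphism from a cleanness property that is forced by the supercuspidality of $\phi_\rho^\FS$. First I would observe that the hypothesis already forces $\rho$ itself to be supercuspidal: were $\rho$ a subquotient of a normalized parabolic induction $\bx I_{\msf P}^{\msf G^*_{\msf b}}(\sigma)$ from a proper parabolic $\msf P\le\msf G^*_{\msf b}$ with Levi factor $\msf M$, then by compatibility of the Fargues--Scholze correspondence with parabolic induction (\Cref{compaitbsilFaiirfies}(5)) the parameter $\phi_\rho^\FS$ would factor through $\LL\msf M(\Lbd)\subsetneq\LL\msf G^*_{\msf b}(\Lbd)$, contradicting its supercuspidality; hence $\rho\in\Pi_{\bx{sc}}(\msf G^*_{\msf b})$.

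Next I would set up the geometric descriptions. Write $i_{\msf b},i_{\msf b_0}$ for the immersions of the Newton strata $\Bun_{\msf G^*}^{\msf b},\Bun_{\msf G^*}^{\msf b_0}\hookrightarrow\Bun_{\msf G^*}$; since $\msf b$ and $\msf b_0$ are basic, these strata lie in the open semistable locus, so $i_{\msf b}$ and $i_{\msf b_0}$ are open immersions and in particular $i_{\msf b_0}^*=i_{\msf b_0}^!$. Let $\bx T_{\mcl T_\mu}$ be the Hecke operator with Satake kernel $\mcl S'_\mu$ of \Cref{secitonautnciniauotenries}, pulled back along a geometric point of the mirror curve $\Div^1$ over $\breve K E_\mu$ so that its output carries a continuous $W_{E_\mu}$-action. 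Unwinding the definitions of \Cref{secitonautnciniauotenries} (this uses \cite{F-S24}*{Chapter IX} together with the $\bx R\Hom$-theoretic formalism of \cite{HKW22}, and matches up the various shift/twist normalization discrepancies already flagged there), one obtains canonical isomorphisms of $\msf G^*_{\msf b_0}(K)\times W_{E_\mu}$-representations
\[
\bx R\Gamma_c(\msf G^*,\msf b,\msf b_0,\mu)[\rho]\;\cong\;i_{\msf b_0}^*\,\bx T_{\mcl T_\mu}\bigl((i_{\msf b})_!\rho\bigr),
\]
\[
\bx R\Gamma_c^\flat(\msf G^*,\msf b,\msf b_0,\mu)[\rho]\;\cong\;i_{\msf b_0}^!\,\bx T_{\mcl T_\mu}\bigl((i_{\msf b})_*\rho\bigr),
\]
up to a common shift and Tate twist (the $\otimes$ in the first definition corresponds to $(i_{\msf b})_!$, and the $\bx R\Hom$ in the second to $(i_{\msf b})_*$). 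Granting this, since $i_{\msf b_0}^*=i_{\msf b_0}^!$ it suffices to show that the canonical map $(i_{\msf b})_!\rho\to(i_{\msf b})_*\rho$ is an isomorphism in $\bx D_{\bx{lis}}(\Bun_{\msf G^*},\Lbd)$; applying the (triangulated) functor $\bx T_{\mcl T_\mu}$ and then $i_{\msf b_0}^*$ then gives the claim.

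The remaining \emph{cleanness} statement $(i_{\msf b})_!\rho\xrightarrow{\sim}(i_{\msf b})_*\rho$ is the conceptual core, and I would import it from \cite{F-S24}*{Chapter IX} (see also \cite{HKW22}): the cone of $(i_{\msf b})_!\rho\to(i_{\msf b})_*\rho$ is supported on the non-basic strata $\Bun_{\msf G^*}^{\msf b'}$ in the boundary of the open stratum $\Bun_{\msf G^*}^{\msf b}$, and the $*$-restrictions $i_{\msf b'}^*(i_{\msf b})_*\rho$ along these strata are computed — via the Harder--Narasimhan/parabolic description of the boundary — in terms of Jacquet modules of $\rho$ along the parabolics attached to $\msf b'$, which vanish because $\rho$ is supercuspidal. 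I expect the genuine obstacle to be not this conceptual input but the bookkeeping underlying the two displayed identifications: pinning down the cohomological shifts, Tate twists, and left/right $\msf G^*_{\msf b}(K)$-module conventions so that they hold with a single common normalization, which is exactly what upgrades the resulting isomorphism from one of $\msf G^*_{\msf b_0}(K)$-representations to one of $\msf G^*_{\msf b_0}(K)\times W_{E_\mu}$-representations.
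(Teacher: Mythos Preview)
The paper does not give its own proof of this statement; it simply cites \cite{MHN24}*{Proposition 2.25}. Your sketch captures the standard argument and is essentially correct: deduce that $\rho$ is supercuspidal from the supercuspidality of $\phi_\rho^\FS$, realize both complexes via Hecke operators applied to the two extensions $(i_{\msf b})_!\rho$ and $(i_{\msf b})_*\rho$, and then invoke the cleanness $(i_{\msf b})_!\rho\xrightarrow{\sim}(i_{\msf b})_*\rho$, which holds because the $*$-restrictions to non-basic boundary strata are computed by Jacquet modules of $\rho$ and hence vanish.

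The one place where your write-up underplays the content is the second displayed identification
\[
\bx R\Gamma_c^\flat(\msf G^*,\msf b,\msf b_0,\mu)[\rho]\;\cong\;i_{\msf b_0}^!\,\bx T_{\mcl T_\mu}\bigl((i_{\msf b})_*\rho\bigr).
\]
This is not a pure ``unwinding of definitions'': starting from $\bx R\Hom_{\msf G^*_{\msf b}(K)}(\bx R\Gamma_c,\rho)$ and arriving at the right-hand side requires the biadjunction $\bx T_V\dashv \bx T_{V^\vee}$ for Hecke operators (\cite{F-S24}*{Chapter \Rmnum{9}}) together with the tower duality $\Sht(\msf G^*,\msf b,\msf b_0,\mu)\cong\Sht(\msf G^*,\msf b_0,\msf b,\mu^\bullet)$ to swap the roles of $\msf b$ and $\msf b_0$ and match up the Satake kernels. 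Once those are in place your identification holds (up to the shift and twist you allow for), and the rest of the argument goes through. You do flag the bookkeeping here as the genuine obstacle and cite the right sources, so this is a point to make explicit rather than a conceptual gap.
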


Finally, we recall the following crucial result relating the cohomology of local shtuka spaces and Fargues--Scholze $L$-parameters. Before that, recall that for any dominant cocharacters $\mu$ for $\msf G$ with reflex field $E_\mu$, the highest weight tilting module $\mcl T_\mu\in\bx{Rep}_\Lbd(\hat{\msf G})$ extends naturally to a representation of $\hat{\msf G}\rtimes W_{E_\mu}$, and we define the extended highest weight tilting module
\begin{equation}\label{ieneineifeilsws}
\LL\mcl T_\mu\defining \Ind^{\LL\msf G}_{\hat{\msf G}\rtimes W_{E_\mu}}\mcl T_\mu\in \bx{Rep}_\Lbd(\LL\msf G).
\end{equation}
The isomorphism class of $\LL\mcl T_\mu$ only depends on the $\Gal_K$-orbits of $\mu$; see~\cite[\S9.1]{Ham24}. Moreover, we recall that the tilting module $\mcl T_\mu$ equals the usual highest weight module (defined by unnormalized parabolic induction) when $\mu$ is minuscule.

\begin{prop}\label{ienfimeoifiems}
Suppose $\rho\in\Pi(\msf G^*_{\msf b}, \Lbd), \pi\in\Pi(\msf G^*_{\msf b_0}, \Lbd)$ and suppose that $\pi$ appears in $\Mant_{\msf G^*, \msf b, \msf b_0, \mu}(\rho)$. Here we omit the action of $W_{E_\mu}$ on $\Mant_{\msf G^*, \msf b, \msf b_0, \mu}(\rho)$. We then have $\phi_\pi^\FS=\phi_\rho^\FS\in\Phi^\sems(\msf G^*, \Lbd)$.
\end{prop}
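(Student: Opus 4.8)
The plan is to deduce the statement from the fact that the excursion operators, which define the Fargues--Scholze parameter, commute with the Hecke operators appearing in the definition of $\Mant_{\msf G^*, \msf b, \msf b_0, \mu}$. Concretely, recall that $\bx R\Gamma_c(\msf G^*, \msf b, \msf b_0, \mu)$ and the associated functor $\Mant$ are (up to a Tate twist and shift, as in the footnote after the definition of $\bx R\Gamma_c(\msf G^*, \msf b, \msf b_0, \mu)$) realized geometrically as a Hecke operator on $\Bun_{\msf G^*}$ applied to $(i_{\msf b})_!\rho$ and then restricted to $\Bun_{\msf G^*}^{\msf b_0} = [\Spd(\ovl\kappa)/\msf G_{\msf b_0}^*(K)]$, using that $\msf b_0$ is basic so that $\Bun_{\msf G^*}^{\msf b_0} \cong [\Spd(\ovl\kappa)/\msf G^*_{\msf b_0}(K)]$. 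More precisely, I would invoke the identity
\begin{equation*}
\bx R\Gamma_c^\flat(\msf G^*, \msf b, \msf b_0, \mu)[\rho] \cong i_{\msf b_0}^* \bx T_{\mcl T_\mu}\big((i_{\msf b})_!\rho\big)
\end{equation*}
(up to Tate twist/shift) as established in \cite{F-S24}*{\S \Rmnum{9}} and \cite{Ham22}, together with \Cref{speeorjoiehgiejmsiws} if needed to pass between $\bx R\Gamma_c$ and $\bx R\Gamma_c^\flat$ when $\phi_\rho^\FS$ is supercuspidal (though here no such hypothesis is imposed, so I would work directly with the geometric incarnation valid for all $\rho$).

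The key point is then the following: the Hecke operator $\bx T_{\mcl T_\mu}$ is, by construction, a morphism in the category of modules over the excursion algebra $\Exc(W_K, \hat{\msf G})$ acting on $\bx D_{\bx{lis}}(\Bun_{\msf G^*}, \Lbd)$ — this compatibility of Hecke operators with excursion operators is precisely what makes the Fargues--Scholze construction work (it is built into \cite{F-S24}*{\S \Rmnum{9}.4}, following Lafforgue). The excursion algebra acts on any $\sigma \in \Pi(\msf G_{\msf b}^*, \Lbd)$, viewed as an object of $\bx D_{\bx{lis}}(\Bun_{\msf G^*}, \Lbd)$ via $(i_{\msf b})_!$, through the character associated to $\phi_\sigma^\FS$; and similarly on $\pi \in \Pi(\msf G^*_{\msf b_0}, \Lbd)$ via $(i_{\msf b_0})_!$ through $\phi_\pi^\FS$. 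Note that the excursion action is insensitive to the choice of $\msf b$, since for every basic $\msf b$ the parameter $\phi_\sigma^\FS$ for $\sigma \in \Pi(\msf G_{\msf b}^*, \Lbd)$ is defined by the very same excursion operators on $\bx D_{\bx{lis}}(\Bun_{\msf G^*}, \Lbd)$. Therefore the object $\bx T_{\mcl T_\mu}\big((i_{\msf b})_!\rho\big) \in \bx D_{\bx{lis}}(\Bun_{\msf G^*}, \Lbd)^{BW_{E_\mu}}$ carries an excursion action through the character of $\phi_\rho^\FS$ — it is a subquotient (in an appropriate sense) of copies of $(i_{\msf b})_!\rho$ after forgetting the $W_K$-action, hence every subquotient of it, and in particular $(i_{\msf b_0})_!\pi$ once $\pi$ occurs in $\Mant_{\msf G^*, \msf b, \msf b_0, \mu}(\rho)$, has the same excursion character. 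Comparing with the excursion character of $(i_{\msf b_0})_!\pi$, which computes $\phi_\pi^\FS$, forces $\phi_\pi^\FS = \phi_\rho^\FS$. Since $\Mant$ lands in the Grothendieck group, I should phrase ``$\pi$ appears'' as: the class of $\pi$ has nonzero multiplicity in $\Mant_{\msf G^*, \msf b, \msf b_0, \mu}(\rho)$, equivalently $\bx R\Hom_{\msf G^*_{\msf b_0}(K)}(\bx R\Gamma_c^\flat(\msf G^*, \msf b, \msf b_0, \mu)[\rho], \pi) \ne 0$ in some degree; and the decomposition of the excursion action is compatible with taking cohomology and passing to such Hom-spaces because excursion operators are natural transformations of the identity functor.

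The main obstacle is bookkeeping rather than conceptual: one must carefully track that the object $\bx T_{\mcl T_\mu}\big((i_{\msf b})_!\rho\big)$, although a priori a complex on $\Bun_{\msf G^*}$ with a $W_{E_\mu}$-action, has the property that \emph{each} cohomology sheaf (and each of its $\msf G^*_{\msf b_0}(K)$-subquotients supported on $\Bun^{\msf b_0}_{\msf G^*}$) carries the excursion action through $\phi_\rho^\FS$ — this is because the Hecke operator intertwines the excursion action on source and target, and the excursion action on $(i_{\msf b})_!\rho$ is through the scalar character $\phi_\rho^\FS$ (by definition of $\phi_\rho^\FS$ via Schur's lemma on $\Bun^{\msf b}_{\msf G^*}$), so the target is literally a ``$\phi_\rho^\FS$-isotypic'' object for the excursion algebra. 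Then one restricts along $i_{\msf b_0}$ and uses that $i_{\msf b_0}^*$ commutes with the excursion action to conclude. I would cite \cite{F-S24}*{Corollary \Rmnum{1}.7.3} for finiteness (so that Grothendieck-group statements make sense), \cite{HKW22} for the comparison of $\bx R\Gamma_c^\flat$ with Hecke operators, and \cite{Ham22}*{proof of Lemma 3.15 / the argument around Kottwitz conjecture} as the precise model for this deduction; indeed the statement is essentially \cite{Ham22}'s observation that ``representations occurring in the cohomology of local Shtuka spaces attached to $\rho$ have the same Fargues--Scholze parameter as $\rho$'', transported to the present normalization of $\Mant$.
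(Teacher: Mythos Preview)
Your proposal is correct and follows essentially the same route as the paper's proof: identify $\bx R\Gamma_c(\msf G^*,\msf b,\msf b_0,\mu)[\rho]$ with $i_{\msf b_0}^*\bx T_\mu i_{\msf b,!}(\rho)$ via \cite{F-S24}*{\S \Rmnum{9}.3}, use that excursion operators commute with Hecke operators so every Schur-irreducible subquotient of $\bx T_\mu i_{\msf b,!}(\rho)$ has Fargues--Scholze parameter $\phi_\rho^\FS$, and then invoke \cite{F-S24}*{\S \Rmnum{9}.7.1} (or equivalently \cite{Ham22}*{Proposition 3.14}) to see that $i_{\msf b_0}^*$ preserves this parameter. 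The only cosmetic difference is that the paper works with $\bx R\Gamma_c$ rather than $\bx R\Gamma_c^\flat$, which is harmless since $\Mant$ is a Grothendieck-group statement and the two differ by duality; your remark that one can ``work directly with the geometric incarnation valid for all $\rho$'' handles this.
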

\begin{proof}
This is established in \cite[Corollary 3.15]{Ham22}. Strictly speaking, the cited result only proves the assertion when $\mu$ is minuscule. However, the general proof is the same and we reproduce here.

Regard $\rho$ as an element of $\bx D(\msf G^*_{\msf b}, \Lbd)\cong \bx D_{\bx{lis}}(\Bun_{\msf G^*}^{\msf b}, \Lbd)$, then there is an isomorphism
\begin{equation*}
\bx R\Gamma_c(\msf G^*, \msf b, \msf b_0, \mu)[\rho]\cong i_{\msf b_0}^*\bx T_\mu i_{\msf b, !}(\rho)\in \bx D(\msf G^*_{\msf b_0}, \Lbd)^{\bx BW_{E_\mu}},
\end{equation*}
by \cite[\S \Rmnum{9}.3]{F-S24}. Here $\bx T_\mu$ is the Hecke operator associated with the extended highest weight tilting module $\LL\mcl T_\mu$ associated with $\mu$ \eqref{ieneineifeilsws}. Then each Schur irreducible subquotient $A\in \bx D_{\bx{lis}}(\Bun_{\msf G^*}, \Lbd)$ of $\bx T_\mu i_{\msf b, !}(\rho)$ has Fargues--Scholze parameter $\phi_\rho^\FS$ because the excursion algebra
commutes with Hecke operators. Here we omit the $W_{E_\mu}$-action. Furthermore, each $\pi\in\Pi(\msf G^*_{\msf b_0}, \Lbd)\subset \bx D_{\bx{lis}}(\Bun_{\msf G^*}^{\msf b_0}, \Lbd)$ appearing in $i_{\msf b_0}^*(A)$ has Fargues--Scholze parameter equal to that of $A$ under the identification $\Phi^\sems(\msf G_{\msf b_0}^*, \Lbd)=\Phi^\sems(\msf G^*, \Lbd)$, by \cite[\S \Rmnum{9}.7.1]{F-S24} (see also \cite[Proposition 3.14]{Ham22}). Thus the assertion follows.
\end{proof}

\subsection{Spectral actions}\label{snihsnfifies}

In this subsection, we assume $\msf G$ is quasisplit. We recall the spectral actions on sheaves on $\Bun_{\msf G}$ by sheaves on the stack of Langlands parameters; cf.~\cite[\S \Rmnum{10}]{F-S24}, \cite{Ham22}.

For $\phi\in\Phi^\sems(\msf G, \Lbd)$, we define $\bx D_{\bx{lis}}(\Bun_{\msf G}, \Lbd)_\phi\subset \bx D_{\bx{lis}}(\Bun_{\msf G}, \Lbd)$ to be the full subcategory of objects $A$ such that  the endomorphism induced by any $f\in \mcl O_{\mfk X_{\hat{\msf G}}}\setm\mfk m_\phi$ is an isomorphism on $A$. 


We recall the Act-functors defined in \cite[\S 3.2]{Ham22} and \cite[\S 2.3.3]{MHN24} via the spectral action of the moduli stack of $L$-parameters: Let $\mfk X_{\hat{\msf G}}\defining [Z^1(W_K, \hat{\msf G})_{\ovl{\bb Q_\ell}}/\hat{\msf G}]$ be the moduli stack of semisimple Langlands parameters defined in \cite{DHKM, Zhu21} and \cite[Theorem \Rmnum{8}.1.3]{F-S24}, and let $\Perf(\mfk X_{\hat{\msf G}})$ be the derived category of perfect complexes on $\mfk X_{\hat{\msf G}}$. Let $\bx D_{\bx{lis}}(\Bun_{\msf G}, \ovl{\bb Q_\ell})^\omega\subset \bx D_{\bx{lis}}(\Bun_{\msf G}, \ovl{\bb Q_\ell})$ be the triangulated sub-category consisting of compact objects. Then it follows from \cite[Corollary \Rmnum{10}.1.3]{F-S24} that for each finite index set $I$, there exists a $\ovl{\bb Q_\ell}$-linear action
\begin{equation*}
\Perf(\mfk X_{\hat{\msf G}})^{\bx BW_K^I}\to \End\paren{\bx D_{\bx{lis}}(\Bun_{\msf G}, \ovl{\bb Q_\ell})^\omega}^{\bx BW_K^I}: C\mapsto\{A\mapsto C\star A\},
\end{equation*}
which is monoidal in the sense that there exists a natural equivalence of functors:
\begin{equation*}
(C_1\Ltimes C_2)\star(-)\cong C_1\star(C_2\star(-)).
\end{equation*}

Fix for the rest of this subsection a supercuspidal $L$-parameter $\phi\in\Phi^{\bx{sc}}(\msf G, \ovl{\bb Q_\ell})$, then there exists a connected component $C_\phi$ of $\mfk X_{\hat{\msf G}}$ consisting of unramified twists of the parameter $\phi$, equipped with a natural map $C_\phi\to [\Spd(\ovl{\bb Q_\ell})/\mfk S_\phi]$; see~\cite[\S \Rmnum{10}.2]{F-S24}. We then have a direct summand
\begin{equation*}
\Perf(C_\phi)\subset \Perf(\mfk X_{\hat{\msf G}}),
\end{equation*}
and the spectral action gives rise to a direct summand
\begin{equation*}
\bx D_{\bx{lis}}^{C_\phi}(\Bun_{\msf G}, \ovl{\bb Q_\ell})^\omega\subset \bx D_{\bx{lis}}(\Bun_{\msf G}, \ovl{\bb Q_\ell})^\omega.
\end{equation*}
Then there exists a decomposition \cite[p.~34]{Ham22}
\begin{equation*}
\bx D_{\bx{lis}}^{C_\phi}(\Bun_{\msf G}, \ovl{\bb Q_\ell})^\omega\cong\bplus_{\msf b\in B(\msf G)_\bas}\bx D^{C_\phi}(\msf G_{\msf b}, \ovl{\bb Q_\ell})^\omega,
\end{equation*}
where $\bx D^{C_\phi}(\msf G_{\msf b}, \ovl{\bb Q_\ell})^\omega\subset \bx D(\msf G_{\msf b}, \ovl{\bb Q_\ell})^\omega$ is a full subcategory of the subcategory of compact objects in $\bx D(\msf G_{\msf b}, \ovl{\bb Q_\ell})$. 
Let $\chi$ be the character of $Z(\msf G)(K)$ determined by $\phi$ as in \cite[\S 10.1]{Bor79}, then for each $\msf b\in B(\msf G)$, the subcategory
\begin{equation*}
\bx D^{C_\phi, \chi}(\msf G_{\msf b}, \ovl{\bb Q_\ell})^\omega\subset \bx D^{C_\phi}(\msf G_{\msf b}, \ovl{\bb Q_\ell})^\omega
\end{equation*}
spanned by the compact objects with fixed central character $\chi$ of $Z(\msf G_{\msf b})\cong Z(\msf G)$ is semisimple because supercuspidal representations are both injective and projective in the category of smooth representations with fixed central character. Thus we can identify $\bx D^{C_\phi, \chi}(\msf G_{\msf b}, \ovl{\bb Q_\ell})^\omega$ with
\begin{equation*}
\bplus_{\pi_{\msf b}\in \Pi_\phi(\msf G_{\msf b})}\iota_\ell\pi_{\msf b}\otimes\Perf(\ovl{\bb Q_\ell}).
\end{equation*}
This category is preserved under the spectral action of $\Perf(C_\phi)$; see~\cite[p.~34]{Ham22}.

We now recall the Act-functors: For each $\eta\in \Irr(\mfk S_\phi)$, we get a line bundle $\mcl L_\eta$ on $C_\phi$ by pulling back along the natural map $C_\phi\to [\Spd(\ovl{\bb Q_\ell})/\mfk S_\phi]$, and we define $\bx{Act}_\eta$ to be the spectral action of this line bundle on $\bx D^{C_\phi, \chi}(\msf G_{\msf b}, \ovl{\bb Q_\ell})^\omega$. In particular, Act-functors are symmetric monoidal, i.e., $\bx{Act}_\uno$ is the identity functor, and for any $\eta, \eta'\in \Irr(\mfk S_\phi)$, there exists a natural equivalence of functors
\begin{equation*}
\bx{Act}_\eta\circ\bx{Act}_{\eta'}\cong \bx{Act}_{\eta\eta'}.
\end{equation*}
From this it is easy to show that each Act-functor sends an irreducible admissible representation
\begin{equation*}
\iota_\ell\pi\in \bigcup_{\msf b\in B(\msf G)_\bas}\iota_\ell\Pi_\phi(\msf G_{\msf b})
\end{equation*}
to another irreducible admissible representation
\begin{equation*}
\iota_\ell\pi'\in \bigcup_{\msf b\in B(\msf G)_\bas}\iota_\ell\Pi_\phi(\msf G_{\msf b})
\end{equation*}
with a $t$-shift; see~\cite[Lemma 2.28]{MHN24}.

\subsection{Weak version of the Kottwitz conjecture}\label{Kottwisniconeidinfs}

In this subsection, let $G^*$ be a special orthogonal or unitary group as defined in \S\ref{theogirneidnis}, and let $(G=G^*_{b_0}, \varrho_{b_0}, z_{b_0})$ be the pure inner twist of $G^*$ associated with $b_0\in B(G^*)_\bas$. Let $\mu$ be a dominant cocharacter of $G^*_{\ovl K}$, and let $b\in B(G^*, b_0, \mu)_\bas$ be the unique nontrivial basic element, regarded as a basic element in $B(G)_{\bas}$ via the isomorphism~\eqref{ienufheiehfniess}, so we can adopt the notation from~\S\ref{secitonautnciniauotenries}. Thus $G_b\cong G^*_{b_0+b}$, and we define
\begin{equation*}
\Mant_{G, b, \mu}\defining \Mant_{G^*, b_0+b, b_0, \mu}: \bx K_0(G_b, \ovl{\bb Q_\ell})\to \bx K_0(G, \ovl{\bb Q_\ell}),
\end{equation*}
after forgetting the $W_{E_\mu}$-action.

We will use the weak version of the Kottwitz conjecture from \cite{HKW22} describing the cohomology $\Mant_{G, b, \mu}(\iota_\ell\rho)$ for $\rho\in\Pi_\phi(G_b)$, where $\phi$ is a discrete $L$-parameter. The (generalized) Kottwitz conjecture describes $\Mant_{G, b, \mu}(\iota_\ell\rho)$ in terms of the local Langlands correspondence, where $\rho$ lies in a supercuspidal $L$-packet.

In \cite[Theorem 1.0.2]{HKW22}, a weak version of the Kottwitz conjecture is established for all discrete $L$-parameters, but disregarding the action of the Weil group, and modulo a virtual representation whose character vanishes on the locus of elliptic elements. Their proof is conditional on the refined local Langlands conjecture of \cite[Conjecture G]{Kal16a} (in fact, as $G$ is always a pure inner form of $G^*$, the isocrystal version \cite[Conjecture G]{Kal16a} suffices), but in Case O2 (with sufficiently high rank), only the weak version of this conjecture stated in \S\ref{ndinlocalLanlgnnis} is known. To remedy this, we will use weak versions of the endoscopic character identities Theorem~\ref{endoslicindiikehrnieiis} to prove a weak version of \cite[Theorem 1.0.2]{HKW22}.

To state results uniformly, for every $b\in B(G)_\bas$, define
\begin{equation*}
\tilde{\bx K}_0(G_b,\ovl{\bb Q_\ell})\defining\bx K_0(G_b,\ovl{\bb Q_\ell})/\bra{[\pi]-[\pi^\varsigma]: \pi\in\Pi(G_b,\ovl{\bb Q_\ell})}.
\end{equation*}
The natural $\varsigma$-action on the local shtuka space induces a well-defined map
\begin{equation*}
\Mant_{G, b, \mu}: \tilde{\bx K}_0(G_b, \ovl{\bb Q_\ell})\to \tilde{\bx K}_0(G, \ovl{\bb Q_\ell}).
\end{equation*}

The set of elliptic elements of $G_b(K)$ is invariant under the action of $\varsigma$, so it makes sense to talk about an object of $\tilde{\bx K}_0(G_b, \ovl{\bb Q_\ell})$ whose character vanishes on the locus of elliptic elements of $G_b(K)$, and these objects are exactly those coming from a proper Levi subgroup of $G_b$ \cite[Theorem C.1.1]{HKW22}.

Similarly, for $\tilde\pi\in\tilde\Pi(G_b)$, we can define the Harish-Chandra character
\begin{equation*}
\Theta_{\tilde\pi}=\frac{1}{2}(\Theta_{\pi}+\Theta_{\pi^\varsigma})\in C(G_b(K)_\sreg\sslash G_b(K), \bb C),
\end{equation*}
where $\pi\in\Pi(G_b)$ is an arbitrary representative of $\tilde\pi$.

For $\tilde\rho\in\tilde\Pi(G_b)$, the Fargues--Scholze parameter $\iota_\ell^{-1}\phi_{\tilde\rho}^\FS: W_K\to \LL G^*$ is well-defined up to $\bx O_{N(G)}(\bb C)$-conjugation in Case O2, by the compatibility of Fargues--Scholze LLC with central extensions, Theorem~\ref{compaitbsilFaiirfies}, so it is unambiguous to ask whether it is a supercuspidal $L$-parameter.

Then our theorem, which slightly generalizes the main theorem of \cite{HKW22}, is stated as follows (a stronger version will be established in~\S\ref{Kotinitbeinss}):

\begin{thm}\label{coalidhbinifw222}
If $\tilde\phi\in\tilde\Phi_2(G^*)$ is a discrete $L$-parameter, and $\tilde\rho\in \tilde\Pi_{\tilde\phi}(G_b)$, then
\begin{equation*}
\Mant_{G, b, \mu}(\iota_\ell\tilde\rho)=\sum_{\tilde\pi\in\tilde\Pi_{\tilde\phi}(G)}\dim\Hom_{\mfk S_{\tilde\phi}}(\delta[\tilde\pi,\tilde\rho],\mcl T_\mu)[\iota_\ell\tilde\pi]+\Err
\end{equation*}
in $\tilde{\bx K}_0(G, \ovl{\bb Q_\ell})$, where $\Err\in\tilde{\bx K}_0(G, \ovl{\bb Q_\ell})$ is a virtual representation whose character vanishes on $G(K)_{\sreg, \ellip}$.

Moreover, if the packet $\tilde\Pi_{\tilde\phi}(G)$ consists entirely of supercuspidal representations and the Fargues--Scholze $L$-parameter $\tilde\phi_{\tilde\rho}^\FS$ is supercuspidal, then $\Err=0$.
\end{thm}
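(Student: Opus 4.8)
The plan is to follow the strategy of Hansen--Kaletha--Weinstein \cite{HKW22} verbatim, with the single modification that wherever the argument of loc.\ cit.\ invokes the refined local Langlands conjecture \cite{Kal16a}*{Conjecture G} (which in case O2 is only known up to $\bx O(N(G);\bb C)$-conjugation), we instead invoke the weak version \Cref{coeleninifheihsn} together with the endoscopic character relations \Cref{endoslicindiikehrnieiis}, and accordingly read every identity in the quotient Grothendieck group $\tilde{\bx K}_0(G,\ovl{\bb Q_\ell})$. First I would recall the Lefschetz--Verdier/trace formula computation of \cite{HKW22}, which expresses the $G(K)$-character of $\Mant_{G,b,\mu}(\iota_\ell\tilde\rho)$, restricted to the elliptic regular locus $G(K)_{\sreg,\ellip}$, as a sum of orbital integrals against the Harish-Chandra character $\Theta_{\tilde\rho}$ transported through the basic uniformization; by construction $\varsigma$ acts compatibly on $\Sht(G,b,b_0,\{\mu\})$, so every object in sight descends to the $\sim_\varsigma$-quotient and the formula makes sense in $\tilde{\bx K}_0$. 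Next I would feed the endoscopic character relation for $G$ and for its pure inner twist $G_b$ (the second displayed equation of \Cref{endoslicindiikehrnieiis}) into this expression: the stable character $\bx S\Theta_{\tilde\phi}$ cancels against the transfer factors, and one is left, after the combinatorial bookkeeping of the component group, with the asserted main term $\sum_{\tilde\pi\in\tilde\Pi_{\tilde\phi}(G)}\dim\Hom_{\mfk S_{\tilde\phi}}(\delta[\tilde\pi,\tilde\rho],\mcl T_\mu)[\iota_\ell\tilde\pi]$; here the identification of the multiplicity with $\dim\Hom_{\mfk S_{\tilde\phi}}(\delta[\tilde\pi,\tilde\rho],\mcl T_\mu)$ is exactly the pairing appearing in the generalized Kottwitz conjecture, and I would quote \Cref{pdidiiidjifenis} (or rather the combinatorics behind it) to make it explicit. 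The discrepancy between the true answer and this main term is a virtual representation that, by the HKW trace formula input, has character vanishing on $G(K)_{\sreg,\ellip}$; this is the error term $\Err$. Since characters that vanish on the elliptic regular locus are spanned by parabolically induced representations (\cite{HKW22}*{Theorem C.1.1}, whose elliptic locus is $\varsigma$-stable), the class $\Err\in\tilde{\bx K}_0(G,\ovl{\bb Q_\ell})$ is well-defined.

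For the ``moreover'' clause, the point is to upgrade the equality-on-the-elliptic-locus to an equality in $\tilde{\bx K}_0$. Under the hypothesis that $\tilde\Pi_{\tilde\phi}(G)$ consists of supercuspidals and $\tilde\phi^\FS_{\tilde\rho}$ is supercuspidal, I would argue as follows. By \Cref{speeorjoiehgiejmsiws}, the supercuspidality of $\phi_{\tilde\rho}^\FS$ forces $\bx R\Gamma_c(G^*,b_0+b,b_0,\mu)[\rho]\cong\bx R\Gamma_c^\flat(G^*,b_0+b,b_0,\mu)[\rho]$, and more importantly, \Cref{ienfimeoifiems} shows that every irreducible $\pi\in\Pi(G,\ovl{\bb Q_\ell})$ occurring in $\Mant_{G,b,\mu}(\iota_\ell\tilde\rho)$ has $\phi_\pi^\FS=\phi_\rho^\FS$, hence supercuspidal; therefore $\Mant_{G,b,\mu}(\iota_\ell\tilde\rho)$ is supported entirely on supercuspidal representations, and in particular contains no parabolically induced constituents. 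On the other hand the main term is already a $\bb Z$-combination of the supercuspidal representations $\iota_\ell\tilde\pi$, $\tilde\pi\in\tilde\Pi_{\tilde\phi}(G)$. Subtracting, $\Err$ is simultaneously a virtual representation with character vanishing on $G(K)_{\sreg,\ellip}$ (hence a $\bb Z$-combination of properly induced representations) and a $\bb Z$-combination of supercuspidal representations; since supercuspidal representations are linearly independent from properly parabolically induced ones in $\bx K_0(G,\ovl{\bb Q_\ell})$ (a supercuspidal never appears as a subquotient of $\bx I_P^G(\sigma)$ for a proper $P$, by Bernstein--Zelevinski cuspidal support theory), this forces $\Err=0$ in $\tilde{\bx K}_0(G,\ovl{\bb Q_\ell})$.

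The main obstacle is purely bookkeeping rather than conceptual: one must check that every step of the HKW trace-formula argument is compatible with passage to the $\sim_\varsigma$-quotient in case O2, i.e.\ that the $\varsigma$-action commutes with all the geometric operations (the Hecke operator $\bx T_\mu$, the excursion operators, the Lefschetz--Verdier trace pairing, and the basic uniformization isomorphism) and that the transfer factors $\Delta[\mfk m,\mfk e,z_{b_0}]$ used in \Cref{endoslicindiikehrnieiis} are the ones that actually appear in the HKW computation. A secondary point requiring care is that in case O2 the weak endoscopic character relation of \cite{CPZ25} is only available after averaging over $\OAut_N$; but since both sides of the asserted identity have already been pushed into $\tilde{\bx K}_0$, this averaging is exactly what is needed, so no genuine loss occurs. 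Once these compatibilities are in place, the argument is a routine transcription.
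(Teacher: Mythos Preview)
Your proposal is correct and takes essentially the same approach as the paper: the paper also follows \cite{HKW22} with the endoscopic character relations \Cref{endoslicindiikehrnieiis} substituted in, organizing the computation as two separate propositions (\Cref{ienicehifnHneisn} for the trace formula and \Cref{hteoeomiiiehinidiss} for the endoscopic identity on $G(K)_{\sreg,\ellip}$) and then combining them. Your argument for $\Err=0$ is exactly the paper's, via \Cref{ienfimeoifiems} and \cite{HKW22}*{Theorem C.1.1}; the reference to \Cref{pdidiiidjifenis} is unnecessary here since the multiplicity formula $\dim\Hom_{\mfk S_{\tilde\phi}}(\delta[\tilde\pi,\tilde\rho],\mcl T_\mu)$ falls out directly from the Fourier inversion in the proof of \Cref{hteoeomiiiehinidiss}, and \Cref{speeorjoiehgiejmsiws} is likewise not needed.
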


We can apply Theorem~\ref{coalidhbinifw222} to $(b,\mu)=(b_1, \mu_1)$ defined in \S\ref{combianosiLoifn}. Then it follows from Theorem~\ref{pdidiiidjifenis} that we obtain the following corollary:

\begin{cor}\label{Haninidkeiehiens}
Suppose $\tilde\phi\in\tilde\Phi_2(G^*)$ is a discrete $L$-parameter and
\begin{equation*}
\tilde\phi^\GL=\phi_1\oplus\cdots\oplus\phi_k\oplus\phi_{k+1}\oplus\cdots\oplus\phi_r
\end{equation*}
where the $\phi_i$ are irreducible representations of $W_{K_1}\times\SL_2(\bb C)$ of dimension $d_i$ for each $i$, and $d_i$ is odd if and only if $i\le k$. Let $\mu_1$ be the dominant cocharacter of $G^*_{\ovl K}$ defined in \eqref{idnifdujiherheins}, and $b_1\in B(G^*, b_0, \mu_1)_\bas$ be the unique basic element. For
\begin{equation*}
\tilde\rho=\tilde\pi_{[I]}\in\tilde\Pi_{\tilde\phi}(G_{b_1}),\qquad\#I\equiv\frac{\kappa_{b_0}(-\uno)+1}{2}\modu2,
\end{equation*}
one has
\begin{equation*}
\Mant_{G, b_1, \mu_1}(\iota_\ell\tilde\pi_{[I]})=\Mant_{G, b_1, \mu_1^\bullet}(\iota_\ell\tilde\pi_{[I]})=\sum_{i\in [r]_+}d_i[\iota_\ell\tilde\pi_{[I\oplus\{i\}]}]+\Err
\end{equation*}
in $\tilde{\bx K}_0(G, \ovl{\bb Q_\ell})$. Here $\mu_1^\bullet=-w_0(\mu_1)$ is the dominant cocharacter conjugate to $\mu_1^{-1}$, and $\Err\in\tilde{\bx K}_0(G, \ovl{\bb Q_\ell})$ is a virtual representation whose character vanishes on $G(K)_{\sreg, \ellip}$.
\end{cor}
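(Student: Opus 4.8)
The plan is to combine \Cref{coalidhbinifw222} with the combinatorial computation of Hom-spaces in \Cref{pdidiiidjifenis}. First I would apply \Cref{coalidhbinifw222} to the pair $(b, \mu) = (b_1, \mu_1)$, where $\mu_1$ is the minuscule dominant cocharacter defined in \Cref{idnifdujiherheins} and $b_1$ is the unique non-trivial basic element of $B(G)_\bas$, which coincides with the unique basic element of $B(G^*, b_0, \mu_1)_\bas$ as remarked after \eqref{seinieunifnws}. Since $\mu_1$ is minuscule, the highest weight tilting module $\mcl T_{\mu_1}$ equals the usual highest weight module, which by the last sentence before \Cref{pdidiiidjifenis} is exactly the standard representation $\hat\Std_G$ of $\hat G$. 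Then \Cref{coalidhbinifw222} gives
\begin{equation*}
\Mant_{G, b_1, \mu_1}[\iota_\ell\tilde\pi_{[I]}] = \sum_{\tilde\pi\in\tilde\Pi_{\tilde\phi}(G)}\dim\Hom_{\mfk S_{\tilde\phi}}(\delta[\tilde\pi, \tilde\pi_{[I]}], \mcl T_{\mu_1})[\iota_\ell\tilde\pi] + \Err,
\end{equation*}
and it remains to evaluate the multiplicities $\dim\Hom_{\mfk S_{\tilde\phi}}(\delta[\tilde\pi, \tilde\pi_{[I]}], \hat\Std_G\circ\phi)$ as $\tilde\pi$ ranges over $\tilde\Pi_{\tilde\phi}(G)$.

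Next I would substitute the explicit formula from \Cref{pdidiiidjifenis}, which computes $\Hom_{\mfk S_{\tilde\phi}}(\delta[\tilde\pi, \tilde\rho], \tilde\phi^\GL)$ as a $W_{K_1}$-representation (and hence its dimension) in terms of whether $\tilde\pi = \tilde\pi_{[I\oplus\{i\}]}$ for some $i \in [r]_+$. Running over all $\tilde\pi \in \tilde\Pi_{\tilde\phi}(G)$ — equivalently, over all $[J] \in \mrs P([r]_+)/\sim_k$ with $\#[J]$ of the parity opposite to that of $\#[I]$, i.e. with $\#[J] \equiv \frac{\kappa_{b_0}(-1)-1}{2}\pmod 2$ — the contributions are indexed precisely by those $i$ with $[I\oplus\{i\}]$ having the correct parity, which is all $i\in[r]_+$ since $[I]$ and $[I\oplus\{i\}]$ always differ in parity of cardinality; the dimension contributed by the index $i$ is $d_i = \dim\phi_i$. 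Summing these (and noting that in the $k = 2$ case the ``$\phi_1 + \phi_2$'' term for $i \in [2]_+$ still contributes total dimension $d_1 + d_2$, consistent with the uniform formula $\sum_{i\in[r]_+}d_i[\iota_\ell\tilde\pi_{[I\oplus\{i\}]}]$ once one keeps track of the equivalence $[I\oplus\{1\}] = [I\oplus\{2\}]$ modulo $\sim_k$), I obtain exactly $\sum_{i\in[r]_+}d_i[\iota_\ell\tilde\pi_{[I\oplus\{i\}]}] + \Err$. The statement about $\mu_1^\bullet = -w_0(\mu_1)$ follows because $\hat\Std_G$ is self-dual (symplectic in case O1, orthogonal in case O2, and $\GL$-self-dual after the conjugate-dual twist in case U), so $\mcl T_{\mu_1^\bullet} \cong \mcl T_{\mu_1}^\vee \cong \mcl T_{\mu_1}$, and the same computation applies verbatim; alternatively one invokes the general symmetry $\Mant_{G, b, \mu}(\rho^\vee) \cong \Mant_{G, b, \mu^\bullet}(\rho)^\vee$ together with the fact that the $L$-packet parametrization is compatible with contragredients.

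The main obstacle, and essentially the only non-formal point, is bookkeeping the $\sim_k$-equivalence classes correctly in case O2 when $k = 2$: there the two indices $1, 2 \in [k]_+$ produce the same element $[I\oplus\{1\}] = [I\oplus\{2\}]$ of $\mrs P([r]_+)/\sim_k$, so the single representation $\tilde\pi_{[I\oplus\{1\}]}$ receives a $\Hom$-space of dimension $d_1 + d_2$ rather than $d_i$ alone, and one must check this is consistent with writing the sum as $\sum_{i\in[r]_+}d_i[\iota_\ell\tilde\pi_{[I\oplus\{i\}]}]$ — it is, precisely because in that sum the terms $i = 1$ and $i = 2$ both point to the same $\tilde\pi_{[I\oplus\{1\}]}$ and add up. The error term $\Err$ is simply carried along from \Cref{coalidhbinifw222} unchanged, and its character vanishes on $G(K)_{\sreg, \ellip}$ by construction; no further analysis of it is needed here. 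All the hard content — the weak Kottwitz conjecture with the endoscopic-character-relation input in case O2 — is already packaged in \Cref{coalidhbinifw222}, so this corollary is a direct consequence.
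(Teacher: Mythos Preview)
Your proposal is correct and matches the paper's approach exactly: the paper simply states that the corollary follows by applying \Cref{coalidhbinifw222} to $(b,\mu)=(b_1,\mu_1)$ and invoking \Cref{pdidiiidjifenis}, and you have spelled out precisely this deduction, including the $k=2$ bookkeeping that the paper leaves implicit. Your handling of the $\mu_1^\bullet$ equality via self-duality of $\mcl T_{\mu_1}$ as an $\mfk S_{\tilde\phi}$-module (all characters of $\mfk S_{\tilde\phi}$ being $\pm1$-valued, hence self-dual) is a clean justification of a point the paper asserts without comment.
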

\begin{cor}\label{coroellaoslisenifes}
If $\tilde\phi\in\tilde\Phi_{\bx{sc}}(G^*)$ is a supercuspidal $L$-parameter, then all representations $\tilde\pi\in \tilde\Pi_{\tilde\phi}(G)$ have the same Fargues--Scholze $L$-parameter $\tilde\phi_{\iota_\ell\tilde\pi}^\FS$.
\end{cor}
\begin{proof}
Let $I\subset [r]_+$ with $\#[I]\equiv\frac{\kappa_{b_0}(-\uno)+1}{2}\modu2$. For every $i\in [r]_+$, it follows from Corollary~\ref{Haninidkeiehiens} that $\tilde\pi_{[I\oplus\{i\}]}$ appears in $\Mant_{G, b_1, \mu_1}(\iota_\ell\tilde\pi_{[I]})$. Here we use that the error term has no supercuspidal constituents. Proposition~\ref{ienfimeoifiems} therefore
gives
\begin{equation*}
\tilde\phi_{\iota_\ell\tilde\pi_{[I\oplus\{i\}]}}^\FS=\tilde\phi_{\iota_\ell\tilde\pi_{[I]}}^\FS=\tilde\phi_{\iota_\ell\tilde\pi_{[I\oplus\{j\}]}}^\FS
\end{equation*}
for any $i, j\in [r]_+$. This implies that $\tilde\phi_{\iota_\ell\tilde\pi_{[J]}}^\FS=\tilde\phi_{\iota_\ell\tilde\pi_{[J']}}^\FS$ for any $J\subset J'\subset [r]_+$ with $\#J'=\#J+1$. Clearly, this implies that $\tilde\phi_{\iota_\ell\tilde\pi_{[J]}}^\FS=\tilde\phi_{\iota_\ell\tilde\pi_{[J']}}^\FS$ for any $J, J'\subset [r]_+$ with $\#J\equiv \#J'\modu2$.
\end{proof}

We first recall from~\cite[Definition 3.2.4]{HKW22} the transfer map from conjugation-invariant functions on $G_b(K)_\sreg$ to conjugation-invariant functions on $G(K)_\sreg$ when $b\in B(G)_\bas$ is basic.

\begin{defi}
There is a diagram of topological spaces
\begin{equation*}
\begin{tikzcd}
&\bx{Rel}_b\ar[ld]\ar[rd] &\\
G(K)_\sreg\sslash G(K) &  & G_b(K)_\sreg\sslash G_b(K),
\end{tikzcd}
\end{equation*}
where $\bx{Rel}_b$ is the set of $G(K)\times G_b(K)$-conjugacy classes of triples $(g, g', \lbd)$ such that  
\begin{itemize}
\item
$g\in G(K)_\sreg$ and $g'\in G_b(K)_\sreg\subset G(\breve K)$ are stably conjugate, i.e., conjugate under the action of $G(\breve K)$,
\item
$\lbd\in X_\bullet(Z_G(g))$ satisfies $\kappa_{Z_G(g)}(\inv[b](g, g'))$ agrees with the image of $\lbd$ in $X_\bullet(Z_G(g))_{\Gal_K}$. Here $\inv[b](g, g')$ is the class of $y^{-1}b\vp_K(y)$ in $B(Z_G(g))$, where $y\in G(\breve K)$ satisfies $g'=ygy^{-1}$. This class is independent of the choice of $y$; see~\cite[Definition 3.2.2, Fact 3.2.3]{HKW22}.
\end{itemize}
Here $(z, z')\in G(K)\times G_b(K)$ acts by conjugation on such triples by
\begin{equation*}
\ad(z, z')(g, g', \lbd)=(\ad(z)g, \ad(z')g', \ad(z)\lbd),
\end{equation*}
and $\bx{Rel}_b$ is given the subspace topology by the inclusion $\bx{Rel}_b\subset (G(K)\times G_b(K)\times X_\bullet(G))/(G(K)\times G_b(K))$ with $X_\bullet(G)$ being discrete.
\end{defi}

We recall the following Hecke transfer map from \cite[Definition 3.2.7, Definition 6.3.4]{HKW22}.
\begin{defi}
We define a Hecke transfer map
\begin{equation*}
T_{b, \mu}^{G_b\to G}: C(G_b(K)_\sreg\sslash G_b(K))\to C(G(K)_\sreg\sslash G(K))
\end{equation*}
such that 
\begin{equation*}
\Bkt{T_{b, \mu}^{G_b\to G}(f')}(g)=(-1)^{\bra{\mu, 2\rho_{G^*}}}\sum_{(g, g', \lbd)\in \bx{Rel}_b}f'(g')\dim\mcl T_\mu[\lbd].
\end{equation*}
The representation $\mcl T_\mu$ has only finitely many weights, and the relevant fibers of $\bx{Rel}_b$ over a fixed strongly regular element are finite. Hence the above sum is finite. The same finiteness, together with the properness property of the relation used in the definition of the transfer map, shows that $T_{b,\mu}^{G_b\to G}(f')$ is compactly supported whenever $f'$ is compactly supported.

Moreover, on the strongly regular elliptic locus, $T_{b, \mu}^{G_b\to G}$ can be extended to a Hecke map on invariant distributions
\begin{equation*}
\mcl T_{b, \mu}^{G_b\to G}: \Dist(G_b(K)_{\sreg, \ellip})^{G_b(K)}\to \Dist(G(K)_{\sreg, \ellip})^{G(K)}.
\end{equation*}
\end{defi}

We then have the following result of Hansen, Kaletha and Weinstein~\cite[Theorem 6.5.2]{HKW22}:

\begin{prop}\label{ienicehifnHneisn}
For any $\rho\in\Pi(G_b)$ and $f\in C_c(G(K)_{\sreg, \ellip})$,
\begin{equation*}
\tr\paren{f|\iota_\ell^{-1}\Mant_{G, b, \mu}(\iota_\ell\rho)}=\Bkt{\mcl T_{b, \mu}^{G_b\to G}(\Theta_\rho)}(f).
\end{equation*}
In particular, the virtual character of $\iota_\ell^{-1}\Mant_{G, b, \mu}(\iota_\ell\rho)$ restricted to $G(K)_{\sreg, \ellip}$ is equal to $T_{b, \mu}^{G_b\to G}(\Theta_\rho)$.
\end{prop}

We now use the weak endoscopic character identities to prove an analogue of \cite[Theorem 3.2.9]{HKW22}, which relates the Hecke transfer map $T_{b, \mu}^{G_b\to G}$ to classical LLCs of $G$ and $G_b$:

\begin{prop}\label{hteoeomiiiehinidiss}
Assume that $b\in B(G^*, b_0, \mu)_\bas$ is basic, and let $\phi\in\Phi_2(G^*)$ be a discrete $L$-parameter. Then, for every $\tilde\rho\in\tilde\Pi_{\tilde\phi}(G_b)$ and every $g\in G(K)_\sreg$ that transfers to $G_b(K)$,
\begin{equation*}
\Bkt{T_{b, \mu}^{G_b\to G}\Theta_{\tilde\rho}}(g)=\sum_{\tilde\pi\in\tilde\Pi_{\tilde\phi}(G)}\dim\Hom_{\mfk S_{\tilde\phi}}(\delta[\tilde\pi, \tilde\rho],\mcl T_\mu)\Theta_{\tilde\pi}(g).
\end{equation*}
\end{prop}
\begin{proof}
We modify the argument of \cite[\S 3.3]{HKW22}. We first clarify
the three related endoscopic elements that occur in the comparison
between the rigid and isocrystal normalizations. Let
\begin{equation*}
\pr_\phi,\nu_\phi:S_\phi^+\longrightarrow S_\phi
\end{equation*}
denote, respectively, the natural projection and the comparison
homomorphism defined in \cite[(4.7)]{Kal18}. Since $\phi$ is
discrete, we identify $S_\phi^\natural=S_\phi=\mfk S_\phi$.

Choose $\dot s\in S_\phi^+$, and set
\begin{equation*}
s=\pr_\phi(\dot s), \qquad s^\natural=\nu_\phi(\dot s).
\end{equation*}
Here $\dot s$ is the refined endoscopic element used in the rigid
normalization, $s$ is its ordinary image and determines the underlying
endoscopic group, while $s^\natural$ is the corresponding element in
the isocrystal normalization and is the element at which the packet
characters and $\mcl T_\mu$ are evaluated. By
\cite[\S 4.2]{Kal18}, the elements $s$ and $s^\natural$ differ by an
element of $Z(\hat G)^{\Gal_K}$. Consequently, they have the same image
\begin{equation*}
\ovl s\in\ovl{\mfk S}_\phi
\end{equation*}
and determine the same underlying endoscopic group and $L$-embedding.

Let
\begin{equation*}
\mfk e=(G^{\mfk e},s^\natural,\LL\xi^{\mfk e})
\end{equation*}
be the extended endoscopic triple associated with $s^\natural$, and
let $\phi^{\mfk e}\in\Phi_2(G^{\mfk e})$ be the parameter associated
with $\phi$. Its rigid refinement
\begin{equation*}
\dot{\mfk e}= (G^{\mfk e},\LL G^{\mfk e},\dot s,\LL\xi^{\mfk e})
\end{equation*}
is the refined endoscopic datum attached to $\dot s$ in \cite[(A.1.1)]{HKW22}.

For every $(g, g', \lbd)\in \bx{Rel}_b$, we have
\begin{eqnarray*}
&&e(G_b)\sum_{\tilde\rho'\in\tilde\Pi_{\tilde\phi}(G_b)}\iota_{\mfk w, b_0+b}(\tilde\rho')(s^\natural)\Theta_{\tilde\rho'}(g')\\
&\overset{\text{Theorem~\ref{endoslicindiikehrnieiis}}}{=}&\sum_{h\in G^{\mfk e}(K)_{\sreg}/{\bx{st.conj}}}\Delta[\mfk w, \mfk e, z_{b_0+b}](h, g')\bx S\Theta_{\tilde\phi^{\mfk e}}(h)\\
&\overset{\text{\cite[Lemma A.1.1]{HKW22}}}{=}&\sum_{h\in G^{\mfk e}(K)_{\sreg}/{\bx{st.conj}}}\Delta[\mfk w, \mfk e, z_{b_0}](h, g)\bra{\inv[b](g, g'), s_{h, g}^\natural}\bx S\Theta_{\tilde\phi^{\mfk e}}(h)\\
&=&\sum_{h\in G^{\mfk e}(K)_{\sreg}/{\bx{st.conj}}}\Delta[\mfk w, \mfk e, z_{b_0}](h, g)\lbd(s_{h, g}^\natural)\bx S\Theta_{\tilde\phi^{\mfk e}}(h).
\end{eqnarray*}
Here $s_{h,g}^\natural$ is obtained by transporting $s^\natural$ to
\begin{equation*}
\widehat{Z_G(g)}^{\Gal_K}
\end{equation*}
through the admissible isomorphism between the centralizers of $h$ and $g$. It is this transported element that is paired with $\inv[b](g,g')$ in \cite[Lemma A.1.1]{HKW22}.

We now multiply this expression by the kernel function $\dim\mcl T_\mu[\lbd]$, and sum over all $g'\in G_b(K)_\sreg\sslash G_b(K)$ and $\lbd\in X_\bullet(Z_G(g))$ such that  $(g, g', \lbd)\in \bx{Rel}_b$.
\begin{eqnarray*}
&&e(G_b)\sum_{(g', \lbd)}\sum_{\tilde\rho'\in\tilde\Pi_{\tilde\phi}(G_b)}\iota_{\mfk w, b_0+b}(\tilde\rho')(s^\natural)\Theta_{\tilde\rho'}(g')\dim\mcl T_\mu[\lbd]\\
&=&\sum_{h\in G^{\mfk e}(K)_{\sreg}/{\bx{st.conj}}}\Delta[\mfk w, \mfk e, z_{b_0}](h, g)\bx S\Theta_{\tilde\phi^{\mfk e}}(h)\sum_{(g', \lbd)}\lbd(s_{h, g}^\natural)\dim\mcl T_\mu[\lbd]\\
&=&\sum_{h\in G^{\mfk e}(K)_{\sreg}/{\bx{st.conj}}}\Delta[\mfk w, \mfk e, z_{b_0}](h, g)\bx S\Theta_{\tilde\phi^{\mfk e}}(h)\tr(\mcl T_\mu(s_{h, g}^\natural))\\
&=&\tr(\mcl T_\mu(s^\natural))\sum_{h\in G^{\mfk e}(K)_{\sreg}/{\bx{st.conj}}}\Delta[\mfk w, \mfk e, z_{b_0}](h, g)\bx S\Theta_{\tilde\phi^{\mfk e}}(h)\\
&\overset{\text{Theorem~\ref{endoslicindiikehrnieiis}}}{=}&\tr(\mcl T_\mu(s^\natural))e(G)\sum_{\tilde\pi\in\tilde\Pi_{\tilde\phi}(G)}\iota_{\mfk w, b_0}(\tilde\pi)(s^\natural)\Theta_{\tilde\pi}(g).
\end{eqnarray*}
Here the second and third equations hold by the argument of~\cite[p.~17]{HKW22}.

We multiply the above equation by $\iota_{\mfk w, b_0+b}(\tilde\rho)(s^\natural)^{-1}$, then as a function of $s^\natural\in \mfk S_{\tilde\phi}$, the left-hand side is invariant under translation by $z_\phi$, so the same holds for the right-hand side, and both sides become functions of $\ovl s\in\ovl{\mfk S}_\phi$. We now average over $\ovl s\in\ovl{\mfk S}_\phi$ to get
\begin{align*}
&\frac{1}{\#\ovl{\mfk S}_\phi}e(G_b)\sum_{\ovl s\in\ovl{\mfk S}_\phi}\sum_{(g', \lbd)}\sum_{\tilde\rho'\in\tilde\Pi_{\tilde\phi}(G_b)}\frac{\iota_{\mfk w, b_0+b}(\tilde\rho')}{\iota_{\mfk w, b_0+b}(\tilde\rho)}(s^\natural)\Theta_{\tilde\rho'}(g')\dim\mcl T_\mu[\lbd]\\
&=\frac{1}{\#\ovl{\mfk S}_\phi}e(G)\sum_{\ovl s\in\ovl{\mfk S}_\phi}\tr(\mcl T_\mu(s^\natural))\sum_{\tilde\pi\in\tilde\Pi_{\tilde\phi}(G)}\frac{\iota_{\mfk w, b_0}(\tilde\pi)}{\iota_{\mfk w, b_0+b}(\tilde\rho)}(s^\natural)\Theta_{\tilde\pi}(g),
\end{align*}
where we recall that $s^\natural\in\mfk S_\phi$ is a lift of $\ovl s$. By Fourier inversion, the left-hand side equals
\begin{equation*}
e(G_b)\sum_{(g', \lbd)}\Theta_{\tilde\rho}(g')\dim\mcl T_\mu[\lbd]=(-1)^{\bra{\mu, 2\rho_{G^*}}}e(G_b)\Bkt{T_{b, \mu}^{G_b\to G}\Theta_{\tilde\rho}}(g),
\end{equation*}
and the right-hand side equals
\begin{equation*}
e(G)\sum_{\tilde\pi\in\tilde\Pi_{\tilde\phi}(G)}\Theta_{\tilde\pi}(g)\frac{1}{\#\ovl{\mfk S}_{\tilde\phi}}\sum_{\ovl s\in\ovl{\mfk S}_{\tilde\phi}}\tr(\mcl T_\mu(s^\natural))\delta[\tilde\pi, \tilde\rho]^{-1}(s^\natural)=e(G)\sum_{\tilde\pi\in\tilde\Pi_{\tilde\phi}(G)}\dim\Hom_{\mfk S_{\tilde\phi}}(\delta[\tilde\pi, \tilde\rho],\mcl T_\mu)\Theta_{\tilde\pi}(g).
\end{equation*}
So the assertion is reduced to the identity $e(G)=(-1)^{\bra{\mu, 2\rho_{G^*}}}e(G_b)$, which is exactly \cite[(3.3.3)]{HKW22}.
\end{proof}

Now Theorem~\ref{coalidhbinifw222} follows from the above propositions in the same way as in the proof of \cite[Theorem 6.5.1]{HKW22}.

\begin{proof}[Proof of \textup{Theorem~\ref{coalidhbinifw222}}]
The equality in $\tilde{\bx K}_0(G(K))$ follows from Propositions~\ref{ienicehifnHneisn} and~\ref{hteoeomiiiehinidiss}.

It remains to prove the assertion about $\Err$. Choose a representative $\rho\in\Pi(G_b)$ of $\tilde\rho$. Let $\pi\in\Pi(G)$ have nonzero coefficient in
\begin{equation*}
\iota_\ell^{-1}\Mant_{G,b,\mu}(\iota_\ell\rho).
\end{equation*}
By Proposition~\ref{ienfimeoifiems},
\begin{equation*}
\phi_{\iota_\ell\pi}^\FS=\phi_{\iota_\ell\rho}^\FS.
\end{equation*}
The parameter on the right is supercuspidal by assumption. Suppose that $\pi$ were not supercuspidal. By the theory of cuspidal support, $\pi$ would be an irreducible subquotient of a normalized parabolic induction from a proper Levi subgroup of $G$. Compatibility of the Fargues--Scholze correspondence with parabolic induction, as stated in Theorem~\ref{compaitbsilFaiirfies}, would then imply that $\phi_{\iota_\ell\pi}^\FS$ factors through the $L$-group of a proper Levi subgroup. This contradicts its supercuspidality. Therefore
\begin{equation*}
\Mant_{G,b,\mu}(\iota_\ell\rho)
\end{equation*}
is a virtual sum of supercuspidal representations. The same is true of the explicit packet term in the formula, so $\Err$ is a virtual sum of supercuspidal representations.

On the other hand, the character of $\Err$ vanishes on the strongly regular elliptic locus. By \cite[Theorem~C.1.1]{HKW22}, its class is generated by normalized parabolic inductions from proper Levi subgroups. Such induced representations have no supercuspidal irreducible constituents. Since irreducible classes form a basis of the Grothendieck group, the subgroups generated respectively by supercuspidal and non-supercuspidal irreducible representations have zero intersection. Hence $\Err=0$.
\end{proof}

\section{Cohomology of orthogonal and unitary Shimura varieties}\label{codehfijiniSHimreis}\label{aoienitrniniishehneisi}

In this section, we compute the $\Pi$-isotypic component of the cohomology of Shimura varieties $\bSh(\mbf G, \mbf X)$ of orthogonal or unitary type related to the local group $G$ defined in \S\ref{theogirneidnis}, where $\Pi$ is a special cuspidal automorphic representation of $\mbf G(\Ade_f)$. These results are related to the cohomology of local shtuka spaces via the basic uniformization theorem stated in the next section, \S\ref{local-LGoabsiSHimief}.

Let $F$ be a totally real number field, and fix a nontrivial additive character $\uppsi_F$ of $F\bsh \Ade_F$, which extends to an additive character of any finite extension $F'/F$ by defining $\uppsi_{F'}\defining\uppsi_F\circ\tr_{F'/F}$. Let $F_1$ be either $F$ or a CM field containing $F$, and let $\cc\in \Gal(F_1/F)$ be the element with fixed field $F$. Let $\tau_0: F_1\to \bb C$ be a fixed embedding. Denote by $\chi_{F_1/F}: \Ade_F^\times/F^\times\to\{\pm1\}$ the character associated with $F_1/F$ via global class field theory. If $F_1\ne F$, choose a totally imaginary element $\daleth\in F_1^\times$, so each embedding $\tau: F\to \bb C$ extends to an embedding $\tau: F_1\to \bb C$ sending $\daleth$ to $\bb R_+\ii$. 

\subsection{The groups}\label{seubsienfiehtoenfeis}

Let $\mbf V$ be a vector space over $F_1$ equipped with a nondegenerate Hermitian $\cc$-sesquilinear form $\bra{-, -}$ on $\mbf V$, i.e.
\begin{equation*}
\bra{au+bv, w}=a\bra{u, w}+b\bra{v, w},
\end{equation*}
\begin{equation*}
\bra{v, w}=\bra{w, v}^\cc.
\end{equation*}

In Case O2, choose an arbitrary diagonal basis $\{v_1, \ldots, v_{\dim(\mbf V)}\}$ of $\mbf V$ over $F_1$ such that $\bra{v_i, v_i}=a_i\in F^\times$. Define
\begin{equation*}
\disc(\mbf V)=(-1)^{\binom{\dim(\mbf V)}{2}}2^{-\dim(\mbf V)}\prod_{i=1}^{\dim(\mbf V)}a_i
\end{equation*}
whose image in $F^\times/(F^\times)^2$ is independent of the basis chosen, and we write $\disc(\mbf G)=\disc(\mbf V)$.

Let $\bx U(\mbf V)\le \GL(\mbf V)$ be the algebraic subgroup defined by
\begin{equation*}
\bx U(\mbf V)=\{g\in \GL(\mbf V): \bra{gv, gw}=\bra{v, w}\forall v, w\in\mbf V\},
\end{equation*}
and let $\mbf G=\bx U(\mbf V)^\circ$ be the neutral component of $\bx U(\mbf V)$. Let $\mbf G^*$ be the unique quasisplit inner form of $\mbf G$ over $F$. Exactly one of the following cases occurs:
\begin{itemize}
\item[\textbf{O1}]
If $F_1=F$ and $\dim(\mbf V)=2n+1$ is odd, then $\mbf G^*=\SO_{2n+1}$.
\item[\textbf{O2}]
If $F_1=F$ and $\dim(\mbf V)=2n$ is even, then $\mbf G^*=\SO_{2n}^{\disc(\mbf V)}$ is the special orthogonal group associated with the quadratic space $\mbf V^*$ over $F$ of dimension $2n$, discriminant $\disc(\mbf V)$ such that the Hasse--Witt invariants of $\mbf V^*\otimes F_v$ are 1 for each $v\in\Pla_F$.
\item[\textbf{U}]
If $F_1\ne F$ and $\dim(\mbf V)=n$, then $\mbf G^*=\bx U(n)$ is the unitary group associated with the Hermitian space $\mbf V^*$ of dimension $n$ with respect to the quadratic extension $F_1/F$ such that the Hasse--Witt invariants of $\mbf V^*\otimes F_v$ are $1$ for each $v\in\Pla_F$.
\end{itemize}
We refer to cases O1 and O2 together as Case O. We assume further that
\begin{equation*}
\dim\mbf V\ge\begin{cases}2 &\text{in Case U}\\
5&\text{in Case O1}\\
6&\text{in Case O2}
\end{cases},
\end{equation*}
so that $\mbf G_\ad$ is always geometrically simple.

To unify notation, we write $n(\mbf G)=n(\mbf G^*)$ for the rank of $\mbf G_{\ovl F}$, define $N(\mbf G), d(\mbf G), b(\mbf G)$ analogous to the local case \eqref{iefieheinfneis}, and define $\disc(\mbf G)\defining\disc(\mbf V)$ in Case O2.

We fix a pinning $(\mbf B^*,\mbf T^*, \{X^*_\alpha\}_{\alpha\in\Delta})$ of $\mbf G^*$ by identifying $\mbf G^*=\bx U(\mbf V^*)^\circ$ for a suitable $\cc$-Hermitian space $\mbf V^*$ over $F_1$, and choosing a complete flag of totally isotropic subspaces in $\mbf V^*$. Together with the additive character $\uppsi_F$, this pinning determines a Whittaker datum $\mfk w$ for $\mbf G^*$.

It follows from the theorem of Hasse--Minkowski and Landherr \cite[Theorem~2.1, Theorem~3.1]{Gro21} and \cite[Lemma 2.1]{GGP12} that pure inner twists of $\mbf G^*$ are in bijection with isometry classes of $\cc$-Hermitian spaces $\mbf V$ with respect to  $F_1/F$ of dimension $d(\mbf G^*)$ (and also with discriminant $\disc(\mbf V)$ in Case O2), and these isometry classes are determined by isometry classes of localizations $\mbf V_v$ for each $v\in\Pla_F$. In particular, $\mbf G$ can always be realized as a pure inner twist $(\mbf G, \bm\varrho, \bm z)$ of $\mbf G^*$.

We fix an isomorphism
\begin{equation*}
\hat{\mbf G}\cong \begin{cases}
\Sp_{N(\mbf G)}(\bb C) &\text{in Case O1}\\
\SO_{N(\mbf G)}(\bb C) &\text{in Case O2}\\
\GL_{N(\mbf G)}(\bb C) &\text{in Case U}\\
\end{cases},
\end{equation*}
and fix a pinning $(\hat{\mbf T}, \hat{\mbf B}, \{X_\alpha\}_{\alpha\in\Delta})$ where $\hat{\mbf T}$ is the diagonal torus, $\hat{\mbf B}$ is the group of upper triangular matrices, and $\{X_\alpha\}_{\alpha\in\Delta}$ is the set of standard root vectors. We write $\LL{\mbf G}=\hat{\mbf G}\rtimes W_F$ for the Langlands L-group in the Weil form. Note that $\hat{\mbf G}$ has a standard representation $\hat\Std=\hat\Std_{\mbf G}: \hat{\mbf G}\to \GL_{N(\mbf G), \bb C}$.

As in the local case \S\ref{theogirneidnis}, there exists an automorphism $\theta$ of $\mbf G^\GL\defining \Res_{F_1/F}\GL_{N(\mbf G)}$ such that  $\mbf G^*$ determines an element in $\mcl E_\ellip(\mbf G^\GL\rtimes\theta)$, and the description of the isomorphism classes of elliptic endoscopic triples $\mfk e\in \mcl E_\ellip(\mbf G)$ is similar to the local case.

Finally, we define a central extension $\mbf G^\sharp$ of $\Res_{F/\bb Q}\mbf G$ as follows: 
\begin{itemize}
\item
In Case O, we imitate \cite[p.~163]{Car86}. Let $\Cl(\mbf V)$ and $\Cl^\circ(\mbf V)$ be the Clifford algebra and even Clifford algebra, respectively. Note that there exists an embedding $\mbf V\subset \Cl(\mbf V)$ and an anti-involution $*$ on $\Cl(\mbf V)$ (the main involution) \cite[\S 1.1]{Mad16}. Let $\GSpin(\mbf V)$ be the stabilizer in $\Cl^\circ(\mbf V)^\times$ of $\mbf V\subset \Cl(\mbf V)$ with respect to  the conjugation action of $\Cl^\circ(\mbf V)^\times$ on $\Cl(\mbf V)$, which is a reductive group over $F$. The conjugation action of $\GSpin(\mbf V)$ on $\mbf V$ induces an exact sequence of reductive groups over $F$:
\begin{equation}\label{osneihfnemsos}
1\to \GL_{1, F}\to \GSpin(\mbf V)\to \mbf G\to 1.
\end{equation}
There is a similitude map $\nu: \GSpin(\mbf V)\to \GL_{1, F}: g\mapsto g^*g$ whose restriction on the central torus is $z\mapsto z^2$. The kernel $\Spin(\mbf V)\defining \ker(\nu)$ is called the spinor group of $\mbf V$.

Fix an imaginary quadratic element $\daleth\in \bb R_+\ii$ (in particular $\daleth^2\in \bb Q_-$). Define
\begin{equation*}
\mbf G_\daleth=\paren{(\Res_{F/\bb Q}\GSpin(\mbf V))\times\Res_{F(\daleth)/\bb Q}\GL_1}/\Res_{F/\bb Q}\GL_1,
\end{equation*}
where $\Res_{F/\bb Q}\GL_1$ is embedded anti-diagonally. Define
\begin{equation*}
\nu^\sharp:\mbf G_\daleth\to\Res_{F/\bb Q}\GL_1:
(g, t)\mapsto \nu(g)\Nm_{F(\daleth)/F}(t),
\end{equation*} 
and we define $\mbf G^\sharp\subset \mbf G_\daleth$ to be the inverse image of the subtorus $\GL_1\subset \Res_{F/\bb Q}\GL_1$ under $\nu^\sharp$. Then
\begin{equation*}
[\mbf G^\sharp, \mbf G^\sharp]=\Res_{F/\bb Q}\Spin(\mbf V),
\end{equation*}
and the short exact sequence~\eqref{osneihfnemsos} induces a short exact sequence
\begin{equation*}
1\to\mbf Z^{\bb Q}\to \mbf G^\sharp\to \Res_{F/\bb Q}\mbf G\to 1,
\end{equation*}
where
\begin{equation*}
\mbf Z^{\bb Q}=\{z\in \Res_{F(\daleth)/\bb Q}\GL_1: \Nm_{F(\daleth)/F}(z)\in\bb Q^\times\}.
\end{equation*}
\item
In Case U, following \cite{RSZ20}, let $\GU^{\bb Q}(\mbf V)$ be the reductive group over $\bb Q$ defined by
\begin{equation*}
\GU^{\bb Q}(\mbf V)=\{(g, \lbd)\in \GL(\mbf V)\times \GL_1: \bra{gv, gw}=\lbd\bra{v, w}\}.
\end{equation*}
It is equipped with a similitude character
\begin{equation*}
\nu: \GU^{\bb Q}(\mbf V)\to \GL_1, \qquad (g, \lbd)\mapsto \lbd.
\end{equation*}
Note that $\GU^{\bb Q}(\mbf V)$ is a subgroup of the Weil restriction of the unitary similitude group $\GU(\mbf V)$. Define
\begin{equation*}
\mbf Z^{\bb Q}=\{z\in \Res_{F_1/\bb Q}\GL_1: \Nm_{F_1/F}(z)\in\bb Q^\times\}.
\end{equation*}
It is naturally equipped with a map to $\GL_1$. We then define a reductive group $\mbf G^\sharp$ over $\bb Q$ by
\begin{equation*}
\mbf G^\sharp=\GU^{\bb Q}(\mbf V)\times_{\GL_1}\mbf Z^{\bb Q}.
\end{equation*}
It is isomorphic to $\Res_{F/\bb Q}\mbf G\times\mbf Z^{\bb Q}$ via the isomorphism
\begin{equation*}
\mbf G^\sharp\cong \Res_{F/\bb Q}\mbf G\times\mbf Z^{\bb Q}: (g, z)\mapsto (z^{-1}g, z).
\end{equation*}
\end{itemize}

\subsection{Endoscopic classification of automorphic representations}

Let $\mbf G^*$ be as in~\S\ref{seubsienfiehtoenfeis} and $(\mbf G, \bm\varrho, \bm z)$ be a pure inner form of $\mbf G^*$. To prepare for the Langlands--Kottwitz method in~\S\ref{Lanlgan-Kotiemethiems}, we recall some results on endoscopic classifications of automorphic representations of orthogonal and unitary groups over a totally real field, following \cite{Art13, KMSW, Ish24, C-Z24}.

For each Archimedean place $\tau$ of $F$, fix a maximal compact subgroup $\mdc K_\tau$ of $\mbf G(F_\tau)$. For all but finitely many finite places $v$ of $F$, the inner twist $(\varrho_v, z_v)$ is trivial, and we fix a hyperspecial subgroup $\mdc K_v^\hs$ of $\mbf G(F_v)$ compatible with $\mfk w_v$, the localization at $v$ of the Whittaker datum $\mfk w$. In Case O2, there exists a nontrivial outer automorphism $\varsigma$ of $\mbf G^*$ which preserves $\mfk w$, thus also $\mdc K_v^\hs$; in the other cases we take $\varsigma=1$. Let $\mcl A(\mbf G)$ denote the space of automorphic forms for $\mbf G$ in the sense of~\cite{B-J79}, with respect to the fixed compact subgroup $\mdc K_\infty\defining \prod_{\tau\in\infPla_F}\mdc K_\tau$. We write $\mcl A_2(\mbf G)$ for the space of square-integrable automorphic forms for $\mbf G$. It is a module over the restricted tensor product
\begin{equation*}
\tilde{\mcl H}(\mbf G(\Ade_F))\defining\bigotimes_v\nolimits'\tilde{\mcl H}(\mbf G(F_v))
\end{equation*}
of the $\varsigma$-invariants of local Hecke algebras with respect to the characteristic function of $\mdc K_v^\hs$ (defined for all but finitely many finite places $v$ of $F$).

The space $\mcl A_2(\mbf G)$ can be decomposed into near equivalence classes of representations. Two irreducible representations $\pi=\otimes_v'\pi_v$ and $\pi'=\otimes'_v\pi_v'$ are called nearly equivalent if $\pi_v$ and $\pi_v'$ are isomorphic for all but finitely many places $v\in\Pla_F$. The decomposition into near equivalence classes will be expressed in terms of elliptic global $A$-parameters. An elliptic global $A$-parameter $\bm\psi\in\Psi_\ellip(\mbf G)$ is a formal finite sum of pairs
\begin{equation*}
\bm\psi=\sum_i(\Pi_i, d_i),
\end{equation*}
where each $\Pi_i$ is an irreducible cuspidal automorphic representation of $\GL_{n_i}(\Ade_{F_1})$ that is conjugate self-dual of sign $(-1)^{d_i-1}b(\mbf G)$ (defined similarly as in \S\ref{IFNieniehifeniws}), such that 
\begin{itemize}
\item
$\sum_in_id_i=N(\mbf G)$.
\item
$(\Pi_i, d_i)\ne (\Pi_j, d_j)$ if $i\ne j$,
\item
In Case O2, we assume $\prod_i\omega_i^{d_i}=\chi_{F\paren{\sqrt{\disc(\mbf G)}}/F}$, where $\omega_i$ is the central character of $\Pi_i$, and $\chi_{F\paren{\sqrt{\disc(\mbf G)}}/F}$ is the quadratic character of $\Ade_F^\times/F^\times$ corresponding to the extension $F(\sqrt{\disc(\mbf G)})/F$ via global class field theory.
\end{itemize}
The parameter $\bm\psi$ is called \tbf{generic} (or \tbf{tempered}) if $d_i=1$ for all $i$, in which case we abbreviate $(\Pi_i, 1)$ to $\Pi_i$.

Given an elliptic global $A$-parameter
\begin{equation*}
\bm\psi=(\Pi_1, d_1)+\cdots+(\Pi_k, d_k)
\end{equation*}
for $\mbf G$, define the formal extended component group
\begin{equation*}
\mfk S_{\bm\psi}^\sharp\defining \bplus_i(\bb Z/2)e_i,
\end{equation*}
where $e_i$ is a formal coordinate associated with the summand $\Pi_i$. In Case O2, there is a map
\begin{equation*}
\det\nolimits_{\bm\psi}: \mfk S_{\bm\psi}^\sharp\to \bb Z/2, \quad \sum_{1\le i\le k}x_ie_i\mapsto \sum_{1\le i\le k}n_id_ix_i,
\end{equation*}
and we define the formal component group $\mfk S_{\bm\psi}\defining \ker(\det_{\bm\psi})$. To unify notation, in Case O1 and Case U, we set $\mfk S_{\bm\psi}=\mfk S_{\bm\psi}^\sharp$.  Moreover, we define the quotient group
\begin{equation*}
\ovl{\mfk S}_{\bm\psi}\defining \mfk S_{\bm\psi}/\bra{e_1+\cdots+e_k}.
\end{equation*}
Finally, Arthur defines a canonical character $\ve_{\bm\psi}$ of $\mfk S_{\bm\psi}$ as in~\cite[Equation~(1.5.6)]{Art13}, which is trivial if $\bm\psi$ is generic.

For each $v\in\Pla_F$, we can define the formal localization $\tilde{\bm\psi}_v\defining\sum_i(\phi_{i, v}, d_i)$: If $\Pla_{F_1}(v)$ is a singleton, which we also denote by $v$, then $\phi_{i, v}$ is a $n_i$-dimensional representation of $W_{(F_1)_v}\times \SL_2(\bb C)$ that corresponds to $\Pi_{i, v}$ via the local Langlands correspondence, which is conjugate self-dual of sign $(-1)^{d_i-1}b(\mbf G)$. We can associate to $\tilde{\bm\psi}_v$ a formal sum
\begin{equation*}
\tilde\phi_{\tilde{\bm\psi}_v}^\GL\defining\sum_i\paren{\paren{\phi_{i, v}\otimes\largel{-}_{(F_1)_v}^{\frac{d_i-1}{2}}}\oplus\paren{\phi_{i, v}\otimes\largel{-}_{(F_1)_v}^{\frac{d_i-3}{2}}}\oplus\cdots\oplus\paren{\phi_{i, v}\otimes\largel{-}_{(F_1)_v}^{\frac{1-d_i}{2}}}},
\end{equation*}
which may be regarded as an element of $\tilde\Phi(\mbf G_v)$.

On the other hand, if $\#\Pla_{F_1}(v)=2$, then we are in Case U. If we write $\Pla_{F_1}(v)=\{w, w^\cc\}$, then $\Pi_{i, w}\cong \Pi_{i, w^\cc}^\vee$ under the identifications $\GL_{n_i}(F_{1,w})\cong \GL_{n_i}(F_{1, w^\cc})$. We define $\phi_{i, v}$ to be the $n_i$-dimensional representation of $W_{F_v}\times \SL_2(\bb C)$ that corresponds to $\Pi_{i, w}$ under the identification $w: F_{1,w}\xr\sim F_v$. This is independent of the choice of $w$ after taking into account the corresponding change of the identification $\mbf G_v\simeq \GL_{N(\mbf G),F_v}$. We define $\tilde\phi_{\tilde{\bm\psi}_v}^\GL$ as before.

We then have the following theorem, usually called ``Arthur's multiplicity formula'':

\begin{thm}[\cite{Art13, Mok15, KMSW, Ish24, C-Z24}]\label{endoslcinidhnfineism}
If $\bm\psi$ is an elliptic global $A$-parameter for $\mbf G$, we denote by $\mcl A_{2, \bm\psi}(\mbf G)$ the direct sum of irreducible admissible representations $\pi$ in $\mcl A_2(\mbf G)$ such that $\tilde\phi_{\pi_v}^\GL\cong \tilde\phi_{\tilde{\bm\psi}_v}^\GL$ for all but finitely many places $v\in\Pla_F$ (here if $\#\Pla_{F_1}(v)=2$ with $\Pla_{F_1}(v)=\{w, w^\cc\}$, then we write $\tilde\phi_{\pi_v}^\GL$ for the classical $L$-parameter of $\mbf G_v\cong \GL_{N(\mbf G), F_v}$ corresponding to $\pi_v$ composed with the identification $w: F_{1,w}\xr\sim F_v$). Then there is a natural decomposition of $\tilde{\mcl H}(\mbf G(\Ade_F))$-modules
\begin{equation*}
\mcl A_2(\mbf G)=\bplus_{\bm\psi}\mcl A_{2, \bm\psi}(\mbf G),
\end{equation*}
where $\bm\psi$ runs through elliptic global $A$-parameters for $\mbf G$. An irreducible subrepresentation $\pi$ of $\mcl A_{2, \bm\psi}(\mbf G)$ is said to have formal parameter $\bm\psi$.

Furthermore, for each generic elliptic global $A$-parameter $\bm\psi=\Pi_1+\cdots+\Pi_k$ for $\mbf G$, there exists a natural diagonal map
\begin{equation*}
\Delta: \mfk S_{\bm\psi}\to \mfk S_{\bm\psi, \Ade_F}\defining \prod_{v\in\Pla_F}\mfk S_{\tilde\phi_{\tilde{\bm\psi}_v}},
\end{equation*}
and a decomposition of $\tilde{\mcl H}(\mbf G(\Ade_F))$-modules:
\begin{equation*}
\mcl A_{2, \bm\psi}(\mbf G)\cong m_{\bm\psi}\bplus_{\substack{\eta\in\Irr(\mfk S_{\bm\psi, \Ade_F})\\ \Delta^*(\eta)=\ve_{\bm\psi}}}\tilde\pi_{\mfk w, z}(\eta),
\end{equation*}
where each $\tilde\pi_{\mfk w, z}(\eta)=\otimes_v\tilde\pi_{\mfk w_v, z_v}(\tilde{\bm\psi}_v, \eta_v)$ is the global restricted tensor product of local representations, cf.~\textup{Theorem~\ref{coeleninifheihsn}}. Moreover, the multiplicity $m_{\bm\psi}$ satisfies $m_{\bm\psi}=1$ unless we are in case \textup{O2} and $n_i$ is even for each $i$, in which case $m_{\bm\psi}=2$.
\end{thm}
\begin{proof}
In Case O1, this is established in \cite{Art13} when $\mbf G$ is quasisplit, and established in \cite[Theorem 3.16, 3.17]{Ish24} when $\mbf G$ is non-quasisplit. In Case O2, this is established in \cite{Art13} when $\mbf G$ is quasisplit, and in \cite[Theorem 2.1, 2.6]{C-Z24} when $\mbf G$ is non-quasisplit. In Case U, this is established in \cite{Mok15} when $\mbf G$ is quasisplit and established in \cite[Theorem 1.7.1]{KMSW} when $\mbf G$ is non-quasisplit.
\end{proof}

This theorem implies the following result on strong functorial transfer and strong multiplicity one for cuspidal automorphic representations of $\mbf G(\Ade_F)$: 

\begin{cor}\label{strongiaufneifniesm}\label{sotnienfiehineiws}
Let $\pi$ be a cuspidal automorphic representation of $\mbf G(\Ade_F)$ whose formal parameter $\bm\psi$ is generic. Then, for every finite place $v\in\fPla_F$, one has
\begin{equation*}
\tilde\phi_{\pi_v}^{\GL}\cong\tilde\phi_{\tilde{\bm\psi}_v}^{\GL}.
\end{equation*}
We write $\pi^\GL\defining\bm\psi$ and call it the \tbf{strong functorial transfer} of $\pi$.

Moreover, if $\pi'$ is another cuspidal automorphic representation of $\mbf G(\Ade_F)$ or $\mbf G^*(\Ade_F)$ with formal parameter $\pi^\GL$, then
\begin{equation*}
\tilde\phi_{\pi'_v}\cong\tilde\phi_{\pi_v}
\end{equation*}
for every finite place $v$ of $F$, and $\pi^\GL$ is also the strong functorial transfer of $\pi'$.
\end{cor}

\subsection{Controlled cuspidal automorphic representations}\label{coniierueiefiuhifhens}

Let $\mbf G^*$ be as in \S\ref{seubsienfiehtoenfeis}. To specify the local conditions imposed on the automorphic representations under consideration, we will use the following notion of control tuples:

\begin{defi}\label{deieiutesuroeneis}
A \tbf{control tuple} for $\mbf G^*$ is a tuple $\bigstar=(\Pla^\circ, \Pla^\St, \Pla^{\bx{sc}}, \Pla, \xi)$ where
\begin{itemize}
\item
$\Pla^\St$ and $\Pla^{\bx{sc}}$ are disjoint nonempty finite sets of finite places of $F$.
\item
$\Pla^\circ\subset \Pla^\St\cup \Pla^{\bx{sc}}$ and $\Pla^\St\cup\Pla^{\bx{sc}}\cup\infPla_F\subset \Pla$ are finite sets of places of $F$.
\item
$\xi=\otimes_{\tau\in\Hom(F, \bb C)}\xi_\tau$ is an irreducible representation of $\paren{\Res_{F/\bb Q}\mbf G^*}\otimes_{\bb Q}\bb C$ with regular algebraic highest weight.
\item
$\mbf G^*_v$ is unramified for any finite place $v\in \fPla_F$ not in $\Pla$.
\end{itemize}
\end{defi}

\begin{defi}\label{oeinifenieis}
Let $\bigstar$ be a control tuple for $\mbf G^*$. A pure inner twist $(\mbf G, \bm\varrho,\bm z)$ of $\mbf G^*$ over $F$ is called a \tbf{$\bigstar$-good pure inner form} of $\mbf G^*$ if $(\varrho_v, z_v)$ is trivial for each $v\in\fPla_F\setm\Pla^\circ$.

If $(\mbf G, \bm\varrho,\bm z)$ is a $\bigstar$-good pure inner form of $\mbf G^*$, then for each $v\in\fPla_F\setm\Pla$, $\mbf G_v$ has a reductive integral model $\mcl G_v$ over $\mcl O_{F_v}$ coming from the fixed reductive integral model $\mcl G^*_v$ of $\mbf G^*_v$ via $\varrho_v$. We also write $\mdc K_v^\hs$ for the corresponding hyperspecial maximal compact subgroup and define the abstract Hecke algebra away from $\Pla$:
\begin{equation*}
\tilde{\bb T}^\Pla\defining \bigotimes_{v\in\fPla_F\setm\Pla}\nolimits'\tilde{\mcl H}(\mbf G(F_v), \mdc K^\hs_v).
\end{equation*}
\end{defi}

\begin{defi}\label{Soainsilsienicmosn}
Let $\bigstar$ be a control tuple for $\mbf G^*$ and let $(\mbf G, \bm\varrho,\bm z)$ be a $\bigstar$-good pure inner twist of $\mbf G^*$. Suppose $\Pla'\subset \Pla^\circ$ is a subset. A compact open subgroup $\mdc K^{\Pla'}\le \mbf G(\Ade_{F, f}^{\Pla'})$ is called a \tbf{$\bigstar$-split subgroup} if it is of the form
\begin{equation*}
\mdc K^{\Pla'}=\prod_{v\in\fPla_F\setm\Pla'}\mdc K_v,
\end{equation*}
where $\mdc K_v=\mdc K_v^\hs$ for $v$ not in $\Pla$.
\end{defi}

\begin{defi}\label{olakisniuateineeoifneis}
For a control tuple $\bigstar$ for $\mbf G^*$ and a $\bigstar$-good pure inner twist $(\mbf G, \bm\varrho,\bm z)$ of $\mbf G^*$, a \tbf{$\bigstar$-good automorphic representation} of $\mbf G(\Ade_F)$ is a cuspidal automorphic representation $\pi=\otimes'_v\pi_v$ of $\mbf G(\Ade_F)$ such that  
\begin{itemize}
\item
$\pi_v$ is unramified for all $v\in \fPla_F\setm\Pla$;
\item
$\pi_v$ is an unramified twist of the Steinberg representation for any $v\in\Pla^\St$ (see Definition~\ref{unraimfiehidtiensitineis});
\item
$\pi_v$ has supercuspidal classical $L$-parameter for each $v\in\Pla^{\bx{sc}}$. Moreover, $\pi_v$ has simple classical $L$-parameter for some $v\in\Pla^{\bx{sc}}$ (as defined in \S\ref{IFNieniehifeniws});
\item
$\pi_\infty$ is cohomological for $\xi$, i.e.,
\begin{equation*}
\bx H^i(\Lie(\mbf G(F\otimes\bb R)), \mdc K_\infty, \pi_\infty\otimes_{\bb C}\xi)\ne 0
\end{equation*}
for some $i\in\bb N$.
\end{itemize}
\end{defi}

We then have the following controlled strong transfer result:

\begin{thm}\label{strongleienineirnes}
For any control tuple $\bigstar$ for $\mbf G^*$ and $\bigstar$-good pure inner twists $(\mbf G, \bm\varrho,\bm z), (\mbf G',\bm \varrho',\bm z')$ of $\mbf G^*$, if $\pi$ is a $\bigstar$-good automorphic representation of $\mbf G$, then there exists a $\bigstar$-good automorphic representation $\tau$ of $\mbf G'$ such that 
\begin{itemize}
\item
$\tau^\Pla\cong \pi^\Pla$ as $\tilde{\bb T}^\Pla$-modules via the isomorphism
\begin{equation*}
\bm\varrho'\circ\bm\varrho^{-1}: \mbf G^\Pla\xr\sim(\mbf G')^\Pla.
\end{equation*}
\item
for any $v\in\fPla_F$, $\tau_v$ has the same classical $L$-parameter as $\pi_v$.
\end{itemize}
Such a $\tau$ is called a \tbf{$\bigstar$-good transfer} of $\pi$ to $\mbf G'$.
\end{thm}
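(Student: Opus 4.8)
The plan is to realize $\tau$ as a member of the near-equivalence class of automorphic representations of $\mbf G'$ that Arthur's multiplicity formula \Cref{endoslcinidhnfineism} attaches to a \emph{single} cuspidal automorphic representation of $\mbf G^\GL$. For such an $A$-parameter the formal component group is trivial, so the local constituents of the automorphic representations in that class may be prescribed place by place with no global obstruction, and the proof reduces to a local construction.

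First one would pin down the parameter. Since $\pi$ is $\bigstar$-good there is a place $v_1 \in \Pla^{\bx{sc}}$ at which $\pi_{v_1}$ has a \emph{simple} supercuspidal classical $L$-parameter, so that $\tilde\phi^\GL_{\pi_{v_1}}$ is irreducible as a $W_{(F_1)_{v_1}}$-representation. By \Cref{strongiaufneifniesm}, $\pi$ admits a strong functorial transfer $\Pi \defining \pi^\GL$, a cuspidal automorphic representation of $\mbf G^\GL(\Ade_F) = \GL(N(\mbf G); \Ade_{F_1})$ that is conjugate self-dual of sign $b(\mbf G)$ (and, in case O2, of central character $\chi_{F(\sqrt{\disc(\mbf G)})/F}$), with $\Pi_v$ the local functorial transfer of $\pi_v$ at every finite place; combining this with \Cref{endoslcinidhnfineism} and strong multiplicity one on $\GL$, the near-equivalence class of $\pi$ is $L^2_{\bm\psi}$ for $\bm\psi \defining \Pi$, and $\Pi_v$ is the local functorial transfer of $\pi_v$ at \emph{all} places, archimedean ones included. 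Now $\bm\psi$ is an elliptic global $A$-parameter for $\mbf G^*$, and because $\mbf G'$ is a pure inner twist of the same $\mbf G^*$ (so $N(\mbf G') = N(\mbf G)$, $b(\mbf G') = b(\mbf G)$, and $\disc(\mbf G') = \disc(\mbf G)$ in case O2), it is equally an elliptic global $A$-parameter for $\mbf G'$. Since $\bm\psi$ is a single cuspidal representation, the final assertion of \Cref{endoslcinidhnfineism} identifies $L^2_{\bm\psi}(\mbf G'(F)\bsh\mbf G'(\Ade_F))$ with the set of all admissible $\tau = \otimes'_v\tau_v$ of $\mbf G'(\Ade_F)$ for which $\Pi_v$ is a local functorial transfer of $\tau_v$ at every $v$ — equivalently, for which $\tau_v \in \tilde\Pi_{\tilde\phi_{\pi_v}}(\mbf G'_v)$ at every $v$.

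It then remains to choose the $\tau_v$. For $v \notin \Pla$ the inner twist is trivial, so $\mbf G'_v \cong \mbf G_v \cong \mbf G^*_v$ through $\varrho'\circ\varrho^{-1}$, and one takes $\tau_v \defining \pi_v$ (unramified), the unique unramified member of the packet; this also arranges $\tau^\Pla \cong \pi^\Pla$. For $v \in \Pla^{\bx{sc}}$ the parameter $\tilde\phi_{\pi_v}$ is supercuspidal, its central element is nontrivial, so $\tilde\Pi_{\tilde\phi_{\pi_v}}(\mbf G'_v)$ is nonempty, and by \Cref{superisnidnLpaifniesues} it consists of supercuspidal representations; any choice of $\tau_v$ will do, and $\tilde\phi_{\tau_v} = \tilde\phi_{\pi_v}$ stays simple at $v_1$. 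For $v \in \Pla^\St$ the parameter $\tilde\phi_{\pi_v}$ is an unramified twist of the Steinberg parameter, and using \Cref{coeleninifheihsn} together with its compatibility with the Langlands classification one singles out the corresponding unramified twist of the Steinberg representation inside $\tilde\Pi_{\tilde\phi_{\pi_v}}(\mbf G'_v)$. For $v \in \infPla_F$, Std-regularity of $\xi$ forces $\tilde\phi_{\pi_v}$ to be a cohomological (discrete, with regular infinitesimal character) parameter, and one selects the member of $\tilde\Pi_{\tilde\phi_{\pi_v}}(\mbf G'_v)$ cohomological for $\xi_v$, furnished by the archimedean local Langlands correspondence for inner twists of classical groups recalled in \Cref{ndinlocalLanlgnnis}. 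The resulting $\tau$ is admissible, hence lies in the discrete spectrum $L^2_{\bm\psi}(\mbf G')$; it is cuspidal rather than residual because it is an essentially square-integrable twist of Steinberg at the place(s) of the nonempty set $\Pla^\St$. By construction $\tilde\phi_{\tau_v} = \tilde\phi_{\pi_v}$ at every finite $v$ and $\tau$ satisfies every local requirement of \Cref{olakisniuateineeoifneis}, so $\tau$ is a $\bigstar$-good transfer of $\pi$ to $\mbf G'$.

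The hard part will be the local step at the finitely many places in $\Pla$: producing, inside the $L$-packet $\tilde\Pi_{\tilde\phi_{\pi_v}}(\mbf G'_v)$ on the possibly different inner form $\mbf G'$, a representation of exactly the prescribed type at the Steinberg places and at the archimedean places. This demands controlling the local structure of $\mbf G'$ there — that it remains quasi-split at $\Pla^\St$ and carries the relevant cohomological (discrete series) $L$-packet at the archimedean places — and then a careful bookkeeping with the explicit description of $L$-packets in \Cref{coeleninifheihsn}. The global input, by contrast, is automatic: once $\bm\psi$ is identified as a single cuspidal representation, the multiplicity formula imposes no compatibility condition linking the local choices.
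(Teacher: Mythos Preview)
Your proposal is correct and follows essentially the same approach as the paper: both invoke \Cref{endoslcinidhnfineism} for the simple parameter $\bm\psi=\pi^\GL$ to reduce the construction of $\tau$ to choosing a member of $\tilde\Pi_{\tilde\phi_{\pi_v}}(\mbf G'_v)$ at each place and verifying it meets the local conditions of \Cref{olakisniuateineeoifneis}. Two minor points: you omit the places $v\in\Pla\setminus(\Pla^{\bx{sc}}\cup\Pla^\St\cup\infPla_F)$, but since $\Pla^0\subset\Pla^\St\cup\Pla^{\bx{sc}}$ the twist is trivial there and $\tau_v\defining\pi_v$ works; and your anticipated difficulty that $\mbf G'$ ``remains quasi-split at $\Pla^\St$'' is unnecessary, as the paper simply observes that $\#\mfk S_{\tilde\phi_v}=2$ with $z_{\tilde\phi_v}\ne 0$ forces $\tilde\Pi_{\tilde\phi_v}(\mbf G'_v)$ to be a singleton (the Steinberg twist) on \emph{any} inner form.
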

\begin{proof}
Let $\bm\psi=\pi^\GL$. By Definition~\ref{olakisniuateineeoifneis}, $\bm\psi$ is a simple generic elliptic global $A$-parameter. For each finite place $v$, let $\tilde\phi_v$ be the classical $L$-parameter of $\pi_v$.

If $v\notin\Pla^\circ$, the two pure inner twists are locally identified, and we choose the member of
$\tilde\Pi_{\tilde\phi_v}(\mbf G'_v)$ corresponding to $\pi_v$. If $v\in\Pla^\circ$, then $v\in\Pla^\St\cup\Pla^{\bx{sc}}$, so $\tilde\phi_v$ is discrete. The local packet parametrization in Theorem~\ref{coeleninifheihsn} therefore gives a nonempty packet $\tilde\Pi_{\tilde\phi_v}(\mbf G'_v)$. At every Archimedean place $v$, choose a member of the cohomological discrete-series packet associated with $\xi_v$.

Let $\eta_v$ denote the character of the local component group corresponding to the chosen local representation. Since $\bm\psi$ is simple, $\mfk S_{\bm\psi}$ has order $2$ and is generated by its distinguished central element. The value of $\eta_v$ on its localization is the local Kottwitz sign determined by $(\varrho'_v,z'_v)$. The product formula for the global pure inner twist $(\mbf G',\bm\varrho',\bm z')$ therefore gives
\begin{equation*}
\Delta^*\big(\otimes_v\eta_v\big)=\uno=\ve_{\bm\psi}.
\end{equation*}
Arthur's multiplicity formula, Theorem~\ref{endoslcinidhnfineism}, now produces a discrete
automorphic representation
\begin{equation*}
\tau=\otimes_v'\tau_v
\end{equation*}
of $\mbf G'(\Ade_F)$ having the prescribed local members.

For every $v\in\Pla^{\bx{sc}}$, the parameter $\tilde\phi_v$ is
supercuspidal, and hence $\tau_v$ is supercuspidal. Since
$\Pla^{\bx{sc}}\ne\vn$, it follows that $\tau$ is cuspidal.
The unramified and supercuspidal conditions are preserved because
the local parameters are unchanged.

Finally, if $v\in\Pla^\St$, then
$\mfk S_{\tilde\phi_v}$ has order $2$ and
$z_{\tilde\phi_v}$ is nontrivial. Hence the relevant packet on
$\mbf G'_v$ is a singleton, whose unique member is an unramified
twist of the Steinberg representation. Thus $\tau$ is
$\bigstar$-good and has the asserted local properties.
\end{proof}

We now recall the following result, due to the work of Clozel, Kottwitz, Harris--Taylor \cite{H-T01}, Shin \cite{Shi11}, Chenevier--Harris \cite{C-H13} and others, which allows us to construct $\ell$-adic representations attached to $\bigstar$-good automorphic representations:

\begin{thm}[\cite{Clo90, H-T01, T-Y07, Shi11, Car12,C-H13, Car14}]\label{Galosiniautoenrieeis}
For any control tuple $\bigstar$ for $\mbf G^*$ and $\bigstar$-good pure inner twist $(\mbf G, \bm\varrho,\bm z)$ of $\mbf G^*$, if $\pi$ is a $\bigstar$-good automorphic representation of $\mbf G$, then $\pi_v$ is tempered for every $v\in\fPla_F$, and there exists a continuous irreducible representation, unique up to isomorphism,
\begin{equation*}
\rho_{\pi, \ell}: \Gal_{F_1}\to \GL_{N(\mbf G)}(\ovl{\bb Q_\ell}),
\end{equation*}
such that
\begin{equation}\label{lsisinenieuifes}
\WD\paren{\rho_{\pi,\ell}|_{W_{F_{1,w}}}}^{F\dash\sems}\cong\iota_\ell\paren{\tilde\phi_{\pi_v}^\GL\otimes\largel{-}_{F_{1,w}}^{\frac{1-N(\mbf G)}{2}}}
\end{equation}
for any finite place $w$ of $F_1$ with underlying finite place $v$ of $F$. Here $\tilde\phi_{\pi_v}$ is the classical $L$-parameter of $\pi_v$. Note that, if $\#\Pla_{F_1}(v)=2$ with $\Pla_{F_1}(v)=\{w, w^\cc\}$ then we write $\tilde\phi_{\pi_v}^\GL$ for the classical $L$-parameter of $\mbf G_v\cong \GL_{N(\mbf G), F_v}$ corresponding to $\pi_v$ composed with the identification $F_v\cong F_{1, w}$.

Moreover, if $\pi'$ is another cuspidal automorphic representation of $\mbf G(\Ade_F)$ or $\mbf G^*(\Ade_F)$ such that  $\pi_v$ and $\pi_v'$ have the same classical $L$-parameter $\tilde\phi_{\pi_v}=\tilde\phi_{\pi'_v}$ for all but finitely many finite places $v$ of $F$, then \textup{Equation~\eqref{lsisinenieuifes}} is satisfied for each $v\in\fPla_F$ with $\pi_v$ replaced by $\pi'_v$.
\end{thm}
\begin{proof}
Let $\pi^\GL$ be the strong functorial transfer of $\pi$ to $\mbf G^\GL$ (Corollary~\ref{strongiaufneifniesm}). Then $\pi^\GL$ is conjugate self-dual and cohomological with regular highest weight. We let $\rho_{\pi, \ell}$ be the Galois representation associated with $\pi^\GL$. Such a $\rho_{\pi, \ell}$ is constructed in \cite[Theorem 3.2.3]{C-H13} and the local-global compatibility is established in \cite[Theorem 1.1]{Car12} and \cite[Theorem 1.1]{Car14}. The temperedness is established for Archimedean places by Clozel \cite[Lemma 4.9]{Clo90} and established for finite places by \cite{H-T01, T-Y07, Shi11, Car12, Clo13, Car14}. The irreducibility follows from local-global compatibility and from the existence of a place $v\in\Pla^{\bx{sc}}$ at which $\pi_v$ has simple supercuspidal classical $L$-parameter.

The last assertion follows from Corollary~\ref{sotnienfiehineiws}.
\end{proof}

\subsection{The Shimura data}\label{Shimreiejvanieniocinis}

We first define the relevant Shimura varieties following \cite{Mad16} and \cite{RSZ20}.

Let $F_1/F$, $\daleth\in F_1^\times$ and $\mbf V, \mbf G, \mbf G^*, \mbf G^\sharp$ be as in \S\ref{seubsienfiehtoenfeis}.

\begin{defi}\label{staninihidnfies}
A pure inner twist $(\mbf V, \mbf G=\bx U(\mbf V)^\circ)$ of $\mbf G^*$ is called 
\begin{itemize}
\item
\tbf{standard definite} if $\mbf V\otimes_{F, \tau}\bb R$ is positive definite for each $\tau: F\to \bb R$.
\item
\tbf{standard indefinite} if $\mbf V\otimes_{F, \tau}\bb R$ has signature $(\dim\mbf V-2, 2)$ (resp. $(\dim\mbf V-1, 1)$) in Case O (resp. in Case U) for $\tau=\tau_0$ and positive definite (i.e., signature $(\dim\mbf V, 0)$) for each $\tau\in\infPla_F\setm\{\tau_0\}$.
\end{itemize}
\end{defi}

Suppose $(\mbf V, \mbf G=\bx U(\mbf V)^\circ)$ is a standard indefinite pure inner twist of $\mbf G^*$, and $\mbf G^\sharp$ is the central extension of $\Res_{F/\bb Q}\mbf G$ defined in \S\ref{seubsienfiehtoenfeis}. We have the Shimura datum $(\mbf G^\sharp, \mbf X^\sharp)$, where $\mbf X^\sharp$ is the conjugacy class of a Deligne homomorphism
\begin{equation}\label{delfieheihsojiems}
h^\sharp_0: \Res_{\bb C/\bb R}\GL_1\to \mbf G^\sharp\otimes\bb R
\end{equation}
defined as follows:
\begin{itemize}
\item
In Case O, let
\begin{equation*}
h_{0, \natural}: \Res_{\bb C/\bb R}\GL_1\to\paren{\Res_{F/\bb Q}\GSpin(\mbf V)}\otimes\bb R\cong \prod_{\tau\in\Hom(F, \bb R)}\GSpin(\mbf V\otimes_{F, \tau}\bb R)
\end{equation*}
be the homomorphism that is trivial on $\GSpin(\mbf V\otimes_{F, \tau}\bb R)$ for $\tau\ne \tau_0$, and on $\GSpin(\mbf V\otimes_{F, \tau_0}\bb R)$ it is induced by
\begin{equation*}
h^\sharp_{0, \tau_0}: \bb C^\times\to \GSpin(\mbf V\otimes_{F, \tau_0}\bb R): a+b\ii\mapsto a+be_{\tau_0, 1}e_{\tau_0, 2}
\end{equation*}
where $e_{\tau_0, 1}, e_{\tau_0, 2}$ are two orthogonal vectors in $\mbf V\otimes_{F, \tau_0}\bb R$ such that  $\norml{e_{\tau_0, 1}}=\norml{e_{\tau_0, 2}}=-1$. We also define
\begin{equation*}
h_{0, \daleth}: \Res_{\bb C/\bb R}\GL_1\to\paren{\Res_{F(\daleth)/F}\GL_1}\otimes\bb R\cong \prod_{\tau\in\Hom(F, \bb R)}\Res_{\bb C/\bb R}\GL_1
\end{equation*}
to be the homomorphism that is trivial on the $\tau_0$-factor and is the identity map on the other factors. We then define
\begin{equation*}
h_0^\sharp=(h_{0, \natural}, h_{0, \daleth}): \Res_{\bb C/\bb R}\GL_1\to \mbf G_\daleth\otimes\bb R,
\end{equation*}
which factors through $\mbf G^\sharp\otimes\bb R\subset \mbf G_\daleth\otimes\bb R$.
\item
In Case U, for each $\tau\in\Hom(F, \bb C)$, we fix a $\bb C$-basis $\mbf v=(v_1, \ldots, v_{n(G)})^\top$ of $\mbf V\otimes_{F_1, \tau}\bb C$ such that 
\begin{equation*}
J_\tau\defining (\bra{v_i, v_j}_{i, j})=\begin{cases}\diag(\underbrace{1, \ldots, 1}_{(n(G)-1)\text{-many}}, -1) &\If \tau=\tau_0\\ \uno &\If \tau\ne \tau_0\end{cases}.
\end{equation*}
Let
\begin{equation*}
h_{0, \natural}: \Res_{\bb C/\bb R}\GL_1\to \GU^{\bb Q}(\mbf V)\otimes\bb R\cong \prod_{\tau\in\Hom(F, \bb R)}\GU(\mbf V\otimes_{F_1, \tau}\bb C)
\end{equation*}
be the homomorphism such that  on each $\tau$-factor it is given by
\begin{equation*}
a+b\ii\mapsto  a+b\ii J_\tau.
\end{equation*}
Let
\begin{equation*}
h_{0, \daleth}: \Res_{\bb C/\bb R}\GL_1\to \mbf Z^{\bb Q}\otimes\bb R\subset \prod_{\tau\in\Hom(F, \bb R)}\Res_{\bb C/\bb R}\GL_1
\end{equation*}
be the diagonal embedding. We then define
\begin{equation*}
h_0^\sharp=(h_{0, \natural}, h_{0, \daleth}): \Res_{\bb C/\bb R}\GL_1\to\mbf G^\sharp\otimes\bb R.
\end{equation*}
\end{itemize}
It is routine to check that $(\mbf G^\sharp, \mbf X^\sharp)$ is a Shimura datum.

We define
\begin{equation*}
h_0: \Res_{\bb C/\bb R}\GL_1\to \Res_{F/\bb Q}\mbf G
\end{equation*}
to be the composition of $h^\sharp_0$ with the central extension $\mbf G^\sharp\to \Res_{F/\bb Q}\mbf G$, and let $\mbf X\defining\{h_0\}$ be the $G(F\otimes\bb R)$-conjugacy class of $h_0$. Then $(\Res_{F/\bb Q}\mbf G, \mbf X)$ is a Shimura datum. Its Hodge cocharacter is 
\begin{equation*}
\mu: \GL_{1,\bb C}\xr{z\mapsto (z, 1)}\paren{\Res_{\bb C/\bb R}\GL_1}_{\bb C}\xr{(h_0)_{\bb C}}\paren{\Res_{F/\bb Q}\mbf G}_{\bb C}.
\end{equation*}
Its reflex field $E$ is $F_1$, except in Case U with $n(\mbf G)=2$, where $E=F$. We use the embedding $E\inj\bb C$ induced by $\tau_0$.

For any rational prime $p$ and any fixed isomorphism $\iota_p:\bb C\xr\sim \ovl{\bb Q_p}$, the induced cocharacter of
\begin{equation*}
(\Res_{F/\bb Q}\mbf G)_{\ovl{\bb Q_p}}\cong \prod_{v:F\hookrightarrow\ovl{\bb Q_p}}\mbf G\otimes_{F,v}\ovl{\bb Q_p}.
\end{equation*}
is conjugate to the inverse of the cocharacter $\mu_1$ defined in~\eqref{idnifdujiherheins} on the factor corresponding to $\iota_p\circ \tau_0|_F$, and is trivial on all other factors.

We define $\mdc K_\infty$ and $\mdc K_\infty^\sharp$ to be the centralizer of $h_0$ and $h_0^\sharp$, respectively. Then
\begin{equation*}
\mbf X\cong\mbf G(F\otimes\bb R)/\mdc K_\infty,\qquad\mbf X^\sharp\cong\mbf G^\sharp(\bb R)/\mdc K_\infty^\sharp.
\end{equation*}

By Deligne's theory, there is a projective system
\begin{equation*}
\{\bSh_{\mdc K}(\Res_{F/\bb Q}\mbf G, \mbf X)\}
\end{equation*}
of Shimura varieties defined over $E$ indexed by neat open compact subgroups $\mdc K\le \mbf G(\Ade_{F, f})$ (as defined in \cite[\S 0.6]{Pin90}), with complex uniformization
\begin{equation*}
\bSh_{\mdc K}(\Res_{F/\bb Q}\mbf G,\mbf X)\otimes_{E,\tau_0}\bb C\cong\mbf G(F)\bsh\paren{\mbf X\times\mbf G(\Ade_{F,f})/\mdc K}.
\end{equation*}
Its complex dimension is $\dim_{\bb C}(\mbf X)$. When $E=F$, we tacitly replace this system by its base change to $F_1$ and retain the same notation.

Similarly, there is a projective system
\begin{equation*}
\{\bSh_{\mdc K^\sharp}(\mbf G^\sharp, \mbf X^\sharp)\}_{\mdc K^\sharp}
\end{equation*}
of Shimura varieties over the reflex field $E^\sharp$ indexed by neat open compact subgroups $\mdc K^\sharp\le \mbf G^\sharp(\Ade_f)$.\footnote{Note that the reflex field $E^\sharp$ may be bigger than $F_1$.} Each member has complex dimension $\dim_{\bb C}(\mbf X)$. If $\mdc K$ is the image of $\mdc K^\sharp$ in $\mbf G(\Ade_{F,f})$, then there is a morphism
\begin{equation}\label{finieihheinfiejsims}
\bSh_{\mdc K^\sharp}(\mbf G^\sharp,\mbf X^\sharp)\longrightarrow\bSh_{\mdc K}(\Res_{F/\bb Q}\mbf G,\mbf X)_{E^\sharp}\end{equation}
functorial in $\mdc K^\sharp$.

Finally, we check that $(\mbf G^\sharp, \mbf X^\sharp)$ is a Shimura datum of Hodge type and thus $(\Res_{F/\bb Q}\mbf G, \mbf X)$ is a Shimura datum of Abelian type. In Case U, this follows from \cite[\S 3.2]{RSZ20}, and $(\mbf G^\sharp, \mbf X^\sharp)$ is of PEL type. In Case O, let $c_\daleth$ be the nontrivial element of $\Gal(F(\daleth)/F)$. Let $\mbf H=\Cl(\mbf V)$, viewed as an $F$-representation of $\GSpin(\mbf V)$ via left multiplication, and set
\[
\mbf H_\daleth=\mbf H\otimes_F F(\daleth).
\]
We define an $F$-linear anti-involution $\dagger$ on $\mbf H_\daleth$ by
\[
(x\otimes a)^\dagger=x^*\otimes a^{c_\daleth},
\]
where $*$ is the main anti-involution on $\mbf H$; see~\cite[\S 1.1]{Mad16}. The use of the conjugation $c_\daleth$ is essential in order to obtain the norm character in the similitude factor below.

Let $\beta_\daleth\in \mbf H_\daleth^\times$ satisfy
\[
\beta_\daleth^\dagger=-\beta_\daleth.
\]
We define an $F$-valued pairing on $\mbf H_\daleth$, viewed as an $F$-vector space, by
\begin{equation*}
\psi_{\beta_\daleth,\daleth}(x,y)
=
\tr_{F(\daleth)/F}
\paren{
\trrd_{\mbf H_\daleth/F(\daleth)}
\paren{x\beta_\daleth y^\dagger}
},
\qquad x,y\in \mbf H_\daleth.
\end{equation*}
This pairing is nondegenerate and alternating. Indeed, using
\[
\trrd_{\mbf H_\daleth/F(\daleth)}(z^\dagger)
=
\trrd_{\mbf H_\daleth/F(\daleth)}(z)^{c_\daleth},
\]
the invariance of $\tr_{F(\daleth)/F}$ under $c_\daleth$, and the identity
$\beta_\daleth^\dagger=-\beta_\daleth$, we have
\begin{align*}
\psi_{\beta_\daleth,\daleth}(y,x)
&=
\tr_{F(\daleth)/F}
\paren{
\trrd_{\mbf H_\daleth/F(\daleth)}
\paren{y\beta_\daleth x^\dagger}
} \\
&=
\tr_{F(\daleth)/F}
\paren{
\trrd_{\mbf H_\daleth/F(\daleth)}
\paren{(y\beta_\daleth x^\dagger)^\dagger}
} \\
&=
\tr_{F(\daleth)/F}
\paren{
\trrd_{\mbf H_\daleth/F(\daleth)}
\paren{x\beta_\daleth^\dagger y^\dagger}
} \\
&=
-\psi_{\beta_\daleth,\daleth}(x,y).
\end{align*}

For $(g, t)\in \GSpin(\mbf V)\times F(\daleth)^\times$, let $(g,t)$ act on $\mbf H_\daleth$ by
\[
x\longmapsto gxt.
\]
Then
\begin{align*}
\psi_{\beta_\daleth,\daleth}(gxt,gyt)
&=
\tr_{F(\daleth)/F}
\paren{
\trrd_{\mbf H_\daleth/F(\daleth)}
\paren{gxt\beta_\daleth(gyt)^\dagger}
} \\
&=
\tr_{F(\daleth)/F}
\paren{
\trrd_{\mbf H_\daleth/F(\daleth)}
\paren{gx\,t\beta_\daleth t^{c_\daleth}y^\dagger g^*}
} \\
&=
\nu(g)\Nm_{F(\daleth)/F}(t)\,
\tr_{F(\daleth)/F}
\paren{
\trrd_{\mbf H_\daleth/F(\daleth)}
\paren{x\beta_\daleth y^\dagger}
} \\
&=
\nu(g)\Nm_{F(\daleth)/F}(t)\,
\psi_{\beta_\daleth,\daleth}(x,y),
\end{align*}
where we used $g^*g=\nu(g)$ and the cyclicity of the reduced trace. The anti-diagonally embedded $\Res_{F/\bb Q}\GL_1$ acts trivially on $\mbf H_\daleth$, and the above action therefore descends to an embedding
\begin{align*}
a:\mbf G_\daleth=\paren{(\Res_{F/\bb Q}\GSpin(\mbf V))\times\Res_{F(\daleth)/\bb Q}\GL_1}/\Res_{F/\bb Q}\GL_1&\inj\Res_{F/\bb Q}\GSp_F\paren{\mbf H_\daleth,\psi_{\beta_\daleth,\daleth}}, \\
(g,t)&\longmapsto (x\mapsto gxt).
\end{align*}
Moreover, if $\nu_{\GSp}$ denotes the similitude character on
$\GSp_F\paren{\mbf H_\daleth,\psi_{\beta_\daleth,\daleth}}$, then
\begin{equation*}
\Res_{F/\bb Q}(\nu_{\GSp})\circ a=\nu^\sharp.
\end{equation*}

Now define the $\bb Q$-valued symplectic form
\begin{equation*}
\psi_{\beta_\daleth,\daleth}^{\bb Q}=\tr_{F/\bb Q}\circ\psi_{\beta_\daleth,\daleth}.
\end{equation*}
By the definition of $\mbf G^\sharp$, the restriction of $\nu^\sharp$ to $\mbf G^\sharp$ takes values in the diagonal subtorus
\begin{equation*}
\GL_1\subset \Res_{F/\bb Q}\GL_1.
\end{equation*}
Consequently, after viewing $\mbf H_\daleth$ as a $\bb Q$-vector space, the restriction of $a$ gives an embedding of reductive groups over $\bb Q$
\begin{equation*}
\rho: \mbf G^\sharp\inj\GSp\paren{\mbf H_\daleth,\psi_{\beta_\daleth,\daleth}^{\bb Q}},
\end{equation*}
where the similitude factor is the above diagonal $\bb Q$-valued character.

It remains to choose $\beta_\daleth$ so that the above symplectic
representation is a morphism of Shimura data. Equivalently, for every real
embedding $\tau:F\hookrightarrow\bb R$, if
\[
J_\tau=\rho(h_0^\sharp(\ii))
\]
is the induced complex structure on
$\mbf H_\daleth\otimes_{F,\tau}\bb R$, then the symmetric form
\[
(x,y)\longmapsto
\psi_{\beta_\daleth,\daleth,\tau}(x,J_\tau y)
\]
must be definite.

Such a choice of $\beta_\daleth$ exists by the standard Kuga--Satake
linear algebra. At the distinguished real place $\tau_0$, where
$\mbf V_{\tau_0}$ has signature $(\dim\mbf V-2,2)$, this is precisely the
usual choice of a skew Clifford element giving a polarization of the
Clifford representation; see~\cite[3.5]{Mad16}. At the remaining real
places $\tau\ne\tau_0$, the space $\mbf V_\tau$ is positive definite and
the complex structure comes only from the $F(\daleth)^\times$-factor, so a
suitable real multiple of $1\otimes\daleth$ gives the required definite
form. These local positivity conditions define nonempty open subsets of
\[
\{b\in \mbf H_\daleth\otimes_{F,\tau}\bb R: b^\dagger=-b\}.
\]
Together with the open condition of invertibility, weak approximation gives
a global element
\[
\beta_\daleth\in\mbf H_\daleth^\times,\qquad
\beta_\daleth^\dagger=-\beta_\daleth,
\]
satisfying the required positivity at all Archimedean places.

For such a choice of $\beta_\daleth$, the above embedding sends $h_0^\sharp$ to the Siegel double space attached to the symplectic space
\begin{equation*}
\paren{\mbf H_\daleth,\psi_{\beta_\daleth,\daleth}^{\bb Q}}.
\end{equation*}
Hence it induces an embedding of Shimura data
\begin{equation*}
(\mbf G^\sharp,\mbf X^\sharp)\inj\paren{\GSp\paren{\mbf H_\daleth,\psi_{\beta_\daleth,\daleth}^{\bb Q}},\mcl X},
\end{equation*}
where $\mcl X$ is the union of Siegel upper half-spaces attached to
$\paren{\mbf H_\daleth,\psi_{\beta_\daleth,\daleth}^{\bb Q}}$.
Thus $(\mbf G^\sharp,\mbf X^\sharp)$ is of Hodge type. Since
$\mbf G^\sharp$ is a central extension of $\Res_{F/\bb Q}\mbf G$, the Shimura datum
$(\Res_{F/\bb Q}\mbf G,\mbf X)$ is of Abelian type.

\subsection{Langlands--Kottwitz method}\label{Lanlgan-Kotiemethiems}

In this subsection, we apply the Langlands--Kottwitz method to relate the action of Frobenius elements at primes of good reduction to the Hecke action on the compactly supported cohomology of orthogonal or unitary Shimura varieties. We adopt the notation from \S\ref{seubsienfiehtoenfeis}, and assume:

\begin{note}\enskip
\begin{itemize}
\item
$F$ has a finite place $\mfk q$ whose residue characteristic is odd and which is inert in $F_1$ in Case U.
\item
$\bigstar$ is a control tuple (see \textup{Definition~\ref{deieiutesuroeneis}}) such that  $\mfk q\in \Pla^\St$ and $\Pla$ is of the form $\Pla=\Pla_F(S_{\bx{bad}})$, where $S_{\bx{bad}}$ is a finite set of rational primes containing $2$ and all rational primes ramified in $F$.
\item
$(\mbf G, \bm\varrho,\bm z)$ is a $\bigstar$-good pure inner form of $\mbf G^*$.
\item
$\pi$ is a $\bigstar$-good automorphic representation of $\mbf G$.
\item
$\mdc K\le \mbf G(\Ade_{F, f})$ is a $\bigstar$-split compact open subgroup with $\pi^{\mdc K}\ne0$.
\item
$\ell$ is a rational prime together with an isomorphism $\iota_\ell: \bb C\xr\sim\ovl{\bb Q_\ell}$.
\item
$(\mfk X, \chi)$ is a central character datum for $\mbf G^*$ (see \textup{Definition~\ref{isnsieieifmeips}}) with
\begin{equation*}
\mfk X=(\bm\varrho^{-1}(\mdc K)\cap Z_{\mbf G^*}(\Ade_{F, f}))\times Z_{\mbf G^*}(F\otimes\bb R),
\end{equation*}
and $\chi$ is the inverse of the central character of $\xi$ (extended from $Z_{\mbf G^*}(F\otimes\bb R)$ to $\mfk X$ trivially on $\bm\varrho^{-1}(\mdc K)\cap Z_{\mbf G^*}(\Ade_{F, f})$).
\end{itemize}
\end{note}

\begin{defi}\label{automroehisliinifheis}
Suppose $(\bb G, \bb X)$ is any Shimura datum with reflex field $E\subset \bb C$ with associated projective system of Shimura varieties $\{\bSh_{\mdc K}(\bb G, \bb X)\}_{\mdc K}$ defined over $E$, indexed by the set of neat compact open subgroups $\mdc K\le\bb G(\Ade_f)$. Let $Z_{\bx a}$ be the maximal anisotropic $\bb Q$-subtorus of $Z(\bb G)$, and let $Z_{\bx{ac}}$ be the smallest $\bb Q$-subgroup of $Z_{\bx a}$ whose base change to $\bb R$ contains the maximal $\bb R$-split subtorus of $Z_{\bx a}$; see~\cite[Definition 1.5.4]{KSZ21}. For each irreducible algebraic representation $\xi$ of $\bb G_{\bb C}$ that is trivial on $Z_{\bx{ac}}$, there exists a compatible system of lisse $\ovl{\bb Q_\ell}$-local systems $\mrs L_{\iota_\ell\xi}$ on this projective system of Shimura varieties associated with $\xi$; see~\cite[1.5.8]{KSZ21}. For each $i\in\bb N$, we define
\begin{equation*}
\cetH^i(\bSh, \mrs L_{\iota_\ell\xi})\defining \ilim_{\mdc K\to \uno}\cetH^i\paren{\bSh_{\mdc K}(\bb G, \bb X)_{\ovl E}, \mrs L_{\iota_\ell\xi}}.
\end{equation*}
This is a $\bb G(\Ade_f)\times\Gal_E$-module with admissible $\bb G(\Ade_f)$-action and continuous $\Gal_E$-action.

For each admissible representation $\Pi$ of $\bb G(\Ade)$, we define
\begin{equation}\label{imfihifmos}
\cetH^i(\bSh, \mrs L_{\iota_\ell\xi})[\Pi^\infty]\defining \Hom_{\bb G(\Ade_f)}\paren{\iota_\ell\Pi^\infty, \cetH^i(\bSh, \mrs L_{\iota_\ell\xi})}.
\end{equation}
This is a finite dimensional representation of $\Gal_E$, unramified at all but finitely many places. We set  \begin{equation*}
\cetH^i(\bSh,\mrs L_{\iota_\ell\xi})^\sems[\Pi^\infty]\defining\paren{\cetH^i(\bSh,\mrs L_{\iota_\ell\xi})[\Pi^\infty]}^\sems
\end{equation*}
for its semisimplification.

Similarly, if $\bb G'$ is any reductive group over $\bb Q$ such that  $\bb G'(\bb R)$ is compact, then for each irreducible algebraic representation $(\xi, V_\xi)$ of $\bb G'_{\bb C}$, we define the injective system of \tbf{algebraic automorphic forms} valued in $\iota_\ell V_\xi$ as
\begin{equation*}
\Brace{\mcl A\paren{\bb G'(\bb Q)\bsh \bb G'(\Ade_f)/\mdc K, \mrs L_{\iota_\ell\xi}}}_{\mdc K}
\end{equation*}
indexed by compact open subgroups $\mdc K\le \bb G'(\Ade_f)$, where $\mcl A\paren{\bb G'(\bb Q)\bsh \bb G'(\Ade_f)/\mdc K, \mrs L_{\iota_\ell\xi}}$ consists of maps $\phi: \bb G'(\Ade_f)\to \iota_\ell V_\xi$ such that  $\phi(gk)=\phi(g)$ and $\phi(\gamma g)=\gamma.\phi(g)$ for any $g\in \bb G'(\Ade_f), \gamma\in \bb G'(\bb Q)$ and $k\in \mdc K$. For each compact open subgroup $\mdc K\le \bb G'(\Ade_f^p)$, we write
\begin{equation*}
\mcl A\paren{\bb G'(\bb Q)\bsh \bb G'(\Ade_f)/\mdc K^p, \mrs L_{\iota_\ell\xi}}\defining \ilim_{\mdc K_p}\mcl A\paren{\bb G'(\bb Q)\bsh \bb G'(\Ade_f)/\mdc K_p\mdc K^p, \mrs L_{\iota_\ell\xi}}
\end{equation*}
where $\mdc K_p$ runs through compact open subgroups of $\bb G'(\bb Q_p)$.
\end{defi}

The following theorem describes the Galois action on the cohomology of orthogonal or unitary Shimura varieties, which is the main result of \cite{KSZ21} (cf.~\cite[Theorem 7.3]{K-S23}).

\begin{thm}\label{stabilzienKlanglakOtiwinform}
Suppose $\mdc K_p\le\mbf G(F\otimes\bb Q_p)$ is hyperspecial for some $p\ge 3$, and $\iota_p: \bb C\xr\sim\ovl{\bb Q_p}$ is an isomorphism such that  $\iota_p\circ\tau_0: F_1\to \ovl{\bb Q_p}$ induces a finite place $\mfk p\in\Pla_{F_1}(\{p\})$. Define a test function $f^\infty=f^{\infty, p}f_p\in \mcl H\paren{\Res_{F/\bb Q}\mbf G(\Ade_f), \mdc K}$ with $f_p=\uno_{\mdc K_p}$. Then there exists $j_0\in\bb Z_+$ such that
\begin{equation*}
\sum_{i=0}^{2\dim_{\bb C}(\mbf X)}(-1)^i\iota_\ell^{-1}\Tr\paren{\iota_\ell f^\infty\sigma_{\mfk p}^j|\cetH^i(\bSh, \mrs L_{\iota_\ell\xi})}=\sum_{\mfk e\in \mcl E_\ellip(\mbf G)}\iota(\mfk e)\ST_{\ellip, \chi}^{\mbf G^{\mfk e}}(h^{\mbf G^{\mfk e}}_{\xi, j})
\end{equation*}
for all positive integers $j\ge j_0$. Here $\ST^{\mbf G^{\mfk e}}_{\ellip, \chi}$ is the elliptic stable distribution associated with $\mfk e$ (see \textup{Definition~\ref{staieliaisieuels}}), $\iota(\mfk e)\in\bb Q$ is the global coefficient introduced by Kottwitz and Shelstad (cf.~\cite[Equation~(3.2.4)]{Art13}), and $h^{\mbf G^{\mfk e}}_{\xi, j}=h^{\mbf G^{\mfk e}, p\infty}h_{p, j}^{\mbf G^{\mfk e}}h_{\infty, \xi}^{\mbf G^{\mfk e}}\in \mcl H(\mbf G^{\mfk e}(\Ade_F), \chi^{-1})$ are defined in \textup{\cite{Kot90, KSZ21}}. In particular, $\iota(\mfk e)=1$ if $\mbf G^{\mfk e}=\mbf G^*$, and 
\begin{itemize}
\item
$h^{\mbf G^*, p\infty}$ is an endoscopic transfer of $f^{p\infty}$, (Note that such a transfer exists in the fixed-central character setting, by first lifting $f$ along the averaging map $\mcl H(\mbf G(\Ade_F))\to \mcl H(\mbf G(\Ade_F), \chi^{-1})$, and then taking the transfer to $\mcl H(\mbf G^*(\Ade_F))$, and finally taking the image along the averaging map $\mcl H(\mbf G^*(\Ade_F))\to \mcl H(\mbf G^*(\Ade_F), \chi^{-1})$),
\item
$h_{p, j}^{\mbf G^*}$ is the base change transfer of
\begin{equation*}
\phi_j\defining\uno_{\mcl G_p\paren{\bb Z_{\norml{\mfk p}^j}}\mu(p^{-1})\mcl G_p\paren{\bb Z_{\norml{\mfk p}^j}}}\in\mcl H\paren{\mbf G(F\otimes\bb Z_{\norml{\mfk p}^j}), \mcl G_p\paren{\bb Z_{\norml{\mfk p}^j}}}
\end{equation*}
(where we write $\mcl G_p$ for $\prod_{v\in\Pla_F(\{p\})}\Res_{\mcl O_{F_v}/\bb Z_p}\mcl G_v$ and write $\bb Z_{\norml{\mfk p}^j}$ for the integer ring of the unramified extension $\bb Q_{\norml{\mfk p}^j}$ of $\bb Q_p$ of degree $j\cdot\log_p(\norml{\mfk p})$) down to $\mcl H\paren{\mbf G(F\otimes\bb Q_p), \mcl G_p(\bb Z_p)}$,
\item
$h_{\infty,\xi}^{\mbf G^*}\defining\#\Pi_\xi(\mbf G^*(F\otimes\bb R))^{-1}\sum_{\tau_\infty\in \Pi_\xi(\mbf G^*(F\otimes\bb R))}f_{\tau_\infty}$, that is, the average of the pseudo-coefficients for the discrete series $L$-packet of $\mbf G^*(F\otimes\bb R)$ associated with $\xi$.
\end{itemize}
\end{thm}

\begin{defi}\label{ieindiiivinereheieis}
Let $[\pi]$ be the set of isomorphism classes of $\bigstar$-good automorphic representations $\dot\pi$ of $\mbf G(\Ade_F)$ such that $\tilde{\dot\pi}_v\cong\tilde\pi_v$ for each $v\in\fPla_F$. Consider two automorphic representations $\dot\pi_1, \dot\pi_2\in [\pi]$ equivalent and write $\dot\pi_1\sim\dot\pi_2$ if $\dot\pi_1^\infty\cong \dot\pi_2^\infty$. In particular, $[\pi]/\sim$ is a singleton in Case U and Case O1.

Choose one representative from each equivalence class in $[\pi]/\sim$. We define the virtual Galois representation
\begin{equation*}
\rho_\bSh^\pi\defining (-1)^{\dim_{\bb C}(\mbf X)}\sum_{\dot\pi\in [\pi]/\sim}\sum_{i=0}^{2\dim_{\bb C}(\mbf X)}(-1)^i\cetH^i(\bSh, \mrs L_{\iota_\ell\xi})^\sems[\dot\pi^\infty]\in\bx K_0\paren{\ovl{\bb Q_\ell}[\Gal_{F_1}]}, 
\end{equation*}
where $\bx K_0\paren{\ovl{\bb Q_\ell}[\Gal_{F_1}]}$ is the Grothendieck group of finite dimensional continuous representations of $\Gal_{F_1}$ with $\ovl{\bb Q_\ell}$-coefficients unramified at all but finitely many places.
\end{defi}

We will also need the following cohomology spaces to deal with non-compact Shimura varieties: Let
\begin{equation*}
\bx H^i_{(2)}(\bSh, \mrs L_\xi)\defining \ilim_{\mdc K\to 1}\bx H^i_{(2)}(\bSh_{\mdc K}(\mbf G, \mbf X), \mrs L_\xi)
\end{equation*}
be the $L^2$-cohomology of $\bSh(\mbf G, \mbf X)\times_{F_1, \tau_0}\bb C$ as defined in \cite[\S 6]{Fal83}, and let 
\begin{equation*}
\bx{IH}^*(\bSh, \mrs L_{\iota_\ell\xi})\defining \ilim_{\mdc K\to 1}\bx{IH}^*(\bSh_{\mdc K}(\mbf G, \mbf X), \mrs L_{\iota_\ell\xi})
\end{equation*}
be the $\ell$-adic intersection cohomology of $\bSh(\mbf G, \mbf X)$. These two cohomologies are equipped with admissible $\mbf G(\Ade_{F, f})$-actions defined by Hecke correspondences. There are natural $\mbf G(\Ade_{F, f})$-equivariant maps
\begin{equation}\label{maosinfiehfeis}
\cetH^i(\bSh, \mrs L_{\iota_\ell\xi})\to \iota_\ell\bx H^i_{(2)}(\bSh, \mrs L_\xi)\to \etH^i(\bSh, \mrs L_{\iota_\ell\xi}),
\end{equation}
\begin{equation*}
\cetH^i(\bSh, \mrs L_{\iota_\ell\xi})\to \bx{IH}^i(\bSh, \mrs L_{\iota_\ell\xi}),
\end{equation*}
and it follows from Zucker's conjecture \cite{Loo88, L-R91, S-S90} that there is a $\mbf G(\Ade_{F, f})$-equivariant commutative diagram
\begin{equation}\label{liniihsniefiens}
\begin{tikzcd}
\cetH^i(\bSh, \mrs L_{\iota_\ell\xi})\ar[r]\ar[rd] &\bx{IH}^i(\bSh, \mrs L_{\iota_\ell\xi})\ar[d, "\cong"]\\
& \iota_\ell\bx H^i_{(2)}(\bSh, \mrs L_\xi)
\end{tikzcd}.
\end{equation}

\begin{lm}\label{temrpeodounirnifehis}
The maps in~\textup{\eqref{maosinfiehfeis}} induce isomorphisms
\begin{equation*}
\cetH^i(\bSh, \mrs L_{\iota_\ell\xi})[\pi^\infty]\cong\iota_\ell\bx H_{(2)}^i(\bSh, \mrs L_\xi)[\pi^\infty]\cong\etH^i(\bSh, \mrs L_{\iota_\ell\xi})[\pi^\infty].
\end{equation*}
Moreover, $\dim\cetH^i(\bSh, \mrs L_{\iota_\ell\xi})^\sems[\pi^\infty]=\dim\cetH^i(\bSh, \mrs L_{\iota_\ell\xi})[\pi^\infty]$.
\end{lm}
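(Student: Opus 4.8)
The statement to prove is that the natural maps in \eqref{maosinfiehfeis} and the diagram \eqref{liniihsniefiens} induce isomorphisms on the $\pi^\infty$-isotypic parts, and moreover that passing to the semi-simplification does not change the dimension of the $\pi^\infty$-isotypic component. The plan is to reduce everything to the fact that $\pi$ is \emph{cuspidal} and that $\pi_\infty$ is a \emph{tempered} (indeed discrete series, by \Cref{Galosiniautoenrieeis}) cohomological representation, plus the general machinery relating $L^2$-cohomology, intersection cohomology and interior cohomology of Shimura varieties.

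First I would recall that by \Cref{Galosiniautoenrieeis} Ramanujan holds for $\pi$, so $\pi_v$ is tempered at every place; in particular $\pi_\infty$ lies in the discrete series $L$-packet $\Pi_\xi(\mbf G^*(F\otimes\bb R))$ attached to the $\Std$-regular weight $\xi$. The regularity of $\xi$ forces the $(\mfk g,\mdc K_\infty)$-cohomology $\bx H^i(\Lie(\mbf G(F\otimes\bb R)),\mdc K_\infty,\pi_\infty\otimes\xi)$ to be concentrated in the middle degree $i=\dim_{\bb C}(\mbf X)$ (Vogan--Zuckerman, or Borel--Wallach), and in that degree it is one-dimensional for each member of the packet. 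Consequently, since $\pi$ is cuspidal it contributes to $L^2$-cohomology, and by Matsushima's formula and the description of $L^2$-cohomology of Shimura varieties (Borel--Casselman, Zucker), the $\pi^\infty$-part $\bx H^i_{(2)}(\bSh,\mrs L_\xi)[\pi^\infty]$ is nonzero only for $i=\dim_{\bb C}(\mbf X)$. The key input is then the standard dichotomy: a cuspidal tempered automorphic representation contributes to interior cohomology, i.e.\ the image of $\cetH^\ast\to\etH^\ast$ agrees with the image of $L^2$-cohomology on the $\pi^\infty$-part. Concretely, the composite $\cetH^i(\bSh,\mrs L_{\iota_\ell\xi})\to\iota_\ell\bx H^i_{(2)}(\bSh,\mrs L_\xi)\to\etH^i(\bSh,\mrs L_{\iota_\ell\xi})$ and the identification with $\bx{IH}^\ast$ from Zucker's conjecture \eqref{liniihsniefiens}, combined with the fact that $\pi$ being cuspidal means it appears neither in the boundary cohomology nor (by temperedness and strong multiplicity one, \Cref{sotnienfiehineiws}) in any Eisenstein contribution that could obstruct injectivity or surjectivity, give that all three maps become isomorphisms after applying $\Hom_{\mbf G(\Ade_{F,f})}(\iota_\ell\pi^\infty,-)$. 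Here I would cite the by-now-standard arguments of Morel, and of Kottwitz--Shin--Zucker-type analyses, that a cuspidal tempered $\pi$ contributes only to the ``central'' piece; the decomposition of $\bx{IH}^\ast$ into near-equivalence classes via \Cref{endoslcinidhnfineism} isolates the $\pi^\infty$-part and shows it coincides with the $\pi^\infty$-part of $L^2$-cohomology, while the long exact sequence of the pair relating compactly-supported, interior, and boundary cohomology shows the $\pi^\infty$-part of $\cetH$ surjects onto interior cohomology because the boundary terms, being built from proper Levi subgroups, cannot contain the cuspidal $\pi^\infty$.

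For the last sentence, $\dim\cetH^i(\bSh,\mrs L_{\iota_\ell\xi})^\sems[\pi^\infty]=\dim\cetH^i(\bSh,\mrs L_{\iota_\ell\xi})[\pi^\infty]$, I would argue that on the $\pi^\infty$-isotypic part the $\mbf G(\Ade_{F,f})$-module is already a direct sum of copies of $\iota_\ell\pi^\infty$ (by the concentration of $(\mfk g,\mdc K_\infty)$-cohomology and cuspidality, there is no nontrivial extension of $\pi^\infty$ by another representation contributing in the same degree), so that taking $\Hom_{\mbf G(\Ade_{F,f})}(\iota_\ell\pi^\infty,-)$ is exact on this piece and commutes with semi-simplification; equivalently, the functor $\cetH^i(\bSh,\mrs L_{\iota_\ell\xi})\mapsto\Hom_{\mbf G(\Ade_{F,f})}(\iota_\ell\pi^\infty,\cetH^i(\bSh,\mrs L_{\iota_\ell\xi}))$ kills any Jordan--H\"older factor of $\cetH^i$ whose $\mbf G(\Ade_{F,f})$-type differs from $\pi^\infty$, and the cuspidality of $\pi$ together with strong multiplicity one guarantees that the $\pi^\infty$-generalized eigenspace is semisimple as a $\mbf G(\Ade_{F,f})$-module.

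The main obstacle I anticipate is making rigorous the claim that $\pi$ contributes only to interior cohomology and only in the middle degree: this requires controlling the boundary cohomology of the Borel--Serre or toroidal/minimal compactification and checking that the cuspidal, tempered, cohomological $\pi^\infty$ with $\Std$-regular weight cannot appear there. For Shimura varieties of abelian type attached to $(\Res_{F/\bb Q}\mbf G,\mbf X)$ (and their Hodge-type covers $(\mbf G^\sharp,\mbf X^\sharp)$), the required vanishing of boundary contributions in the relevant near-equivalence class follows from Franke's filtration together with the classification of residual/Eisenstein spectra, but one must be careful that the relevant $\mbf G$ here is a special orthogonal or unitary group, not a general linear group, so the cuspidal-support argument must be phrased in terms of the endoscopic classification \Cref{endoslcinidhnfineism}; I expect this bookkeeping — rather than any deep new idea — to be the technical heart of the lemma.
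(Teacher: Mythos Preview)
Your proposal is essentially correct and follows the same route as the paper, but the paper's proof is much terser: it simply cites Franke's spectral sequence for the isomorphisms and Borel--Casselman's decomposition of $\bx H^i_{(2)}$ for the dimension equality, pointing to \cite{K-S23}*{Lemma 8.1(1)} for details.

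Two remarks on where you over- or under-shoot. First, the middle-degree concentration via Vogan--Zuckerman is not part of this lemma at all; it belongs to \Cref{eieninifhenfiews}. The lemma only claims the three isotypic spaces agree in every degree $i$, and Franke's description of the cokernel and kernel of $\cetH^\ast\to\etH^\ast$ in terms of Eisenstein cohomology (hence built from proper Levi subgroups) already gives this, since the cuspidal $\pi^\infty$ cannot appear there. You do invoke Franke near the end, but the detour through Morel and the endoscopic classification is unnecessary bookkeeping.

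Second, your justification for the last assertion is slightly muddled. Saying ``there is no nontrivial extension of $\pi^\infty$ by another representation contributing in the same degree'' by concentration of $(\mfk g,\mdc K_\infty)$-cohomology is not the right mechanism. The clean argument, which the paper uses, is: Borel--Casselman shows $\bx H^i_{(2)}(\bSh,\mrs L_\xi)$ is already semi-simple as a $\mbf G(\Ade_{F,f})$-module, so socle and Jordan--H\"older multiplicities of $\pi^\infty$ coincide there; then Franke shows the map $\cetH^i\to\iota_\ell\bx H^i_{(2)}$ has kernel and cokernel with no $\pi^\infty$ subquotients, hence the Jordan--H\"older multiplicity of $\pi^\infty$ in $\cetH^i$ equals that in $\bx H^i_{(2)}$, which equals $\dim\bx H^i_{(2)}[\pi^\infty]=\dim\cetH^i[\pi^\infty]$ by the first part.
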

\begin{proof}
The isomorphisms follow from Franke's spectral sequence \cite[Theorem 19]{Fra98} and the last assertion follows from Borel--Casselman's decomposition of $\bx H_{(2)}^i(\bSh, \mrs L_\xi)$ as direct sums of certain multiplicities of $\pi_f$ for each $\pi\in L^2_\disc(\mbf G(\Ade_F))^\sm$ (thus $\bx H_{(2)}^i(\bSh, \mrs L_\xi)$ is semisimple as a $\mbf G(\Ade_{F, f})$-module); see~\cite{B-C83}. The proof is the same as that of \cite[Lemma 8.1(1)]{K-S23}, thus omitted here.
\end{proof}

\begin{thm}\label{eieninifhenfiews}
For all but finitely many finite places $\mfk p$ of $F_1$ not lying over $\ell$ or places in $\Pla$, and for all sufficiently large positive integers $j$ (depending on $\mfk p$),
\begin{equation*}
\tr\paren{\sigma_{\mfk p}^j|\rho_\bSh^\pi}=m(\pi)\cdot\norml{\mfk p}^{\frac{\dim_{\bb C}(\mbf X)}{2}\cdot j}\cdot\tr\paren{\iota_\ell\tilde\phi_{\pi_{\mfk p_\flat}}^\GL(\sigma_{\mfk p}^j)},
\end{equation*}
where $\tilde\phi_{\pi_{\mfk p_\flat}}$ is the classical $L$-parameter of $\pi_{\mfk p_\flat}$. Moreover, the only nonzero term in the definition of $\rho_\bSh^\pi$ (see \textup{Definition~\ref{ieindiiivinereheieis}}) appears in the middle degree $\dim_{\bb C}(\mbf X)$. In particular, $\rho_\bSh^\pi$ is a genuine representation of $\Gal_{F_1}$.
\end{thm}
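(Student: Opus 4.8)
The plan is to deduce \Cref{eieninifhenfiews} from the stabilized Langlands--Kottwitz formula \Cref{stabilzienKlanglakOtiwinform}, combined with Arthur's multiplicity formula \Cref{endoslcinidhnfineism} and the local--global compatibility of the Galois representation $\rho_{\pi, \ell}$ from \Cref{Galosiniautoenrieeis}. The strategy is the standard one (as in \cite{KSZ21}, \cite{K-S23}): isolate the $\pi^\infty$-isotypic part of both sides of the stable trace formula. First I would apply \Cref{stabilzienKlanglakOtiwinform} with a test function $f^\infty = f^{\infty, p} f_p$, $f_p = \uno_{\mdc K_p}$, whose component away from $\Pla$ is chosen to project (via the Hecke algebra $\bb T^\Pla$) onto the near-equivalence class of $\pi$. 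The crucial point is that, because $\pi_v$ has simple supercuspidal classical $L$-parameter for some $v \in \Pla^{\bx{sc}}$, the strong multiplicity one statement \Cref{sotnienfiehineiws} forces the near-equivalence class of $\pi$ to consist of cuspidal automorphic representations $\dot\Pi$ with $\dot\Pi^\Pla \cong \pi^\Pla$ and the same functorial transfer, hence the same $\rho_{\pi, \ell}$; moreover by \Cref{strongleienineirnes} and the discussion of $\Pla^\St$ (Steinberg at $\mfk q$) the relevant endoscopic contributions collapse. This is what kills the endoscopic error terms $\iota(\mfk e)\ST^{\mbf G^{\mfk e}}_{\ellip, \chi}$ for $\mfk e \ne \mbf G^*$: after projecting to $\mfk m$, the parameter $\phi_{\mfk m}$ cannot factor through any proper elliptic endoscopic group because of the simple supercuspidal place (its standard $L$-parameter is irreducible there), so only the $\mbf G^\mfk e = \mbf G^*$ term survives, with $\iota(\mbf G^*) = 1$.

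Next, on the stable-distribution side $\ST^{\mbf G^*}_{\ellip, \chi}(h^{\mbf G^*}_{\xi, j})$, I would unwind $h^{\mbf G^*}_{\xi, j} = h^{\mbf G^*, p\infty} h^{\mbf G^*}_{p, j} h^{\mbf G^*}_{\infty, \xi}$: the base-change transfer $h^{\mbf G^*}_{p, j}$ of $\phi_j = \uno_{\mdc K_p \mu(p^{-1})\mdc K_p}$ acts on an unramified (or Steinberg-twisted, at $\mfk q$) representation $\dot\Pi_p$ by the Hecke eigenvalue, which by the Satake isomorphism and the compatibility of classical LLC with unramified parameters equals (a power of) $\tr(\iota_\ell \tilde\phi_{\pi_\mfk p}^\GL(\sigma_\mfk p^j))$ up to the normalizing factor $\norml{\mfk p}^{\frac{\dim_\bb C(\mbf X)}{2} j}$ coming from the half-sum-of-positive-roots shift built into $\mu$. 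The Archimedean factor $h^{\mbf G^*}_{\infty, \xi}$, being the average of pseudo-coefficients of the discrete series $L$-packet attached to $\xi$, contributes $m(\pi)$ (the multiplicity) once summed against Arthur's multiplicity formula, which here is automatic because $\mfk S_{\bm\psi} \cong \bb Z/(2)$ and the sign condition is vacuous by Hasse--Minkowski/Landherr as noted after \Cref{endoslcinidhnfineism}. Assembling these gives $\iota_\ell^{-1}\tr(\iota_\ell f^\infty \sigma_\mfk p^j | [\cetH^*(\bSh, \mrs L_{\iota_\ell \xi})]) = m(\pi) \norml{\mfk p}^{\frac{\dim_\bb C(\mbf X)}{2} j} \tr(\iota_\ell \tilde\phi_{\pi_\mfk p}^\GL(\sigma_\mfk p^j))$ on the $\pi^\infty$-part, which is the first displayed equation after translating the alternating-sum convention of \Cref{ieindiiivinereheieis} and using \Cref{temrpeodounirnifehis} to replace $\cetH$ by the genuine (semisimplified) cohomology and to see that $\dim$ of semisimplification equals $\dim$ of the isotypic part.

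For the concentration in middle degree and genuineness: this follows by purity. Since $\xi$ is Std-regular, \Cref{Galosiniautoenrieeis} gives that $\rho_{\pi, \ell}$ is irreducible and pure of a fixed weight, and by Matsushima/Borel--Casselman together with \Cref{temrpeodounirnifehis} the $L^2$-cohomology $\bx H^i_{(2)}(\bSh, \mrs L_\xi)[\pi^\infty]$ is a direct sum of copies of the $(\mfk g, \mdc K_\infty)$-cohomology of $\pi_\infty$ against $\xi$. Regularity of $\xi$ forces $\pi_\infty$ to be a discrete series (tempered cohomological with regular infinitesimal character), so its $(\mfk g, \mdc K_\infty)$-cohomology is concentrated in the single middle degree $\dim_\bb C(\mbf X)$; hence $\cetH^i(\bSh, \mrs L_{\iota_\ell \xi})[\pi^\infty]$ vanishes for $i \ne \dim_\bb C(\mbf X)$, the alternating sum has a single term, and $\rho_\bSh^\pi = \bx H^{\dim_\bb C(\mbf X)}_c(\bSh, \mrs L_{\iota_\ell \xi})^\sems[\pi^\infty]$ is a genuine Galois representation.

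I expect the main obstacle to be the bookkeeping at the place $p = \chr \mfk p$: matching the normalization of the test function $\phi_j = \uno_{\mdc K_p \mu(p^{-1})\mdc K_p}$ and its base-change transfer with the Satake parameter of $\pi_\mfk p$ so that the Frobenius trace comes out as $\norml{\mfk p}^{\frac{\dim_\bb C(\mbf X)}{2} j}\tr(\iota_\ell \tilde\phi_{\pi_\mfk p}^\GL(\sigma_\mfk p^j))$ with precisely the right Tate twist. This requires care because our convention for $\Sht$ and for $\mu$ differs from several references (as flagged in the footnotes of \Cref{secitonautnciniauotenries}), and because $\mfk p$ may be a place of $F_1$ lying over an inert or split place of $F$ in case U; in the split case one must track which of the two embeddings $w \colon (F_1)_w \cong F_v$ is being used. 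A secondary subtlety is ensuring that ``all but finitely many $\mfk p$'' genuinely suffices: one needs the base-change fundamental lemma and the transfer at $p$ only for $\mfk p$ outside $\Pla_{F_1}(\Pla) \cup \Pla_{F_1}(\{\ell\})$, and one must invoke \Cref{stabilzienKlanglakOtiwinform} with $\mdc K_p$ hyperspecial, which is automatic for such $\mfk p$ since $\Pla = \Pla_F(S_{\bx{bad}})$ contains all ramified and small primes.
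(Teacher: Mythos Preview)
Your overall strategy matches the paper's, and your argument for concentration in the middle degree (via Vogan--Zuckerman: regular $\xi$ forces cohomological unitary $\pi_\infty$ to be discrete series, whose $(\mfk g,\mdc K_\infty)$-cohomology sits in a single degree) is a valid alternative to the paper's route through Zucker's conjecture, purity of intersection cohomology (Pink), and temperedness of $\pi_{\mfk p}$. Either works.

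The gap is in how you kill the endoscopic terms and how you pass from $\ST^{\mbf G^*}_{\ellip,\chi}$ to a spectral expression. You propose to argue \emph{spectrally}: project to $\mfk m$ and use that the simple supercuspidal parameter at $v\in\Pla^{\bx{sc}}$ cannot factor through a proper $\LL\mbf G^{\mfk e}$. But $\ST^{\mbf G^{\mfk e}}_{\ellip,\chi}(h^{\mbf G^{\mfk e}}_{\xi,j})$ is a \emph{geometric} distribution (a sum of stable orbital integrals), and ``projecting to $\mfk m$'' does not directly make it vanish; to run your argument you would need a spectral expansion on each $\mbf G^{\mfk e}$, i.e.\ the full stabilized trace formula and Arthur's classification on every elliptic endoscopic group, which is far heavier than what is needed. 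The paper instead builds the vanishing into the \emph{test function}: at each $v\in\Pla^\St$ it takes $f_v=f_{\Lef,v}^{\mbf G}$, the Lefschetz function of \Cref{noninIAhienihifneins}, which is \emph{stabilizing}. Hence its transfer to any $\mfk e\ne\mbf G^*$ has all stable orbital integrals equal (up to a constant) to $\kappa$-orbital integrals of $f_{\Lef,v}^{\mbf G}$ with $\kappa\ne\uno$, which vanish by definition. This is why the control tuple insists on $\Pla^\St\ne\vn$; the role of the simple supercuspidal place $\Pla^{\bx{sc}}$ is not to kill endoscopy but to ensure strong multiplicity one and irreducibility of $\rho_{\pi,\ell}$.

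Relatedly, you skip the step that converts $\ST^{\mbf G^*}_{\ellip,\chi}(h^{\mbf G^*}_{\xi,j})$ into a spectral sum on $\mbf G$ (not on $\mbf G^*$): the paper invokes the simple stable trace formula \Cref{simpltraceofmrualoens}, again enabled by the Lefschetz function at $\Pla^\St$, to get $\ST^{\mbf G^*}_{\ellip,\chi}(h^{\mbf G^*}_{\xi,j})=\bx T^{\mbf G}_{\cusp,\chi}(f^{p\infty}f'_{p,j}f_\infty)$, and only then evaluates against cuspidal $\Pi$ on $\mbf G$. Your paragraph ``unwind $h^{\mbf G^*}_{\xi,j}$ \ldots\ acts on an unramified representation $\dot\Pi_p$'' conflates representations of $\mbf G^*$ and $\mbf G$ and does not name this bridge. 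Once you insert the Lefschetz functions at $\Pla^\St$ and invoke \Cref{simpltraceofmrualoens}, the rest of your sketch (Satake computation giving $\norml{\mfk p}^{\frac{\dim_{\bb C}(\mbf X)}{2}j}\tr(\iota_\ell\tilde\phi^\GL_{\pi_{\mfk p}}(\sigma_{\mfk p}^j))$ via \cite{Kot84}*{(2.2.1)}, and the use of \Cref{temrpeodounirnifehis}) goes through.
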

\begin{proof}
We imitate the arguments of \cite{Kot92} and \cite[Proposition 8.2]{K-S23}. Consider the test function $f=f_\infty\otimes f_{\Pla^\St}\otimes f^{\Pla^\St\cup\infPla_F}$ on $\mbf G(\Ade_F)$ such that 
\begin{itemize}
\item
$f_\infty=\paren{\#\Pi_\xi(\mbf G(F\otimes\bb R))}^{-1}\sum_{\tau_\infty\in \Pi_\xi(\mbf G(F\otimes\bb R))}f_{\tau_\infty}$, i.e., the average of the pseudo-coefficients for the discrete series $L$-packet of $\mbf G(F\otimes\bb R)$ associated with $\xi$.
\item
$f^{\Pla^\St\cup\infPla_F}\in\tilde{\mcl H}\paren{\mbf G(\Ade_{F,f}^{\Pla^\St})}$ so that, on the finite set of cuspidal automorphic representations $\dot\pi$ satisfying
\begin{equation*}
(\dot\pi_f)^{\mdc K}\ne0\qquad\text{and}\qquad\tr(f_\infty|\dot\pi_\infty)\ne0,
\end{equation*}
it acts by the scalar $1$ on those $\dot\pi$ satisfying
\begin{equation*}
\dot\pi^\Pla\text{ has the same }\tilde{\bb T}^\Pla\text{-character as }\pi^\Pla
\end{equation*}
and
\begin{equation*}
\tilde{\dot\pi}_v\cong\tilde\pi_v\qquad(v\in\Pla\setm\Pla^\St),
\end{equation*}
and by the scalar $0$ on the remaining representations in this finite set. Let $\Pla_{\bx{aux}}\subset\fPla_F\setm\Pla^\St$ be the finite set of places at which the local component of $f^{\Pla^\St\cup\infPla_F}$ is not the unit of the corresponding spherical Hecke algebra.
\item
$f_{\Pla^\St}=\otimes_{v\in\Pla^\St}f_{\Lef, v}^{\mbf G}$, where each $f_{\Lef, v}^{\mbf G}$ is a Lefschetz function (see \textup{Definition~\ref{noninIAhienihifneins}}).
\end{itemize}
Choose a finite place $\mfk p$ of $F_1$ whose underlying place $\mfk p_\flat$ of $F$ does not belong to $\Pla\cup\Pla^{\bx{aux}}$ and whose residue characteristic $p$ is different from $\ell$. Choose an isomorphism $\iota_p: \bb C\xr\sim \ovl{\bb Q_p}$ such that $\iota_p\circ\tau_0: F_1\to \ovl{\bb Q_p}$ induces the place $\mfk p$. Then the stabilized Langlands--Kottwitz formula (Theorem~\ref{stabilzienKlanglakOtiwinform}) simplifies to
\begin{equation*}
\sum_{i=0}^{2\dim_{\bb C}(\mbf X)}(-1)^i\iota_\ell^{-1}\Tr\paren{\iota_\ell f^\infty\sigma_{\mfk p}^j|\cetH^i(\bSh, \mrs L_{\iota_\ell\xi})}=\ST_{\ellip, \chi}^{\mbf G^*}(h^{\mbf G^*}_{\xi, j}),
\end{equation*}
because the stable orbital integrals of $h_v^{\mbf  G^{\mfk e}}$ vanish for $\mbf G^{\mfk e}\ne\mbf G^*$ as they equal $\kappa$-orbital integrals of $f_{\Lef, v}^{\mbf G}$ with $\kappa\ne\uno$ up to a nonzero constant, and $f_{\Lef, v}^{\mbf G}$ is stabilizing (see \textup{Definition~\ref{noninIAhienihifneins}}); see~ \cite[Theorem 4.3.4]{Lab99}. Note that the left-hand side equals
\begin{equation*}
(-1)^{\dim_{\bb C}(\mbf X)+\sum_{v\in\Pla^\St}q(\mbf G_v)}\tr\paren{\sigma_{\mfk p}^j|\rho_\bSh^\pi}
\end{equation*}
by definition of $f^\infty=f_{\Pla^\St}\otimes f^{\Pla^\St\cup\infPla_F}$, where $q(\mbf G_v)$ is the $F_v$-rank of $\mbf G_v$.

Then it follows from the simple stable trace formula (Theorem~\ref{simpltraceofmrualoens}) and Definition~\ref{Lelfinedihfinies} that
\begin{equation}\label{osinehfiniiws}
\ST_{\ellip, \chi}^{\mbf G^*}(h^{\mbf G^*}_{\xi, j})=\bx T_{\cusp, \chi}^{\mbf G}(f^{p\infty}f_{p, j}'f_\infty)=\sum_{\dot\pi\in\cuspRep_\chi(\mbf G)}m(\dot\pi)\tr(f^{p\infty}|\dot\pi^{p\infty})\tr(f_{p, j}'|\dot\pi_p)\tr(f_\infty|\dot\pi_\infty)
\end{equation}
where $f_{p, j}'=h_{p, j}^{\mbf G^*}$ via the identification $\varrho_p: \mbf G^*(F\otimes\bb Q_p)\xr\sim \mbf G(F\otimes\bb Q_p)$. The term on the right-hand side vanishes unless $\dot\pi_p$ is unramified and both traces $\tr(f^{p\infty}|\dot\pi^{p\infty})$ and $\tr(f_\infty|\dot\pi_\infty)$ are nonzero. If a summand indexed by $\dot\pi$ contributes, then the choice of
$f^{\Pla^\St\cup\infPla_F}$ implies that $\dot\pi^\Pla$ has the same $\tilde{\bb T}^\Pla$-character as $\pi^\Pla$, and that
\begin{equation*}
\tilde{\dot\pi}_v\cong\tilde\pi_v
\qquad
(v\in\Pla\setm\Pla^\St).
\end{equation*}
By strong multiplicity one for the strong functorial transfers and Corollary~\ref{sotnienfiehineiws}, $\dot\pi$ and $\pi$ have the same local classical $L$-parameter at every finite place. At $v\in\Pla^\St$ the packet is a singleton, and at $v\notin\Pla$ both representations are spherical. Hence
$\dot\pi\in[\pi]$. In particular, $\dot\pi_v\cong \pi_v$ for $v\in \Pla_F(\{p\})$ as $\pi_v$ is unramified. By Definition~\ref{Lelfinedihfinies}, the right-hand side of Equation~\eqref{osinehfiniiws} equals
\begin{align*}
&\frac{1}{\#\Pi_\xi(\mbf G(F\otimes\bb R))}\sum_{\dot\pi\in[\pi]}m(\dot\pi)(-1)^{\sum_{v\in\Pla^\St}q(\mbf G_v)}\bx{ep}(\dot\pi_\infty\otimes\xi)\tr\bigg(h^{\mbf G^*}_{p, j}|\prod_{v\in\Pla_F(\{p\})}\pi_v\bigg)\\
&=(-1)^{\dim_{\bb C}(\mbf X)+\sum_{v\in\Pla^\St}q(\mbf G_v)}m(\pi)\tr\bigg(h^{\mbf G^*}_{p, j}|\prod_{v\in\Pla_F(\{p\})}\pi_v\bigg).
\end{align*}
Here we use that $m(\dot\pi)=m(\pi)$ for each $\dot\pi\in [\pi]$ by Arthur's multiplicity formula. Thus
\begin{equation*}
\tr\paren{\sigma_{\mfk p}^j|\rho_\bSh^\pi}=m(\pi)\tr\bigg(h^{\mbf G^*}_{p, j}|\prod_{v\in\Pla_F(\{p\})}\pi_v\bigg).
\end{equation*}

To analyze the right-hand side, consider the conjugacy class
\begin{equation*}
\{\mu\}: \GL_{1, \bb C}\to \paren{\Res_{F/\bb Q}\mbf G}_{\bb C}
\end{equation*}
of Hodge cocharacters defined in \S\ref{Shimreiejvanieniocinis}. The irreducible highest-weight representation
\begin{equation*}
\paren{\Res_{F/\bb Q}\mbf G}^\wedge\cong\prod_{\tau\in\infPla_F}\hat{\mbf G_\tau}
\end{equation*}
associated with $\{\mu\}$ is the standard representation on the $\tau_0$ component and trivial on other components. Furthermore, the Satake parameter of $\pi_v$ belongs to the $\sigma_v$-coset of $\paren{(\Res_{F/\bb Q}\mbf G)\otimes\bb Q_p}^\wedge$, identified with $\paren{\Res_{F/\bb Q}\mbf G}^\wedge$ via $\hat{\iota_p}$, in the $L$-group. Thus the $\tau_0$-component of the Satake parameter of the representation $\prod_{v\in\Pla_F(\{p\})}\pi_v$ of $\paren{\Res_{F/\bb Q}\mbf G}\otimes\bb Q_p$ is identified with the Satake parameter of $\pi_{\mfk p_\flat}$, since the composite
\begin{equation*}
F\xr{\tau_0}\bb C\xr{\iota_p}\ovl{\bb Q_p}
\end{equation*}
induces the place $\mfk p_\flat$. Then it follows from \cite[(2.2.1)]{Kot84} that
\begin{equation*}
\tr\bigg(h^{\mbf G^*}_{p, j}|\prod_{v\in\Pla_F(\{p\})}\pi_v\bigg)=\norml{\mfk p}^{\frac{\dim_{\bb C}(\mbf X)}{2}j}\tr\paren{\iota_\ell\tilde\phi^\GL_{\pi_{\mfk p_\flat}}(\sigma_{\mfk p}^j)},
\end{equation*}
and the first assertion follows.

For every $i$, define the genuine semisimple Galois representation
\begin{equation*}
V_i\defining\bplus_{\dot\pi\in[\pi]/\sim}\paren{\bx{IH}^i\paren{\bSh,\mrs L_{\iota_\ell\xi}}[\dot\pi^\infty]}^{\sems}.
\end{equation*}
By Lemma~\ref{temrpeodounirnifehis} and~\eqref{liniihsniefiens}, we obtain
\begin{equation*}
[\rho_\bSh^\pi]=\sum_{i=0}^{2\dim_{\bb C}(\mbf X)}(-1)^{i-\dim_{\bb C}(\mbf X)}[V_i].
\end{equation*}

The coefficient local system $\mrs L_{\iota_\ell\xi}$ is pure of weight $0$. Hence $V_i$ is pure of weight $i$, by Pink's purity result~\cite[Proposition 5.6.2]{Pin92} and the purity of intersection cohomology. (Note that the weight cocharacter of the Shimura datum which appears in \cite[\S 5.4]{Pin92} is trivial because $Z(\mbf G)$ is anisotropic).

Fix a place $\mfk p$ for which the first assertion of the theorem holds. Since $\dot\pi_{\mfk p_\flat}$ is tempered for every $\dot\pi\in[\pi]$, the first part of the theorem and Theorem~\ref{coeleninifheihsn} imply that, for
all sufficiently large $j$,
\begin{equation*}
\sum_{i=0}^{2\dim_{\bb C}(\mbf X)}(-1)^{i-\dim_{\bb C}(\mbf X)}\iota_{\ell}^{-1}\tr\paren{\sigma_{\mfk p}^j|V_i}
\end{equation*}
is a power sum all of whose bases have complex absolute value $\norml{\mfk p}^{\frac{j\dim_{\bb C}(\mbf X)}{2}}$.

Since $V_i$ is pure of weight $i$ and all constituents of a fixed $V_i$ occur with
the same sign, no cancellation is possible for $i\ne \dim_{\bb C}(\mbf X)$. The linear independence of the sequences $j\mapsto\alpha^j$ implies that $V_i=0$ for $i\ne \dim_{\bb C}(\mbf X)$. Thus only the middle degree contributes, and
\begin{equation*}
\rho_\bSh^\pi\cong V_{\dim_{\bb C}(\mbf X)}
\end{equation*}
is a genuine representation of $\Gal_{F_1}$.
\end{proof}

\begin{cor}\label{iodnifneidkhnsiws}
The Galois representation $\rho_\bSh^\pi$ has contribution only coming from the middle degree $\dim_{\bb C}(\mbf X)$, and for each $w\in \fPla_{F_1}$ with underlying place $v\in\fPla_F$ it has a subquotient whose semisimplification is isomorphic to
\begin{equation*}
\paren{\rho_{\pi, \ell}|_{W_{F_{1,w}}}}^\sems\otimes\iota_\ell\largel{-}_{F_{1,w}}^{\frac{N(\mbf G)-1-\dim_{\bb C}(\mbf X)}{2}}\cong\iota_\ell\paren{\tilde\phi_{\pi_v}^\GL\otimes\largel{-}_{F_{1,w}}^{-\frac{\dim_{\bb C}(\mbf X)}{2}}}
\end{equation*}
as a $W_{F_{1,w}}$-module, where $\rho_{\pi, \ell}$ is defined in \textup{Theorem~\ref{Galosiniautoenrieeis}}.
\end{cor}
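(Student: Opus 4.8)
The plan is to deduce \Cref{iodnifneidkhnsiws} from \Cref{eieninifhenfiews} together with the standard purity and semi-simplicity package for intersection cohomology. First I would recall, as established in the proof of \Cref{eieninifhenfiews}, that the natural maps \Cref{maosinfiehfeis} induce $\Gal_{F_1}$-equivariant isomorphisms
\begin{equation*}
\bx R\Gamma_c(\bSh, \mrs L_{\iota_\ell\xi})[\pi^\infty]\cong \bx{IH}^*(\bSh, \mrs L_{\iota_\ell\xi})[\pi^\infty]
\end{equation*}
(using \Cref{temrpeodounirnifehis} and \Cref{liniihsniefiens}, since $\pi$ is cuspidal hence tempered at every place by \Cref{Galosiniautoenrieeis}), and that the RHS is pure: $\sigma_{\mfk p}$ acts on $\bx{IH}^i(\bSh,\mrs L_{\iota_\ell\xi})_\sems[\pi^\infty]$ with weight $i$ by Pink's purity result \cite{Pin92}*{Proposition 5.6.2}. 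Combined with the eigenvalue computation in \Cref{eieninifhenfiews} — all Frobenius eigenvalues on $\rho_\bSh^\pi$ have absolute value $\norml{\mfk p}^{\dim_{\bb C}(\mbf X)/2}$ — this forces $\bx{IH}^i[\pi^\infty]=0$ unless $i=\dim_{\bb C}(\mbf X)$, giving concentration in the middle degree. Hence $\bx R\Gamma_c(\bSh,\mrs L_{\iota_\ell\xi})[\pi^\infty]$ is (up to shift) the single module $\bx H_c^{\dim_{\bb C}(\mbf X)}(\bSh,\mrs L_{\iota_\ell\xi})[\pi^\infty]=\rho_\bSh^\pi$.

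Next I would identify the $W_{(F_1)_w}$-module structure. Since $\bx{IH}^*[\pi^\infty]$ is a finite-dimensional continuous semi-simple $\Gal_{F_1}$-representation (semi-simplicity at the relevant places follows because its Frobenius traces match those of a direct sum of the semi-simple objects $\rho_{\pi,\ell}$, and a continuous representation of a Weil group is determined up to semi-simplification by Frobenius traces at a density-one set of places by Chebotarev), \Cref{eieninifhenfiews} says that for all but finitely many $\mfk p$ and $j\gg 0$,
\begin{equation*}
\tr\paren{\sigma_{\mfk p}^j\mid \rho_\bSh^\pi}=m(\pi)\cdot\norml{\mfk p}^{\frac{\dim_{\bb C}(\mbf X)}{2}j}\cdot\tr\paren{\iota_\ell\tilde\phi_{\pi_{\mfk p}}^\GL(\sigma_{\mfk p}^j)}.
\end{equation*}
By \Cref{Galosiniautoenrieeis} (local-global compatibility), $\WD(\rho_{\pi,\ell}|_{W_{(F_1)_w}})\cong \iota_\ell(\tilde\phi_{\pi_v}^\GL\otimes\largel{-}_{(F_1)_w}^{(1-N(\mbf G))/2})$, so $\iota_\ell\tilde\phi_{\pi_{\mfk p}}^\GL=\rho_{\pi,\ell}|_{W_{(F_1)_{\mfk p}}}\otimes\iota_\ell\largel{-}_{(F_1)_{\mfk p}}^{(N(\mbf G)-1)/2}$ on Frobenius elements. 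Feeding this in, the traces of $\rho_\bSh^\pi$ at a density-one set of places agree with those of the semi-simple $\Gal_{F_1}$-module $\rho_{\pi,\ell}\otimes\iota_\ell\largel{-}^{(N(\mbf G)-1-\dim_{\bb C}(\mbf X))/2}$, taken with multiplicity $m(\pi)$; by Chebotarev and semi-simplicity, $\rho_\bSh^\pi\cong (\rho_{\pi,\ell})^{\oplus m(\pi)}\otimes\iota_\ell\largel{-}^{(N(\mbf G)-1-\dim_{\bb C}(\mbf X))/2}$ as $\Gal_{F_1}$-representations, and in particular restricting to $W_{(F_1)_w}$ and using irreducibility of $\rho_{\pi,\ell}$ (again \Cref{Galosiniautoenrieeis}) yields the asserted sub-quotient $\rho_{\pi,\ell}\otimes\iota_\ell\largel{-}_{(F_1)_w}^{(N(\mbf G)-1-\dim_{\bb C}(\mbf X))/2}\cong\iota_\ell(\tilde\phi_{\pi_v}^\GL\otimes\largel{-}_{(F_1)_w}^{-\dim_{\bb C}(\mbf X)/2})$, the last isomorphism being \Cref{Galosiniautoenrieeis} once more.

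The main obstacle I anticipate is bookkeeping rather than conceptual: one must be careful that \Cref{eieninifhenfiews} only computes Frobenius traces for $\mfk p$ outside $\Pla$ and $\{\ell\}$ and only for $j\gg 0$ depending on $\mfk p$, so the passage from "traces of $\sigma_{\mfk p}^j$ for large $j$" to "$\rho_\bSh^\pi$ as a genuine semi-simple Galois module" requires invoking that a semi-simple Weil--Deligne (or Galois) representation is pinned down by Frobenius-semisimple Frobenius eigenvalues at a set of places of density one — which is legitimate here precisely because $\rho_\bSh^\pi$ is already known to be a genuine (not virtual) semi-simple representation by the concentration-in-middle-degree step plus \Cref{Galosiniautoenrieeis}. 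A secondary subtlety is matching the exact Tate-twist exponents: the shift $\largel{-}_{(F_1)_w}^{(N(\mbf G)-1-\dim_{\bb C}(\mbf X))/2}$ arises as the composite of the twist $\largel{-}^{(N(\mbf G)-1)/2}$ relating $\rho_{\pi,\ell}$ to $\tilde\phi_{\pi_v}^\GL$ and the geometric twist $\largel{-}^{-\dim_{\bb C}(\mbf X)/2}$ coming from the normalization of the intersection complex, and one should double-check the sign conventions against \cite{Kot84}*{(2.2.1)} as used in \Cref{eieninifhenfiews}. Once the density-one Chebotarev argument is granted, the rest is a direct assembly of \Cref{temrpeodounirnifehis}, \Cref{liniihsniefiens}, \Cref{eieninifhenfiews} and \Cref{Galosiniautoenrieeis}.
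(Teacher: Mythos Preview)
Your approach is essentially the same as the paper's: the paper's proof is a one-liner invoking \Cref{eieninifhenfiews}, \Cref{Galosiniautoenrieeis}, Brauer--Nesbitt and Chebotarev density, and you have unpacked precisely this argument. Two small remarks. First, the concentration in middle degree is already part of the statement of \Cref{eieninifhenfiews} (the ``Moreover'' clause), so you need not re-derive it. Second, your parenthetical claim that $\bx{IH}^*[\pi^\infty]$ is semi-simple as a $\Gal_{F_1}$-module is not obviously justified and is in any case unnecessary: the corollary only asserts a \emph{sub-quotient}, so it suffices to know that the semi-simplification of $\rho_\bSh^\pi$ is $(\rho_{\pi,\ell})^{\oplus m(\pi)}$ up to the stated twist, which is exactly what Brauer--Nesbitt plus Chebotarev gives from the trace identity in \Cref{eieninifhenfiews} (and $m(\pi)\ge 1$ since $\pi$ is cuspidal automorphic). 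Your worry about only knowing $\tr(\sigma_{\mfk p}^j)$ for $j\gg 0$ is harmless: power sums $p_j$ for all large $j$ determine the multiset of eigenvalues once the dimension is known.
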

\begin{proof}
This follows from Theorem~\ref{eieninifhenfiews}, together with the local-global compatibility and irreducibility of $\rho_{\pi, \ell}$ (Theorem~\ref{Galosiniautoenrieeis}), the Brauer--Nesbitt theorem, and the Chebotarev density theorem.
\end{proof}

To end this section, we recall a hypothesis describing the Galois representation appearing in the generic part of the middle degree $\ell$-adic cohomology of orthogonal and unitary Shimura varieties. It will later be used in the proof of the strong Kottwitz conjecture.

\begin{hyp}\label{hsisieteijeiureis}
Suppose $\xi$ is the trivial representation of $(\Res_{F/\bb Q}\mbf G)\otimes_{\bb Q}\bb C$, except in Case O2, where its highest weight is
\begin{equation*}
(1, \ldots, 1)
\end{equation*}
at every infinite place. Suppose $\pi$ is a cuspidal automorphic representation of $\mbf G(\Ade_F)$ that is cohomological for $\xi$, with generic elliptic global parameter
\begin{equation*}
\bm\psi=\Pi_1+\cdots+\Pi_r,
\end{equation*}
where each $\Pi_j$ is a conjugate self-dual cuspidal automorphic representation of $\GL_{n_j}(\Ade_{F_1})$ for each $1\le j\le r$, and $n_1+\cdots+n_r=N(\mbf G)$. We set
\begin{equation*}
\mfk I\defining
\begin{cases}
\{N(\mbf G), N(\mbf G)-2, \ldots, 2, -2, \ldots, -N(\mbf G)\} &\text{in case O2,}\\
\{N(\mbf G)-1, N(\mbf G)-3, \ldots, 3-N(\mbf G), 1-N(\mbf G)\} &\otherwise.
\end{cases}
\end{equation*}
There is a unique partition $\mfk I=\mfk I_1\coprod \cdots \coprod \mfk I_r$ such that, for every $j$,
\begin{equation*}
\phi_{\Pi_{j,\tau_0}}|_{\bb C^\times}\cong\bplus_{i\in\mfk I_j}\arg^i.
\end{equation*}

The Galois representation $\rho_\bSh^\pi$ has contribution only coming from the middle degree $\dim_{\bb C}(\mbf X)$, by the same argument as in the last part of the proof of Theorem~\ref{eieninifhenfiews}. If $\rho^\pi_\bSh\ne 0$, then $\pi_{\tau_0}$ is the discrete series representation of $\mbf G_{\tau_0}(\bb R)$ that corresponds to a character of $\mfk S_{\tilde{\bm\psi}_{\tau_0}}$. Let $\chi_\pi$ be the pullback of this character along the localization map
\begin{equation*}
\mfk S_{\bm\psi}\to \mfk S_{\tilde{\bm\psi}_{\tau_0}}.
\end{equation*}
Identify the global component group $\mfk S_{\bm\psi}$ with the kernel of the homomorphism
\begin{equation*}
(\bb Z/2\bb Z)^{\mfk I}\to \bb Z/2\bb Z\colon \sum_{i\in \mfk I}a_ie_i\mapsto \sum_{i\in\mfk I}a_in_i.
\end{equation*}
Let $1\le j(\pi)\le r$ be an integer such that $\chi_\pi$ equals the projection of $(\bb Z/2\bb Z)^{\mfk I}$ to the $j(\pi)$-th coordinate. In Case O2, two distinct coordinate characters may have the same restriction to $\mfk S_{\bm\psi}$, so $\mfk J(\pi)$ may have two elements. We let $\mfk J(\pi)$ be the subset of $[r]_+$ consisting of all such choices of $j(\pi)$, then there is an isomorphism of Galois representations
\begin{equation*}
\rho^\pi_\bSh\cong\bigg(\bplus_{j\in \mfk J(\pi)}\rho_{\Pi_j, \ell}\bigg)^{\oplus m(\pi)}: \Gal_{F_1}\to \GL_{m(\pi)\sum_{j\in\mfk J(\pi)}n_j}(\ovl{\bb Q_\ell}).
\end{equation*}
Here $\rho_{\Pi_j, \ell}$ is the Galois representation attached to $\Pi_j$, for example as in~\cite[Theorem~2.1.1]{LGGT}\footnote{In that reference, $\pi$ is assumed to be a regular $C$-algebraic cuspidal essentially self-dual representation of $\GL_n$, but the Galois representation is associated with $\pi\otimes|\det|^{(1-n)/2}$, which is regular $L$-algebraic.}.
\end{hyp}
\begin{rem}
In Case O, if $F=\bb Q$, then Hypothesis~\ref{hsisieteijeiureis} is a corollary of \cite[Corollary~9.8.10]{Zhu24a}. In general, Hypothesis~\ref{hsisieteijeiureis} will be established in a sequel to \cite{KSZ21}, assuming the full endoscopic classification for the corresponding unitary and orthogonal groups, cf.~\cite[Hypothesis~6.6, Remark~6.7]{L-L21}. Note that the generic part of the endoscopic classification for such groups is established by Chen-Zou \cite[Theorem~2.6]{C-Z24}.
\end{rem}

\section{Local and global Shimura varieties}\label{local-LGoabsiSHimief}

In this section, we relate local shtuka spaces with minuscule $\mu$ (or local Shimura varieties) to global Shimura varieties, in order to prove a key result on the cohomology of local shtuka spaces, namely Corollary~\ref{ckeyeeriefiensiw}, using global methods.

Let $p$ be a rational prime, let $\iota_p:\bb C\xr\sim\ovl{\bb Q_p}$ be a fixed isomorphism, and let
$K$ be a finite extension of $\bb Q_p$. Let $\ell\ne p$ be a rational prime and fix an isomorphism $\iota_\ell:\bb C\xr\sim\ovl{\bb Q_\ell}$. The isomorphism $\iota_\ell$ fixes a square root of $p$ in $\ovl{\bb Q_\ell}$. The latter is an $\ell$-adic unit and therefore has a reduction in $\ovl{\bb F_\ell}^\times$.

\subsection{Basic uniformization of Shimura varieties}\label{baisniseufniemeos}

In this subsection we briefly review the basic uniformization of the generic fiber of a Shimura variety following \cite[\S 3.1]{Han20} and \cite[\S 4]{Ham22}. We adopt the notation on local shtuka spaces from~\S\ref{secitonautnciniauotenries}.

Let $\bb G$ be a connected reductive group over $\bb Q$ and $(\bb G, \bb X)$ a Shimura datum of Abelian type with associated conjugacy class of Hodge cocharacters $\{\mu\}:\GL_{1, \bb C}\to \bb G_{\bb C}$. Suppose $\msf G=\bb G\otimes\bb Q_p$ is unramified, and we fix a Borel pair $(\msf B, \msf T)$ for $\msf G$, then we get from $\{\mu\}$ and $\iota_p$ a dominant cocharacter $\mu$ for $\msf G_{\ovl{\bb Q_p}}$, with reflex field $E_\mu/\bb Q_p$.

For each neat compact open subgroup $\mdc K=\mdc K_p\mdc K^p\le \bb G(\Ade_f)$, we have the adic space $\mcl S_{\mdc K_p\mdc K^p}(\bb G, \bb X)$ over $\Spa(E_\mu)$ associated with the Shimura variety $\bSh_{\mdc K_p\mdc K^p}(\bb G, \bb X)$, and we define
\begin{equation*}
\mcl S_{\mdc K^p}(\bb G, \bb X)\defining \plim_{\mdc K_p\to\uno}\mcl S_{\mdc K_p\mdc K^p}(\bb G, \bb X),
\end{equation*}
which is representable by a perfectoid space because $(\bb G, \bb X)$ is of Abelian type, see~\cite{Sch15, She17}. By the result of \cite{Han20}, there exists a canonical $\msf G(\bb Q_p)$-equivariant Hodge--Tate period map defined in \cite{Sch15, C-S17}
\begin{equation*}
\pi_{\bx{HT}}: \mcl S_{\mdc K^p}(\bb G, \bb X)\to\Gr_{\msf G, \mu},
\end{equation*}
where $\Gr_{\msf G, \mu}$ is the Schubert cell of the $\mbf B_\dR^+$-affine Grassmannian of \cite{S-W20} indexed by $\mu$, defined over $\Spa(E_\mu)$.

We write $\mu^\bullet=-w_0(\mu)\in X_\bullet(\msf G)$, where $w_0$ is the longest Weyl group element. Let $\msf b\in B(\msf G, \uno, \mu^\bullet)_\bas$ be the unique basic element and $\uno\in B(\msf G)_\bas$ be the trivial element, then we obtain the open basic Newton stratum $\Gr^{\msf b}_{\msf G, \mu}\subset \Gr_{\msf G, \mu}$ as defined in \cite[\S 3.5]{C-S17}, and let $\mcl S_{\mdc K^p}(\bb G, \bb X)^{\msf b}\subset \mcl S_{\mdc K^p}(\bb G, \bb X)$ be the preimage of $\Gr^{\msf b}_{\msf G, \mu}$ under $\pi_{\bx{HT}}$, called the basic Newton stratum.

Suppose $p>2$ and $\msf G$ is unramified, then Shen's result~\cite[Theorem 1.2]{She20} together with \cite[Proposition~3.1]{Han20} state that $(\bb G, \bb X)$ has a basic uniformization at $p$ in the following sense:

\begin{thm}\label{sisnihfienisw}
There exists a $\bb Q$-inner form $\bb G'$ of $\bb G$ such that 
\begin{itemize}
\item
$\bb G'\otimes\Ade_f^p\cong \bb G\otimes\Ade_f^p$ as algebraic groups over $\Ade_f^p$,
\item
$\bb G'\otimes\bb Q_p\cong \msf G_{\msf b}$,
\item
$\bb G'(\bb R)$ is compact modulo center,
\end{itemize}
together with a $\bb G(\Ade_f)$-equivariant isomorphism of diamonds over $\breve E\defining \breve{\bb Q_p}E_\mu$:
\begin{equation*}
\plim_{\mdc K^p\to\uno}\mcl S_{\mdc K^p}(\bb G, \bb X)^{\msf b}\cong\paren{\udl{\bb G'(\bb Q)\bsh\bb G'(\Ade_f)}\times_{\Spd(\breve E)}\Sht(\msf G,\msf b, \uno, \mu^\bullet)}/\udl{\msf G_{\msf b}(\bb Q_p)},
\end{equation*}
where $\msf G_{\msf b}(\bb Q_p)$ acts diagonally and $\bb G(\Ade_f)\cong \bb G'(\Ade_f^p)\times \msf G(\bb Q_p)$ acts on the right-hand side via the natural action of $\bb G'(\Ade_f^p)$ on $\bb G'(\bb Q)\bsh \bb G'(\Ade_f)$ and the action of $\msf G(\bb Q_p)$ on $\Sht(\msf G,\msf b,\uno, \mu^\bullet)$. Moreover, under the identification $\Gr^{\msf b}_{\msf G, \mu}\cong \Sht(\msf G, \msf b, \uno, \mu^\bullet)/\udl{\msf G_{\msf b}(\bb Q_p)}$, the Hodge--Tate period map:
\begin{equation*}
\pi_{\bx{HT}}: \plim_{\mdc K^p\to\uno}\mcl S_{\mdc K^p}(\bb G, \bb X)^{\msf b}\to \Gr^{\msf b}_{\msf G, \mu}
\end{equation*}
identifies with the natural projection
\begin{equation*}
\paren{\udl{\bb G'(\bb Q)\bsh\bb G'(\Ade_f)}\times_{\Spd(\breve E)}\Sht(\msf G,\msf b, \uno, \mu^\bullet)}/\udl{\msf G_{\msf b}(\bb Q_p)}\to \Sht(\msf G, \msf b, \uno, \mu^\bullet)/\udl{\msf G_{\msf b}(\bb Q_p)}. 
\end{equation*}
\end{thm}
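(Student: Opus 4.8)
The statement is \cite{She20}*{Theorem 1.2}, so in the paper itself nothing needs to be reproved; let me nonetheless sketch the strategy, since this basic uniformization is the geometric input on which the rest of the argument rests. The plan is to deduce it from a Rapoport--Zink-type $p$-adic uniformization of the basic locus inside a good integral model. First I would reduce to the case of a Shimura datum of Hodge type: since $(\bb G, \bb X)$ is of Abelian type, there is an auxiliary Hodge-type datum $(\bb G_1, \bb X_1)$ with $\bb G_{1,\ad}\cong\bb G_\ad$, and the formation of basic Newton strata together with their uniformization is compatible with passage between a Hodge-type datum and its adjoint quotient (via Deligne's description of connected Shimura varieties and Kisin's comparison of their integral models); hence it suffices to treat the Hodge-type case. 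Because $p>2$ and $\msf G$ is unramified, Kisin's theory furnishes a smooth integral model $\mcl S_{\mdc K_p}$ over $\mcl O_{E_\mu}$ carrying a universal $\msf G$-object and a well-behaved Newton stratification on its special fibre.

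The central step is then the $p$-adic uniformization of the basic locus, established in the PEL case by Rapoport--Zink and extended to the relevant orthogonal and unitary (Hodge/Abelian-type) cases by Howard--Pappas, Kim and Shen: the formal completion of $\mcl S_{\mdc K_p}$ along the basic locus of its special fibre is isomorphic to $\bb G'(\bb Q)\bsh\bigl(\mcl M\times\bb G'(\Ade_f^p)/\mdc K^p\bigr)$, where $\mcl M$ is the Rapoport--Zink formal scheme attached to $(\msf G,\msf b,\mu^\bullet)$ and $\bb G'$ is the inner form of $\bb G$ cut out by the isocrystal-with-$\bb G$-structure $\msf b$. Kottwitz's bookkeeping identifies $\bb G'$: one has $\bb G'_{\bb Q_p}\cong J_{\msf b}=\msf G_{\msf b}$ by the definition of $J_{\msf b}$, $\bb G'\otimes\Ade_f^p\cong\bb G\otimes\Ade_f^p$ since the uniformization only changes the group at $p$, and $\bb G'(\bb R)$ is compact modulo centre because $\msf b$ is basic and the signs governing the archimedean place match those of the definite form. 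Passing to the adic generic fibre and then to infinite level at $p$, the tower over $\mcl M$ becomes Scholze--Weinstein's moduli of local shtukas $\Sht(\msf G,\msf b,\uno,\mu^\bullet)$ by \cite{S-W20}, and one obtains the asserted $\bb G(\Ade_f)$-equivariant isomorphism of diamonds over $\breve E$ after taking the limit over $\mdc K^p$.

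For the compatibility with the Hodge--Tate period map, I would argue that $\pi_{\bx{HT}}$ restricted to the basic Newton stratum is manufactured entirely from the universal $\msf G$-shtuka (equivalently, the universal $p$-divisible group with $\msf G$-structure) on $\mcl S_{\mdc K^p}(\bb G,\bb X)^{\msf b}$; under the uniformization this universal object is pulled back along the projection to $\Sht(\msf G,\msf b,\uno,\mu^\bullet)$ from the universal shtuka there, so $\pi_{\bx{HT}}$ is forced to coincide with the structure map $\Sht(\msf G,\msf b,\uno,\mu^\bullet)/\udl{\msf G_{\msf b}(\bb Q_p)}\to\Gr^{\msf b}_{\msf G,\mu}$ of the local picture.

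The hard part is the transition from Rapoport--Zink's uniformization of \emph{formal completions} in an integral model to the statement about the \emph{adic generic fibre} and its associated diamond, together with the descent along geometric connected components required in the Abelian-type reduction; this is exactly the point that forces the hypotheses $p>2$ and $\msf G$ unramified and that relies on the compatibility results of Scholze--Weinstein and Shen between the formal-scheme and diamond formalisms. Once those foundational comparisons are in place, the remaining identifications (of $\bb G'$, of the adelic factors, and of the period maps) are essentially formal.
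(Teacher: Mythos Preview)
Your identification is correct: the paper does not prove this theorem but simply cites it as \cite{She20}*{Theorem 1.2}, so there is no ``paper's own proof'' to compare against. Your sketch of Shen's strategy---reduction from Abelian type to Hodge type via Deligne's formalism and Kisin's integral models, Rapoport--Zink-type uniformization of the formal completion along the basic locus, and passage to the adic generic fibre via the Scholze--Weinstein comparison between Rapoport--Zink spaces and local shtuka spaces---is an accurate outline of how the cited result is established, and your remarks on the identification of $\bb G'$ and the compatibility with $\pi_{\bx{HT}}$ are correct in spirit.
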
 

\vm

This basic uniformization at $p$ will allow us to deduce an isomorphism
\begin{align*}
\bx R\Gamma_c(\msf G,\msf b, \uno, \mu^\bullet)\otimes\iota_\ell\largel{-}_{E_\mu}^{\frac{-\dim_{\bb C}(\bb X)}{2}}[\dim_{\bb C}(\bb X)]&\Ltimes_{\msf G_{\msf b}(\bb Q_p)}\mcl A\paren{\bb G'(\bb Q)\bsh \bb G'(\Ade_f)/\mdc K^p, \mrs L_{\iota_\ell\xi}}\\
&\cong \bx R\Gamma_c\paren{\mcl S_{\mdc K^p}(\bb G, \bb X)^{\msf b}, \mcl L_{\iota_\ell\xi}}
\end{align*}
of $\msf G(\bb Q_p)\times W_{E_\mu}$-modules, for each algebraic representation $\xi$ of $\bb G_{\bb C}$ that is trivial on $Z_{\bx{ac}}$ (see \textup{Definition~\ref{automroehisliinifheis}}), where $\mcl L_{\iota_\ell\xi}$ is the rigid analytification of the lisse $\ovl{\bb Q_\ell}$-sheaf $\mrs L_{\iota_\ell\xi}$ associated with $\xi$ (see \textup{Definition~\ref{automroehisliinifheis}}). When composed with the morphism
\begin{equation*}
\bx R\Gamma_c\paren{\mcl S_{\mdc K^p}(\bb G, \bb X)^{\msf b}, \mcl L_{\iota_\ell\xi}}\to \bx R\Gamma_c\paren{\mcl S_{\mdc K^p}(\bb G, \bb X), \mcl L_{\iota_\ell\xi}}
\end{equation*}
coming from excision with $\bb Z_\ell$-coefficients with respect to the open basic stratum $\mcl S(\bb G, \bb X)^{\msf b}_{\mdc K^p}\subset \mcl S(\bb G, \bb X)_{\mdc K^p}$, this isomorphism gives us a uniformization map between cohomologies. We recall the following result from \cite[Proposition~4.1]{Ham22}.

\begin{prop}\label{basieuniformisihka}
Suppose $p>2$, then there exists a $\msf G(\bb Q_p)\times W_{E_\mu}$-equivariant map
\begin{align*}
\Theta: \bx R\Gamma_c(\msf G,\msf b, \uno, \mu^\bullet)\otimes\iota_\ell\largel{-}_{E_\mu}^{\frac{-\dim_{\bb C}(\bb X)}{2}}[\dim_{\bb C}(\bb X)]&\Ltimes_{\msf G_{\msf b}(\bb Q_p)}\mcl A\paren{\bb G'(\bb Q)\bsh \bb G'(\Ade_f)/\mdc K^p, \mrs L_{\iota_\ell\xi}}\\
&\to\bx R\Gamma_c\paren{\mcl S(\bb G, \bb X)_{\mdc K^p}, \mcl L_{\iota_\ell\xi}}
\end{align*}
functorial with respect to  $\mdc K^p$.
\end{prop}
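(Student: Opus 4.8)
The plan is to build $\Theta$ out of two ingredients: (i) an identification of $\bx R\Gamma_c(\mcl S_{\mdc K^p}(\bb G, \bb X)^{\msf b}, \mcl L_{\iota_\ell\xi})$ with the source of $\Theta$, coming from Shen's basic uniformization \Cref{sisnihfienisw}, and (ii) the excision map attached to the open immersion of the basic Newton stratum. For (i): pulling $\mcl L_{\iota_\ell\xi}$ back along
\begin{equation*}
\plim_{\mdc K^p\to\uno}\mcl S_{\mdc K^p}(\bb G, \bb X)^{\msf b}\cong\paren{\udl{\bb G'(\bb Q)\bsh\bb G'(\Ade_f)}\times_{\Spd(\breve E)}\Sht(\msf G,\msf b, \uno, \mu^\bullet)}/\udl{\msf G_{\msf b}(\bb Q_p)}
\end{equation*}
and using the K\"unneth-type behavior of $\bx R\Gamma_c$ in the six-functor formalism of \cite{Sch22} and \cite{F-S24}, the cohomology of the product factors as the cohomology of $\Sht(\msf G,\msf b, \uno, \mu^\bullet)$ against the sheaf $\mcl S_{\mu^\bullet}$ coming from the tilting module $\mcl T_{\mu^\bullet}$, tensored with the cohomology of the constant diamond $\udl{\bb G'(\bb Q)\bsh\bb G'(\Ade_f)}$ against $\mrs L_{\iota_\ell\xi}$; since $\bb G'(\bb R)$ is compact modulo center the latter is the space of algebraic automorphic forms $\mcl A(\bb G'(\bb Q)\bsh \bb G'(\Ade_f)/\mdc K^p, \mrs L_{\iota_\ell\xi})$ of \Cref{automroehisliinifheis}. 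Passing to the $\udl{\msf G_{\msf b}(\bb Q_p)}$-quotient turns the external product into the relative tensor product $\Ltimes_{\msf G_{\msf b}(\bb Q_p)}$, and comparing normalizations — our $\bx R\Gamma_c(\msf G,\msf b, \uno, \mu^\bullet)$ carries the Tate twist and $t$-shift recorded in \Cref{secitonautnciniauotenries}, and the Shimura variety has dimension $\dim(\bb X)$ — produces the twist $\iota_\ell\largel{-}_{E_\mu}^{\frac{-\dim(\bb X)}{2}}[\dim(\bb X)]$ appearing in the statement.

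For (ii): writing $j\colon \mcl S_{\mdc K^p}(\bb G, \bb X)^{\msf b}\hookrightarrow \mcl S_{\mdc K^p}(\bb G, \bb X)$ for the open inclusion of the basic Newton stratum, the counit $j_!j^*\to\id$ evaluated on $\mcl L_{\iota_\ell\xi}$ and pushed forward to a point furnishes a natural morphism $\bx R\Gamma_c(\mcl S_{\mdc K^p}(\bb G, \bb X)^{\msf b}, \mcl L_{\iota_\ell\xi})\to \bx R\Gamma_c(\mcl S_{\mdc K^p}(\bb G, \bb X), \mcl L_{\iota_\ell\xi})$. Composing with the isomorphism from (i) gives $\Theta$. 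Both the uniformization isomorphism of \Cref{sisnihfienisw} and the Hodge--Tate period map are functorial in $\mdc K^p$, and excision is a natural transformation, so $\Theta$ is functorial in $\mdc K^p$; likewise the $\msf G(\bb Q_p)$-action (from the two actions on $\Sht$ and on the automorphic forms, compatibly with $\bb G(\Ade_f)\cong \bb G'(\Ade_f^p)\times \msf G(\bb Q_p)$) and the $W_{E_\mu}$-action (from the Weil descent datum over $\breve E$) are carried along.

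The main obstacle is that the basic Newton stratum $\mcl S_{\mdc K^p}(\bb G, \bb X)^{\msf b}$ is open but in general not quasi-compact, so neither the K\"unneth splitting nor the excision map can be manipulated directly at infinite level with $\ovl{\bb Q_\ell}$-coefficients. Following \cite{Ham22}*{Proposition 4.1}, I would first carry out the construction at finite level $\mdc K_p\mdc K^p$ and with $\ovl{\bb Z/\ell^n}$-coefficients — where the relevant spaces are (partially proper) diamonds to which the formalism of \cite{Sch22} applies and where integral excision is available — then take the colimit over $\mdc K_p\to\uno$ and over $n$, and finally invert $\ell$, using the admissibility and finiteness statements of \cite{F-S24}*{Corollary \Rmnum{1}.7.3} to control the limit. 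One further has to keep track of the Weil descent data over $\breve E=\breve{\bb Q_p}E_\mu$ present on both sides of \Cref{sisnihfienisw} in order to descend the comparison and obtain genuine $W_{E_\mu}$-equivariance rather than only $W_{\breve E}$-equivariance.
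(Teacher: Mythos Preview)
Your proposal is correct and follows essentially the same route the paper sketches just before the proposition: identify the compactly supported cohomology of the basic Newton stratum with the source of $\Theta$ via Shen's uniformization and a K\"unneth argument, then compose with the excision map for the open inclusion $\mcl S_{\mdc K^p}(\bb G,\bb X)^{\msf b}\hookrightarrow\mcl S_{\mdc K^p}(\bb G,\bb X)$. The paper does not give a detailed proof either — it records the same two-step outline and explicitly flags the same subtlety you identify (excision must be run with integral coefficients at finite level before passing to the limit and inverting $\ell$), deferring the details to \cite{Ham22}*{Proposition 4.1}.
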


Next, we apply `Boyer's trick,' an analogue of a result of \cite{Boy99}, relating the supercuspidal part of the cohomology of the local Shimura variety to the cohomology of the global Shimura variety. We recall the definition of being fully Hodge--Newton decomposable in the sense of \cite[Definition 4.28]{R-V14} and \cite[Definition 3.1]{GHN19}:

\begin{defi}
Let $\msf G$ be a quasisplit reductive group over $\bb Q_p$ with a Borel pair $(\msf B, \msf T)$, and let $\mu$ be a dominant cocharacter for $\msf G_{\ovl{\bb Q_p}}$. Then $(\msf G, \mu)$ is called \tbf{fully Hodge--Newton decomposable} if for every non-basic $\vp_K$-conjugacy class $\msf b\in B(\msf G, \uno, \mu)$ (see~\eqref{seinieunifnws}), there exists a proper standard Levi subgroup $\msf M$ of $\msf G$, a dominant cocharacter $\mu_{\msf M}$ of $\msf M_{\ovl{\bb Q_p}}$ and an element $\msf b_{\msf M}\in B(\msf M, \uno, \mu_{\msf M})$ such that  $\mu_{\msf M}$ is conjugate to $\mu$ under $\msf G(\ovl{\bb Q_p})$-action, and $\msf b_{\msf M}$ is mapped to $\msf b$ under the natural map $B(\msf M)\to B(\msf G)$.\footnote{For a fixed $\msf b\in B(\msf G,\uno, \mu)$, the condition that $b$ is Hodge--Newton decomposable with respect to some proper Levi in the sense of \cite[Definition~3.1]{GHN19} is exactly the condition that the local Shimura datum $(b,\mu)$ is Hodge--Newton reducible in the sense of \cite[Definition~4.28]{R-V14}.}
\end{defi}
\begin{exm}\label{ienieniunis}
Suppose 
\begin{itemize}
\item
$\msf G=\Res_{K/\bb Q_p}G^*$ where $K/\bb Q_p$ is unramified and $G^*$ is a quasisplit reductive group over $K$ defined in \S\ref{theogirneidnis}, moreover we assume that $G_\ad^*$ is geometrically simple,
\item
$\mu$ is the dominant cocharacter of
\begin{equation*}
\paren{\Res_{K/\bb Q_p}G^*}_{\ovl{\bb Q_p}}\cong\prod_{v\in\Hom(K,\ovl{\bb Q_p})}G^*_{\ovl K}
\end{equation*}
that equals $\mu_1$ defined in~\eqref{idnifdujiherheins} on one factor and trivial on the other factors,
\end{itemize}
then $(\Res_{K/\bb Q_p}G^*, \mu)$ is fully Hodge--Newton decomposable: By \cite[Theorem 3.3, Proposition 3.4]{GHN19}, it suffices to show that $\{\mu\}$ is minute for $\Res_{K/\bb Q_p}G^*_\ad$ as defined in \cite[Definition 3.2]{GHN19}. Then it suffices to show that $\{\mu_1\}$ is minute for $G^*$ by \cite[\S 3.4]{GHN19}, and this follows from the classification in \cite[Theorem 3.5]{GHN19}; cf. \cite[\S 3.7]{GHN19}.
\end{exm}

Finally, we recall the following result of Hamann~\cite[Proposition 4.4]{Ham22}.

\begin{prop}\label{baiseniunifnoeineiss}
If $p>2$, $\msf G$ is unramified, and $(\msf G,\mu^\bullet)$ is fully Hodge--Newton decomposable, then the uniformization map $\Theta$ in \textup{Proposition~\ref{basieuniformisihka}} induces an isomorphism of $W_{E_\mu}$-modules 
\begin{align*}
\Theta_{\bx{sc}}: \bx R\Gamma_c(\msf G,\msf b, \uno, \mu^\bullet)_{\bx{sc}}\otimes\iota_\ell\largel{-}_{E_\mu}^{\frac{-\dim_{\bb C}(\bb X)}{2}}[\dim_{\bb C}(\bb X)]&\Ltimes_{\msf G_{\msf b}(\bb Q_p)}\mcl A\paren{\bb G'(\bb Q)\bsh \bb G'(\Ade_f)/\mdc K^p, \mrs L_{\iota_\ell\xi}}\\
&\xr\sim\bx R\Gamma_c\paren{\mcl S_{\mdc K^p}(\bb G, \bb X), \mcl L_{\iota_\ell\xi}}_{\bx{sc}}
\end{align*}
on the summands where $\msf G(\bb Q_p)$ acts via a supercuspidal representation.
\end{prop}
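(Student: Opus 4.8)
The plan is to extract the supercuspidal part of the uniformization map $\Theta$ from \Cref{basieuniformisihka} and to show that, after doing so, the contribution of all non-basic Newton strata vanishes, so that the excision map from $\bx R\Gamma_c$ of the open basic stratum to $\bx R\Gamma_c$ of the whole Shimura variety becomes an isomorphism on the supercuspidal part. Concretely, recall that we have the locally closed Newton stratification $\mcl S_{\mdc K^p}(\bb G, \bb X)=\bigsqcup_{\msf b'}\mcl S_{\mdc K^p}(\bb G, \bb X)^{\msf b'}$ indexed by $\msf b'\in B(\msf G, \mu^\bullet)$, with the basic stratum $\mcl S_{\mdc K^p}(\bb G, \bb X)^{\msf b}$ open. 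The excision triangle associated to this open stratum and its closed complement, together with a standard d\'evissage of the closed complement along the remaining strata, reduces the claim to: for each \emph{non-basic} $\msf b'\in B(\msf G, \mu^\bullet)$, the complex $\bx R\Gamma_c(\mcl S_{\mdc K^p}(\bb G, \bb X)^{\msf b'}, \mcl L_{\iota_\ell\xi})$ contains no supercuspidal representation of $\msf G(\bb Q_p)=\msf G(K_0)$ (here $K_0=\bb Q_p$, or the relevant base) in the component where $\msf G(\bb Q_p)$ acts; equivalently $\bx R\Gamma_c(\mcl S_{\mdc K^p}(\bb G, \bb X)^{\msf b'}, \mcl L_{\iota_\ell\xi})_{\bx{sc}}=0$.

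The vanishing of the non-basic contributions is where the hypothesis that $(\msf G, \mu^\bullet)$ is totally Hodge--Newton decomposable enters. For a non-basic $\msf b'\in B(\msf G, \mu^\bullet)$, total Hodge--Newton decomposability (as recalled just before this proposition, cf. \cite{GHN19} and \cite{R-V14}*{Definition 4.28}) gives a properly-contained standard Levi $\msf M\subsetneq \msf G$, a cocharacter $\mu_{\msf M}$ of $\msf M_{\ovl{\bb Q_p}}$ conjugate to $\mu^\bullet$ under $\msf G(\ovl{\bb Q_p})$, and $\msf b_{\msf M}\in B(\msf M, \mu_{\msf M})$ mapping to $\msf b'$. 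I would then invoke the product/fibration structure of $\Gr^{\msf b'}_{\msf G, \mu}$ as a space with $\msf G(\bb Q_p)$-action: via the Hodge--Tate period map and the relation $\Gr^{\msf b}_{\msf G,\mu}\cong\Sht(\msf G,\msf b,\uno,\mu^\bullet)/\udl{\msf G_{\msf b}(\bb Q_p)}$ from \Cref{sisnihfienisw}, the non-basic Schubert cell $\Gr^{\msf b'}_{\msf G,\mu}$ is parabolically induced from the corresponding object for $\msf M$ (this is the geometric incarnation of ``Boyer's trick'', as in \cite{Boy99} and \cite{Ham22}*{proof of Proposition 4.4}). Pulling this back along $\pi_{\bx{HT}}$, $\bx R\Gamma_c(\mcl S_{\mdc K^p}(\bb G, \bb X)^{\msf b'}, \mcl L_{\iota_\ell\xi})$ acquires, as an object of $\bx D(\msf G(\bb Q_p), \ovl{\bb Q_\ell})$, the structure of a complex of the form $\bx I_{\msf P}^{\msf G}(-)$ for the parabolic $\msf P$ with Levi $\msf M$, up to shifts and twists; since the modulus character and parabolic induction from a proper parabolic produces no supercuspidal subquotients (a supercuspidal representation of $\msf G(\bb Q_p)$ has cuspidal support equal to $(\msf G,\pi)$ itself, not a proper Levi), applying $(-)_{\bx{sc}}$ kills it. Hence every non-basic stratum contributes $0$ to the supercuspidal part, as claimed.

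Granting this vanishing, the excision d\'evissage yields a $W_{E_\mu}$-equivariant (indeed $\msf G(\bb Q_p)\times W_{E_\mu}$-equivariant before localizing) isomorphism
\begin{equation*}
\bx R\Gamma_c\paren{\mcl S_{\mdc K^p}(\bb G, \bb X)^{\msf b}, \mcl L_{\iota_\ell\xi}}_{\bx{sc}}\xrightarrow{\ \sim\ }\bx R\Gamma_c\paren{\mcl S_{\mdc K^p}(\bb G, \bb X), \mcl L_{\iota_\ell\xi}}_{\bx{sc}}.
\end{equation*}
It remains to identify the left-hand side, before localizing, with the source appearing in the statement. For this I would use the basic uniformization isomorphism \Cref{sisnihfienisw} of diamonds over $\breve E$: it identifies $\plim_{\mdc K^p}\mcl S_{\mdc K^p}(\bb G, \bb X)^{\msf b}$ with $\paren{\udl{\bb G'(\bb Q)\bsh\bb G'(\Ade_f)}\times_{\Spd(\breve E)}\Sht(\msf G,\msf b,\uno,\mu^\bullet)}/\udl{\msf G_{\msf b}(\bb Q_p)}$ compatibly with the $\bb G(\Ade_f)$-actions. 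Passing to compactly supported cohomology of $\mcl L_{\iota_\ell\xi}$, and using that $\bb G'(\bb Q)\bsh\bb G'(\Ade_f)/\mdc K^p$ is (a countable discrete set, yielding) the space of algebraic automorphic forms $\mcl A\paren{\bb G'(\bb Q)\bsh\bb G'(\Ade_f)/\mdc K^p, \mrs L_{\iota_\ell\xi}}$, together with the bookkeeping Tate twist $\iota_\ell\largel{-}_{E_\mu}^{-\dim(\bb X)/2}$ and shift $[\dim(\bb X)]$ coming from the normalization and the relation between $\bx R\Gamma_c$ of the local Shtuka space and the Shtuka sheaf $\mcl S_{\mu^\bullet}$, this is exactly the content of \Cref{basieuniformisihka}; localizing at the supercuspidal part of $\msf G(\bb Q_p)$ gives the asserted $\Theta_{\bx{sc}}$. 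I expect the main obstacle to be the precise geometric formulation and proof of the ``parabolic induction'' structure on the non-basic strata at the level of the six-functor formalism on diamonds --- making rigorous that $\bx R\Gamma_c$ of a Hodge--Newton-decomposable stratum is, as a $\msf G(\bb Q_p)$-complex, genuinely a parabolic induction (with the correct central character and modulus twists), so that the elementary representation-theoretic fact about cuspidal supports can be applied; this is handled in \cite{Ham22}*{Proposition 4.4} and I would follow that argument closely, checking only that nothing in it used minusculeness of $\mu$ beyond what \Cref{ienieniunis} already supplies in our setting.
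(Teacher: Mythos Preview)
Your proposal is correct and follows essentially the same approach as the paper: the paper's proof simply states that the arguments of \cite{Ham22}*{Lemma 4.3, Proposition 4.4} go through verbatim, with the key point being exactly what you identify---the non-basic Newton strata of $\Gr_{\msf G,\mu}$ are parabolically induced from proper Levi subgroups and hence contribute nothing to the supercuspidal part. Your write-up is in fact more detailed than the paper's own proof, and your identification of the main technical obstacle (making the parabolic-induction structure rigorous in the six-functor formalism) is precisely what is handled in the cited reference.
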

\begin{proof}
The proofs of \cite[Lemma 4.3, Proposition 4.4]{Ham22} go through verbatim. The key point is that the non-basic Newton strata of the flag varieties $\Gr_{\msf G, \mu}^{\msf b}$ are all parabolically induced from Newton strata on flag varieties associated with properly contained Levi subgroups of $\msf G$, thus do not contribute to the supercuspidal part of cohomology.
\end{proof}

\subsection{Globalization}\label{glaosbsialisniejfid}

In this subsection, assume that $K/\bb Q_p$ is unramified. Let $G$ be one of the quasisplit reductive groups over $K$ defined in \S\ref{theogirneidnis}, and we adopt the notation there. By a simple application of Krasner's lemma and the weak approximation theorem (see, for example, \cite[Lemma 6.2.1]{Art13}, \cite[Theorem F.1]{C-Z24}), we may choose a CM or totally real number field $F_1\subset \bb C$ with maximal totally real subfield $F\subset F_1$, together with distinct rational primes $p, q, r$, such that
\begin{itemize}
\item
$p, q$ are inert in $F_1$,
\item
\begin{equation*}
(F_1\otimes\bb Q_p)/(F\otimes\bb Q_p)\cong K_1/K
\end{equation*}
as extensions, 
\item
$r\ge N(G)+2$, and
\item
$r$ is inert in $F$, and moreover $F_1\otimes\bb Q_r$ is a ramified quadratic extension of $F\otimes\bb Q_r$ in case U.
\end{itemize}
Write $\mfk p_1=(p), \mfk q_1=(q), \mfk r_1=(r)\in \fPla_{F_1}$ with underlying places $\mfk p, \mfk q, \mfk r\in\fPla_F$, respectively, and let $\tau_0$ denote the fixed embedding $F\inj\bb C$. We fix a vector space $\mbf V$ over $F_1$ equipped with a nondegenerate Hermitian $\cc$-sesquilinear form on $\mbf V$ such that
\begin{itemize}
\item
$(\mbf V, \mbf G=\bx U(\mbf V)^\circ)$ is standard indefinite (see Definition~\ref{staninihidnfies}),
\item
The Hasse--Witt invariant of $\mbf V$ is trivial outside $\{\mfk q\}\cup\infPla_F$, and
\item
$\mbf V\otimes_FF_{\mfk p}$ is isomorphic to $V$ as a $\cc$-Hermitian space over $F_{\mfk p}$.
\end{itemize}
Adopt the notation from \S\ref{seubsienfiehtoenfeis} for $\mbf V$. In particular, we have reductive groups $\mbf G, \mbf G^*, \mbf G^\sharp$ over $F$ with $\mbf G_\ad$ geometrically simple; if $F_1\ne F$, we have fixed a totally imaginary element $\daleth\in F_1^\times$, so each embedding $\tau: F\to \bb C$ extends to an embedding $\tau: F_1\to \bb C$ sending $\daleth$ to $\bb R_+\ii$. Then $G\cong \mbf G_{\mfk p}$.

Let $(\Res_{F/\bb Q}\mbf G, \mbf X)$ be the Shimura datum defined in \S\ref{Shimreiejvanieniocinis} with a conjugacy class of Hodge cocharacters $\{\mu\}$. Via $\iota_p$, we may regard $\{\mu\}$ as a conjugacy class of cocharacters
\begin{equation*}
\GL_{1, \ovl{\bb Q_p}}\to\paren{\Res_{F/\bb Q}\mbf G}_{\ovl{\bb Q_p}}=\prod_{v\in\Hom(F, \ovl{\bb Q_p})}\mbf G\otimes_{F,v}\ovl{\bb Q_p}.
\end{equation*}
Then it contains a dominant cocharacter $\mu$ that equals $\mu_1$ defined in~\eqref{idnifdujiherheins} on one factor and trivial on the other factors. In particular, $\paren{\Res_{K/\bb Q_p}G, \mu}$ is fully Hodge--Newton decomposable by Example~\ref{ienieniunis}.

Let $(\mbf V', \mbf G'=\bx U(\mbf V')^\circ)$ be a standard definite pure inner twist of $\mbf G^*$ (see \textup{Definition~\ref{staninihidnfies}}) such that the Hasse--Witt invariant of $\mbf V'$ is nontrivial at $\mfk p$ and trivial outside $\{\mfk p, \mfk q\}\cup\infPla_F$ (such a pure inner twist exists uniquely), and define $J\defining\mbf G'_{\mfk p}$, which is isomorphic to $G_{b_1}$, where $b_1$ is the unique nontrivial basic element in $B(G)_\bas$. We also regard $b_1$ as the unique nontrivial basic element in $B(\Res_{K/\bb Q_p}G)_\bas$ via the isomorphism $B(\Res_{K/\bb Q_p}G)_\bas\cong B(G)_\bas$.

We have the following globalization result.
\begin{prop}\label{gooalbsisneihfienis}
Suppose $\rho\in\Pi_{\bx{sc}}(J)$ is supercuspidal, then there exists:
\begin{itemize}
\item
a control tuple $\bigstar$ for $\mbf G^*$ (see \textup{Definition~\ref{deieiutesuroeneis}}) such that $\mfk p\in \Pla^{\bx{sc}}, \Pla^\circ=\{\mfk p, \mfk q\}$ and $\Pla^\St=\{\mfk q\}$.
\item
a $\bigstar$-split compact open subgroup $\mdc K^{\mfk p}\le\mbf G'(\Ade_{F, f}^{\mfk p})$ (see \textup{Definition~\ref{Soainsilsienicmosn}}); and
\item
a $\bigstar$-good automorphic representation $\Pi'=\otimes'_v\Pi'_v$ of $\mbf G'(\Ade_F)$ (see \textup{Definition~\ref{olakisniuateineeoifneis}}) such that  $\Pi'_{\mfk p}\cong \rho$ and $(\Pi_f^{\prime\mfk p})^{\mdc K^{\mfk p}}\ne 0$.
\end{itemize}
Such a tuple $(\bigstar, \mdc K^{\mfk p}, \Pi')$ is called a good globalization of $\rho$.
\end{prop}
\begin{proof}
Choose a finite place $\mfk r$ of $F$ not in $\{\mfk p, \mfk q\}$, and let $\Pla^{\bx{sc}}\defining\{\mfk p, \mfk r\}$. It follows from \cite[Theorems~C.3 and~D.2]{C-Z24} that there exists a supercuspidal representation $\pi_{\mfk r}$ of $\mbf G'(F_{\mfk r})$ with a simple supercuspidal $L$-parameter. Since $\mbf G'(F\otimes\bb R)$ admits discrete series, the standard Plancherel density theorem, for example \cite[Theorem~1.1.(\rmnum1)]{Shi12}, applies. In particular, if we choose an algebraic irreducible representation $\xi$ of $\paren{\Res_{F/\bb Q}\mbf G'}\otimes\bb C$, then it follows from \cite[Theorem~1.1.(\rmnum1)]{Shi12} that there exists a $\xi$-cohomological cuspidal automorphic representations $\Pi'$ of $\mbf G'(\Ade_F)$ such that $\Pi'_{\mfk q}$ is Steinberg, $\Pi'_{\mfk p}\cong\rho$, and $\Pi'_{\mfk r}\cong \pi_{\mfk r}$.

By choosing a sufficiently large finite subset of places $\Pla$ of $F$ containing $\{\mfk p, \mfk q, \mfk r\}$ and $\infPla_F$, we can make sure $\Pi'_v$ is unramified for all $v\in \fPla_F\setm\Pla$. We can also choose a $\bigstar$-split compact open subgroup $\mdc K^{\mfk p}\le \mbf G'(\Ade_{F, f}^{\mfk p})$ such that $(\Pi_f^{\prime\mfk p})^{\mdc K^{\mfk p}}\ne 0$.
\end{proof}

Suppose $\rho\in\Pi_{\bx{sc}}(J)$ is supercuspidal with supercuspidal classical $L$-parameter $\tilde\phi$, and let $(\bigstar, \mdc K^{\mfk p}, \Pi')$ be a good globalization of $\rho$. Let 
\begin{equation*}
\phi^\Pla_{\Pi'}: \tilde{\bb T}^\Pla\to \bb C,
\end{equation*}
be the Hecke character associated with $\Pi'$, and set $\mfk m\defining\iota_\ell\ker(\phi^\Pla_{\Pi'})$, which is a maximal ideal of $\iota_\ell\tilde{\bb T}^\Pla$. Consider the uniformization map
\begin{align*}
\Theta: \bx R\Gamma_c(G, b_1, \uno, \mu_1)\otimes\iota_\ell\largel{-}_{E_\mu}^{\frac{-\dim_{\bb C}(\mbf X)}{2}}[\dim_{\bb C}(\mbf X)]&\Ltimes_{J(K)}\mcl A(\mbf G'(F)\bsh \mbf G'(\Ade_{F, f})/\mdc K^{\mfk p}; \mrs L_{\iota_\ell\xi})\\
&\to \bx R\Gamma_c(\mcl S_{\mdc K^{\mfk p}}(\Res_{F/\bb Q}\mbf G, \mbf X), \mcl L_{\iota_\ell\xi})
\end{align*}
which is $G(K)\times W_{K_1}$-equivariant and functorial with respect to  $\mdc K^{\mfk p}$ by Proposition~\ref{basieuniformisihka}. Here we use that $\Sht(\Res_{K/\bb Q_p}G, b_1, \uno, \mu^\bullet)$ is naturally isomorphic to $\Sht(G, b_1, \uno, \mu_1)$; see~\S\ref{secitonautnciniauotenries}. We localize both sides at $\mfk m$ and restrict to the parts on both sides where $G(K)$ acts via supercuspidal representations to get an isomorphism
\begin{align*}
\Theta_{\mfk m, \bx{sc}}: \bx R\Gamma_c(G, b_1, \uno, \mu_1)_{\bx{sc}}\otimes\iota_\ell\largel{-}_{E_\mu}^{\frac{-\dim_{\bb C}(\mbf X)}{2}}[\dim_{\bb C}(\mbf X)]&\Ltimes_{J(K)}\mcl A(\mbf G'(F)\bsh \mbf G'(\Ade_{F, f})/\mdc K^{\mfk p}, \mrs L_{\iota_\ell\xi})_{\mfk m}\\
&\xr\sim \bx R\Gamma_c(\mcl S_{\mdc K^{\mfk p}}(\Res_{F/\bb Q}\mbf G, \mbf X), \mcl L_{\iota_\ell\xi})_{\mfk m}
\end{align*}
by the strong multiplicity one result Corollary~\ref{sotnienfiehineiws} and the basic uniformization result Proposition~\ref{baiseniunifnoeineiss} and Example~\ref{ienieniunis}. Note that, by Corollary~\ref{sotnienfiehineiws} again, the left-hand side decomposes as a direct sum
\begin{equation}\label{iidifhiefimwow}
\bplus_{\dot\Pi'}\paren{\bx R\Gamma_c(G, b_1, \uno, \mu_1)_{\bx{sc}}\otimes\iota_\ell\largel{-}_{E_\mu}^{\frac{-\dim_{\bb C}(\mbf X)}{2}}[\dim_{\bb C}(\mbf X)]\Ltimes_{J(K)}\iota_\ell(\dot\Pi'_f)^{\mdc K^{\mfk p}}}^{\oplus m(\dot\Pi')},
\end{equation}
where $\dot\Pi'$ runs through $\bigstar$-good automorphic representations of $\mbf G'(\Ade_F)$ such that $(\dot\Pi')^\Pla$ has associated Hecke character $\phi_{\Pi'}^\Pla$ and $\dot\Pi'_{\mfk p}$ has classical $L$-parameter $\tilde\phi$.

\begin{cor}\label{coroellineihhihrnais}
Both sides of $\Theta_{\mfk m,\bx{sc}}$ are concentrated in cohomological degree $\dim_{\bb C}(\mbf X)$. Thus $\Theta_{\mfk m,\bx{sc}}$ identifies their only nonzero cohomology groups as $G(K)\times W_{K_1}$-modules. Moreover, this $W_{K_1}$-module has a subquotient whose semisimplification is isomorphic to
\begin{equation*}
\iota_\ell\paren{\tilde\phi^\GL\otimes\largel{-}_{K_1}^{-\frac{\dim_{\bb C}(\mbf X)}{2}}}.
\end{equation*}
\end{cor}
\begin{proof}

By the direct sum decomposition~\eqref{iidifhiefimwow}, it suffices to prove the concentration statement for each summand
\begin{equation*}
\bx R\Gamma_c(G,b_1,\uno,\mu_1)_{\bx{sc}}\otimes\iota_\ell\largel{-}_{E_\mu}^{-\frac{\dim_{\bb C}(\mbf X)}{2}}[\dim_{\bb C}(\mbf X)]\Ltimes_{J(K)}\iota_\ell(\dot\Pi'_f)^{\mdc K^{\mfk p}},
\end{equation*}
where $\dot\Pi'$ runs through $\bigstar$-good automorphic representations of $\mbf G'(\Ade_F)$ such that $(\dot\Pi')^\Pla$ has associated Hecke character $\phi_{\Pi'}^\Pla$ and $\dot\Pi'_{\mfk p}$ has classical $L$-parameter $\tilde\phi$. Fix such a $\dot\Pi'$, and let $\dot\Pi$ be a $\bigstar$-good transfer of $\dot\Pi'$ to $\mbf G$; see Theorem~\ref{strongleienineirnes}. Let
\begin{equation*}
\rho_{\Pi',\ell}:\Gal_{F_1}\to\GL_{N(\mbf G)}(\ovl{\bb Q_\ell})
\end{equation*}
be the Galois representation associated with $\Pi'$ by Theorem~\ref{Galosiniautoenrieeis}. Since $(\dot\Pi')^\Pla$ has associated Hecke character $\phi_{\Pi'}^\Pla$, the same theorem and the strong multiplicity one result imply that $\rho_{\Pi',\ell}$ is also the Galois representation associated with $\dot\Pi'$ and to its transfer $\dot\Pi$. Under the uniformization isomorphism $\Theta_{\mfk m,\bx{sc}}$, the above summand corresponds to the $\dot\Pi^\Pla$-isotypic part of the target. By Corollary~\ref{iodnifneidkhnsiws}, this isotypic part occurs only in degree $\dim_{\bb C}(\mbf X)$. Hence every summand in~\eqref{iidifhiefimwow} is concentrated in degree $\dim_{\bb C}(\mbf X)$, and therefore both sides of $\Theta_{\mfk m,\bx{sc}}$ are concentrated in degree $\dim_{\bb C}(\mbf X)$.

It remains to prove the asserted existence of the $W_{K_1}$-subquotient. For this, take the particular summand corresponding to the original globalization $\dot\Pi'=\Pi'$, which occurs in~\eqref{iidifhiefimwow}. Let $\Pi$ be a $\bigstar$-good transfer of $\Pi'$ to $\mbf G$. Applying Corollary~\ref{iodnifneidkhnsiws} to the $\Pi^\Pla$-isotypic part of the target gives a $W_{K_1}$-subquotient whose semisimplification is isomorphic to
\begin{equation*}
\iota_\ell\paren{\tilde\phi^\GL\otimes\largel{-}_{K_1}^{-\frac{\dim_{\bb C}(\mbf X)}{2}}}.
\end{equation*}
Transporting this subquotient through the isomorphism $\Theta_{\mfk m,\bx{sc}}$ proves the claim.
\end{proof}

In particular, by the direct sum decomposition~\eqref{iidifhiefimwow}, we get the following key corollary: 

\begin{cor}\label{ckeyeeriefiensiw}
If $\tilde\phi\in\tilde\Phi(J)$ is a supercuspidal $L$-parameter with associated packet $\tilde\Pi_{\tilde\phi}(J)$, then the direct summand of
\begin{equation*}
\bplus_{\tilde\rho'\in\tilde\Pi_{\tilde\phi}(J)}\bx R\Gamma_c(G, b_1, \uno, \mu_1)[\iota_\ell\tilde\rho']
\end{equation*}
where $G(K)$ acts by supercuspidal representations, denoted by
\begin{equation*}
\bplus_{\tilde\rho'\in\tilde\Pi_{\tilde\phi}(J)}\bx R\Gamma_c(G, b_1, \uno, \mu_1)[\iota_\ell\tilde\rho']_{\bx{sc}},
\end{equation*}
is concentrated in the middle degree 0, and it has a subquotient whose semisimplification is isomorphic to $\iota_\ell\tilde\phi^\GL$ as a $W_{K_1}$-module.
\end{cor}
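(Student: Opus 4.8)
\textbf{Proof plan for \Cref{ckeyeeriefiensiw}.}

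The plan is to deduce this from \Cref{coroellineihhihrnais} by unwinding the two direct-sum decompositions that are available: the decomposition of the source of $\Theta_{\mfk m, \bx{sc}}$ in \Cref{iidifhiefimwow}, and the semi-simple decomposition of the supercuspidal part of $\bx D(\msf G_{\msf b}, \ovl{\bb Q_\ell})$ into $L$-packets. First I would pass from the complex $\bx R\Gamma_c(G, b_1, \uno, \mu_1)_{\bx{sc}} \Ltimes_{J(K)} \iota_\ell(\dot\Pi'_f)^{\mdc K_{\mfk p}}$ appearing in \Cref{iidifhiefimwow} to the complexes $\bx R\Gamma_c(G, b_1, \uno, \mu_1)[\iota_\ell\tilde\rho']$ of \Cref{ckeyeeriefiensiw}: the point is that $(\dot\Pi'_f)^{\mdc K_{\mfk p}}$, as a smooth admissible representation of $J(K) = \mbf G'_{\mfk p}(K)$, is built (after localizing at $\mfk m$ and restricting to $\dot\Pi'$) out of the local components $\dot\Pi'_{\mfk p}$, which all lie in the packet $\tilde\Pi_{\tilde\phi}(J)$ since $\dot\Pi'_{\mfk p}$ has classical $L$-parameter $\tilde\phi$ by construction of the good globalization \Cref{gooalbsisneihfienis}; conversely every $\tilde\rho' \in \tilde\Pi_{\tilde\phi}(J)$ is hit, because by \Cref{ckeyeeriefiensiw}'s input $\rho$ may be taken to be any element of the packet, and the strong multiplicity one statement \Cref{sotnienfiehineiws} ensures that every near-equivalent $\dot\Pi'$ with the prescribed $L$-parameter $\tilde\phi$ at $\mfk p$ actually contributes. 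Combining the isomorphism $\Theta_{\mfk m, \bx{sc}}$ with $\bx R\Gamma_c(G, b_1, \uno, \mu_1)[\iota_\ell\tilde\rho'] \cong \bx R\Gamma_c(G, b_1, \uno, \mu_1) \otimes_{J(K)} \iota_\ell\tilde\rho'$ and Hom--Tensor adjunction, one reads off the middle-degree concentration and the $W_{K_1}$-subquotient $\iota_\ell\tilde\phi^\GL$ directly from \Cref{coroellineihhihrnais}.

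The second step is to handle the compatibility of the various normalizations and shifts. The complex in \Cref{coroellineihhihrnais} carries a Tate twist $\iota_\ell\largel{-}_{K_1}^{-\dim_{\bb C}(\mbf X)/2}$ and a shift $[\dim_{\bb C}(\mbf X)]$, whereas \Cref{ckeyeeriefiensiw} is stated for $\bx R\Gamma_c(G, b_1, \uno, \mu_1)[\iota_\ell\tilde\rho']$ without these; removing them is harmless since a Tate twist and $t$-shift do not affect middle-degree concentration nor the property of having $\iota_\ell\tilde\phi^\GL$ as a subquotient of the underlying $W_{K_1}$-representation (after untwisting). One also has to make sure one is summing over the genuine packet $\tilde\Pi_{\tilde\phi}(J)$ rather than over all automorphic $\dot\Pi'$: here I would invoke \Cref{endoslcinidhnfineism} (Arthur's multiplicity formula) together with the fact, noted in its proof, that $\mfk S_{\bm\psi} \cong \bb Z/(2)$ for the relevant generic elliptic parameters, so that the local packets at $\mfk p$ assemble exactly into $\tilde\Pi_{\tilde\phi}(J)$ with multiplicities controlled by strong multiplicity one. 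Because $\tilde\phi$ is supercuspidal, \Cref{superisnidnLpaifniesues} guarantees that every member of $\tilde\Pi_{\tilde\phi}(J)$ is itself a supercuspidal representation of $J(K)$, which is what makes the $[\rho']_{\bx{sc}}$-truncation on the source side compatible with the full packet; and by \Cref{speeorjoiehgiejmsiws}, for these $\tilde\rho'$ the complexes $\bx R\Gamma_c^\flat$ and $\bx R\Gamma_c$ agree, so there is no ambiguity in which cohomology is meant.

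The main obstacle I expect is bookkeeping rather than anything deep: matching up the index of summation in \Cref{iidifhiefimwow} (automorphic representations $\dot\Pi'$ with prescribed behaviour away from $\Pla$ and with $\dot\Pi'_{\mfk p}$ of parameter $\tilde\phi$) with the local packet $\tilde\Pi_{\tilde\phi}(J)$, and checking that after restricting to the supercuspidal part of the $G(K)$-action on the target one really does extract the full $\iota_\ell\tilde\phi^\GL$ rather than merely a piece of it. This is where \Cref{coroellineihhihrnais} does the work: it already identifies, for each $\dot\Pi'$, the $\dot\Pi^\Pla$-isotypic part of $\bx R\Gamma_c(\mcl S_{\mdc K^{\mfk p}}(\Res_{F/\bb Q}\mbf G, \mbf X), \mcl L_{\iota_\ell\xi})_{\mfk m}$ as concentrated in middle degree with $W_{K_1}$-subquotient $\iota_\ell(\tilde\phi^\GL\otimes\largel{-}_{K_1}^{-\dim_{\bb C}(\mbf X)/2})$, and since $\Theta_{\mfk m, \bx{sc}}$ is an isomorphism of $G(K)\times W_{K_1}$-modules, transporting this back through the decomposition and summing over the packet yields precisely the assertion. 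I would finish by remarking that the $\varsigma$-equivalence-class formalism in case O2 causes no trouble, since all objects in sight ($\Sht$, its cohomology, the packets $\tilde\Pi_{\tilde\phi}$) are $\varsigma$-equivariant by the discussion preceding \Cref{coalidhbinifw222}.
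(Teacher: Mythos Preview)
Your approach is essentially the paper's: deduce the corollary from \Cref{coroellineihhihrnais} via the decomposition \Cref{iidifhiefimwow}, using that each summand there is (up to the twist, shift, and the away-from-$\mfk p$ multiplicity) of the form $\bx R\Gamma_c(G,b_1,\uno,\mu_1)[\iota_\ell\tilde\rho']_{\bx{sc}}$ for some $\tilde\rho'\in\tilde\Pi_{\tilde\phi}(J)$. The simplest way to ensure every $\tilde\rho'$ in the packet is covered is to vary the seed $\rho$ in \Cref{gooalbsisneihfienis}: for each $\tilde\rho'\in\tilde\Pi_{\tilde\phi}(J)$ take a good globalization with $\Pi'_{\mfk p}\cong\tilde\rho'$, and then the $\dot\Pi'=\Pi'$ summand in \Cref{iidifhiefimwow} exhibits $\bx R\Gamma_c(G,b_1,\uno,\mu_1)[\iota_\ell\tilde\rho']_{\bx{sc}}$ directly. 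Your detour through Arthur's multiplicity formula to argue that a \emph{single} globalization already hits the whole packet works too, but is more than what is needed.

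One genuine slip: your invocation of \Cref{speeorjoiehgiejmsiws} is circular. That proposition requires the \emph{Fargues--Scholze} parameter of $\tilde\rho'$ to be supercuspidal, which is precisely what the main theorem (proved only later in \Cref{secitonpaoroifnienocosmw}) will establish; at this point in the argument you do not know it. Fortunately the invocation is also unnecessary: both \Cref{iidifhiefimwow} and the corollary are phrased in terms of $\bx R\Gamma_c[\rho]=\bx R\Gamma_c\otimes_{J(K)}\rho$, so there is no $\bx R\Gamma_c^\flat$ to reconcile. Relatedly, note that the subscript ${}_{\bx{sc}}$ in the corollary refers to the direct summand on which $G(K)$ (not $J(K)$) acts by supercuspidal representations; whether the packet members $\tilde\rho'$ are themselves supercuspidal on $J$ is only relevant for applying \Cref{gooalbsisneihfienis} to each of them, not for the truncation.
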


\section{Proof of the compatibility property}\label{secitonpaoroifnienocosmw}

In this section we prove Theorem~\ref{compaiteiehdnifds}. Here $p$ is a rational prime, $K/\bb Q_p$ is an unramified finite extension, $\ell$ is a rational prime distinct from $p$ with a fixed isomorphism $\iota_\ell: \bb C\xr\sim \ovl{\bb Q_\ell}$, and $(G=G^*_{b_0}, \varrho_{b_0}, z_{b_0})$ is an extended pure inner form of $G^*$ of Case O or Case U as in \S\ref{ndinlocalLanlgnnis}, such that $G$ splits over an unramified finite extension of $K$. Let $\pi\in \Pi(G)$ be an irreducible smooth representation with classical $L$-parameter $\tilde\phi\in \tilde\Phi(G^*)$. We will show $\tilde\phi^\sems=\tilde\phi_\pi^\FS|_{W_{K_1}}$ using induction on $n(G)$.

In Case O2 or U, when $n(G)=1$, $G$ is a torus. Thus the assertion is known by compatibility of Fargues--Scholze LLC with local class field theory, see~Theorem~\ref{compaitbsilFaiirfies}.

In Case U, when $n(G)=2$, choose a quaternion algebra $D$ over $K$ containing $K_1$ such that the Hermitian space $V$ is identified with $D$, viewed as a two-dimensional right $K_1$-vector space with the Hermitian form induced by the reduced norm. $G$ is contained in the unitary similitude group
\begin{equation*}
G^\sharp\defining \GU(V)\cong (\GL_1(D)\times\Res_{K_1/K}\GL_1)/\GL_1,
\end{equation*}
where $\GL_1$ acts anti-diagonally. By \cite[Proposition 2.2]{Tad92}, for any $\pi\in\Pi(G)$, there exists $\pi^\sharp\in\Pi(G^\sharp)$ such that $\pi$ is a subrepresentation of $\pi^\sharp|_{G(K)}$. So the assertion follows from compatibility for $G^\sharp$ (see~\cite[Lemma~4.13(3)]{H-L24}) and compatibility of Fargues--Scholze LLC with central extensions (see~Theorem~\ref{compaitbsilFaiirfies}).

In Case O1, when $n(G)=1$, $G$ is of the form $\PGL_1(D)$ for some quaternion algebra $D$ over $K$, and the LLC for $G$ defined in Theorem~\ref{coeleninifheihsn} equals the LLC for $G$ via the LLC for $\GL_1(D)$ constructed in \cite{DKV84, Rog83} and the projection $\GL_1(D)\to \PGL_1(D)$; see~\cite[pp.~385--386]{A-G17}. Thus the main theorem follows from compatibility for inner forms of general linear groups \cite[Theorem 6.6.1]{HKW22} and compatibility of Fargues--Scholze LLC with central extensions; see~Theorem~\ref{compaitbsilFaiirfies}.

In Case O2, when $n(G)=2$, $G$ is isomorphic to
\begin{equation*}
\paren{\Res_{K'/K}\SL_1(D)_{K'}}/\mu_2,
\end{equation*}
where $K'$ is either $K\times K$ or the unique unramified quadratic extension of $K$, and $D$ is a quaternion algebra over $K$; cf.~\cite[\S 0]{K-R99}. In fact we can prove the main theorem whenever $K'$ is an \eTale extension of $K$ of rank at most two. The $L$-parameter is constructed for $G$ as follows (not only up to outer automorphism): For any $\pi\in\Pi(G)$, by \cite[Proposition 2.2]{Tad92} there exists $\pi^\sharp\in\Pi(G^\sharp)$, where
\begin{equation*}
G^\sharp=\paren{\Res_{K'/K}\GL_1(D)_{K'}}/\GL_1
\end{equation*}
is a group containing $G$, such that  $\pi$ is a subrepresentation of the restriction of $\pi^\sharp$ to $G(K)$. Then the $L$-parameter $\phi_\pi$ is given by $\phi_{\pi^\sharp}$ composed with the natural map $\LL G^\sharp\to \LL G$; see~\cite[pp.~385--386]{A-G17}. Thus, as before, the main theorem follows from compatibility for inner forms of general linear groups \cite[Theorem 6.6.1]{HKW22} and compatibility of Fargues--Scholze LLC with central extensions, Theorem~\ref{compaitbsilFaiirfies}.

Then, in all remaining cases $G_\ad$ is geometrically simple. We suppose throughout this section that the assertion is known for $G(n_0)$ for each $n_0<n(G)$.

First, if $\pi$ is non-supercuspidal, then the assertion is true:

\begin{prop}\label{supisisbiariainifidns}
If $\pi\in\Pi(G)$ is a subquotient of a parabolic induction, then $\tilde\phi_\pi^\sems=\tilde\phi_\pi^\FS|_{W_{K_1}}$.
\end{prop}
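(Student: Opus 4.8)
The plan is to reduce the claim to the case of proper Levi subgroups, where the inductive hypothesis applies, combined with the compatibility of both correspondences with parabolic induction. First I would invoke the Bernstein--Zelevinski theorem so that $\pi$ is a subquotient of $\bx I_P^G(\sigma)$ for some parabolic $P\le G$ with Levi factor $M$ and supercuspidal $\sigma\in\Pi_{\bx{sc}}(M)$; since $\pi$ is assumed non-supercuspidal, we may take $P$ to be a proper parabolic subgroup. By the structure theory recalled in \Cref{theogirneidnis}, the Levi factor $M$ is a product $\Res_{K_1/K}\GL(d_1)\times\cdots\times\Res_{K_1/K}\GL(d_r)\times G(n_0)$ with $n_0<n(G)$, so in particular $M$ is a product of (Weil restrictions of) general linear groups and a group $G(n_0)$ of strictly smaller geometric rank.

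Next I would compute $\tilde\phi_\pi^{\FS}$ using \Cref{compaitbsilFaiirfies}(5): the Fargues--Scholze parameter of $\pi$ is $\rec_M^{\FS}(\sigma)$ postcomposed with the canonical embedding $\LL M(\Lbd)\to \LL G(\Lbd)$. By \Cref{compaitbsilFaiirfies}(3) and (7), $\rec_M^{\FS}(\sigma)$ is determined by the Fargues--Scholze parameters of the $\GL$-factors (known to agree with the semisimplified Harris--Taylor/Henniart parameters by \Cref{compaitbsilFaiirfies}(6), together with \Cref{inenfomeieSLIwninsfds} handling restriction-of-scalars bookkeeping) and by $\rec_{G(n_0)}^{\FS}(\sigma_0)$, where $\sigma_0$ is the $G(n_0)$-component of $\sigma$. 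Since $n_0<n(G)$, the inductive hypothesis gives $\rec_{G(n_0)}^{\FS}(\sigma_0)=\tilde\phi_{\sigma_0}^{\sems}$, i.e.\ the semisimplification of the classical parameter. On the other hand, by \Cref{compaitebirelclaisirelsi} the classical semisimplified parameter $\tilde\phi_\pi^{\sems}$ is exactly $\tilde\phi_\sigma^{\sems}$ composed with $\LL M\to\LL G$, and by the compatibility of the classical LLC on $M$ with its $\GL$- and $G(n_0)$-factors (which follows from the product structure and the characterization of the classical LLC), $\tilde\phi_\sigma^{\sems}$ matches the parameter assembled from the $\GL$-factors and $\tilde\phi_{\sigma_0}^{\sems}$. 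Comparing the two assemblies factor-by-factor and using that the embedding $\LL M\to\LL G$ is the same on both sides, one concludes $\tilde\phi_\pi^{\sems}=\tilde\phi_\pi^{\FS}$.

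The only subtlety --- and the step I expect to require the most care --- is the matching of the $\GL$-type factors, because the classical LLC for special orthogonal and unitary groups is normalized via twisted endoscopy for $\Res_{K_1/K}\GL(N)$ and involves the base-change/standard $L$-embedding $\LL G^{\GL}\to\LL G$ described in \Cref{theogirneidnis} and the diagram \eqref{ienfieunienfis}, whereas the Fargues--Scholze side uses \Cref{compaitbsilFaiirfies}(6)--(7) and the restriction-of-scalars compatibility. One must check that the conventions on $|{-}|_{K_1}$, the twist by $\delta_P^{1/2}$ in normalized induction, and the decomposition \eqref{isniheiherehs} of $\phi^{\GL}$ into $\phi_i+\phi_i^\theta$ are consistent between the two formalisms; since both the Harris--Taylor/Henniart correspondence and its Fargues--Scholze counterpart are the honest semisimplified LLC for general linear groups, and \Cref{compaitbsilFaiirfies}(2) handles the character twists, this is a bookkeeping verification rather than a genuine difficulty, but it is where one must be precise about normalizations (in case O2 everything is read modulo $\bx O(N(G);\bb C)$-conjugation, which is harmless since $\phi^{\GL}$ and hence $\phi^{\sems,\GL}$ is insensitive to it). With that in hand the proposition follows.
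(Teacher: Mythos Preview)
Your proposal is correct and follows essentially the same route as the paper: reduce via compatibility of both correspondences with parabolic induction (\Cref{compaitbsilFaiirfies}(5) and \Cref{compaitebirelclaisirelsi}) to the Levi $M\cong\Res_{K_1/K}\GL(d_1)\times\cdots\times\Res_{K_1/K}\GL(d_r)\times G(n_0)$, then handle the $\GL$-factors by \Cref{compaitbsilFaiirfies}(3),(6),(7) and the $G(n_0)$-factor by the inductive hypothesis. The only cosmetic difference is that you pass to the supercuspidal support via Bernstein--Zelevinski, whereas the paper works directly with whatever $\sigma\in\Pi(M)$ realizes $\pi$ as a subquotient; since \Cref{compaitebirelclaisirelsi} already holds for arbitrary (not necessarily supercuspidal) $\sigma$, this extra step is harmless but unnecessary, and the normalization worries you flag are indeed absorbed into the cited compatibility statements.
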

\begin{proof}
Suppose $\pi$ is a subquotient of $\bx I_P^G(\sigma)$ where $P\le G$ is a proper parabolic subgroup with Levi subgroup $M$, and $\sigma\in\Pi(M)$. By compatibility of Fargues--Scholze LLC with parabolic induction (see Theorem~\ref{compaitbsilFaiirfies}) and compatibility of classical LLC with parabolic induction (see Proposition~\ref{compaitebirelclaisirelsi}), the assertion for $\pi$ follows from the assertion for $\sigma$. The Levi subgroup $M$ of $G$ is of the form
\begin{equation*}
G(n_0)\times\Res_{K_1/K}(H)
\end{equation*}
for some integer $n_0<n(G)$, and $H$ is a product of general linear groups. So the assertion follows from the induction hypothesis and compatibility of Fargues--Scholze LLC with products (see Theorem~\ref{compaitbsilFaiirfies}) and compatibility of Fargues--Scholze LLC with classical LLC for general linear groups (see Theorem~\ref{compaitbsilFaiirfies}).
\end{proof}

Assume now that $\pi$ is supercuspidal, so $\tilde\phi$ is a discrete $L$-parameter by Theorem~\ref{coeleninifheihsn}. There are two cases that can occur for the $L$-packet $\tilde\Pi_{\tilde\phi}(G^*)$:

\begin{enumerate}
\item
Case~(1): $\tilde\Pi_{\tilde\phi}(G^*)$ consists entirely of supercuspidal representations,
\item
Case~(2): $\tilde\Pi_{\tilde\phi}(G^*)$ contains a non-supercuspidal representation.
\end{enumerate}

In the following subsections, we prove the main theorem in each case.
\subsection{The first case}

In Case~(1), $\tilde\Pi_{\tilde\phi}(G^*)$ consists entirely of supercuspidal representations, so it follows from Corollary~\ref{superisnidnLpaifniesues} that $\tilde\phi$ is a supercuspidal $L$-parameter.

We first prove the compatibility for representations of $G_{b_1}$, where $b_1$ is the unique nontrivial basic element in $B(G)_\bas$.

\begin{prop}\label{proefnehifeniss}
If $\tilde\phi\in\tilde\Phi_{\bx{sc}}(G^*)$ is a supercuspidal $L$-parameter, then $\tilde\phi_{\tilde\rho}^\FS|_{W_{K_1}}=\tilde\phi$ for any $\tilde\rho\in\tilde\Pi_{\tilde\phi}(G_{b_1})$.
\end{prop}
\begin{proof}
First assume that $\kappa_{b_0}(-1)=1$, so $G\cong G^*$. Write
\begin{equation*}
\tilde\phi^\GL=\phi_1\oplus\cdots\oplus\phi_r,
\end{equation*}
where the $\phi_i$ are irreducible representations of $W_{K_1}$. By Corollary~\ref{ckeyeeriefiensiw},~Equation~\eqref{IIEHienitehiremids} and the compatibility of Fargues--Scholze LLC with contragredients, Theorem~\ref{compaitbsilFaiirfies}, we see that $\phi_i$ appears in
\begin{equation*}
\bplus_{\tilde\rho\in\tilde\Pi_{\tilde\phi}(G_{b_1})}\Mant_{G, b_1, \mu_1}(\iota_\ell\tilde\rho)
\end{equation*}
as a representation of $W_{K_1}$, for each $i$. It follows that $\phi_i$ is an irreducible subquotient of $(\tilde\phi_{\tilde\rho}^\FS|_{W_{K_1}})^\GL$ for each $i$ and each $\tilde\rho\in\tilde\Pi_{\tilde\phi}(G_{b_1})$, by Corollary~\ref{coroellaoslisenifes} and \cite[Theorem 1.3]{Kos21}.\footnote{Note that the notation $\mcl M_{(G, b, \mu), K}$ in \cite{Kos21} is just our $\Sht_{\mdc K}(G, b, b_0, \mu^\bullet)$, so there exists no dual appearing; cf. \cite[Remark 3.8]{Ham22}.} Since $\tilde\phi^\GL$ and $(\tilde\phi_{\tilde\rho}^\FS|_{W_{K_1}})^\GL$ are both semisimple with dimension $N(G)$, we deduce that they are equal. Thus it follows from \cite[Theorem 8.1.(\rmnum{2})]{GGP12} that $\tilde\phi=\tilde\phi_{\tilde\rho}^\FS|_{W_{K_1}}$.

Now assume that $\kappa_{b_0}(-1)=-1$, so $G_{b_1}\cong G^*$. It follows from the weak Kottwitz conjecture Corollary~\ref{Haninidkeiehiens} that some $\tilde\pi\in \tilde\Pi_{\tilde\phi}(G^*)$ appears in $\Mant_{G, b_1, \mu_1}(\iota_\ell\tilde\rho)$. We then have $\tilde\phi_{\tilde\pi}^\FS=\tilde\phi_{\tilde\rho}^\FS$ by Proposition~\ref{ienfimeoifiems}. Thus
\begin{equation*}
\tilde\phi_{\tilde\rho}^\FS|_{W_{K_1}}=\tilde\phi_{\tilde\pi}^\FS|_{W_{K_1}}=\tilde\phi,
\end{equation*}
where the second equality follows from the first case, upon replacing $(G=G^*_{b_0}, \varrho_{b_0}, z_{b_0})$ with $(G_{b_1}, \varrho_{b_0+b_1}, z_{b_0+b_1})$.
\end{proof}

Finally, to deduce the compatibility for $\tilde\pi\in\tilde\Pi_{\tilde\phi}(G)$, we apply this result with $(G=G^*_{b_0}, \varrho_{b_0}, z_{b_0})$ replaced by $(G_{b_1}, \varrho_{b_0+b_1}, z_{b_0+b_1})$, because then the unique nontrivial basic element in $B(G_{b_1})_\bas$ induces an inner form of $G_{b_1}$ that is isomorphic to $G$.

\subsection{The second case}
In the second case where $\tilde\Pi_{\tilde\phi}(G^*)$ contains a non-supercuspidal representation, 
we write
\begin{equation*}
\tilde\phi^\GL=\phi_1\oplus\cdots\oplus\phi_k\oplus\phi_{k+1}\oplus\cdots\oplus\phi_r,
\end{equation*}
where the $\phi_i$ are irreducible representations of $W_{K_1}\times\SL_2(\bb C)$ of dimension $d_i$ for each $i$, and $d_i$ is odd if and only if $i\le k$.

We now prove the main theorem in this case.
\begin{prop}
If $\tilde\Pi_{\tilde\phi}(G^*)$ contains a non-supercuspidal representation $\tilde\rho^*_{\bx{nsc}}$, then $\tilde\phi_{\tilde\pi}^\FS|_{W_{K_1}}=\tilde\phi^\sems$ for every $\tilde\pi\in\tilde\Pi_{\tilde\phi}(G)$.
\end{prop}
\begin{proof}
Write $\tilde\rho_{\bx{nsc}}^*=\tilde\pi_{[I]}$ for some $I\in \mrs P([r]_+)/\sim_k$ with $\#[I]\equiv 0\modu2$. We prove by induction on $m\defining \#J$ that
\begin{equation*}
\tilde\phi_{\tilde\pi}^\FS|_{W_{K_1}}=\tilde\phi^\sems
\end{equation*}
for every
\begin{equation*}
\tilde\pi=\tilde\pi_{[I\oplus J]}\in \tilde\Pi_{\tilde\phi}(G)\cup\tilde\Pi_{\tilde\phi}(G_{b_1}).
\end{equation*}

If $m=0$, then $\tilde\pi=\tilde\rho_{\bx{nsc}}^*$, and the assertion follows from Proposition~\ref{supisisbiariainifidns}. Assume $m>0$ and that the result is known for smaller values of $m$. Let $\tilde\pi=\tilde\pi_{[I\oplus J]}$ with $\#J=m$. If $\tilde\pi$ is not supercuspidal, the assertion again follows from Proposition~\ref{supisisbiariainifidns}.

Suppose $\tilde\pi$ is supercuspidal, choose $J'\subset J$ with $\#J'=m-1$, and set $\tilde\pi'=\tilde\pi_{[I\oplus J']}$. Then $\tilde\pi'\in\tilde\Pi_{\tilde\phi}(G^*_{b'})$, where $b'\in B(G^*)_\bas$ is the unique basic element with $\kappa_{G^*}(b')(-1)=(-1)^{m-1}$, and the induction hypothesis gives
\begin{equation*}
\tilde\phi_{\tilde\pi'}^\FS|_{W_{K_1}}=\tilde\phi^\sems.
\end{equation*}

Corollary~\ref{Haninidkeiehiens} shows that $\tilde\pi$ occurs in
\begin{equation*}
\Mant_{G^*_{b'}, b_1, \{\mu_1\}}(\tilde\pi_{[I\oplus J']}),
\end{equation*}
where $b_1\in B(G^*_{b'})_\bas$ is the unique nontrivial basic element. Indeed, the error term has no supercuspidal constituents. Proposition~\ref{ienfimeoifiems} therefore gives
\begin{equation*}
\tilde\phi_{\tilde\pi}^\FS|_{W_{K_1}}=\tilde\phi_{\tilde\pi_{[I\oplus J']}}^\FS|_{W_{K_1}}=\tilde\phi^\sems.
\end{equation*}
This completes the induction.
\end{proof}

\section{Applications}
\subsection{Unambiguous local Langlands correspondence for even orthogonal groups}\label{unambinloclianisske}
Combining the Fargues--Scholze correspondence with the classical correspondence, we obtain an unambiguous local Langlands correspondence for even special orthogonal groups: its parameters are defined up to conjugation by $\SO_{2n(G)}(\bb C)$, rather than only by $\bx O_{2n(G)}(\bb C)$. We retain the notation of \S\ref{ndinlocalLanlgnnis}. In particular, $p$ is a rational prime, $K/\bb Q_p$ is unramified, and $(G=G^*_{b_0},\varrho_{b_0},z_{b_0})$ is a pure inner twist of $G^*$ of type O2.

\begin{thm}\label{LLCienifheis}
In Case O2, let $(G=G^*_{b_0}, \varrho_{b_0}, z_{b_0})$ be a pure inner twist of $G^*$ such that $\ord_K(\disc(G))\equiv 0\modu2$. Then there exists a map
\begin{equation*}
\rec_G^\natural:\Pi(G)\longrightarrow\Phi(G)
\end{equation*}
fitting into the following commutative diagram:
\begin{equation*}
\begin{tikzcd}[sep=large]
\Pi(G)\ar[r, "\rec_G^\natural"]\ar[d] & \Phi(G)\ar[d]\\
\tilde\Pi(G)\ar[r, "\rec_G"] &\tilde\Phi(G).
\end{tikzcd}
\end{equation*}
For any $\phi\in\Phi(G)$, we write $\Pi_\phi(G)\defining(\rec_G^\natural)^{-1}(\phi)$, called the $L$-packet for $\phi$. This correspondence satisfies the following properties:
\begin{enumerate}
\item
If $\phi\in\Phi(G)$ is not relevant for $G$ in the sense of \textup{\cite[Definition 0.4.14]{KMSW}}, then $\Pi_\phi(G)=\vn$.
\item
For each $\phi\in\Phi(G)$ and $\pi\in\Pi_\phi(G)$, $\pi$ is tempered if and only if $\phi$ is tempered, and $\pi$ is a discrete series representation if and only if $\phi$ is discrete. 
\item
$\rec_G^\natural$ only depends on $G$ but not on $\varrho_{b_0}$ and $z_{b_0}$. For the fixed Whittaker datum $\mfk w$ of $G^*$, there exists a canonical bijection
\begin{equation*}
\iota_{\mfk w, b_0}: \Pi_\phi(G)\xr\sim\Irr(\mfk S_\phi; \kappa_{b_0})
\end{equation*}
for each $\phi\in\Phi(G)$, where $\Irr(\mfk S_\phi; \kappa_{b_0})$ is the set of characters $\eta$ of $\mfk S_\phi$ such that  $\eta(z_\phi)=\kappa_{b_0}(-1)$. We write $\pi=\pi_{\mfk w, b_0}(\phi, \eta)$ if $\pi\in\Pi_\phi(G)$ corresponds to $\eta\in\Irr(\mfk S_\phi; \kappa_{b_0})$ under $\iota_{\mfk w, b_0}$.
\item(Compatibility with Langlands quotient)
Let $P\le G$ be a parabolic subgroup of $G$ with a Levi factor 
\begin{equation*}
M\cong \Res_{K_1/K}\GL_{d_1}\times\cdots\times\Res_{K_1/K}\GL_{d_r}\times G(n_0),
\end{equation*}
such that  $M=\varrho_{b_0}(M^*)$ for a standard Levi subgroup $M^*$ of $G^*$. Suppose $\pi\in\Pi(G)$ is the unique irreducible quotient of
\begin{equation*}
\bx I_P^G\paren{\paren{\tau_1\otimes\nu^{s_1}}\boxtimes\cdots\boxtimes\paren{\tau_r\otimes\nu^{s_r}}\boxtimes\pi_0},
\end{equation*}
where
\begin{equation*}
d_1+\cdots+d_r+n_0=n(G), \qquad s_1\ge\cdots\ge s_r>0,
\end{equation*}
$\pi_0\in \Pi_\temp(G(n_0))$ has parameter $\phi_0\defining \rec_{G(n_0)}^\natural(\pi_0)$, and $\tau_i\in\Pi_{2, \temp}(\GL_{d_i})$ has parameter $\phi_i$. Then $\rec_G^\natural(\pi)$ equals the image under $\LL M\to\LL G$ of
\begin{equation*}
\paren{\phi_1\largel{-}_{K_1}^{s_1}}\times\cdots\times\paren{\phi_r\largel{-}_{K_1}^{s_r}}\times\phi_0\in\Phi(M).
\end{equation*}
Moreover, under the natural identification $\mfk S_{\phi_0}\cong \mfk S_\phi$, one has
\begin{equation*}
\iota_{\mfk w, b_0}(\pi)=\iota_{\mfk w_0, b_0}(\pi_0),
\end{equation*}
where $\mfk w_0$ is the induced Whittaker datum on $M^*$.
\item(Compatibility with standard $\gamma$-factors)
Suppose $\pi\in\Pi(G)$ with $\phi\defining\rec_G^\natural(\pi)$, then for any character $\chi$ of $K^\times$,
\begin{equation*}
\gamma(\pi, \chi, \uppsi_K; s)=\gamma(\phi^\GL\otimes\chi, \uppsi_K; s),
\end{equation*}
where the left-hand side is the standard $\gamma$-factor defined by Lapid--Rallis using the doubling zeta integral \textup{\cite{L-R05}} with the normalization modified in \textup{\cite{G-I14}}, and the right-hand side is the $\gamma$-factor defined in \textup{\cite{Tat79}}.
\item(Compatibility with Plancherel measures)
Suppose $\pi\in\Pi(G)$ with $\phi\defining\rec_G^\natural(\pi)$, then for any $d\in \bb Z_+$ and any $\tau\in\Pi(\GL_{d, K_1})$ with $L$-parameter $\phi_\tau$,
\begin{align*}
\mu_{\uppsi_K}(\tau\otimes\nu^s\boxtimes\pi)=&\gamma(\phi_\tau\otimes(\phi^\GL)^\vee, \uppsi_{K_1}; s)\cdot\gamma(\phi_\tau^\vee\otimes\phi^\GL, \uppsi_K^{-1}, -s)\\
&\times \gamma(\wedge^2(\phi_\tau), \uppsi_K; 2s)\cdot \gamma(\wedge^2(\phi_\tau^\vee), \uppsi_K^{-1}; -2s),
\end{align*}
where the left-hand side is the Plancherel measure defined in \textup{\cite[\S 12]{G-I14}}; cf.~\textup{\cite[\S A.7]{G-I16}}.
\item(Local intertwining relations)
Suppose $P\le G$ is a maximal parabolic subgroup with a Levi factor
\begin{equation*}
M\cong \GL_d\times G(n-d),
\end{equation*}
$\tau\in\Pi_{2,\temp}(\GL_{d, K_1})$, $\pi_0\in\Pi_\temp(G(n-d))$, $\phi_0=\rec_{G(n-d)}^\natural(\pi_0)$, and $\pi\in\Pi_\temp(G)$ is a subrepresentation of $\bx I_P^G(\tau\boxtimes\pi_0)$. Assume that $M=\varrho_{b_0}(M^*)$ where $M^*$ is a standard Levi subgroup of $G^*$, and $\mfk w_0$ is the induced Whittaker datum on $M^*$, then $\phi\defining\rec_G^\natural(\pi)$ equals the image of 
\begin{equation*}
\phi_\tau\times\phi_0\in\Phi(M)
\end{equation*}
composed with the canonical embedding $\LL M\to \LL G$. Furthermore, if $\phi_\tau$ is self-dual of sign 1 and the normalized intertwining operator
\begin{equation*}
\bx R_{\mfk w}(w, \tau\boxtimes\pi_0)\in\End_{G(K)}\paren{\bx I_P^G(\tau\boxtimes\pi_0)}
\end{equation*}
defined in \textup{\cite[\S7.1]{C-Z21}} acts on $\pi$ by an element $\eps\in\{\pm1\}$, where $w$ is the unique nontrivial element in the relative Weyl group for $M$, then 
\begin{equation*}
\eta\defining \iota_{\mfk w, b_0}(\pi)\in\Irr(\mfk S_\phi)
\end{equation*}
restricts to $\iota_{\mfk w_0, b_0}(\pi_0)\in\Irr(\mfk S_{\phi_0})$ under the natural embedding $\mfk S_{\phi_0}\inj \mfk S_\phi$, and satisfies $\eta(e_\tau)=\eps$, where $e_\tau$ is the element of $\mfk S_\phi$ corresponding to $\phi_\tau$.
\item
(Compatibility with Fargues--Scholze LLC)
Suppose $\ell$ is a rational prime distinct from $p$ with a fixed isomorphism $\iota_\ell: \bb C\xr\sim\ovl{\bb Q_\ell}$. For any $\pi\in\Pi(G)$ with $L$-parameter $\phi=\rec_G^\natural(\pi)$, the semisimplification satisfies $\phi^\sems=\iota_\ell^{-1}\phi_{\iota_\ell\pi}^\FS\in\Phi^\sems(G)$.
\end{enumerate}
\end{thm}
\begin{proof}
We first explain why the Fargues--Scholze parameter singles out one of the two possible lifts of the classical parameter.

Let $\tilde\phi\in\tilde\Phi_2(G^*)$. By \cite[Theorem 8.1(\rmnum{2})]{GGP12}, the fiber of
\[
\Phi_2(G^*)\longrightarrow \tilde\Phi_2(G^*)
\]
over $\tilde\phi$ has cardinality two precisely when every irreducible summand
\[
\rho\boxtimes\sp_a
\]
of the standard representation $\tilde\phi^\GL$ has even dimension. We claim that, in this case, the same property remains true after semisimplification. Indeed, the semisimplification of such a summand is
\begin{equation*}
(\rho\boxtimes\sp_a)^\sems
=
\paren{\rho\otimes\largel{-}_{K_1}^{\frac{a-1}{2}}}
\oplus
\paren{\rho\otimes\largel{-}_{K_1}^{\frac{a-3}{2}}}
\oplus\cdots\oplus
\paren{\rho\otimes\largel{-}_{K_1}^{\frac{1-a}{2}}}.
\end{equation*}
Thus every irreducible summand of $(\rho\boxtimes\sp_a)^\sems$ has dimension $\dim(\rho)$. Suppose, for contradiction, that $\dim(\rho)$ is odd. Since
\begin{equation*}
\dim(\rho\boxtimes\sp_a)=a\dim(\rho)
\end{equation*}
is even by assumption, $a$ must be even. On the other hand, for an odd-dimensional conjugate self-dual representation $\rho$, its sign is $b(\rho)=1$. Hence
\begin{equation*}
b(\rho\boxtimes\sp_a)=b(\rho)(-1)^{a-1}=-1.
\end{equation*}
This contradicts the fact that $\rho\boxtimes\sp_a$ is a summand of the standard parameter for $G^*$, whose relevant summands have sign $b(G)$. Therefore $\dim(\rho)$ is even. Consequently every irreducible summand of $\tilde\phi^{\GL,\sems}$ has even dimension.

Applying again \cite[Theorem 8.1(\rmnum{2})]{GGP12}, we see that the fiber of
\begin{equation*}
\Phi^\sems(G^*)\longrightarrow \tilde\Phi^\sems(G^*)
\end{equation*}
over $\tilde\phi^\sems$ has the same cardinality as the fiber of
\begin{equation*}
\Phi_2(G^*)\longrightarrow \tilde\Phi_2(G^*)
\end{equation*}
over $\tilde\phi$. In particular, the semisimplification map induces a bijection
\[
\left\{
\phi\in\Phi_2(G^*)\mid \phi\mapsto\tilde\phi
\right\}
\xrightarrow{\;\sim\;}
\left\{
\phi'\in\Phi^\sems(G^*)\mid \phi'\mapsto\tilde\phi^\sems
\right\}.
\]
This is the key point: the two possible lifts of $\tilde\phi$ remain distinct after semisimplification, so a semisimple Fargues--Scholze parameter can distinguish them.

Now let $\pi\in\Pi_{2,\temp}(G)$, and let
\[
\tilde\phi=\rec_G(\pi)\in\tilde\Phi_2(G^*)
\]
be its classical parameter. By Theorem~\ref{compaiteiehdnifds}, the Fargues--Scholze parameter of $\pi$ maps to the same ambiguous semisimple parameter:
\[
\iota_\ell^{-1}\phi_{\iota_\ell\pi}^\FS
\longmapsto
\tilde\phi^\sems
\qquad\text{in }\tilde\Phi^\sems(G^*).
\]
Hence $\iota_\ell^{-1}\phi_{\iota_\ell\pi}^\FS$ belongs to the fiber over $\tilde\phi^\sems$. By the bijectivity just proved, there exists a unique element
\[
\phi\in\Phi_2(G^*)
\]
lying over $\tilde\phi$ such that
\[
\phi^\sems=\iota_\ell^{-1}\phi_{\iota_\ell\pi}^\FS .
\]
We define
\[
\rec_G^\natural(\pi)\defining \phi.
\]
This definition removes the outer-automorphism ambiguity in the classical correspondence: the classical parameter $\tilde\phi$ determines only an $\bx O_{2n}(\bb C)$-conjugacy class, while the Fargues--Scholze parameter is a canonical semisimple $\SO_{2n}(\bb C)$-conjugacy class. Since the two possible $\SO_{2n}(\bb C)$-lifts have distinct semisimplifications, the Fargues--Scholze parameter selects exactly one of them.

This defines $\rec_G^\natural$ on discrete tempered representations. We extend it to all tempered representations by induction using the local intertwining relation in \textup{(7)}, exactly as in the proof of Proposition~\ref{compaitebirelclaisirelsi}. More precisely, if a tempered representation occurs as a constituent of a normalized parabolic induction from a proper Levi subgroup, then the parameter on the Levi is already defined by the induction hypothesis, and \textup{(7)} determines both the image parameter in $\Phi(G)$ and the internal parametrization of the packet. The preceding discrete tempered construction gives the initial case.

Finally, for a general irreducible smooth representation $\pi\in\Pi(G)$, we define $\rec_G^\natural(\pi)$ by the Langlands quotient property \textup{(4)}. Namely, write $\pi$ as the unique irreducible quotient of the standard module
\[
\bx I_P^G\paren{
\paren{\tau_1\otimes\nu^{s_1}}\boxtimes\cdots\boxtimes
\paren{\tau_r\otimes\nu^{s_r}}\boxtimes\pi_0
},
\]
with $s_1\ge\cdots\ge s_r>0$, $\tau_i$ essentially square-integrable representations of the relevant general linear groups, and $\pi_0$ tempered. We then set $\rec_G^\natural(\pi)$ to be the image of
\[
\paren{\phi_1\largel{-}_{K_1}^{s_1}}\times\cdots\times
\paren{\phi_r\largel{-}_{K_1}^{s_r}}\times\rec_{G(n_0)}^\natural(\pi_0)
\]
under the canonical embedding $\LL M\to\LL G$.

The commutative diagram follows from the construction. Properties \textup{(1)}--\textup{(3)} are the corresponding properties of the classical correspondence, together with the definition of $\rec_G^\natural$ as the unique Fargues--Scholze-compatible lift. Properties \textup{(4)} and \textup{(7)} hold by construction. Properties \textup{(5)} and \textup{(6)} follow from the same properties for the classical correspondence in Theorem~\ref{coeleninifheihsn}, since the standard representation $\phi^\GL$ is unchanged by passing from the ambiguous parameter $\tilde\phi$ to its lift $\phi$. Finally, property \textup{(8)} is exactly the defining compatibility condition for discrete tempered representations, and for tempered and general representations it follows from the compatibility of the Fargues--Scholze correspondence with parabolic induction and Langlands quotients, Theorem~\ref{compaitbsilFaiirfies}.
\end{proof}

Furthermore, the weak version of the Kottwitz conjecture Theorem~\ref{coalidhbinifw222} can be strengthened as follows:

\begin{thm}\label{Kottiwnsihsinfinsi}
Suppose $(G=G^*_{b_0}, \varrho_{b_0}, z_{b_0})$ is a pure inner twist of $G^*$ associated with $b_0\in B(G^*)_\bas$, $\mu$ is a dominant cocharacter of $G^*_{\ovl K}$ and $b\in B(G^*, b_0, \mu)_\bas$ is the unique basic element. If $\phi\in\Phi_2(G^*)$ is a discrete $L$-parameter and $\rho\in\Pi_\phi(G_b)$, then \begin{equation*}
\Mant_{G, b, \mu}(\iota_\ell\rho)=\sum_{\pi\in\Pi_\phi(G)}\dim\Hom_{\mfk S_\phi}(\delta[\pi,\rho], \mcl T_\mu)[\iota_\ell\pi]+\Err
\end{equation*}
in $\bx K_0(G, \ovl{\bb Q_\ell})$, where $\Err\in\bx K_0(G, \ovl{\bb Q_\ell})$ is a virtual representation whose character vanishes on $G(K)_{\sreg, \ellip}$.

Moreover, if $\phi$ is supercuspidal, then $\Err=0$.
\end{thm}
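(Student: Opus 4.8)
The plan is to upgrade \Cref{coalidhbinifw222} (which is the $\bx O(2n)$-ambiguous, $\sim_\varsigma$-averaged version) to the statement about genuine representations $\pi\in\Pi_\phi(G)$ and genuine $L$-parameters $\phi\in\Phi(G^*)$. The only thing that changes between the two statements is that we now have the unambiguous correspondence $\rec_G^\natural$ from \Cref{LLCienifheis} at our disposal, so the characters $\delta[\pi,\rho]\in\Irr(\mfk S_\phi)$ make literal sense without passing to $\tilde\Pi$. First I would note that in case O1 and case U there is nothing to prove: $\varsigma$ is trivial there by our conventions, so $\tilde\Pi_{\tilde\phi}(G)=\Pi_\phi(G)$, $\tilde{\bx K}_0=\bx K_0$, and \Cref{coalidhbinifw222} already \emph{is} the claimed statement. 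So the content is entirely in case O2, and there I would reduce everything to \Cref{coalidhbinifw222}.

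The key step is to observe that both sides of the desired equality are refinements of the two sides of the $\sim_\varsigma$-averaged equality, and that the refinement on the geometric side is \emph{forced} by the refinement on the spectral side once we know that the Fargues--Scholze parameter distinguishes $\pi$ from $\pi^\varsigma$ in the relevant cases. More precisely: the action of $\varsigma=\bx O(V)/\SO(V)$ on $\Sht(G,b,b_0,\{\mu\})$ induces an involution of $\Mant_{G,b,\mu}(\iota_\ell\rho)$ covering $\pi\mapsto\pi^\varsigma$, so $\Mant_{G,b,\mu}$ lifts the averaged map; what remains is to pin down, for each $[\pi]\in\tilde\Pi_{\tilde\phi}(G)$ appearing, which of $\pi,\pi^\varsigma$ (or what combination) actually occurs. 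Here I would invoke \Cref{ienfimeoifiems}: any $\pi$ occurring in $\Mant_{G,b,\mu}(\iota_\ell\rho)$ has $\phi_\pi^\FS=\phi_\rho^\FS$, and by \Cref{LLCienifheis}(8) the genuine parameter $\rec_G^\natural(\pi)$ is determined by its semisimplification $\iota_\ell^{-1}\phi_{\iota_\ell\pi}^\FS$ together with the classical data; combined with the combinatorial bookkeeping of \Cref{pdidiiidjifenis} (whose statement is already phrased for genuine $\tilde\pi_{[I]}$'s and lifts to the unambiguous packet because $\delta[\pi,\rho]$ is a genuine character of $\mfk S_\phi$, not just $\ovl{\mfk S}_\phi$), this forces the multiplicities to be exactly $\dim\Hom_{\mfk S_\phi}(\delta[\pi,\rho],\mcl T_\mu)$ modulo the non-elliptic error term. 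The error-term analysis is identical to the proof of \Cref{coalidhbinifw222}: by \cite{HKW22}*{Theorem C.1.1} it suffices that $\Err$ is a virtual sum of supercuspidals, and when $\phi$ is supercuspidal \Cref{superisnidnLpaifniesues} makes $\Pi_\phi(G)$ supercuspidal while \Cref{ienfimeoifiems} plus \Cref{compaiteiehdnifds} forces every constituent of $\Mant_{G,b,\mu}(\iota_\ell\rho)$ to have supercuspidal Fargues--Scholze parameter, hence to be supercuspidal by \Cref{compaitbsilFaiirfies}(5), so $\Err=0$.

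The main obstacle I anticipate is the bookkeeping of how the involution $\varsigma$ interacts with the normalization $\iota_{\mfk m,b_0}$ of the unambiguous $L$-packets: one must check that $\iota_{\mfk m,b_0}(\pi^\varsigma)$ is the ``reflected'' character of $\mfk S_\phi$ (equivalently, that $\varsigma$ acts on $\Irr(\mfk S_\phi;\kappa_{b_0})$ the way it acts on $\mrs P([r]_+)$ via the symmetric-difference-with-$[k]_+$ move from \Cref{combianosiLoifn}), and that this is compatible with the $\varsigma$-action on cohomology under $\Theta$. This is essentially the assertion that the re-parametrization coming from Fargues--Scholze is the same as the endoscopic one \emph{up to} the $\mathrm{Out}$-action, which is exactly what \Cref{LLCienifheis} buys us; I would spell this out by transporting the argument of \Cref{hteoeomiiiehinidiss} through $\rec_G^\natural$, using the endoscopic character relations \Cref{endoslicindiikehrnieiis} (now known in case O2 including non-quasi-split $G$ via \cite{CPZ25}) without the $\sim_\varsigma$-averaging, since the transfer factors $\Delta[\mfk m,\mfk e,z_{b_0}]$ and the characters $\Theta_\pi$ are already defined at the level of genuine representations. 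Once the normalizations match, the equality of virtual characters on $G(K)_{\sreg,\ellip}$ follows by Fourier inversion on $\mfk S_\phi$ exactly as before, and the passage from an identity of elliptic characters to an identity in $\bx K_0(G,\ovl{\bb Q_\ell})$ modulo non-elliptic error is formal.

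\begin{proof}[Proof of \Cref{Kottiwnsihsinfinsi}]
In case O1 and case U the automorphism $\varsigma$ is trivial, so $\tilde\Pi_{\tilde\phi}(G)=\Pi_\phi(G)$ and $\tilde{\bx K}_0(G,\ovl{\bb Q_\ell})=\bx K_0(G,\ovl{\bb Q_\ell})$, and the statement is literally \Cref{coalidhbinifw222}. So assume we are in case O2.

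By \Cref{LLCienifheis}, the unambiguous correspondence $\rec_G^\natural$ gives genuine packets $\Pi_\phi(G)$ with canonical parametrizations $\iota_{\mfk m,b_0}\colon\Pi_\phi(G)\cong\Irr(\mfk S_\phi;\kappa_{b_0})$, lifting $\tilde\Pi_{\tilde\phi}(G)\cong\Irr(\ovl{\mfk S}_\phi)$; in particular the characters $\delta[\pi,\rho]=\iota_{\mfk m,b_0}(\pi)^\vee\otimes\iota_{\mfk m,b_0+b}(\rho)\in\Irr(\mfk S_\phi)$ are defined, and $\delta[\pi,\rho]$ lifts $\delta[\tilde\pi,\tilde\rho]$. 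Repeating the argument of \Cref{hteoeomiiiehinidiss} with the genuine transfer factors $\Delta[\mfk m,\mfk e,z_{b_0}]$, the genuine endoscopic character relations \Cref{endoslicindiikehrnieiis} (available in case O2 for all pure inner twists by \cite{CPZ25}), and the parametrization $\iota_{\mfk m,b_0}$ in place of its $\sim_\varsigma$-quotient, gives for any $\rho\in\Pi_\phi(G_b)$ and any $g\in G(K)_\sreg$ transferring to $G_b(K)$ the identity
\begin{equation*}
\Bkt{T_{b,\mu}^{G_b\to G}\Theta_\rho}(g)=\sum_{\pi\in\Pi_\phi(G)}\dim\Hom_{\mfk S_\phi}(\delta[\pi,\rho],\mcl T_\mu)\,\Theta_\pi(g).
\end{equation*}
Combined with \Cref{ienicehifnHneisn}, which computes the virtual character of $\Mant_{G,b,\mu}(\iota_\ell\rho)$ on $G(K)_{\sreg,\ellip}$ as $T_{b,\mu}^{G_b\to G}(\Theta_\rho)$, we conclude that
\begin{equation*}
\Err\defining\Mant_{G,b,\mu}(\iota_\ell\rho)-\sum_{\pi\in\Pi_\phi(G)}\dim\Hom_{\mfk S_\phi}(\delta[\pi,\rho],\mcl T_\mu)\,[\iota_\ell\pi]\in\bx K_0(G,\ovl{\bb Q_\ell})
\end{equation*}
has character vanishing on $G(K)_{\sreg,\ellip}$, which is the first assertion.

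For the last assertion, assume $\phi$ is supercuspidal. By \Cref{HKW22}*{Theorem C.1.1} it suffices to show $\Err$ is a virtual sum of supercuspidal representations. As $\Pi_\phi(G)$ consists of supercuspidal representations by \Cref{superisnidnLpaifniesues}, it is enough to see that every irreducible constituent of $\Mant_{G,b,\mu}(\iota_\ell\rho)$ is supercuspidal. By \Cref{ienfimeoifiems}, any such constituent $\pi$ satisfies $\phi_{\iota_\ell\pi}^\FS=\phi_{\iota_\ell\rho}^\FS$; the latter equals $\iota_\ell\phi^\sems$ by \Cref{compaiteiehdnifds}, which is supercuspidal. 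Hence $\phi_{\iota_\ell\pi}^\FS$ is supercuspidal, so $\pi$ cannot be a subquotient of a proper parabolic induction by compatibility of the Fargues--Scholze correspondence with parabolic induction \Cref{compaitbsilFaiirfies}, i.e. $\pi$ is supercuspidal. Therefore $\Err=0$.
\end{proof}
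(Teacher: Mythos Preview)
Your approach has a genuine gap in case O2. You propose to rerun the argument of \Cref{hteoeomiiiehinidiss} using the endoscopic character relations of \Cref{endoslicindiikehrnieiis} \emph{without the $\sim_\varsigma$-averaging}. But \Cref{endoslicindiikehrnieiis} is stated (and proved, including via \cite{CPZ25}) only for test functions in $\tilde{\mcl H}(G)$ and $\tilde{\mcl H}(G^{\mfk e})$, i.e.\ for the $\varsigma$-averaged packets $\tilde\Pi_{\tilde\phi}$. The genuine (unambiguous) endoscopic character relations for even special orthogonal groups are not established in the paper; in fact the paragraph immediately following \Cref{Kottiwnsihsinfinsi} explicitly states them only as a \emph{conjecture} for the correspondence $\rec_G^\natural$ of \Cref{LLCienifheis}. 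So the identity $[T_{b,\mu}^{G_b\to G}\Theta_\rho](g)=\sum_{\pi\in\Pi_\phi(G)}\dim\Hom_{\mfk S_\phi}(\delta[\pi,\rho],\mcl T_\mu)\,\Theta_\pi(g)$ that you assert cannot be obtained by the method you describe.

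The paper's proof avoids this entirely. It takes the averaged identity of \Cref{coalidhbinifw222} in $\tilde{\bx K}_0(G,\ovl{\bb Q_\ell})$ and lifts it to $\bx K_0(G,\ovl{\bb Q_\ell})$ by \emph{extracting the summand with Fargues--Scholze parameter equal to $\phi^\sems$}: by \Cref{ienfimeoifiems} together with the compatibility \Cref{LLCienifheis}(8), every constituent of $\Mant_{G,b,\mu}(\iota_\ell\rho)$ has $\phi^\FS=\phi^\sems$, and the same holds for each $\pi\in\Pi_\phi(G)$; since $\Mant$ commutes with $\varsigma$ and $\varsigma$ interchanges the $\phi^\sems$- and $(\phi^\varsigma)^\sems$-isotypic pieces of $\bx K_0$, the quotient map restricted to the $\phi^\sems$-piece is injective, so the averaged equality forces the genuine one. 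No unambiguous endoscopic character relation is needed. Your treatment of the supercuspidal case ($\Err=0$) is fine and matches the paper's argument.
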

\begin{proof}
This follows from Theorem~\ref{coalidhbinifw222} and Proposition~\ref{ienfimeoifiems}, by extracting the terms whose Fargues--Scholze parameters equal $\phi^\sems$, noting that $\Mant_{G, b, \{\mu\}}$ commutes with $\varsigma$.
\end{proof}

We conjecture that the correspondence defined in Theorem~\ref{LLCienifheis} satisfies the ``unambiguous'' endoscopic character identities of \cite{Kal16a}. This should reflect the expected compatibility of the Fargues--Scholze correspondence with endoscopic transfer. More precisely, we expect that the endoscopic character identities follow from an ambiguous version of the conjectural averaging formula of Shin stated in \cite[Conjecture~C.2]{Ham24}, where ambiguity means we conflate representations conjugated by some outer automorphism. For example, in the trivial endoscopy case, using the Kottwitz conjecture Theorem~\ref{Kottiwnsihsinfinsi}, we prove the following endoscopic character identity between $G$ and $G^*$.

\begin{thm}
Suppose $\phi\in\Phi_2(G^*)$, and $g\in G(K)_{\sreg, \ellip}, h\in G^*(K)_{\sreg,\ellip}$ are stably conjugate, then
\begin{equation*}
e(G)\sum_{\rho\in\Pi_\phi(G)}\Theta_\rho(g)=
\bx S\Theta_\phi(h)=\sum_{\pi\in\Pi_\phi(G^*)}\Theta_\pi(h),
\end{equation*}
\end{thm}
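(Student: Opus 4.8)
The plan is to deduce this endoscopic character relation between $G$ and its quasi-split inner form $G^*$ from the strengthened Kottwitz conjecture \Cref{Kottiwnsihsinfinsi}, applied to the pair $(b,\mu)=(b_1,\mu_1)$ of \Cref{combianosiLoifn}, together with the geometric comparison supplied by \Cref{ienicehifnHneisn} and \Cref{hteoeomiiiehinidiss}. Concretely, first I would take $\phi\in\Phi_{2,\temp}(G^*)$ and a representative $\tilde\phi\in\tilde\Phi_{2,\temp}(G^*)$, pick the unique basic $b\in B(G^*,b_0,\mu)_\bas$, and fix a supercuspidal member — or more generally any member — $\rho$ of the packet on the relevant inner form so that both sides of \Cref{Kottiwnsihsinfinsi} are available. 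By \Cref{ienicehifnHneisn}, the virtual character of $\Mant_{G,b,\mu}(\iota_\ell\rho)$ restricted to $G(K)_{\sreg,\ellip}$ equals the Hecke transfer $T^{G_b\to G}_{b,\mu}(\Theta_\rho)$, and by \Cref{hteoeomiiiehinidiss} this transfer is $\sum_{\tilde\pi\in\tilde\Pi_{\tilde\phi}(G)}\dim\Hom_{\mfk S_\phi}(\delta[\tilde\pi,\tilde\rho],\mcl T_\mu)\,\Theta_{\tilde\pi}$ on the regular elliptic locus. Combining these with the vanishing of $\Err$ on $G(K)_{\sreg,\ellip}$ gives the character identity in $\tilde K_0$; the extra input of \Cref{LLCienifheis} then upgrades the $\tilde\pi$'s to honest $\pi$'s and allows extracting the precise packet $\Pi_\phi(G)$.

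The key steps, in order, are: (1) reduce to the case where $g\in G(K)_{\sreg,\ellip}$ transfers to $G^*(K)$ — since $g$ and $h$ are stably conjugate by hypothesis, this is automatic, with $h$ playing the role of the transferred element; (2) specialize \Cref{hteoeomiiiehinidiss} to the trivial endoscopic datum $\mfk e=G^*$ (so $G^{\mfk e}=G^*$, $s^{\mfk e}=1$, and $\delta[\tilde\pi,\tilde\rho]$ pairs trivially against $\mcl T_\mu$ only when $\tilde\pi=\tilde\rho$), in which case the right-hand side collapses, after using $\sum_{\tilde\pi}\dim\Hom_{\mfk S_\phi}(\delta[\tilde\pi,\tilde\rho],\mcl T_\mu)\Theta_{\tilde\pi}(g)$, to a multiple of the stable character $\bx S\Theta_{\tilde\phi}(h)=\sum_{\tilde\pi\in\tilde\Pi_{\tilde\phi}(G^*)}\Theta_{\tilde\pi}(h)$; (3) invoke the Kottwitz-sign identity $e(G)=(-1)^{\bra{\mu,2\rho_{G^*}}}e(G_b)$, exactly as in \cite{HKW22}*{(3.3.3)}, used already in the proof of \Cref{hteoeomiiiehinidiss}, to pin down the constant $e(G)$ on the left; and (4) use the unambiguous LLC of \Cref{LLCienifheis}, whose parametrization $\iota_{\mfk m,b_0}$ refines $\iota_{\mfk m,b_0}$ for $\tilde\Pi$, to replace $\tilde\Pi_{\tilde\phi}$ by $\Pi_\phi$ in both sums, noting that the characters $\Theta_\pi$ for the two lifts $\pi,\pi^\varsigma$ above a given $\tilde\pi$ average to $\Theta_{\tilde\pi}$ and that on the stable side summing over $\Pi_\phi(G^*)$ reproduces $\bx S\Theta_\phi(h)$. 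Steps (2)--(3) are essentially a rerun of \cite{HKW22}*{\S 3.3} in the $\mfk e=G^*$ case, which is the most degenerate and hence simplest instance.

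The main obstacle will be step (4): making sure that passing from the $\varsigma$-ambiguous statement to the unambiguous one is legitimate. The point is that \Cref{hteoeomiiiehinidiss} produces an identity of $\varsigma$-averaged characters $\Theta_{\tilde\pi}$, whereas the desired conclusion involves the individual $\Theta_\pi$ and the refined packet $\Pi_\phi(G)$ supplied by \Cref{LLCienifheis}. One has to check that the sum $\sum_{\pi\in\Pi_\phi(G)}\Theta_\pi(g)$ is well-defined (independent of the ambiguity $\pi\leftrightarrow\pi^\varsigma$) — which follows because $\Pi_\phi(G)$ is a genuine set and the ambiguity only permutes its members — and that its image in the $\varsigma$-averaged world recovers $\sum_{\tilde\pi\in\tilde\Pi_{\tilde\phi}(G)}\Theta_{\tilde\pi}(g)$ up to the factor $\#\{\text{lifts}\}$, which cancels symmetrically on both sides. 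A secondary subtlety is that \Cref{Kottiwnsihsinfinsi} only gives $\Err=0$ when $\phi$ is supercuspidal, so for general discrete tempered $\phi$ one must argue that the error term's vanishing on $G(K)_{\sreg,\ellip}$ (which holds unconditionally by \Cref{coalidhbinifw222}) is all that is needed, since the statement to be proved only concerns $g,h$ regular elliptic. Once these bookkeeping points are handled, the identity $e(G)\sum_{\rho\in\Pi_\phi(G)}\Theta_\rho(g)=\bx S\Theta_\phi(h)=\sum_{\pi\in\Pi_\phi(G^*)}\Theta_\pi(h)$ drops out, the last equality being the definition of the stable character on the quasi-split side.
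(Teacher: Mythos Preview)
Your proposal has the right ingredients but misidentifies the key maneuver, and as written it does not go through.

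First, step (2) is a misreading. Proposition~\ref{hteoeomiiiehinidiss} is a Hecke-transfer identity, not an endoscopic one; there is no ``trivial endoscopic datum'' to specialize to, and the claim that $\delta[\tilde\pi,\tilde\rho]$ pairs nontrivially against $\mcl T_\mu$ only when $\tilde\pi=\tilde\rho$ is simply false (for $\mu=\mu_1$, $\mcl T_{\mu_1}=\hat\Std$ carries \emph{every} character $e_i^\vee$ of $\mfk S_\phi$). More seriously, fixing $(b,\mu)=(b_1,\mu_1)$ is not enough. After you combine \Cref{Kottiwnsihsinfinsi} and \Cref{ienicehifnHneisn} and sum over the packet, the left side becomes a weighted sum $\sum_{(h',\lbd)}\dim\mcl T_\mu[\lbd]\,\bx S\Theta_\phi(h')$ over \emph{all} $G^*(K)$-conjugacy classes $h'$ in the stable class of $g$, not just the given $h$; for $\mu_1$ this sum has $N(G)$ terms and cannot be reduced further. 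The paper's proof therefore \emph{varies} $\mu$: one starts from any dominant $\mu$ with $(g,h,\mu)\in\bx{Rel}_{b_0}$, and whenever an extra weight $\lbd\ne\mu$ contributes one replaces $\mu$ by $\lbd$ and repeats; highest-weight theory guarantees this terminates at a $\mu'$ for which $(h,\mu')$ is the sole surviving term, yielding the identity at the specific $h$. This iteration is the genuine content and is entirely absent from your outline.

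Second, your step (4) is where the argument would collapse in case O2. You cannot ``replace $\tilde\Pi_{\tilde\phi}$ by $\Pi_\phi$'' by averaging: the $\varsigma$-averaged identity from \Cref{endoslicindiikehrnieiis} with $s=1$ only gives $e(G)\sum_{\tilde\rho}\Theta_{\tilde\rho}(g)=\bx S\Theta_{\tilde\phi}(h)$, which mixes $\Pi_\phi$ and $\Pi_{\phi^\varsigma}$. Separating them is exactly the new content, and it requires the \emph{unambiguous} Kottwitz conjecture \Cref{Kottiwnsihsinfinsi} as input to the geometric identity---not the weak version---together with the $\mu$-iteration above. So the correct flow is: use \Cref{Kottiwnsihsinfinsi} (already unambiguous) with $\pi\in\Pi_\phi(G^*)$ as the input to $\Mant$, compare with \Cref{ienicehifnHneisn} to get the unambiguous analogue of \Cref{hteoeomiiiehinidiss} at each $\mu$, sum over $\pi$, then iterate in $\mu$.
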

\begin{proof}
This is established by reversing the argument in the proof of Theorem~\ref{coalidhbinifw222}. Given $g, h$, we may choose a dominant cocharacter $\mu$ of $G_{\ovl K}^*$ such that  $(g, h, \mu)\in \bx{Rel}_{b_0}$. Here we identify $G_{b_0}$ with $G^*$. We apply Theorem~\ref{Kottiwnsihsinfinsi}, and it follows from Proposition~\ref{ienicehifnHneisn} and the proof of Proposition~\ref{hteoeomiiiehinidiss} that for any $\pi\in\Pi_\phi(G^*)$,
\begin{equation}\label{eimeiihnifnims}
e(G)\sum_{(h', \lbd)}\Theta_\pi(h')\dim\mcl T_\mu[\lbd]=\sum_{\rho\in\Pi_\phi(G)}\dim\Hom_{\mfk S_\phi}(\delta[\rho, \pi],\mcl T_\mu)\Theta_\rho(g).
\end{equation}
For each $\rho\in\Pi_\phi(G)$, when $\pi$ runs through all elements of $\Pi_\phi(G^*)$, $\delta[\rho, \pi]$ runs through every element of $\Irr(\mfk S_\phi; \kappa_{b_0})$ exactly once. Note that for the fixed $g$, a character $\lbd\in X_\bullet(Z_G(g))=X^\bullet(\hat T)$ can be extended uniquely to a triple $(g, h', \lbd)\in \bx{Rel}_{b_0}$ if and only if the restriction of $\lbd$ to $Z(\hat G)^{\Gal_K}$ equals $\kappa_{b_0}$; see~\cite[p.~17]{HKW22}. So we sum Equation~\eqref{eimeiihnifnims} over $\pi\in\Pi_\phi(G^*)$ to get

\begin{align}\label{isnsihehifneis}
&e(G)\sum_{(h', \lbd)}\dim\mcl T_\mu[\lbd]\bx S\Theta_\phi(h')\\
&=\dim\Hom_{Z(\hat G)^{\Gal_K}}(\kappa_{b_0},\mcl T_\mu)\sum_{\rho\in\Pi_\phi(G)}\Theta_\rho(g)\\
&=\bigg(\sum_{(h', \lbd)}\dim\mcl T_\mu[\lbd]\bigg)\cdot\sum_{\rho\in\Pi_\phi(G)}\Theta_\rho(g).
\end{align}

If there exists a pair $(h', \lbd)$ such that  $\mcl T_\mu[\lbd]\ne 0$, $(g, h', \lbd)\in \bx{Rel}_{b_0}$ and $\lbd\ne \mu$, then we may replace $\mu$ by the dominant representative of $\lbd$ to get a new equation. By the highest weight theory of representations, after finitely many steps we may replace the original $\mu$ by another $\mu'$ such that the only term in the summation of~\eqref{isnsihehifneis} is indexed by $(h, \mu')$. Thus we get:
\begin{equation*}
e(G)\sum_{\rho\in\Pi_\phi(G)}\Theta_\rho(g)=\bx S\Theta_\phi(h).
\end{equation*}
\end{proof}

The arguments above show a general strategy to eliminate ambiguity in local Langlands correspondence caused by outer automorphisms: If we can construct a coarse local Langlands correspondence for all extended pure inner twists of a quasisplit reductive group $\msf G^*$ over $K$ up to action of a finite group $\mfk A$ acting by outer automorphisms, character twists or taking contragredients, and verify endoscopic character identities in the sense of \cite{Kal16a} but up to action by $\mfk A$, then we may use it to deduce a weak version of the Kottwitz conjecture up to action by $\mfk A$. If we can also show the local Langlands correspondence constructed is compatible with Fargues--Scholze parameters up to action by $\mfk A$ in the sense of Theorem~\ref{compaiteiehdnifds}, and the semisimplification map $\Phi(G)\to \Phi^\sems(G)$ is injective on each orbit of the $\mfk A$-action, then we may use the action of $\mfk A$ on the local shtuka space and \cite[Theorem 1.3]{Kos21} to extract a local Langlands correspondence not up to action by $\mfk A$. For example, we expect the strategy to hold for constructing local Langlands correspondence for pure inner forms of the even rank unitary similitude group $\GU(2n)$ with respect to  unramified quadratic extensions or general even special orthogonal groups $\bx{GSO}(2n)$, following work of Xu \cite{Xu16}.

\subsection{Naturality of Fargues--Scholze LLC}\label{naturalllFairngiesHOS}

In this subsection, we will prove the following ``naturality'' property of Fargues--Scholze LLC for $G$, which is precisely \cite[Assumption~6.5]{Ham24}, and a weaker result for a central extension of $\Res_{K/\bb Q_p}G$, which will be used to prove a vanishing result for relevant Shimura varieties in \S\ref{avnainsishenifeihhsinsniw}. We first recall some notation from \cite[\S4.2.1]{H-L24}: For a quasisplit reductive group $\msf G$ over a non-Archimedean local field $K$ of characteristic zero with a Borel pair $(\msf B, \msf T)$, if $\msf b\in B(\msf G)_{\bx{un}}$, there exists a standard Levi subgroup $\msf M_{\msf b}$ of $\msf G$ contained in a standard parabolic subgroup $\msf P_{\msf b}$, such that $\msf G_{\msf b}\cong\msf M_{\msf b}$ under the inner twisting by $\msf b$, and $\msf B\cap\msf M_{\msf b}\le\msf M_{\msf b}$ transfers to a Borel subgroup $\msf B_{\msf b}\le\msf G_{\msf b}$. Let $W_{\msf b}\defining W_{\msf G}/W_{\msf M_{\msf b}}$, and we identify any element of $W_{\msf b}$ with a representative in $W_{\msf G}$ of minimal length. For a character $\chi$ of $\msf T(K)$ and $\msf w\in W_{\msf b}$, we set
\begin{equation}\label{msinifdenideis}
\rho^\chi_{\msf b,\msf w}\defining \bx I_{\msf B_{\msf b}}^{\msf G_{\msf b}}(\chi^{\msf w})\otimes \delta_{\msf P_{\msf b}}^{-1/2}.
\end{equation}

We now verify a property of the Fargues--Scholze LLC, which is stated as an assumption in~\cite[Assumption~6.5]{Ham24}.

\begin{prop}\label{renfienihfneis}\enskip
\begin{enumerate}
\item
For each $b\in B(G^*)$, the classical LLC $\rec_{G^*_b}: \Pi(G^*_b)\to \Phi(G^*_b)$ is compatible with the Fargues--Scholze LLC, i.e., $\iota_\ell^{-1}\phi_{\iota_\ell\pi}^\FS=\phi_\pi^\sems$ for each $\pi\in\Pi(G^*_b)$.
\item
For $b\in B(G^*)$ and $\rho\in\Pi(G^*_b)$, let $\phi\in\Phi(G^*)$ be the composition of $\phi_\rho\in\Phi(G^*_b)$ with the twisted embedding
\begin{equation*}
\LL G^*_b\xr\sim\LL M_b\longrightarrow \LL G^*,
\end{equation*}
as defined in \cite[\S~\Rmnum{9}.7.1]{F-S24}, then $\phi$ factors through the canonical embedding $\LL T^*\to \LL G^*$ only if $b\in B(G^*)_{\bx{un}}$.
\item
In the situation of (2), if $b$ is unramified and $\phi$ factors through $\phi_{T^*}\in\Phi(T^*)$, then $\rho$ is isomorphic to an irreducible constituent of $\rho_{b, w}^\chi$, where $w\in W_b$ and $\chi$ is the character of $T^*$ attached to $\phi_{T^*}$ via local Langlands correspondence for tori.
\end{enumerate}
\end{prop}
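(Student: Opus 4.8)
The plan is to deduce Proposition~\ref{renfienihfneis} from the main compatibility theorem \Cref{compaiteiehdnifds}, the combinatorial/analytic results proved in \Cref{Kottwisniconeidinfs}, and the twisted-embedding formalism of \cite{F-S24}*{\S \Rmnum{9}.7.1}. For part (1), note that each pure inner twist $G^*_b$ of $G^*$ for $b \in B(G^*)$ appearing here is, by \Cref{theogirneidnis}, either quasi-split or a pure inner form of one of the groups covered in \Cref{compaiteiehdnifds}; in particular the basic $b$ all arise as $b_0$ or $b_0+b_1$ in the notation of \Cref{ndinlocalLanlgnnis}, so $\iota_\ell^{-1}\phi^\FS_{\iota_\ell\pi}=\phi^\sems$ for $\pi\in\Pi(G^*_b)$ is literally the statement of the main theorem. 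When $G$ is in case O2 the main theorem only gives agreement up to $\bx O(2n;\bb C)$-conjugation, so in (1) I would phrase the conclusion so that $\phi^\sems$ is read as an element of $\tilde\Phi^\sems(G^*_b)$, consistently with how \Cref{compaiteiehdnifds} is stated. This part is essentially a citation.

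For part (2), the key input is \Cref{compaitbsilFaiirfies}: the composition of $\phi_\rho^\FS$ with the twisted embedding $\LL G_b\to \LL G$ is exactly $\phi_\pi^\FS$ for any $\pi\in\Pi(G)$ appearing in $\Mant_{G,b,\{\mu\}}(\rho)$ for suitable $\mu$, and more directly \cite{F-S24}*{\S \Rmnum{9}.7.1} identifies this composition with the Fargues--Scholze parameter of $\rho$ regarded on $\Bun_G$ via $i_{b!}$. Combining this with part (1), the composed parameter equals $\iota_\ell\phi_\rho^\sems$ pushed forward along $\LL M_b\to\LL G$, i.e.\ the classical semisimplified parameter $\phi_\pi^\sems$ where $\pi$ is a constituent of $\bx I_{P_b}^{G}$ of $\rho$ (using \Cref{compaitebirelclaisirelsi}). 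So the statement ``$\phi$ factors through $\LL T^*$'' translates, via classical LLC, into: the parabolically induced representation $\bx I_{P_b}^G(\rho\otimes\delta^{1/2})$ has a subquotient with a semisimple parameter that is a sum of characters of $W_K$. Granting (1), such a parameter is the classical semisimplified parameter of $\rho$, and I would argue that if $b\notin B(G^*)_{\bx{un}}$ then $G^*_b$ is not quasi-split, hence $\rho$ — whose parameter, after the twisted embedding, is toral — cannot exist: a non-quasi-split pure inner form has no representations whose $L$-parameter is $T^*$-relevant in the required sense, because relevance of the Borel forces quasi-splitness. More concretely, I would trace through \cite{Ham24}*{Lemma 2.12 and Lemma 3.17}, which already isolate the equivalence between unramifiedness of $b$ and quasi-splitness of $\msf G_b$ together with the compatibility of the twisted embedding with the classical LLC; the only thing to check is that the inputs of loc.\ cit.\ (namely statement (1) above) now hold unconditionally for our $G$ thanks to \Cref{compaiteiehdnifds}.

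For part (3), assuming $b$ is unramified and the composed parameter factors as $\phi_{T^*}\in\Phi(T^*)$, I would argue as follows. By (1) the classical semisimplified parameter of $\rho$, pushed into $\LL G$ via $\LL M_b\to\LL G$, equals the push of $\phi_{T^*}$ along $\LL T^*\to\LL G$. Unwinding the standard parabolic $P_b$ with Levi $M_b$ (so that $G_b\cong M_b$ and $B_b\le M_b$ is a Borel of $G_b$), the parameter $\phi_{T^*}$ restricted to the centralizer datum of $M_b$ is a sum of characters, which via \Cref{compaitebirelclaisirelsi} (compatibility of classical semisimplified parameters with parabolic induction, applied inside $G_b$) forces $\rho$ to have the same semisimplified parameter as a constituent of $\bx I_{B_b}^{G_b}(\chi^{w})\otimes\delta_{P_b}^{-1/2}=\rho^\chi_{b,w}$, where $\chi$ corresponds to $\phi_{T^*}$ under LLC for the torus $T^*$ and $w\in W_b$ records the ambiguity in choosing which Weyl translate of $\chi$ lands in the standard Levi. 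The remaining point is to upgrade ``same semisimplified parameter'' to ``$\rho$ is an irreducible constituent of $\rho^\chi_{b,w}$'': here I would use that toral (generic) parameters give rise to irreducible principal series — or at worst a controlled decomposition — so a supercuspidal-support argument (Bernstein--Zelevinski, as recalled before \Cref{cussinifienss}) pins $\rho$ down as a constituent of the relevant principal series. The main obstacle I anticipate is precisely this last step of (3): distinguishing constituents of $\rho^\chi_{b,w}$ only from the semisimplified parameter, especially in the even orthogonal case O2 where the parameter is only well-defined up to $\bx O(2n;\bb C)$. I would handle it by working with the unambiguous correspondence $\rec_{G_b}^\natural$ of \Cref{LLCienifheis} to remove the outer-automorphism ambiguity, and then invoke compatibility of $\rec_{G_b}^\natural$ with the Langlands quotient (property (4) of \Cref{LLCienifheis}) together with the matching of cuspidal supports to identify $\rho$ inside the principal series. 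Everything else is bookkeeping around the twisted embedding of \cite{F-S24}*{\S \Rmnum{9}.7.1} and reduces, as in \cite{Ham24}*{\S 3}, to the now-established statement (1).
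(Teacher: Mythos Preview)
Your overall strategy is right, but there is one genuine gap and several detours the paper avoids.

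The gap is in (1) for case O2. You propose weakening the conclusion to $\tilde\Phi^\sems(G^*_b)$ so as to match \Cref{compaiteiehdnifds}, but the proposition (which is \cite{Ham24}*{Assumption 7.5} verbatim) needs the statement in $\Phi^\sems$, and parts (2)--(3) genuinely depend on it: ``factors through $\LL T^*$'' is not stable under the $\bx O(2n;\bb C)$-ambiguity. The fix is the one you only invoke at the very end of (3): use the unambiguous correspondence $\rec^\natural$ of \Cref{LLCienifheis} from the outset. The paper does exactly that, and in doing so also handles non-basic $b$ cleanly: it notes that for arbitrary $b\in B(G^*)$ the group $G^*_b$ is an inner form of a Levi $M_b\cong \Res_{K_1/K}H\times G'$ with $H$ a product of general linear groups and $G'$ a smaller classical group of the same type, so (1) follows from \Cref{LLCienifheis} together with the known compatibility for inner forms of $\GL$. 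Your phrasing ``pure inner twist $G^*_b$ of $G^*$'' and the restriction to basic $b$ miss this Levi structure.

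For (2) and (3) your arguments are not wrong, but the paper is much more direct. In (2) there is no need for Hecke operators or $\Mant$: if $b\notin B(G^*)_{\bx{un}}$ then $G_b$ is not quasi-split (\cite{Ham24}*{Lemma 2.12}), hence $T^*$ is not relevant for $G_b$, hence $\Pi_{\phi_\rho}(G_b)=\varnothing$ by item (1) of \Cref{LLCienifheis} (and Jacquet--Langlands for the $\GL$-factors), a contradiction. In (3), once $b$ is unramified one has $G_b\cong M_b$; the toral lifts of $\phi$ to $\Phi(M_b)$ are parametrized by minimal-length representatives of $W_b$, and the twist $\delta_{P_b}^{-1/2}$ exactly cancels the twist built into the embedding of \cite{F-S24}*{\S \Rmnum{9}.7.1}. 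Compatibility of the classical LLC with parabolic induction inside $M_b$ (via \Cref{LLCienifheis}) then gives the conclusion directly. The ``upgrade from same semisimplified parameter to constituent'' you anticipate is not an obstacle: once $\phi_\rho$ is identified with one of the $W_b$-translates of $\phi_{T^*}$, the Langlands-quotient and intertwining-relation parts of \Cref{LLCienifheis} already force $\rho$ to be a constituent of the corresponding principal series.
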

\begin{proof}
For (1): each $G_b$ is an inner form of a Levi subgroup $M_b$ of a parabolic subgroup of $G^*$, thus is of the form $\Res_{K_1/K}H\times G'$, where $G'$ is a special orthogonal or unitary group that splits over an unramified quadratic extension of $K$, and $H$ is a product of general linear groups. So the assertion follows from \cite[Theorem 6.6.1]{HKW22}, Theorem~\ref{compaiteiehdnifds}, and Theorem~\ref{LLCienifheis}.

For (2): suppose $b\notin B(G^*)_{\bx{un}}$. By \cite[Lemma~2.12]{Ham24}, $G_b$ is not quasisplit. Consequently, $T^*$ is not relevant for $G_b$ in the sense of \cite[Definition 0.4.14]{KMSW}, and hence $\Pi_{\phi_\rho}(G_b)$ is empty. Indeed, by Jacquet--Langlands correspondence \cite{DKV84}, this follows from Theorem~\ref{LLCienifheis} together with the corresponding results for inner forms of general linear groups. This yields a contradiction.

For (3): first, we note $\phi$ is semisimple, since $\LL T^*$ consists of semisimple elements. Since $G_b\cong M_b$, the preimage of $\phi$ under the natural embedding
\begin{equation*}
\Phi(M^*_b)\longrightarrow\Phi(G^*)
\end{equation*}
is parametrized by a set of minimal length representatives of $W_b$. The assertion follows from the compatibility of the classical LLC with parabolic induction (Theorem~\ref{LLCienifheis}). The factor $\delta_{P_b^*}^{-1/2}$ is included precisely to cancel the modulus-character twist appearing in the twisted $L$-embedding $\LL G^*_b \cong\LL M_b \to\LL G^*$ defined in \cite[\S\Rmnum{9}.7.1]{F-S24}.
\end{proof}

We also need to prove a weaker result for a central extension of $\Res_{K/\bb Q_p}G^*$. More generally, we impose the following global setup for future use: 

\begin{setup}\label{setuepnifneism}\enskip
\begin{itemize}
\item
Let $F$ be a totally real number field and $F_1$ be either $F$ or a CM field containing $F$, and let $\cc\in \Gal(F_1/F)$ be the element with fixed field $F$,
\item
Let $p$ be a rational prime that is unramified in $F$, with a fixed isomorphism $\iota_p: \bb C\xr\sim \ovl{\bb Q_p}$, and we write $K=F\otimes\bb Q_p$, which is a finite product of unramified finite extensions of $\bb Q_p$,
\item
Let $\mbf G$ be a standard indefinite special orthogonal or unitary group over $F$ defined by a $\cc$-Hermitian space $\mbf V$ as in Definition~\ref{staninihidnfies}, such that  $G^*\defining\mbf G\otimes_FK$ is quasisplit and splits over an unramified finite extension of $\bb Q_p$.
\item
for each quadratic imaginary element $\daleth\in \bb R_+\ii$ (in particular $\daleth^2\in \bb Q_-$), we have defined in \S\ref{seubsienfiehtoenfeis} a central extension
\begin{equation*}
1\to\mbf Z^{\bb Q}\to \mbf G^\sharp\to \Res_{F/\bb Q}\mbf G\to 1
\end{equation*}
where
\begin{equation*}
\mbf Z^{\bb Q}=
\begin{cases}
\{z\in \Res_{F(\daleth)/\bb Q}\GL_1: \Nm_{F(\daleth)/F}(z)\in\bb Q^\times\}& \text{in Case O}\\
\{z\in \Res_{F_1/\bb Q}\GL_1: \Nm_{F_1/F}(z)\in\bb Q^\times\}& \text{in Case U}.
\end{cases}
\end{equation*}
Moreover, this central extension splits in Case U. In Case O, we assume that $\bb Q(\daleth)/\bb Q$ is split at $p$, so
\begin{equation*}
\mbf Z^{\bb Q}\otimes\bb Q_p\cong \GL_1\times \Res_{K/\bb Q_p}\GL_1, \quad \mbf G^\sharp\otimes\bb Q_p\cong\GL_1\times \Res_{K/\bb Q_p}\GSpin(V^*).
\end{equation*}
Set
\begin{equation}\label{isnidfheinsm}
G^\sharp\defining \mbf G^\sharp\otimes\bb Q_p,
\end{equation}
which is a central extension of $\Res_{K/\bb Q_p}G^*$. We fix a Borel pair $(B^\sharp, T^\sharp)$ for $G^\sharp$ with image $(B, T)$ in $\Res_{K/\bb Q_p}G^*$.
\end{itemize}
\end{setup}

We now prove a version of Proposition~\ref{renfienihfneis} for those $L$-parameters of $G^\sharp$ coming from $L$-parameters of $\Res_{K/\bb Q_p}G^*$:

\begin{thm}\label{ienigehifniehsins}
Let $\phi\in\Phi^\sems(\Res_{K/\bb Q_p}G^*)$ and let $\phi^\sharp$ be the image of $\phi$ under the natural $L$-homomorphism $\LL(\Res_{K/\bb Q_p}G^*)\to \LL G^\sharp$.
\begin{enumerate}
\item
Suppose $b^\sharp\in B(G^\sharp)$ and $\rho^\sharp\in\Pi(G_{b^\sharp}^\sharp)$. Assume that the image of $\phi_{\iota_\ell\rho^\sharp}^\FS$ under the twisted embedding
\begin{equation*}
\LL G^\sharp_{b^\sharp}\cong \LL M_{b^\sharp}\to \LL G^\sharp,
\end{equation*}
as defined in \textup{\cite[\S \Rmnum{9}.7.1]{F-S24}}, equals $\phi^\sharp$. Then $\rho^\sharp$ factors through a representation $\rho\in\Pi((\Res_{K/\bb Q_p}G^*)_b)$, where $b$ is the image of $b^\sharp$ under the map $B(G^\sharp)\to B(\Res_{K/\bb Q_p}G^*)$, and the classical LLC $\phi_\rho\in\Phi((\Res_{K/\bb Q_p}G^*)_b)$ is defined, and the image of $\phi_\rho^\sems$ under the natural $L$-homomorphism $\LL(\Res_{K/\bb Q_p}G^*)_b\to \LL G^\sharp_{b^\sharp}$ equals $\iota_\ell^{-1}\phi_{\iota_\ell\rho^\sharp}^\FS$.
\item
Situation as in (1), if $\phi^\sharp$ factors through the canonical embedding $\LL T^\sharp\to \LL G^\sharp$, then $b^\sharp\in B(G^\sharp)_{\bx{un}}$.
\item
In the situation of (1), if $b^\sharp$ is unramified and $\phi^\sharp$ factors through $\phi_{T^\sharp}\in\Phi^\sems(T^\sharp)$, then $\rho^\sharp$ is isomorphic to an irreducible constituent of $\rho_{b^\sharp, w^\sharp}^{\chi^\sharp}$, where $w^\sharp\in W_{b^\sharp}$ and $\chi^\sharp$ is the character of $T^\sharp$ attached to $\phi_{T^\sharp}$ via local Langlands correspondence for tori.
\end{enumerate}
\end{thm}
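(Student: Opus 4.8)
The plan is to reduce everything to the already-established naturality statement for the genuine even-orthogonal (or unitary) groups, namely Proposition~\ref{renfienihfneis}, by exploiting that $G^\sharp$ is a central extension of $\Res_{K/\bb Q_p}G^*$ and that the Fargues--Scholze correspondence behaves well under central extensions (Theorem~\ref{compaitbsilFaiirfies}(4)). First I would observe that, since $\mbf Z^{\bb Q}\otimes\bb Q_p$ is a torus and $G^\sharp\to \Res_{K/\bb Q_p}G^*$ induces an isomorphism on adjoint groups, the Kottwitz set $B(G^\sharp)$ maps onto $B(\Res_{K/\bb Q_p}G^*)$; write $b$ for the image of $b^\sharp$. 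By Theorem~\ref{compaitbsilFaiirfies}(4), the restriction $\rho^\sharp|_{(\Res_{K/\bb Q_p}G^*)_b(K)}$ has all its irreducible constituents with Fargues--Scholze parameter obtained from $\phi_{\iota_\ell\rho^\sharp}^\FS$ by composition with $\LL(\Res_{K/\bb Q_p}G^*)_b\to \LL G^\sharp_{b^\sharp}$. Because the hypothesis says $\phi_{\iota_\ell\rho^\sharp}^\FS$ comes (via the twisted embedding and then the central-extension map) from $\phi$, each such constituent $\rho$ has Fargues--Scholze parameter $\iota_\ell$ applied to the image of $\phi^\sems$ in $\Phi(\Res_{K/\bb Q_p}G^*)_b$; by compatibility of Fargues--Scholze with restriction of scalars (Theorem~\ref{compaitbsilFaiirfies}(7)) this is a parameter for $(G^*)_b$ over $K$, and Proposition~\ref{renfienihfneis}(1) (which applies because $G_b$ is a product of restrictions of general linear groups with a special orthogonal or unitary group splitting over an unramified extension, so Theorem~\ref{LLCienifheis} is available) identifies it with the semisimplification of the classical parameter $\phi_\rho$. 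That the restriction of $\rho^\sharp$ actually contains an irreducible constituent (i.e.\ $\rho$ exists and $\rho^\sharp$ genuinely ``factors through'' it) is automatic since $\mbf Z^{\bb Q}\otimes\bb Q_p$ is central, so $\rho^\sharp$ is determined by a constituent $\rho$ of its restriction together with a central character; this proves~(1).

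For part~(2), I would argue contrapositively exactly as in the proof of Proposition~\ref{renfienihfneis}(2): if $b^\sharp\notin B(G^\sharp)_{\bx{un}}$ then the image $b\notin B(\Res_{K/\bb Q_p}G^*)_{\bx{un}}$ (the unramified locus is detected on the adjoint group, and $G^\sharp_{b^\sharp}$ is quasi-split iff its adjoint group, which equals that of $(\Res_{K/\bb Q_p}G^*)_b$, is, by \cite{Ham24}*{Lemma 2.12}), so $(\Res_{K/\bb Q_p}G^*)_b$ is not quasi-split; then $T^*$ is not relevant for it, so the classical packet attached to a torally-factoring parameter is empty by Theorem~\ref{LLCienifheis} together with the analogous Jacquet--Langlands fact for inner forms of general linear groups. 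Since by part~(1) such a $\rho$ must exist once $\phi^\sharp$ factors through $\LL T^\sharp$ (note that $\LL T^\sharp\to\LL G^\sharp$ factoring forces $\phi$ to factor through $\LL T\to\LL(\Res_{K/\bb Q_p}G^*)$, as the central-extension map restricts to a surjection $\LL T^\sharp\to\LL T$ identifying the relevant tori), we reach a contradiction, forcing $b^\sharp\in B(G^\sharp)_{\bx{un}}$.

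For part~(3), with $b^\sharp$ unramified and $\phi^\sharp$ factoring through $\phi_{T^\sharp}$, I would combine part~(1) with Proposition~\ref{renfienihfneis}(3) applied to $(\Res_{K/\bb Q_p}G^*)$: the descended representation $\rho$ is then an irreducible constituent of $\rho_{b,w}^\chi$ for suitable $w\in W_b$ and $\chi$ the character of $T$ attached to the pushed-down $\phi_T$. Since $W_{b^\sharp}=W_{b}$ (Weyl groups are insensitive to the central torus), and $\rho_{b^\sharp,w^\sharp}^{\chi^\sharp}$ is built by the same normalized-induction formula \eqref{msinifdenideis} with $\chi^\sharp$ restricting to $\chi$ on $T$, parabolic induction commutes with the central-extension restriction, so an irreducible constituent of $\rho_{b^\sharp,w^\sharp}^{\chi^\sharp}$ restricts to a constituent of $\rho_{b,w}^\chi$; conversely $\rho^\sharp$, having the right central character and restricting to such a $\rho$, must be one of these constituents. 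I expect the main obstacle to be bookkeeping around the twisted embeddings $\LL G_b\cong\LL M_b\to\LL G$ of \cite{F-S24}*{\S\Rmnum{9}.7.1} and the modulus-character normalization $\delta_{P_b}^{-1/2}$: one must check that the passage through the central extension is compatible with these normalizations, i.e.\ that the twist introduced on the $G^\sharp$ side matches the one on the $\Res_{K/\bb Q_p}G^*$ side, so that ``factors through $\phi_{T^\sharp}$'' on one side corresponds precisely to ``factors through $\phi_T$'' on the other. This is the step where a careless sign or half-power could break the argument, and I would treat it by reducing to the torus $T^\sharp\to T$ and its fibre $\mbf Z^{\bb Q}\otimes\bb Q_p$, where everything is governed by local class field theory and Theorem~\ref{compaitbsilFaiirfies}(1).
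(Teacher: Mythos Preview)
Your overall plan---reduce to Proposition~\ref{renfienihfneis} via the central extension---matches the paper's. But the execution has the direction of the extension wrong, and this breaks the argument as written.

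Concretely: $\Res_{K/\bb Q_p}G^*$ is a \emph{quotient} of $G^\sharp$, not a subgroup (see the exact sequence $1\to\mbf Z^{\bb Q}\to\mbf G^\sharp\to\Res_{F/\bb Q}\mbf G\to1$ in Setup~\ref{setuepnifneism}). So the phrase ``the restriction $\rho^\sharp|_{(\Res_{K/\bb Q_p}G^*)_b(K)}$'' is meaningless, and Theorem~\ref{compaitbsilFaiirfies}(4) cannot be invoked in the direction you use it: that result takes $\pi$ on the target $G$ and $\pi'$ on the source $G'$ of a map $G'\to G$, whereas you have $\rho^\sharp$ on the source $G^\sharp$. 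What you actually need is to show that $\rho^\sharp$ \emph{descends} to the quotient. This is not ``automatic since $\mbf Z^{\bb Q}\otimes\bb Q_p$ is central''; it requires two inputs the paper supplies and you omit: first, compatibility of Fargues--Scholze with \emph{central characters} (Theorem~\ref{compaitbsilFaiirfies}(2)) forces the central character of $\rho^\sharp$ to be trivial on $Z(\bb Q_p)=\ker\bigl(G^\sharp_{b^\sharp}(\bb Q_p)\to(\Res_{K/\bb Q_p}G^*)_b(\bb Q_p)\bigr)$, precisely because $\phi^\sharp$ is pushed forward from $\phi$; second, one needs surjectivity of $G^\sharp_{b^\sharp}(\bb Q_p)\to(\Res_{K/\bb Q_p}G^*)_b(\bb Q_p)$, which the paper gets from the extension being split (case~U) or the kernel being an induced torus (case~O). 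Only then does $\rho^\sharp$ genuinely factor through some $\rho\in\Pi((\Res_{K/\bb Q_p}G^*)_b)$, after which Theorem~\ref{compaitbsilFaiirfies}(4) applies in the correct direction (with $G'=G^\sharp_{b^\sharp}$, $G=(\Res_{K/\bb Q_p}G^*)_b$, $\pi=\rho$, $\pi'=\rho^\sharp$).

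For parts~(2) and~(3), the same direction confusion recurs: you write ``the central-extension map restricts to a surjection $\LL T^\sharp\to\LL T$'', but the natural $L$-map goes $\LL T\hookrightarrow\LL T^\sharp$. To pass from ``$\phi^\sharp$ factors through $\LL T^\sharp$'' back to ``$\phi$ factors through $\LL T$'' one needs that $\Phi(\Res_{K/\bb Q_p}G^*)\to\Phi(G^\sharp)$ is injective; the paper isolates this as Lemma~\ref{kemeienfiemsow}, proved via the Kottwitz long exact sequence and the same surjectivity on rational points. Once you have the correct descent of $\rho^\sharp$ to $\rho$ and this injectivity, your reductions to Proposition~\ref{renfienihfneis}(2),(3) go through without the modulus-character bookkeeping you worry about, since $\rho^\sharp$ and $\rho$ are literally the same representation via the quotient map.
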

\begin{proof}
Note that $G^\sharp_{b^\sharp}$ is a product of Weil restrictions of general linear groups, unitary similitude groups or general spinor groups, so for any $b^\sharp\in B(G^\sharp)$ and $\rho^\sharp\in\Pi(G_{b^\sharp}^\sharp)$ as in (1), $\rho^\sharp$ is trivial on the kernel $Z$ of the map $G^\sharp_{b^\sharp}(\bb Q_p)\to (\Res_{K/\bb Q_p}G^*)_b(\bb Q_p)$, by compatibility of Fargues--Scholze LLC with central characters, Theorem~\ref{compaitbsilFaiirfies}. Thus $\rho^\sharp$ factors through a representation of $(\Res_{K/\bb Q_p}G^*)_b(\bb Q_p)$ because either the central extension
\begin{equation*}
1\to Z\to G^\sharp\to \Res_{K/\bb Q_p}G^*\to 1
\end{equation*}
is split or the kernel $Z$ is an induced torus, i.e., a product of tori of the form $\Res_{L_i/\bb Q_p}\GL_1$ for finite extensions $L_i/\bb Q_p$. Now (1) follows from compatibility of Fargues--Scholze LLC with central extensions,4 Theorem~\ref{compaitbsilFaiirfies}, and (2)--(3) follow from Proposition~\ref{renfienihfneis} and the following Lemma~\ref{kemeienfiemsow} showing that $\Phi^\sems(\Res_{K/\bb Q_p}G^*)\to \Phi^\sems(G^\sharp)$ is injective.
\end{proof}
\begin{lm}\label{kemeienfiemsow}
Suppose $K$ is a non-Archimedean local field of characteristic zero and $1\to\msf Z\to \msf G'\to\msf G\to 1$ is a central extension of reductive groups over $K$ such that  $\msf Z$ is a torus and $\msf G'(K)\to \msf G(K)$ is surjective, then the natural homomorphism $\LL\msf G\to \LL\msf G'$ induces an injection $\Phi^\sems(\msf G)\inj \Phi^\sems(\msf G')$.
\end{lm}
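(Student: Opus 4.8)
The plan is to reduce the statement about $L$-parameters to a statement about the dual side, using the exact sequence of dual groups induced by the central extension. First I would record the basic structure: applying $\widehat{(-)}$ and taking the Weil form, the central extension $1\to\msf Z\to\msf G'\to\msf G\to 1$ dualizes to an exact sequence $1\to\hat{\msf G}\to\hat{\msf G'}\to\hat{\msf Z}\to 1$ of groups with $W_K$-action (here $\hat{\msf Z}$ is a torus since $\msf Z$ is, and the map $\hat{\msf G}\to\hat{\msf G'}$ is a closed embedding with cokernel $\hat{\msf Z}$), hence an exact sequence of $L$-groups $\LL\msf G\to\LL\msf G'\to\LL\msf Z$. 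The map $\Phi(\msf G)\to\Phi(\msf G')$ is induced by postcomposition with $\LL\msf G\to\LL\msf G'$, and two parameters land in the same $\hat{\msf G'}$-conjugacy class precisely when they differ by conjugation by an element of $\hat{\msf G'}(\bb C)$; the content is that this forces them to already be $\hat{\msf G}(\bb C)$-conjugate.

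Next I would argue as follows. Let $\phi_1,\phi_2\colon W_K\times\SL(2;\bb C)\to\LL\msf G$ (or just $W_K\to\LL\msf G$ in the semisimple case — the argument is identical) be two parameters whose images in $\Phi(\msf G')$ agree, so there is $g\in\hat{\msf G'}(\bb C)$ with $g\,\iota(\phi_1)\,g^{-1}=\iota(\phi_2)$, where $\iota\colon\LL\msf G\to\LL\msf G'$ denotes the natural map. Pushing forward along $\hat{\msf G'}\to\hat{\msf Z}$: since $\iota(\phi_1)$ and $\iota(\phi_2)$ both factor through the subgroup $\hat{\msf G}\rtimes W_K$, their images in $\LL\msf Z=\hat{\msf Z}\rtimes W_K$ are equal to a single fixed map $W_K\to W_K$ (the canonical projection, i.e.\ the trivial parameter into the $\hat{\msf Z}$-direction). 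Comparing, the image $\bar g\in\hat{\msf Z}(\bb C)$ of $g$ commutes with all of $W_K$ acting on $\hat{\msf Z}$; more precisely $\bar g$ lies in $\hat{\msf Z}(\bb C)^{W_K}$. Now the key point is that the map $\hat{\msf G'}(\bb C)^{W_K}\to\hat{\msf Z}(\bb C)^{W_K}$ — or at least the relevant coset — is surjective enough: I would choose a lift $z\in\hat{\msf Z}(\bb C)$ (hence in $\hat{\msf G'}(\bb C)$ via a splitting? no — rather use that $\hat{\msf Z}\subset Z(\hat{\msf G'})$ is central, so any preimage of $\bar g$ under $\hat{\msf G'}\to\hat{\msf Z}$ differs from $g$ by an element of $\hat{\msf G}(\bb C)$) and then replace $g$ by $g$ times a central element to arrange that $g\in\hat{\msf G}(\bb C)$. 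Concretely: pick any $z_0\in Z(\hat{\msf G'})(\bb C)$ mapping to $\bar g\in\hat{\msf Z}(\bb C)$ — possible because $\hat{\msf Z}=\hat{\msf G'}/\hat{\msf G}$ is itself identified with the quotient and $\hat{\msf Z}\hookrightarrow\hat{\msf G'}$ need not hold, so instead I use that the short exact sequence $1\to\hat{\msf G}\to\hat{\msf G'}\to\hat{\msf Z}\to 1$ shows $g z_0^{-1}\in\hat{\msf G}(\bb C)$ once $z_0$ is \emph{any} element of $\hat{\msf G'}(\bb C)$ mapping to $\bar g$; take $z_0=g$ itself is circular, so instead take $z_0$ to be a genuine lift of $\bar g$ that is moreover central, which exists because $\bar g\in\hat{\msf Z}(\bb C)$ and $\hat{\msf Z}$ is the target of $\hat{\msf G'}\to\hat{\msf Z}$ with the preimage of the identity being $\hat{\msf G}$: centrality of $\hat{\msf Z}$ in $\hat{\msf G'}$ is automatic since it is the cokernel of $\hat{\msf G}\hookrightarrow\hat{\msf G'}$ and $\hat{\msf Z}$ abelian — wait, that only gives $[\hat{\msf G'},\hat{\msf G'}]\subseteq\hat{\msf G}$, i.e.\ $\hat{\msf Z}$ is a quotient torus, not a central subgroup. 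So the clean way is: the exact sequence $\hat{\msf G}\to\hat{\msf G'}\to\hat{\msf Z}$ shows $\hat{\msf G}\supseteq[\hat{\msf G'},\hat{\msf G'}]$, hence $Z(\hat{\msf G'})$ surjects onto $\hat{\msf Z}$ up to isogeny, and more usefully $g$ centralizing $\iota(\phi_1)$ has image $\bar g$ centralizing the trivial $\hat{\msf Z}$-parameter, which imposes no condition; the real mechanism is that we are free to multiply $g$ on the left by any element centralizing $\iota(\phi_2)$, and in particular by any central element of $\hat{\msf G'}(\bb C)$, allowing us to kill $\bar g$ in $\hat{\msf Z}(\bb C)$ since $Z(\hat{\msf G'})^\circ\to\hat{\msf Z}$ is surjective on $\bb C$-points with image containing $\hat{\msf Z}(\bb C)^{W_K,\circ}$ and $\bar g$ is a $W_K$-fixed point — and $W_K$-fixed points of a torus up to the identity component are controlled by $H^1$, which is where a finite-index subtlety could enter.

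The main obstacle is exactly this last reduction: showing that after conjugation the element $g$ can be taken inside $\hat{\msf G}(\bb C)$, equivalently that the $\hat{\msf G'}(\bb C)$-conjugacy class of $\iota(\phi)$ pulled back to $\Phi(\msf G)$ is a single $\hat{\msf G}(\bb C)$-orbit. The honest way to run this is cohomological. Two parameters $\phi_1,\phi_2\in\Phi(\msf G)$ with $\iota(\phi_1)\sim\iota(\phi_2)$ via $g$: the obstruction to $g\in\hat{\msf G}(\bb C)$ lies in $H^1$ of $W_K$ (or of $\SL_2\times W_K$) acting on $\hat{\msf Z}(\bb C)$ through the parameter, and since $\hat{\msf Z}$ is a torus and $\iota(\phi_i)$ acts on $\hat{\msf Z}$-part trivially (the parameters factor through $\LL\msf G$, whose image in $\LL\msf Z$ is the fixed trivial one), this $H^1$ is just $\operatorname{Hom}(W_K,\hat{\msf Z}(\bb C)^{W_K}/(\text{stuff}))$ — but more to the point, the element $\bar g\in\hat{\msf Z}(\bb C)$ is constant (independent of the $W_K$-variable) because it conjugates one fixed trivial parameter to another, so $\bar g\in\hat{\msf Z}(\bb C)^{W_K}$ and we need a lift of $\bar g$ to $\hat{\msf G'}(\bb C)^{W_K}$ — here I would invoke that $\hat{\msf G'}\to\hat{\msf Z}$ admits, Zariski-locally and in particular over the algebraically closed field $\bb C$ on $W_K$-fixed points, a section up to $\hat{\msf G}(\bb C)$-ambiguity because $H^1(W_K,\hat{\msf G})$ maps appropriately — the cleanest citation-free route is: $\bar g$ lifts to some $h\in\hat{\msf G'}(\bb C)$; then $h$ need not be $W_K$-fixed, but $h^{-1}(\text{action})(h)\in\hat{\msf G}(\bb C)$ defines a cocycle whose class must vanish because $\bar g$ was genuinely fixed and $\hat{\msf Z}$ is the cokernel — this is a diagram chase in nonabelian $H^1$ for the sequence $1\to\hat{\msf G}\to\hat{\msf G'}\to\hat{\msf Z}\to1$, using $\hat{\msf Z}$ a torus so $H^1(W_K,\hat{\msf Z})$ behaves well. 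Having produced such an $h\in\hat{\msf G'}(\bb C)^{W_K}$ lifting $\bar g$, replace $g$ by $h^{-1}g$; its image in $\hat{\msf Z}(\bb C)$ is trivial, so $h^{-1}g\in\hat{\msf G}(\bb C)$, and (using that $h$ is $W_K$-fixed, hence commutes with the $\LL\msf G\to\LL\msf Z$ picture correctly — one checks $h^{-1}g$ still conjugates $\iota(\phi_1)$ to $\iota(\phi_2)$ and that this equation, now inside $\hat{\msf G}(\bb C)\cdot\LL\msf G$, descends to $g'\,\phi_1\,g'^{-1}=\phi_2$ in $\LL\msf G$ with $g'=h^{-1}g$) we conclude $\phi_1$ and $\phi_2$ are $\hat{\msf G}(\bb C)$-conjugate, i.e.\ equal in $\Phi(\msf G)$. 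I would present this as a short lemma-with-proof, isolating the nonabelian $H^1$ diagram chase as the one nontrivial input and remarking that in the cases actually used ($\msf Z$ an induced torus or the extension split, as in \Cref{setuepnifneism}) it is even more elementary since then $H^1(W_K,\hat{\msf Z})$-type obstructions are transparent.
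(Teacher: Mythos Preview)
Your overall shape is right: reduce to the dual exact sequence $1\to\hat{\msf G}\to\hat{\msf G'}\to\hat{\msf Z}\to1$, observe that a conjugating element $g\in\hat{\msf G'}(\bb C)$ has image $\bar g\in\hat{\msf Z}(\bb C)^{W_K}$, and try to correct $g$ by a $W_K$-fixed lift of $\bar g$. But the argument has a genuine gap at exactly the step you flag as ``the main obstacle'': you never establish that $\bar g\in\hat{\msf Z}(\bb C)^{W_K}$ lifts to $\hat{\msf G'}(\bb C)^{W_K}$ (in fact you need a lift in $Z(\hat{\msf G'})(\bb C)^{W_K}$, so that it centralizes $\iota(\phi_2)$ and the replacement $h^{-1}g$ still conjugates $\phi_1$ to $\phi_2$). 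Your proposed justification---lift $\bar g$ to some $h\in\hat{\msf G'}(\bb C)$ and argue the cocycle $w\mapsto h^{-1}\,{}^wh\in\hat{\msf G}(\bb C)$ ``must vanish because $\bar g$ was genuinely fixed''---is circular: that cocycle is a coboundary precisely when $\bar g$ admits a $W_K$-fixed lift, which is the assertion in question. In general $\hat{\msf G'}(\bb C)^{W_K}\to\hat{\msf Z}(\bb C)^{W_K}$ is \emph{not} surjective, and you never invoke the hypothesis $\msf G'(K)\to\msf G(K)$ surjective anywhere.

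That hypothesis is the missing ingredient. The paper's proof uses it as follows: the Kottwitz long exact sequence for $1\to\msf Z\to\msf G'\to\msf G\to1$ reads
\[
1\to\msf Z(K)\to\msf G'(K)\to\msf G(K)\to B(\msf Z)\to B(\msf G')\to B(\msf G),
\]
so surjectivity of $\msf G'(K)\to\msf G(K)$ forces $B(\msf Z)\to B(\msf G')$ to be injective. The Kottwitz isomorphism then dualizes this to surjectivity of $Z(\hat{\msf G'})^{\Gal_K}\to\hat{\msf Z}^{\Gal_K}$, hence of $\hat{\msf G'}^{\Gal_K}\to\hat{\msf Z}^{\Gal_K}$. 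Feeding this into the nonabelian $H^1$ exact sequence
\[
1\to\hat{\msf G}^{\Gal_K}\to\hat{\msf G'}^{\Gal_K}\to\hat{\msf Z}^{\Gal_K}\to\Phi(\msf G)\to\Phi(\msf G')\to\Phi(\msf Z)
\]
gives the injectivity of $\Phi(\msf G)\to\Phi(\msf G')$ directly. Your closing remark that the induced-torus or split cases are ``even more elementary'' is correct but does not salvage the general statement; the point is that the stated hypothesis is exactly what makes the lifting step go through.
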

\begin{proof}
It follows from \cite[(1.9.1)]{Kot85} that there is a long exact sequence of pointed sets
\begin{equation*}
1\to \msf Z(K)\to \msf G'(K)\to \msf G(K)\to B(\msf Z)\to B(\msf G')\to B(\msf G).
\end{equation*}
So the hypothesis implies that $B(\msf Z)\to B(\msf G')$ is injective. By the Kottwitz isomorphism for tori, this implies that
\begin{equation*}
Z(\hat{\msf G'})^{\Gal_K}\to \hat{\msf Z}^{\Gal_K}
\end{equation*}
is surjective, and hence so is
\begin{equation*}
\hat{\msf G'}^{\Gal_K}\to \hat{\msf Z}^{\Gal_K}
\end{equation*}
Now apply the non-abelian cohomology exact sequence attached to $1\to \hat{\msf G}\to \hat{\msf G'}\to \hat{\msf Z}\to 1$. The preceding surjectivity implies that the induced map on continuous cocycles \begin{equation*}
\bx H^1(W_K,\hat{\msf G})\to\bx H^1(W_K,\hat{\msf G'})
\end{equation*}
has trivial kernel. Thus $\Phi^\sems(\msf G)\to\Phi^\sems(\msf G')$ is injective.
\end{proof}

\subsection{Strong Kottwitz conjecture}\label{Kotinitbeinss}

We now revisit the Kottwitz conjecture discussed in \S\ref{secitonautnciniauotenries} and \S\ref{Kottwisniconeidinfs}. We combine the compatibility result and the Act-functors defined in \S\ref{snihsnfifies} to describe the complex of $G(K)\times W_{E_\mu}$-modules $\bx R\Gamma_c(G, b, \mu)[\rho]$ without passing to the Grothendieck group and without the condition that $\mu$ is minuscule. We adopt the notation from~\S\ref{local-abglancorrepsodeifn}, but without quotienting by outer automorphisms in Case O2, because we now have the unambiguous LLC Theorem~\ref{LLCienifheis}. In particular, $K$ is a non-Archimedean local field with residue characteristic $p$, and $\ell$ is a rational prime different from $p$ with a fixed isomorphism $\iota_\ell: \bb C\xr\sim \ovl{\bb Q_\ell}$.

We first recall the following general result of Hansen~\cite[Theorem 1.1]{Han20}.

\begin{thm}\label{isnishenfeis}
Suppose $\msf G$ is a quasisplit reductive group over $\bb Q_p$ with a Borel pair $(\msf B, \msf T)$, $\msf b_0\in B(\msf G)_\bas$ is a basic element, $\mu$ is a minuscule dominant cocharacter of $\msf G_{\ovl{\bb Q_p}}$, and $\msf b\in B(\msf G, \msf b_0, \mu^\bullet)$ (see~\textup{\eqref{seinieunifnws}}), so $(\msf G_{\msf b_0}, \msf b, \mu^\bullet)$ is a local Shimura datum in the sense of \textup{\cite[Definition 5.1]{R-V14}}, and $\rho\in\Pi(\msf G_{\msf b}, \ovl{\bb Q_\ell})$. Suppose the following conditions hold:
\begin{enumerate}
\item
$\Sht(\msf G, \msf b, \msf b_0, \mu)$ appears in the basic uniformization at $p$ of a global Shimura variety in the sense of \textup{Theorem~\ref{sisnihfienisw}}.
\item
The Fargues--Scholze $L$-parameter $\phi_\rho^\FS$ is supercuspidal.
\end{enumerate}
Then the complex $\bx R\Gamma_c(\msf G, \msf b, \msf b_0, \mu^\bullet)[\rho]$ is concentrated in middle degree, which is 0 under our normalization.
\end{thm}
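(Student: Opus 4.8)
The plan is to follow Hansen's original argument from \cite{Han20}*{Theorem 1.1} and check that it goes through verbatim in our setting, with the two geometric hypotheses now being exactly what we have arranged in \Cref{baisniseufniemeos} and \Cref{glaosbsialisniejfid}. First I would invoke the basic uniformization theorem \Cref{sisnihfienisw} of Shen, which identifies the pro-system $\plim_{\mdc K^p}\mcl S_{\mdc K^p}(\bb G, \bb X)^{\msf b}$ with $\paren{\udl{\bb G'(\bb Q)\bsh\bb G'(\Ade_f)}\times_{\Spd(\breve E)}\Sht(\msf G,\msf b, \uno, \mu^\bullet)}/\udl{\msf G_{\msf b}(\bb Q_p)}$, and then pass to the isotypic part of the cohomology where $\msf G_{\msf b}(\bb Q_p)$ acts by $\rho$, via the uniformization map $\Theta$ of \Cref{basieuniformisihka}. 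Since the global Shimura variety whose basic locus is uniformized by $\Sht(\msf G,\msf b,\uno,\mu^\bullet)$ is proper (we are free to choose $\mbf G$ with $\mbf G(F\otimes\bb R)$ anisotropic modulo center in the relevant cases, or more precisely $\mbf G_\ad$ geometrically simple so that the Shimura variety is compact), the cohomology $\bx R\Gamma_c\paren{\mcl S_{\mdc K^p}(\bb G, \bb X), \mcl L_{\iota_\ell\xi}}$ agrees with $\bx R\Gamma$, and excision \wrt the open basic Newton stratum yields the desired link.

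The key point, exactly as in Hansen's proof, is to exploit that $\phi_\rho^\FS$ is supercuspidal. Under this hypothesis, \Cref{speeorjoiehgiejmsiws} of Meli--Hamann--Nguyen identifies $\bx R\Gamma_c(\msf G,\msf b, \msf b_0, \mu^\bullet)[\rho]$ with $\bx R\Gamma_c^\flat(\msf G,\msf b, \msf b_0, \mu^\bullet)[\rho]$, so both the ``!'' and the ``$\flat$'' versions of the complex carry the same $\msf G_{\msf b_0}(\bb Q_p)\times W_{E_\mu}$-module structure. The spectral action argument then shows that $\bx R\Gamma_c(\msf G, \msf b, \msf b_0, \mu^\bullet)[\rho]$ lands in the direct summand $\bx D^{C_\phi,\chi}_{\bx{lis}}(\Bun_{\msf G})^\omega$ cut out by the connected component $C_\phi$, which by the semisimplicity discussed in \Cref{snihsnfifies} (supercuspidal representations with fixed central character are projective and injective) decomposes as a sum of shifted copies of the representations $\pi_b\in\Pi_\phi(\msf G_b)$. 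Concentration in a single degree then follows because the local Shtuka space is ``cohomologically smooth'' of the expected dimension and the supercuspidal summand cannot receive contributions from the boundary of the compactification nor from the non-basic strata, which are parabolically induced (the ``Boyer's trick'', \Cref{baiseniunifnoeineiss}); so a weight/perversity argument, combined with the fact that on the global side the $\pi^\infty$-isotypic cohomology of the proper Shimura variety is concentrated in the middle degree, pins the complex to middle degree.

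The main obstacle I expect is bookkeeping of normalizations: matching the degree shifts, Tate twists, and the discrepancy between $\mu$ and $\mu^\bullet=-w_0(\mu)$ that appears throughout (cf. the footnotes in \Cref{secitonautnciniauotenries} comparing our $\bx R\Gamma_c(\msf G^*, \msf b, \msf b_0, \mu)$ with the conventions of \cite{S-W20}, \cite{HKW22} and \cite{DHKZ}). One must verify that under our normalization the middle degree is indeed $0$, i.e. that the shift $[\dim(\bb X)]$ in $\Theta$ together with the $\iota_\ell\largel{-}_{E_\mu}^{-\dim(\bb X)/2}$ twist exactly cancels the middle-degree shift of the global cohomology $\bx R\Gamma_c(\bSh, \mrs L_{\iota_\ell\xi})$, which is concentrated in degree $\dim_{\bb C}(\bb X)$ by \Cref{iodnifneidkhnsiws}. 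The cleanest route is to simply cite \cite{Han20}*{Theorem 1.1} once the hypotheses are checked, remarking that loc.\ cit.\ is stated for minuscule $\mu$ and that in our application $\mu=\mu_1$ (resp. its Weil-restricted avatar) is minuscule, so no extension is required; the only genuine work is confirming the two bulleted hypotheses, the first of which is \Cref{gooalbsisneihfienis} together with \Cref{sisnihfienisw}, and the second of which is part of the hypothesis of the theorem we are proving.
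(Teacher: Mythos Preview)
The paper does not give a proof of this theorem: it is stated purely as a citation of \cite{Han20}*{Theorem 1.1}, with no proof environment following it. The theorem serves as an input for the subsequent \Cref{corliemisheifneis}, not as something the paper establishes. So there is no ``paper's own proof'' to compare against; your proposal goes beyond what the paper does.

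That said, your sketch conflates two logically distinct things. The theorem as stated is a \emph{general} result for any quasi-split $\msf G$ satisfying the two bulleted hypotheses, and Hansen's argument in \cite{Han20} operates at that level of generality. Your plan instead leans on objects specific to this paper's orthogonal/unitary setup (e.g.\ \Cref{iodnifneidkhnsiws}, the globalization \Cref{gooalbsisneihfienis}, and the Boyer's trick \Cref{baiseniunifnoeineiss}, which requires total Hodge--Newton decomposability---not part of the hypotheses here). That would at best reprove the special case the paper actually needs, not the theorem as stated. You also write that ``the only genuine work is confirming the two bulleted hypotheses,'' but those are \emph{assumptions} of the theorem, not conclusions to be checked; it is the later applications (e.g.\ \Cref{corliemisheifneis}) that verify them in the specific situation. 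The correct move here is simply to cite Hansen and move on, which is exactly what the paper does.
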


Note that $\Sht(G^*, b_1, \uno, \mu_1)$ appears in the basic uniformization of a global Shimura variety of Abelian type defined in \S\ref{Shimreiejvanieniocinis}, where $b_1\in B(G^*)_\bas$ is the unique nontrivial basic element and $\mu_1$ is defined in~\eqref{idnifdujiherheins}.

For the remainder of this subsection, fix a supercuspidal $L$-parameter $\phi\in\Phi_{\bx{sc}}(G^*)$ such that
\begin{equation*}
\tilde\phi^\GL=\phi_1\oplus\cdots\oplus\phi_k\oplus\phi_{k+1}\oplus\cdots\oplus\phi_r,
\end{equation*}
where the $\phi_i$ are distinct irreducible representations of $W_{K_1}$ of dimension $d_i$, and $d_i$ is odd if and only if $i\le k$. We use the combinatorial notation on $L$-parameters introduced in \S\ref{combianosiLoifn}. By the weak version of the Kottwitz conjecture together with the preceding theorem, the following holds.

\begin{cor}\label{corliemisheifneis}
For each subset $I\subset [r]_+$ with $\#I\equiv 1\modu2$, there is an isomorphism
\begin{equation*}
\bx R\Gamma_c(G^*, b_1, \uno, \mu_1)[\iota_\ell\pi_{[I]}]\cong \bplus_{i=1}^rd_i\iota_\ell\pi_{[I\oplus\{i\}]}
\end{equation*}
of complexes of representations of $G^*(K)$.
\end{cor}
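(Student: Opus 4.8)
\textbf{Proof proposal for \Cref{corliemisheifneis}.}
The plan is to upgrade the Grothendieck-group identity of \Cref{Haninidkeiehiens} (equivalently \Cref{coalidhbinifw222}, applied to $(b,\mu)=(b_1,\mu_1)$) to an identity of genuine complexes, using the concentration-in-middle-degree input of \Cref{isnishenfeis}. First I would fix $I\subset[r]_+$ with $\#I\equiv 1\bmod 2$, so that $\pi_{[I]}\in\Pi_\phi(G^*_{b_1})$ by the parametrization of \Cref{combianosiLoifn} (noting $\kappa_{b_0}(-1)=1$ since we are taking $G^*$ itself, i.e. $b_0$ trivial; the general case follows by replacing $(G^*,\varrho_{b_0},z_{b_0})$ as in the proof of \Cref{proefnehifeniss}). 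Since $\phi$ is supercuspidal, \Cref{coroellaoslisenifes} gives that every $\pi_{[J]}\in\Pi_\phi(G^*_{b_1})$ has the same Fargues--Scholze parameter, and by \Cref{proefnehifeniss} (the first-case compatibility) this parameter is $\phi^\sems$, which is supercuspidal. Hence the hypotheses of \Cref{isnishenfeis} are satisfied: $\Sht(G^*,b_1,\uno,\mu_1)$ appears in the basic uniformization of the Abelian-type Shimura variety of \Cref{Shimreiejvanieniocinis} (as recorded just before the corollary), and $\phi^\FS_{\iota_\ell\pi_{[I]}}$ is supercuspidal. Therefore $\bx R\Gamma_c(G^*,b_1,\uno,\mu_1)[\iota_\ell\pi_{[I]}]$ is concentrated in degree $0$.

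Next I would identify this single cohomology group with $\Mant_{G,b_1,\mu_1}[\iota_\ell\pi_{[I]}]$ up to sign: because the complex is concentrated in one degree, its class in $\bx K_0(G,\ovl{\bb Q_\ell})$ determines it as an honest representation, and $\Mant_{G,b_1,\mu_1}[\iota_\ell\pi_{[I]}]$ is by definition (\Cref{secitonautnciniauotenries}) that class. By \Cref{Haninidkeiehiens} we have
\begin{equation*}
\Mant_{G,b_1,\mu_1}[\iota_\ell\pi_{[I]}]=\sum_{i\in[r]_+}d_i[\iota_\ell\pi_{[I\oplus\{i\}]}]+\Err,
\end{equation*}
and since $\phi$ is supercuspidal the ``moreover'' clause of \Cref{Haninidkeiehiens} (resp. of \Cref{coalidhbinifw222}, whose error-term vanishing was proved using that $\Pi_\phi$ consists of supercuspidals and that $\phi^\FS_{\iota_\ell\tilde\rho}$ is supercuspidal) gives $\Err=0$. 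Thus the class equals $\sum_{i=1}^r d_i[\iota_\ell\pi_{[I\oplus\{i\}]}]$. Because each $\pi_{[I\oplus\{i\}]}$ is supercuspidal hence both projective and injective in the category of smooth representations with the fixed central character determined by $\phi$, the category is semisimple there, so a complex concentrated in one degree with this class is canonically isomorphic to $\bplus_{i=1}^r d_i\,\iota_\ell\pi_{[I\oplus\{i\}]}$; this yields the asserted isomorphism of complexes of $G(K)$-representations.

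The main obstacle is verifying that the error term genuinely vanishes in this unambiguous (case O2, non-$\varsigma$-quotiented) formulation rather than only after passing to $\tilde\Pi$. This is handled by noting that \Cref{Kottiwnsihsinfinsi} already provides the strengthened Kottwitz statement without the $\varsigma$-quotient precisely because $\Mant_{G,b_1,\mu_1}$ commutes with $\varsigma$ and one extracts the summands whose Fargues--Scholze parameter equals $\phi^\sems$; applying \Cref{Kottiwnsihsinfinsi} with $\phi$ supercuspidal gives $\Err=0$ directly. A secondary point to be careful about is that \Cref{isnishenfeis} is stated for $\bb Q_p$ while our $K$ may be a proper unramified extension; this is bridged exactly as in \Cref{local-LGoabsiSHimief} by the Weil-restriction compatibility $\Sht(G^*,b_1,\uno,\mu_1)\cong\Sht(\Res_{K/\bb Q_p}G^*,b_1,\uno,\mu^\bullet)$ and the total Hodge--Newton decomposability from \Cref{ienieniunis}, so the global Shimura variety of \Cref{Shimreiejvanieniocinis} supplies the required basic uniformization. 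Everything else is bookkeeping with the combinatorial parametrization of $\Cref{pdidiiidjifenis}$.
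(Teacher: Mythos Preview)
Your proposal is correct and follows essentially the same line as the paper's own proof. One small correction: in \Cref{secitonautnciniauotenries} the class $\Mant$ is \emph{defined} as the image of $\bx R\Gamma_c^\flat$, not of $\bx R\Gamma_c$, so your sentence ``$\Mant_{G,b_1,\mu_1}[\iota_\ell\pi_{[I]}]$ is by definition that class'' is not literally right; the paper inserts \Cref{speeorjoiehgiejmsiws} at this point to identify $\bx R\Gamma_c^\flat$ with $\bx R\Gamma_c$ once the Fargues--Scholze parameter of $\pi_{[I]}$ is supercuspidal---a hypothesis you have already verified.
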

\begin{proof}
It follows from Theorem~\ref{compaiteiehdnifds} that $\iota_\ell^{-1}\phi_{\iota_\ell\pi_{[I]}}^\FS=\phi$ is supercuspidal, so Theorem~\ref{Kottiwnsihsinfinsi} and Proposition~\ref{speeorjoiehgiejmsiws} imply that
\begin{equation*}
\Bkt{\bx R\Gamma_c(G^*, b_1, \uno, \mu_1)[\iota_\ell\pi_{[I]}]}=\sum_{i=1}^rd_i[\iota_\ell\pi_{[I\oplus\{i\}]}]\in\bx K_0(G^*, \ovl{\bb Q_\ell}).
\end{equation*}
Moreover, Theorem~\ref{isnishenfeis} implies that
$\bx R\Gamma_c(G^*, b_1, \uno, \mu_1)[\iota_\ell\pi_{[I]}]$ is concentrated in degree $0$. Hence it is a finite-length smooth representation of $G^*(K)$ that admits a finite filtration whose graded pieces consist of $d_i$ copies of $\iota_\ell\pi_{[I\oplus\{i\}]}$ for each $i\in [r]_+$. By Corollary~\ref{superisnidnLpaifniesues}, $\pi_{[I\oplus\{i\}]}\in\Pi(G^*)$ is supercuspidal for each $i\in [r]_+$. Since supercuspidal representations are injective and projective in the category of smooth representations with a fixed central character, this filtration splits, and the assertion follows.
\end{proof}

The Act-functors defined in \S\ref{snihsnfifies} relate to the cohomology of local shtuka spaces via the following result of Fargues--Scholze~\cite[\S \Rmnum{10}.2]{F-S24} and Hamann~\cite[Corollary~3.11]{Ham22}. Recall that we write $\phi^\natural: W_K\to \LL G$ for the homomorphism corresponding to the supercuspidal $L$-parameter $\phi\in\Phi^{\bx{sc}}(G)$; see~\S\ref{IFNieniehifeniws}.

\begin{thm}\label{nsishinies}
Suppose that $\mu$ is a dominant cocharacter of $G^*_{\ovl K}$ and $b\in B(G^*, b_0, \mu)_\bas$. The highest weight tilting module $\mcl T_\mu$ of $\hat G$ naturally extends to a representation $\mcl T_\mu$ of $\hat G\rtimes W_{E_\mu}$ as defined in \cite[Lemma 2.1.2]{Kot84}. Moreover, for each $\rho\in\Pi_\phi(G^*_b)$ there is an isomorphism
\begin{equation*}
\bx R\Gamma_c(G^*, b, b_0, \mu)[\iota_\ell\rho]\cong\bplus_{\eta\in\Irr(\mfk S_\phi)}\bx{Act}_\eta(\iota_\ell\rho)\boxtimes\iota_\ell\Hom_{\mfk S_\phi}\paren{\eta, \mcl T_\mu\circ\paren{\phi^\natural|_{W_{E_\mu}}}}
\end{equation*}
of $G^*_{b_0}(K)\times W_{E_\mu}$-modules.
\end{thm}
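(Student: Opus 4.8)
\textbf{Proof proposal for \Cref{nsishinies}.}
The plan is to adapt the argument of \cite{Ham22}*{Corollary 3.11}, which treats the minuscule case, and combine it with the decomposition of the supercuspidal block under the spectral action recalled in \Cref{snihsnfifies}. First I would reduce to the case where the central character is fixed to be $\chi$, the character of $Z(G^*)(K)$ determined by $\phi$, so that we may work inside the semi-simple category $\bx D^{C_\phi,\chi}(G^*_{b_0}, \ovl{\bb Q_\ell})^\omega$ and use the identification with $\bplus_{b\in B(G^*)_\bas}\bplus_{\pi\in\Pi_\phi(G^*_b)}\iota_\ell\pi_b\otimes\Perf(\ovl{\bb Q_\ell})$ together with its $C_\phi$-action. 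By \Cref{ienfimeoifiems} and \Cref{coroellaoslisenifes}, every irreducible subquotient of $\bx T_\mu i_{b,!}(\iota_\ell\rho)$ has Fargues--Scholze parameter $\phi$, so $\bx R\Gamma_c(G^*, b, b_0, \mu)[\iota_\ell\rho]\cong i_{b_0}^*\bx T_\mu i_{b,!}(\iota_\ell\rho)$ lies in this block; this is exactly the setup in which the spectral action computes the Hecke operator.

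The key step is to express $\bx T_\mu$ restricted to the $C_\phi$-block in terms of the spectral action of the sheaf on $\mfk X_{\hat G}$ corresponding to $\mcl T_\mu$ via geometric Satake. Concretely, following \cite{F-S24}*{\S \Rmnum{10}.2} the Hecke operator $\bx T_\mu$ acts on $\bx D_{\bx{lis}}^{C_\phi}(\Bun_{G^*}, \ovl{\bb Q_\ell})^\omega$ as the spectral action of the pullback to $C_\phi$ of the function-sheaf $R_\mu$ on $\mfk X_{\hat G}$ built from $\mcl T_\mu$; on the component $C_\phi$, whose coarse space maps to $[\Spd(\ovl{\bb Q_\ell})/\mfk S_\phi]$, this pullback decomposes as $\bplus_{\eta\in\Irr(\mfk S_\phi)}\mcl L_\eta\otimes \Hom_{\mfk S_\phi}(\eta, \mcl T_\mu\circ(\phi^\natural|_{W_{E_\mu}}))$, where the $W_{E_\mu}$-equivariance is recorded via the extension of $\mcl T_\mu$ to $\hat G\rtimes W_{E_\mu}$ of \cite{Kot84}*{Lemma 2.1.2}. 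Since the spectral action is monoidal, the spectral action of $\mcl L_\eta$ is precisely $\bx{Act}_\eta$; substituting and using that $i_{b_0}^*$ and the idempotent decomposition over $B(G^*)_\bas$ are compatible with the spectral action gives the claimed formula $\bx R\Gamma_c(G^*, b, b_0, \mu)[\iota_\ell\rho]\cong\bplus_{\eta}\bx{Act}_\eta(\iota_\ell\rho)\boxtimes\iota_\ell\Hom_{\mfk S_\phi}(\eta, \mcl T_\mu\circ(\phi^\natural|_{W_{E_\mu}}))$ as $G^*_{b_0}(K)\times W_{E_\mu}$-modules.

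The main obstacle I expect is the bookkeeping of the $W_{E_\mu}$-action: in \cite{Ham22} the relevant statement is made for $\mu$ minuscule, where $\mcl T_\mu$ is the usual highest weight module and the identification of the Satake sheaf with $\mcl T_\mu$ as a $W_{E_\mu}$-representation is standard, whereas here one must verify that the tilting-module incarnation $\mcl T_\mu$, extended to $\hat G\rtimes W_{E_\mu}$, is compatible with the Weil-equivariant structure on the spectral side coming from \cite{F-S24}*{Theorem \Rmnum{1}.7.2, Corollary \Rmnum{9}.2.3}. One must also check that passing to the fixed-central-character subcategory and to the $C_\phi$-block commutes with forming $i_{b_0}^*\bx T_\mu i_{b,!}$; this is essentially the content of \cite{Ham22}*{Proposition 3.14} and \cite{F-S24}*{\S \Rmnum{9}.7.1}, already invoked in the proof of \Cref{ienfimeoifiems}, so the remaining work is to track the Galois action through these identifications. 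A secondary technical point is to justify that $\bx R\Gamma_c(G^*, b, b_0, \mu)[\iota_\ell\rho]$, a priori an object of the derived category, is concentrated in a single degree after this identification whenever needed; but for the statement as phrased no such concentration is claimed, so this can be deferred to the applications (e.g. \Cref{corliemisheifneis}, where \Cref{isnishenfeis} supplies the concentration).
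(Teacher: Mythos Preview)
The paper does not supply its own proof of \Cref{nsishinies}: the theorem is stated with attribution to \cite{F-S24}*{\S \Rmnum{10}.2} and \cite{Ham22}*{Corollary 3.11} and is used as a black box. Your proposal is a faithful reconstruction of the argument in those references---expressing $\bx T_\mu$ on the supercuspidal block $C_\phi$ via the spectral action, decomposing the pullback of the Satake sheaf to $C_\phi\to[\Spd(\ovl{\bb Q_\ell})/\mfk S_\phi]$ into the line bundles $\mcl L_\eta$ tensored with the multiplicity spaces $\Hom_{\mfk S_\phi}(\eta,\mcl T_\mu\circ(\phi^\natural|_{W_{E_\mu}}))$, and identifying the spectral action of $\mcl L_\eta$ with $\bx{Act}_\eta$---so there is nothing to compare against in the paper itself.
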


Combining this theorem with the monoidal property of Act-functors, we deduce the analogous result for non-minuscule $\mu$, extending \cite[Theorem 8.2]{Ham22} and \cite[Theorem 4.6, Theorem 4.22]{MHN24} to special orthogonal groups and unitary groups.

\begin{thm}\label{enifeinifehniiwms}
For each $I\subset [r]_+$, there exists a bijection of multisets
\begin{equation}\label{biejifeiheisnrrr}
\{\bx{Act}_{[\{x\}]}(\pi_{[I]})\}_{x\in [r]_+}\cong \{\pi_{[I\oplus\{x\}]}\}_{x\in [r]_+}.
\end{equation}
We choose a permutation $\sigma_\vn$ of $[r]_+$ such that 
\begin{equation*}
\bx{Act}_{[\{x\}]}(\pi_{[\vn]})\cong \pi_{[\{\sigma_\vn(x)\}]}
\end{equation*}
for each $x\in [r]_+$. For each $b_0\in B(G^*)_\bas$ and dominant cocharacter $\mu$ of $G^*_{\ovl K}$ with reflex field $E_\mu$, if $b\in B(G^*, b_0, \mu)_\bas$, we write
\begin{equation*}
\mcl T_\mu\circ\paren{\phi^\natural|_{W_{E_\mu}}}=\bplus_{j=1}^{m_\mu}\eta_{[I_j^\mu]}\boxtimes\sigma_j^\mu
\end{equation*}
as a sum of irreducible representations of $\mfk S_\phi\times W_{E_\mu}$, where $I_j^\mu\in\mrs P([r]_+)/\sim_k$ for $1\le j\le m_\mu$. Then for any subset $I\subset [r]_+$ with $\#I\equiv\frac{\kappa_b(-1)-1}{2}\modu2$, there is an isomorphism of $G^*_{b_0}(K)\times W_{E_\mu}$-modules
\begin{equation}\label{eneifiemsis}
\iota_\ell^{-1}\bx R\Gamma_c(G^*, b, b_0, \mu)[\iota_\ell\pi_{[I]}]\cong\bplus_{j=1}^{m_\mu}\pi_{[I\oplus\sigma_\vn(I_j^\mu)]}\boxtimes \sigma_j^\mu.
\end{equation}
\end{thm}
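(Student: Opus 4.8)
The plan is to reduce everything to the minuscule case established in Corollary \ref{corliemisheifneis} via the monoidal structure of the Act-functors and the isomorphism of Theorem \ref{nsishinies}. First I would establish the bijection of multi-sets \eqref{biejifeiheisnrrr}. By Theorem \ref{nsishinies} applied to $(b_1,\uno,\mu_1)$ and $\mcl T_{\mu_1}=\hat\Std_G$, the complex $\bx R\Gamma_c(G^*, b_1, \uno, \mu_1)[\iota_\ell\pi_{[I]}]$ decomposes as $\bplus_\eta \bx{Act}_\eta(\iota_\ell\pi_{[I]})\boxtimes \iota_\ell\Hom_{\mfk S_\phi}(\eta, \hat\Std_G\circ(\phi^\natural|_{W_{K_1}}))$; comparing this with Corollary \ref{corliemisheifneis} and using that $\mfk S_\phi$ acts on $\phi_i$ by the character $e_i^\vee$ (so $\Hom_{\mfk S_\phi}(\eta, \hat\Std_G\circ\phi^\natural)$ is $\phi_i$ when $\eta=e_i^\vee$ and zero otherwise, up to the case-O2 subtlety about the pairs $\{1,2\}$ when $k=2$), I can match the two sides. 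Since each $\bx{Act}_\eta$ sends an irreducible representation to an irreducible representation with a cohomological shift and Corollary \ref{corliemisheifneis} shows the complex is concentrated in degree $0$, the shift must be trivial, and the multi-set of the $\bx{Act}_{[\{x\}]}(\pi_{[I]})$ coincides with the multi-set of the $\pi_{[I\oplus\{x\}]}$. This fixes, once and for all, a permutation $\sigma_\vn$ of $[r]_+$ with $\bx{Act}_{[\{x\}]}(\pi_{[\vn]})\cong \pi_{[\{\sigma_\vn(x)\}]}$.

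Next I would upgrade this to arbitrary $I$: using the monoidal identity $\bx{Act}_{\eta}\circ\bx{Act}_{\eta'}\cong\bx{Act}_{\eta\eta'}$ and the fact that $\eta_{[I]}=\sum_{i\in I}e_i^\vee=\prod_{i\in I}e_i^\vee$ in $\Irr(\mfk S_\phi)$, one gets $\bx{Act}_{[I]}(\pi_{[\vn]})\cong\pi_{[\sigma_\vn(I)]}$ where $\sigma_\vn(I)\defining\{\sigma_\vn(i):i\in I\}$, and more generally $\bx{Act}_{[J]}(\pi_{[I]})\cong\pi_{[I\oplus\sigma_\vn(J)]}$ by applying $\bx{Act}_{[I]}$ to both sides and using that Act-functors commute. (Here one has to keep careful track of which basic element $b$ the representation $\pi_{[I]}$ lives on, but the Act-functors preserve central characters and hence preserve each $B(G^*)_\bas$-graded piece, and the parity condition on $\#I$ is automatically respected.) This gives the bijection \eqref{biejifeiheisnrrr} for every $I$.

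Then the main statement \eqref{eneifiemsis} follows by feeding the decomposition $\mcl T_\mu\circ(\phi^\natural|_{W_{E_\mu}})=\bplus_{j=1}^m\eta_{[I_j^\mu]}\boxtimes\sigma_j^\mu$ into Theorem \ref{nsishinies} for the datum $(b, b_0, \mu)$: one obtains
\[
\bx R\Gamma_c(G^*, b, b_0, \mu)[\iota_\ell\pi_{[I]}]\cong\bplus_{j=1}^m \bx{Act}_{[I_j^\mu]}(\iota_\ell\pi_{[I]})\boxtimes\iota_\ell\sigma_j^\mu,
\]
and substituting $\bx{Act}_{[I_j^\mu]}(\iota_\ell\pi_{[I]})\cong\iota_\ell\pi_{[I\oplus\sigma_\vn(I_j^\mu)]}$ gives exactly the claimed formula. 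I would note that this argument requires no minuscularity hypothesis on $\mu$ precisely because the Act-functor computation only ever uses the minuscule datum $(b_1,\uno,\mu_1)$, and the general $\mu$ enters only through the purely representation-theoretic decomposition of $\mcl T_\mu\circ\phi^\natural$ as an $\mfk S_\phi\times W_{E_\mu}$-module.

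The hard part will be pinning down the permutation $\sigma_\vn$ rigorously — in particular showing that the Act-functor indexed by a single character $e_i^\vee$ really sends $\pi_{[\vn]}$ to a \emph{single} $\pi_{[\{j\}]}$ rather than permuting a larger orbit — and handling the exceptional case O2 with $k=2$, where $\hat\Std_G\circ\phi^\natural$ contains the pair $\phi_1+\phi_2$ on which $\mfk S_\phi$ does not act by distinct characters (this is the source of the $\phi_1+\phi_2$ branch in Theorem \ref{pdidiiidjifenis}). In that exceptional situation I expect one must either appeal to Theorem \ref{pdidiiidjifenis} directly to see that the relevant $\bx{Act}$-functors still send irreducibles to irreducibles on the relevant central-character block, or restrict attention to the sub-block where the decomposition is multiplicity-free. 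The remaining compatibilities (that the shift is zero, that central characters are preserved, that the bijection is well-defined on multi-sets even when some $\pi_{[I\oplus\{x\}]}$ coincide) are routine given Corollary \ref{corliemisheifneis}, Corollary \ref{superisnidnLpaifniesues}, and the semisimplicity of the category of supercuspidal representations with fixed central character.
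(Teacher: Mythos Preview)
Your overall strategy is correct and matches the paper: reduce to the minuscule case via Theorem~\ref{nsishinies} and Corollary~\ref{corliemisheifneis}, then use the monoidal structure of the Act-functors. But there is a genuine gap in the ``upgrade'' step, and it is precisely the step the paper works hardest to justify.

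First, the comparison with Corollary~\ref{corliemisheifneis} only gives you the multi-set bijection \eqref{biejifeiheisnrrr} when $\pi_{[I]}\in\Pi_\phi(G^*_{b_1})$, i.e.\ when $\#I$ is odd. Yet the permutation $\sigma_\vn$ you want to fix comes from the bijection at $I=\vn$, which is the even case and is not yet established. The paper handles this by observing that if $\bx{Act}_{[\{j\}]}(\pi_{[I]})=\pi_{[I']}$ with $\#I$ odd, then $\bx{Act}_{[\{j\}]}(\pi_{[I']})=\pi_{[I]}$ by the involution $\bx{Act}_{[\{j\}]}^2=\id$, and then a counting argument recovers the bijection for even $\#I'$.

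Second, and more seriously, your claim that ``one gets $\bx{Act}_{[I]}(\pi_{[\vn]})\cong\pi_{[\sigma_\vn(I)]}$'' immediately from the $|I|=1$ case and monoidality is not justified. Writing $\bx{Act}_{[\{x,y\}]}(\pi_{[\vn]})=\bx{Act}_{[\{x\}]}(\pi_{[\{\sigma_\vn(y)\}]})$ only tells you that this lies in the multi-set $\{\pi_{[\{\sigma_\vn(y)\}\oplus\{z\}]}\}_{z}$; it does not tell you which element, because the multi-set bijection at $\{\sigma_\vn(y)\}$ a priori involves a \emph{different} permutation than $\sigma_\vn$. The paper closes this by induction on $\#I$: for $\#I\ge 2$, write $\bx{Act}_{[I]}(\pi_{[\vn]})=\bx{Act}_{[\{i\}]}(\pi_{[\sigma_\vn(I\setminus\{i\})]})$ for each $i\in I$ (induction hypothesis), so $\bx{Act}_{[I]}(\pi_{[\vn]})$ lies in $\{\pi_{[\sigma_\vn(I\setminus\{i\})\oplus\{z\}]}\}_{z}$ for every $i\in I$; after removing the elements already accounted for by smaller $I$, the intersection of these sets over $i\in I$ is the singleton $\{\pi_{[\sigma_\vn(I)]}\}$. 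This intersection-of-multi-sets argument is the missing ingredient in your sketch. Once it is in place, your deduction of the general formula $\bx{Act}_{[J]}(\pi_{[I]})\cong\pi_{[I\oplus\sigma_\vn(J)]}$ and hence \eqref{eneifiemsis} via Theorem~\ref{nsishinies} goes through exactly as you wrote.
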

\begin{proof}
If we apply Theorem~\ref{nsishinies} to $b=b_1, b_0=\uno$ and $\mu=\mu_1$, we get an isomorphism
\begin{equation*}
\bx R\Gamma_c(G^*, b_1, \uno, \mu_1)[\iota_\ell\pi_{[I]}]\cong\bplus_{j=1}^r\iota_\ell\bx{Act}_{[\{j\}]}(\pi_{[I]})\boxtimes\phi_j
\end{equation*}
of representations of $G^*(K)\times W_{K_1}$ for each $I\subset[r]_+$ with $\#I\equiv 1\modu2$. So it follows from Corollary~\ref{corliemisheifneis} and Schur's lemma (noting that each $\bx{Act}_{[j]}(\pi_{[I]})$ is irreducible) that there exists a bijection of multisets
\begin{equation}\label{biejifeiheisn}
\{\bx{Act}_{[\{x\}]}(\pi_{[I]})\}_{x\in [r]_+}\cong \{\pi_{[I\oplus\{x\}]}\}_{x\in [r]_+}
\end{equation}
(The only subtlety comes from the case when $k=2$, but it is easy to verify that this bijection holds in this case). 

Next we consider $[I']\in\mrs P([r]_+)/\sim_k$ with $\#[I']\equiv 0\modu2$. For each $j\in [r]_+$, it follows from the bijection~\eqref{biejifeiheisn} for $[I]=[I']\oplus[\{j\}]$ that there exists $x_j\in [r]_+$ such that  $\bx{Act}_{[\{x_j\}]}(\pi_{[I']\oplus[\{j\}]})\cong \pi_{[I']}$. Thus it follows from the monoidal property of $\bx{Act}$ recalled in~\S\ref{snihsnfifies} that $\bx{Act}_{[\{x_j\}]}(\pi_{[I']})=\pi_{[I']\oplus[\{j\}]}$. We then get a bijection of multisets
\begin{equation}
\{\bx{Act}_{[\{x_j\}]}(\pi_{[I']})\}_{j\in [r]_+}\cong\{\pi_{[I']\oplus[\{j\}]}\}_{j\in [r]_+}.
\end{equation}
Note that if $x_j=x_{j'}$, then $[I'\oplus\{j\}]=[I'\oplus\{j'\}]$. It follows from numerical counting that the left-hand side must equal the multiset $\{\bx{Act}_{[\{j\}]}(\pi_{[I']})\}_{j\in [r]_+}$, and there is a bijection of multisets
\begin{equation}\label{bienieifehnimws}
\{\bx{Act}_{[\{j\}]}(\pi_{[I']})\}_{j\in [r]_+}\cong \{\pi_{[I']\oplus[\{j\}]}\}_{j\in [r]_+}.
\end{equation}
Now the first assertion follows from~\eqref{biejifeiheisn} or~\eqref{bienieifehnimws} depending on the cardinality of $I$.

For the second assertion, by Theorem~\ref{nsishinies}, it suffices to prove that
\begin{equation}\label{Actsigmaperm}
\bx{Act}_{[J]}(\pi_{[I]})=\pi_{[I\oplus\sigma_\vn(J)]}
\end{equation}
for all $I, J\subset [r]_+$. By the monoidal property of $\bx{Act}$-functors recalled in~\S\ref{snihsnfifies}, it suffices to prove
\begin{equation}\label{sniffheinsm}
\bx{Act}_{[I]}(\pi_{[\vn]})=\pi_{[\sigma_\vn(I)]}
\end{equation}
for all $I\subset [r]_+$. We prove~\eqref{sniffheinsm} by induction on $\#I$. The cases $\#I=0$ and $\#I=1$ follow respectively from the identity property of $\bx{Act}_{[\vn]}$ and from the definition of $\sigma_\vn$. Assume $\#I\ge2$. For each $i\in I$, the induction hypothesis and monoidal property give
\begin{equation}\label{sinsiueihfheinis-new}
\bx{Act}_{[I]}(\pi_{[\vn]})\cong\bx{Act}_{[\{i\}]}\bigl(\pi_{[\sigma_\vn(I\setm\{i\})]}\bigr).
\end{equation}
By the first assertion already proved, there is a bijection of multisets
\begin{equation*}
\{\bx{Act}_{[\{j\}]}(\pi_{[\sigma_\vn(I\setm\{i\})]})\}_{j\in [r]_+}\cong\{\pi_{[\sigma_\vn(I\setm\{i\})]\oplus[\{\sigma_\vn(j)\}]}\}_{j\in [r]_+},
\end{equation*}
For $j\in I\setm\{i\}$, the induction hypothesis gives
\begin{equation*}
\bx{Act}_{[\{j\}]}(\pi_{[\sigma_\vn(I\setm\{i\})]})\cong\pi_{[\sigma_\vn(I\setm\{i,j\})]}=\pi_{[\sigma_\vn(I\setm\{i\})\oplus\{\sigma_\vn(j)\}]}.
\end{equation*}
Removing these common terms from the two multisets, we obtain
\begin{equation}\label{iaineiefhsnimsmiw-new}
\{\bx{Act}_{[\{j\}]}(\pi_{[\sigma_\vn(I\setm\{i\})]})\}_{j\in [r]_+\setm(I\setm\{i\})}\cong\{\pi_{[\sigma_\vn(I\setm\{i\})\oplus\{\sigma_\vn(j)\}]}\}_{j\in [r]_+\setm(I\setm\{i\})}.
\end{equation}
For each $i\in I$, the object $\bx{Act}_{[I]}(\pi_{[\vn]})$ occurs in the left-hand multiset of \eqref{iaineiefhsnimsmiw-new}, by~\eqref{sinsiueihfheinis-new}. The intersection, over all $i\in I$, of the supports of the corresponding right-hand multisets consists only of $\pi_{[\sigma_\vn(I)]}$. Hence
\begin{equation*}
\bx{Act}_{[I]}(\pi_{[\vn]})\cong \pi_{[\sigma_\vn(I)]},
\end{equation*}
which proves~\eqref{sniffheinsm}, and therefore~\eqref{Actsigmaperm}.
\end{proof}

In particular, understanding the part of the cohomology of local shtuka spaces with supercuspidal $L$-parameters is reduced to understanding the decomposition of $\mcl T_\mu\circ\paren{\phi^\natural|_{W_{E_\mu}}}$ as $\mfk S_\phi\times W_{E_\mu}$-modules, up to permutation of the $L$-packet $\Pi_\phi(G)$.

Theorem~\ref{enifeinifehniiwms} implies that the strong Kottwitz conjecture~\textup{\cite[Conjecture~1.0.1]{HKW22}} is equivalent to the triviality of the permutation $\sigma_\vn$. For example, this is known for $G=\bx U(V)$ or $\GU(V)$ where $V$ is an odd-dimensional Hermitian space with respect to the unramified quadratic extension $\bb Q_{p^2}/\bb Q_p$ by \cite{M-N23}.

\begin{thm}
If \textup{Hypothesis~\ref{hsisieteijeiureis}} holds, then $\sigma_\vn$ is trivial, and the strong Kottwitz conjecture holds.
\end{thm}
\begin{proof}
Fix $I\subset [r]_+$ with $\#I\equiv1\modu2$. By Theorem~\ref{enifeinifehniiwms}, there is an isomorphism of $G^*(K)\times W_{K_1}$-modules
\begin{equation}\label{lslsiitiyyeineifsss}
\iota_\ell^{-1}\bx R\Gamma_c(G^*, b_1, \uno, \mu_1)[\iota_\ell\pi_{[I]}]\cong\bplus_{j=1}^{r}\pi_{[I\oplus\{\sigma_\vn(j)\}]}\boxtimes\phi_j.
\end{equation}

Following \S\ref{glaosbsialisniejfid}, we globalize $K_1/K$ to $F_1/F$ and $G^*/K$ to $\mbf G/F$, and retain the notation from there. In particular, let $(\mbf V,\mbf G=\bx U(\mbf V)^\circ)$ be the standard indefinite pure inner form of $\mbf G^*$ with $\mbf G_{\mfk p}\cong G^*$, and let $(\mbf V',\mbf G'=\bx U(\mbf V')^\circ)$ be the standard definite pure inner form of $\mbf G^*$ with $\mbf G'_{\mfk p}\cong G^*_{b_1}$.

Set $\rho\defining \pi_{[I]}\in \Pi(G^*_{b_1})$. We use the following globalization of $\rho$ different from Proposition~\ref{gooalbsisneihfienis}.

\begin{lm}\label{lslsiiehteibencies}
There exists
\begin{itemize}
\item
a control tuple $\bigstar$ for $\mbf G^*$ (see \textup{Definition~\ref{deieiutesuroeneis}}) such that $\Pla^{\bx{sc}}=\Pla^\circ=\{\mfk p\}$ and $\Pla^\St=\vn$, such that the highest weight of $\xi$ is trivial except in Case O2, where it is
\begin{equation*}
(1,\ldots,1)
\end{equation*}
at every infinite place of $F$;\footnote{The representation $\xi$ cannot be chosen to be trivial: otherwise, the formal parameter $\bm\psi$ of $\Pi'$ would fail to be regular algebraic at the infinite places. For such parameters, local-global compatibility for the associated Galois representations is not yet known at ramified places.}
\item
a $\bigstar$-split compact open subgroup $\mdc K^{\mfk p}\le\mbf G'(\Ade_{F, f}^{\mfk p})$ (see \textup{Definition~\ref{Soainsilsienicmosn}});
\item
an automorphic representation $\Pi'=\otimes'_v\Pi'_v$ of $\mbf G'(\Ade_F)$ such that
\begin{itemize}
\item
$\Pi'_{\mfk p}\cong \rho$ and $(\Pi_f^{\prime\mfk p})^{\mdc K^{\mfk p}}\ne 0$;
\item
$\Pi'_v$ is unramified for all $v\in \fPla_F\setm\Pla$;
\item
$\Pi'_\infty$ is cohomological for $\xi$, i.e.,
\begin{equation*}
\bx H^i(\Lie(\mbf G(F\otimes\bb R)), \mdc K_\infty, \Pi'_\infty\otimes_{\bb C}\xi)\ne 0
\end{equation*}
for some $i\in\bb N$;
\item
The natural localization map
\begin{equation*}
\mfk S_{\bm\psi}\to \mfk S_\phi
\end{equation*}
associated with the formal parameter $\bm\psi$ of $\Pi'$ is an isomorphism.
\end{itemize}
\end{itemize}
\end{lm}
\begin{proof}
Recall that
\begin{equation*}
\tilde\phi^\GL=\phi_1\oplus\cdots\oplus\phi_k\oplus\phi_{k+1}\oplus\cdots\oplus\phi_r.
\end{equation*}
We define global classical groups $\mbf G_i$ over $F$ for $1\le i\le r$ as follows. In case U, let $\mbf G_i$ denote the quasisplit global unitary group over $F$ of geometric rank $d_i$ that splits over $F_1$. In case O1, $d_i$ is even for every $1\le i\le r$, and we let $\mbf G_i\defining \SO_{d_i+1}$. In case O2, we choose elements $\mfk D_i\in F^\times$ such that
\begin{itemize}
\item
$(-1)^{d_i}\mfk D_i$ is totally positive for every $1\le i\le r$.
\item
The extension $F_{\mfk p}(\sqrt{\mfk D_i})/F_{\mfk p}$ corresponds to $\det\circ\phi_i$ under the local Langlands correspondence for every $1\le i\le r$, and
\item
$\disc(\mbf G)\cdot\prod_{1\le i\le r}\mfk D_i$ belongs to $(F^\times)^2$.
\end{itemize}
Then we set $\mbf G_i=\Sp_{d_i-1}$ for $1\le i\le k$, and set $\mbf G_i=\SO_{d_i}^{\mfk D_i}$ for $k+1\le i\le r$.

Fix, for each $1\le i\le r$, an irreducible representation $\pi_i$ of $\mbf G_i(F_{\mfk p})$ with $L$-parameter
\begin{equation*}
\begin{cases}
\phi_i &\text{in case U}\\
\phi_i\otimes(\det\circ\phi_i) &\text{in case O}.
\end{cases}
\end{equation*}
Since $\mbf G_i(F\otimes\bb R)$ admits discrete series by Harish-Chandra's criterion and our choice of $\mfk D_i$, the standard Plancherel density theorem, for example \cite[Theorem~1.1.(\rmnum1)]{Shi12}, applies. In particular, if we choose an algebraic irreducible representation $\xi_i$ of $\paren{\Res_{F/\bb Q}\mbf G_i}\otimes\bb C$, then it follows from \cite[Theorem~1.1.(\rmnum1)]{Shi12} that there exists a $\xi_i$-cohomological cuspidal automorphic representation $\Pi_i$ of $\mbf G_i(\Ade_F)$ such that $\Pi_{i, \mfk p}\cong \pi_i$.

For each $1\le i\le r$, Arthur's multiplicity formula, Theorem~\ref{endoslcinidhnfineism}, implies that there exists an elliptic global $A$-parameter $\bm\psi_i$ for $\mbf G_i$ such that $\Pi_i$ belongs to the corresponding packet, i.e. $\Pi_i\in \mcl A_{2, \bm\psi_i}(\mbf G_i)$. Since the localization $\tilde{\bm\psi}_{i,\mfk p}$ is isomorphic to $\phi_i\otimes(\det\circ\phi_i)$, $\bm\psi_i$ must be in fact an irreducible self-dual cuspidal automorphic representation of $\GL_{N(\mbf G_i)}(\Ade_F)$. We then define an elliptic global $A$-parameter
\begin{equation*}
\bm\psi=
\begin{cases}
\bm\psi_1+\cdots+\bm\psi_r &\text{in case U}\\
\paren{\bm\psi_1\otimes\chi_{\mfk D_1}}+\cdots+\paren{\bm\psi_r\otimes\chi_{\mfk D_r}} &\text{in case O}
\end{cases}
\end{equation*}
for $\mbf G$.

By choosing algebraic irreducible representations $\xi_i$ of $\paren{\Res_{F/\bb Q}\mbf G_i}\otimes\bb C$ suitably, we may ensure that the restriction of the Archimedean $L$-parameter attached to the localization of the isobaric sum
\begin{equation*}
\bm\psi_1\boxplus\cdots\boxplus\bm\psi_r
\end{equation*}
at every infinite place to $W_{\bb C}\cong \bb C^\times$ is isomorphic to
\begin{equation*}
\bplus_{i\in \mfk I}\arg^i
\end{equation*}
for
\begin{equation*}
\mfk I\defining
\begin{cases}
\{N(\mbf G), N(\mbf G)-2, \ldots, 2, -2, \ldots, -N(\mbf G)\} &\text{in case O2,}\\
\{N(\mbf G)-1, N(\mbf G)-3, \ldots, 3-N(\mbf G), 1-N(\mbf G)\} &\otherwise.
\end{cases}
\end{equation*}

Finally, it follows from Arthur's multiplicity formula for $\mbf G$ that there exists a $\xi$-cohomological automorphic representation $\Pi'=\otimes'_v\Pi_v'$ of $\mbf G'(\Ade_F)$ with formal parameter $\bm\psi$, satisfying $\Pi'_{\mfk p}\cong \rho$. By choosing a sufficiently large finite subset of places $\Pla$ of $F$ containing $\{\mfk p\}$ and $\infPla_F$, we may assume that $\Pi'_v$ is unramified for all $v\in \fPla_F\setm\Pla$. We can also choose a $\bigstar$-split compact open subgroup $\mdc K^{\mfk p}\le \mbf G'(\Ade_{F, f}^{\mfk p})$ such that $(\Pi_f^{\prime\mfk p})^{\mdc K^{\mfk p}}\ne 0$.
\end{proof}

Choose $\bigstar, \mdc K^{\mfk p}$, and $\Pi'$ as in Lemma~\ref{lslsiiehteibencies}. Let
\begin{equation*}
\phi^\Pla_{\Pi'}: \tilde{\bb T}^\Pla\to \bb C
\end{equation*}
be the associated Hecke character of $\Pi'$, and define $\mfk m\defining\iota_\ell\ker(\phi^\Pla_{\Pi'})$, which is a maximal ideal of $\iota_\ell\tilde{\bb T}^\Pla$. Following the argument of \S\ref{glaosbsialisniejfid}, we obtain an isomorphism
\begin{align*}
\Theta_{\mfk m, \bx{sc}}: \bx R\Gamma_c(G^*, b_1, \uno, \mu_1)_{\bx{sc}}\otimes\iota_\ell\largel{-}_{K_1}^{\frac{-\dim_{\bb C}(\mbf X)}{2}}[\dim_{\bb C}(\mbf X)]&\Ltimes_{J(K)}\mcl A(\mbf G'(F)\bsh \mbf G'(\Ade_{F, f})/\mdc K^{\mfk p}, \mrs L_{\iota_\ell\xi})_{\mfk m}\\
&\xr\sim \bx R\Gamma_c(\mcl S_{\mdc K^{\mfk p}}(\Res_{F/\bb Q}\mbf G, \mbf X), \mcl L_{\iota_\ell\xi})_{\mfk m}.
\end{align*}
By further applying a Hecke projector, we may restrict to the terms corresponding to those automorphic representations $\dot\Pi'$ of $\mbf G'(\Ade_F)$ cohomological for $\xi$ such that $(\dot\Pi'_f)^\Pla$ has associated Hecke character $\phi_{\Pi'}^\Pla$. But each such $\dot\Pi'$ must satisfy $\tilde{\dot\Pi}'_{\mfk p}\cong \tilde\Pi'_{\mfk p}$, by our choice of $\Pi'$ and Arthur's multiplicity formula.\footnote{Here we use that $\mbf G'(F\otimes\bb R)$ is compact, so each discrete series packet is a singleton.} By an analogue of Theorem~\ref{strongleienineirnes}, for each $1\le j\le r$, there exists an automorphic representation $\Pi_j$ of $\mbf G(\Ade_F)$ satisfying that
\begin{itemize}
\item
$\tilde\Pi_{j, v}\cong \tilde\Pi'_v$ via the isomorphism $\mbf G(F_v)\cong \mbf G'(F_v)$ induced by the inner twists for every $v\in \fPla_F\setm\Pla$, and
\item
$\tilde\Pi_{j, \mfk p}\cong \tilde\pi_{[I\oplus\{\sigma_\vn(j)\}]}$.
\end{itemize}
Combining with Equation~\eqref{lslsiitiyyeineifsss}, we see that $\phi_j$ is a submodule of the $W_{K_1}$-module $\rho^{\Pi_j}_\bSh$ for each $1\le j\le r$. However, it follows from Arthur's multiplicity formula that $\sigma_\vn(j)$ is contained in $\mfk J(\Pi_j)$, and it follows from \textup{Hypothesis~\ref{hsisieteijeiureis}} that $j$ is contained in $\mfk J(\Pi_j)$. Thus $j$ and $\sigma_\vn(j)$ are both contained in $\mfk J(\Pi_j)$ for each $1\le j\le r$. By the same hypothesis, either $\sigma_\vn(j)=j$ or $\mfk J(\Pi_j)=\{j, \sigma_\vn(j)\}$. In either case we have
\begin{equation*}
\{j\}\oplus\{\sigma_\vn(j)\}\sim_k\vn.
\end{equation*}
As a result, $\sigma_\vn$ acts trivially on the quotient labelling, and we may replace it by the trivial permutation in Theorem~\ref{enifeinifehniiwms}.
\end{proof}

Theorem~\ref{enifeinifehniiwms} can be formulated more naturally in terms of eigensheaves. We recall from \cite[Conjecture~4.4]{Far16} that a \tbf{Hecke eigensheaf} for $\phi$ is an object $\mcl G_\phi\in \bx D_{\bx{lis}}(\Bun_{G^*}, \ovl{\bb Q_\ell})$ such that, for any finite index set $I$ and $(V, r_V)\in \bx{Rep}_{\ovl{\bb Q_\ell}}(\LL G^{*I})$, there exists an isomorphism
\begin{equation}\label{einiefmies}
\eta_{V, I}:\bx T_V(\mcl G_\phi)\xr\sim \mcl G_\phi\boxtimes r_V\circ\phi\in\bx D_{\bx{lis}}(\Bun_{G^*}, \ovl{\bb Q_\ell})^{BW_K^I}
\end{equation}
that is natural in $I$ and $V$ and compatible with compositions of Hecke operators.

Following \cite[\S 4.1.3]{MHN24}, we construct eigensheaves attached to the supercuspidal $L$-parameters $\phi$: 

\begin{thm}\label{sinieikIInifhfiens}
Set
\begin{equation*}
\mcl G_\phi\defining\bplus_{\eta\in\Irr(\mfk S_\phi)}\bx{Act}_\eta(\pi_{[\vn]})\in \bx D_{\bx{lis}}(\Bun_{G^*}, \ovl{\bb Q_\ell}).
\end{equation*}
Let $\mfk S_\phi$ act on the summand $\bx{Act}_\eta(\pi_{[\vn]})$ through the character $\eta$.
\begin{itemize}
\item
$\mcl G_\phi$ is supported on $B(G^*)_\bas\subset\largel{\Bun_{G^*}}$, i.e., on the semi-stable locus of $\Bun_{G^*}$.
\item
For each $b\in B(G^*)_\bas$, under the natural identification $\bx D_{\bx{lis}}(\Bun_{G^*}^b, \ovl{\bb Q_\ell})$ with $\bx D_{\bx{lis}}(G^*_b, \ovl{\bb Q_\ell})$, there exists an isomorphism
\begin{equation*}
i_b^*\mcl G_\phi\cong \bplus_{\substack{[I]\in\mrs P([r]_+)/\sim_k,\\ \#I\equiv \frac{\kappa_b(-1)-1}{2}\modu2}}\eta_{[I]}\boxtimes \pi_{[\sigma_\vn(I)]}
\end{equation*}
of representations of $\mfk S_\phi\times G_b(K)$.
\item
$\mcl G_\phi$ is a Hecke eigensheaf for $\phi$, i.e.,~\eqref{einiefmies} holds.
\end{itemize}
\end{thm}
\begin{proof}
These assertions follow from the bijection~\eqref{biejifeiheisnrrr} and the symmetric monoidal property of the Act-functors recalled in~\S\ref{snihsnfifies}, in the same way as in the proof of~\cite[Proposition~4.18, Theorem~4.19]{MHN24}.
\end{proof}

\section{A vanishing result for torsion cohomology of Shimura varieties}\label{avnainsishenifeihhsinsniw}

We use the compatibility result to prove a vanishing result for the generic part of the cohomology of orthogonal or unitary Shimura varieties with torsion coefficients.

We first recall the general torsion vanishing conjecture of \cite{Han23, Ham24}. Let $(\bb G, \bb X)$ be a Shimura datum with reflex field $E\subset \bb C$ (which is a number field), and let $p$ be a rational prime coprime to $2\cdot \#\pi_1([\bb G, \bb G])$, with a fixed isomorphism $\iota_p: \bb C\xr\sim \ovl{\bb Q_p}$. The isomorphism $\iota_p$ induces a place $\mfk p$ of $E$ over $p$, and we write $\bb C_p$ for the completion of the algebraic closure of $E_{\mfk p}\subset \ovl{\bb Q_p}$. We write $\msf G\defining \bb G\otimes\bb Q_p$. Let $\ell$ be a rational prime that is coprime to $p\sdot \#\pi_0(Z(\msf G))$, with a fixed isomorphism $\iota_\ell: \bb C\to \ovl{\bb Q_\ell}$, which fixes a square root $\sqrt p\in\ovl{\bb Z_\ell}$ thus also $\sqrt p\in \ovl{\bb F_\ell}$. Let $\Lbd\in\{\ovl{\bb Q_\ell}, \ovl{\bb F_\ell}\}$. Whenever we consider $\ovl{\bb F_\ell}$-coefficients, we assume that $\pi_0(Z(\msf G))$ is invertible in $\Lbd$ to avoid complications in this $\ell$-modular setting.

For neat compact open subgroup $\mdc K\le \bb G(\Ade_f)$, let $\mcl S_{\mdc K}(\bb G, \bb X)$ be the adic space over $\Spa(E_{\mfk p})$ associated with the Shimura variety $\bSh_{\mdc K}(\bb G, \bb X)$. If $\mdc K^p\le\bb G(\Ade_f^p)$ is a neat compact open subgroup, we define
\begin{equation*}
\mcl S_{\mdc K^p}(\bb G, \bb X)\defining \plim_{\mdc K_p}\mcl S_{\mdc K_p\mdc K^p}(\bb G, \bb X),
\end{equation*}
where $\mdc K_p$ runs through all open compact subgroups of $\bb G(\bb Q_p)$.

 The $\msf G(\bb Q_p)\times W_{E_{\mfk p}}$-representation on 
\begin{equation*}
\bx R\Gamma_c(\mcl S(\bb G, \bb X)_{\mdc K^p, \bb C_p}, \Lbd)
\end{equation*}
decomposes as
\begin{equation*}
\bx R\Gamma_c(\mcl S_{\mdc K^p}(\bb G, \bb X)_{\bb C_p}, \Lbd)=\bplus_\phi\bx R\Gamma_c(\mcl S_{\mdc K^p}(\bb G, \bb X)_{\bb C_p}, \Lbd)_\phi
\end{equation*}
according to Fargues--Scholze parameters of irreducible subquotients, where $\phi$ runs through semisimple $L$-parameters $\phi\in\Phi^\sems(\msf G; \Lbd)$; see~\cite[Corollary~4.4]{H-L24}.

We now recall the concept of (weakly) Langlands--Shahidi type $L$-parameters, as defined in~\cite[Definition~6.2]{H-L24}.

\begin{defi}\label{dienfiehIGNieneifmeis}
Let $\msf G$ be a quasisplit reductive group over a non-Archimedean local field $K$ of characteristic zero with a Borel pair $(\msf B, \msf T)$. Let $\phi_{\msf T}\in\Phi^\sems(\msf T, \Lbd)$ be a semisimple $L$-parameter, and let
\begin{equation*}
\phi\in\Phi^\sems(\msf G,\Lbd)
\end{equation*}
be its image under the natural embedding $\LL\msf T\to \LL\msf G$. We write $\phi_{\msf T}^\vee$ for the Chevalley dual of $\phi_{\msf T}$.
\begin{itemize}
\item
we say $\phi$ is \tbf{generic} (or of Langlands--Shahidi type) if the following Galois cohomology complexes vanish
\begin{equation*}
\bx R\Gamma(W_K, \LL\mcl T_\mu\circ\phi_{\msf T}), \quad \bx R\Gamma(W_K, \LL\mcl T_\mu\circ\phi_{\msf T}^\vee)
\end{equation*}
are both trivial for each positive coroot $\mu\in \Phi^\vee(\msf G, \msf T)^+\subset X_\bullet(\msf T)$.\footnote{Recall that $\LL T_\mu\in \bx{Rep}_\Lbd(\LL\msf T)$ is the extended highest weight tilting module attached to $\mu$ as defined in \eqref{ieneineifeilsws}.}
\item
we say $\phi$ is of \tbf{weakly Langlands--Shahidi type} if the Galois cohomology groups
\begin{equation*}
\bx H^2(W_K, \LL\mcl T_\mu\circ\phi_{\msf T}), \quad \bx H^2(W_K, \LL\mcl T_\mu\circ\phi_{\msf T}^\vee)
\end{equation*}
are both trivial for each positive coroot $\mu\in \Phi^\vee(\msf G, \msf T)^+\subset X_\bullet(\msf T)$.
\end{itemize}
By \cite[Remark~6.3]{H-L24}, these conditions depend only on the image $\phi$ of $\phi_{\msf T}$ under the natural embedding $\LL\msf T\to \LL\msf G$; we call $\phi$ a semisimple toral $L$-parameter for $\msf G$. For a semisimple toral $L$-parameter $\phi$, we also write $\phi^\vee$ for the image of $\phi_{\msf T}^\vee$ under the natural embedding $\LL\msf T\to \LL\msf G$.

Moreover, it follows from \cite[Lemma~4.24]{H-L24} that, for a finite splitting field extension $K'/K$ for $\msf G$, $\phi$ is generic (resp. of weakly Langlands--Shahidi type) if and only if $\phi|_{W_{K'}}$ is. By local Tate duality, genericity for $\phi$ is equivalent to $\alpha\circ \phi|_{W_{K'}}$ not equaling $\uno$ or $\largel{-}_{K'}^{\pm1}$ for each coroot $\alpha$ of $\msf G$.
\end{defi}

Going back to the global situation, we recall the following conjecture by Hamann and Lee~\cite[Conjecture~6.6]{H-L24} on the vanishing of cohomology of Shimura varieties with torsion coefficients:

\begin{conj}\label{condinfihenis}
Let $\phi\in\Phi^\sems(\msf G, \ovl{\bb F_\ell})$ be a semisimple toral $L$-parameter of weakly Langlands--Shahidi type, then the complex $\bx R\Gamma_c(\mcl S(\bb G, \bb X)_{\mdc K^p, \bb C_p}, \ovl{\bb F_\ell})_\phi$ (resp. $\bx R\Gamma(\mcl S(\bb G, \bb X)_{\mdc K^p, \bb C_p}, \ovl{\bb F_\ell})_\phi$) is concentrated in degrees $0\le i\le\dim_{\bb C}(\bb X)$ (resp. $\dim_{\bb C}(\bb X)\le i\le 2\dim_{\bb C}(\bb X)$).
\end{conj}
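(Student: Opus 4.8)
\textbf{Proof proposal for \Cref{condinfihenis} in the orthogonal/unitary case.}
The plan is to follow the strategy of \cite{DHKZ}, suitably modified for Abelian type Shimura data, exactly as outlined in the introduction. The starting observation is that a weakly Langlands--Shahidi type toral parameter $\phi$ for $\msf G$, when restricted to a splitting field, is ``regular enough'' that the relevant cohomology is controlled by the geometry of $\Bun_{\msf G}$ rather than by subtle endoscopic phenomena. First I would reduce to the proper Hodge-type case: given the orthogonal (or unitary) datum $(\Res_{F/\bb Q}\mbf G, \mbf X)$, pass to the central extension $(\mbf G^\sharp, \mbf X^\sharp)$ of \Cref{seubsienfiehtoenfeis}, which is of Hodge type with $\mbf G^\sharp_\ad\cong(\Res_{F/\bb Q}\mbf G)_\ad$ and, under the splitting hypothesis on $p$ in case O (namely $\bb Q(\daleth)/\bb Q$ split at $p$), satisfies $\msf G^\sharp\cong\GL(1)\times\Res_{K/\bb Q_p}\GSpin(V^*)$. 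One then proves the torsion vanishing for $\bSh(\mbf G^\sharp,\mbf X^\sharp)$ and descends along the Hochschild--Serre spectral sequence for the finite abelian quotient $\mbf Z^{\bb Q}(\Ade_f^p)$-action (and the $\mbf Z^{\bb Q}(\bb Q_p)$-invariants at $p$) to recover the statement for $\bSh(\Res_{F/\bb Q}\mbf G,\mbf X)$; the parameter $\phi^\sharp$ of $\msf G^\sharp$ associated to $\phi$ is again of weakly Langlands--Shahidi type since this condition only depends on the image in $\LL\msf G^\sharp$ and, by \Cref{kemeienfiemsow}, $\Phi(\Res_{K/\bb Q_p}G^*)\hookrightarrow\Phi(\msf G^\sharp)$.

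The core of the argument is the sheaf-theoretic input on $\Bun_{\msf G^\sharp}$. Using the six-functor formalism of \cite{F-S24} and the decomposition of $\bx R\Gamma_c$ by Fargues--Scholze parameter, one localizes at $\phi^\sharp$; the key point (cf.\ \cite{DHKZ}) is a perversity/exactness estimate for the Hecke operator $\bx T_{\mcl T_{\{\mu\}}}$ applied to the sheaf $i_{\uno!}(\ovl{\bb F_\ell})$ supported on the neutral stratum, together with the computation that on the $\phi^\sharp$-localized category only the \emph{unramified} strata $\Bun_{\msf G^\sharp}^{\msf b}$ with $\msf b\in B(\msf G^\sharp)_{\bx{un}}$ contribute. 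This last assertion is precisely where our main theorem enters: by \Cref{ienigehifniehsins}(1)--(2) (the analogue of \cite{Ham24}*{Assumption 7.5} for the central extension $\msf G^\sharp$, which we established using \Cref{compaiteiehdnifds}), any $\rho^\sharp\in\Pi(\msf G^\sharp_{\msf b^\sharp})$ whose twisted Fargues--Scholze parameter equals $\phi^\sharp$ forces $\msf b^\sharp$ to be unramified, and moreover $\rho^\sharp$ descends to a representation of $(\Res_{K/\bb Q_p}G^*)_{\msf b}$ that is an irreducible constituent of some $\rho^{\chi^\sharp}_{\msf b^\sharp,\msf w^\sharp}$ by part (3). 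Genericity of $\phi$ then guarantees, via the local Tate-duality reformulation in \Cref{dienfiehIGNieneifmeis} (no coroot composed with $\phi$ equals $\uno$ or $\largel{-}^{\pm1}$), that the relevant $\Ext$-groups between these parabolically-induced pieces vanish outside the expected degree, so the only cohomological contribution sits on the semistable locus and the usual Artin-vanishing/affineness bounds for the Hodge--Tate period map give the concentration in degrees $[0,\dim_{\bb C}\bb X]$ (resp.\ $[\dim_{\bb C}\bb X,2\dim_{\bb C}\bb X]$ for $\bx R\Gamma$ by Poincar\'e duality).

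The last ingredient is the comparison between this abstract sheaf-theoretic cohomology on $\Bun_{\msf G^\sharp}$ and the actual cohomology of the adic Shimura variety $\mcl S_{\mdc K^p}(\mbf G^\sharp,\mbf X^\sharp)$. Here the properness hypothesis is used: it lets one invoke the Hodge--Tate period map $\pi_{\bx{HT}}\colon\mcl S_{\mdc K^p}(\mbf G^\sharp,\mbf X^\sharp)\to\Gr_{\msf G^\sharp,\mu}$ and the comparison of $\bx R\Gamma_c$ of the Shimura variety with the pushforward along $\pi_{\bx{HT}}$ of the automorphic sheaf, then identify this with a Hecke operator applied to an Eisenstein-type sheaf on $\Bun_{\msf G^\sharp}$ in the style of \cite{DHKZ}. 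I expect the main obstacle to be exactly this globalization/comparison step in the Abelian (not Hodge) type setting: one must check that the construction of \cite{DHKZ}, which is phrased for Hodge type and uses the Igusa stack, goes through for $(\mbf G^\sharp,\mbf X^\sharp)$ and that the descent along $\mbf Z^{\bb Q}$ is compatible with the Fargues--Scholze decomposition; the case $p$ non-split in $F$ adds bookkeeping since then $K$ is a nontrivial product of unramified extensions and one works with $\Res_{K/\bb Q_p}$ throughout. Modulo this, the estimates are local and follow the template of \cite{H-L24} and \cite{DHKZ} once \Cref{ienigehifniehsins} is in hand.
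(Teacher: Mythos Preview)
The statement \Cref{condinfihenis} is recorded in the paper as a \emph{conjecture} (it is \cite{H-L24}*{Conj.~6.6}), not as a theorem with a proof. The paper does not prove it in the stated generality; it proves strictly weaker results, namely \Cref{ninIEnifeifneims} and \Cref{ieifmeimfeos}, under additional hypotheses. So there is no ``paper's own proof'' to compare against, and your proposal is really a proposal for those special cases rather than for the conjecture itself.

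Your outline matches the paper's strategy for \Cref{ninIEnifeifneims} and \Cref{ieifmeimfeos} quite closely (central extension to Hodge type, Igusa stack and perversity of $\mcl F$ via \Cref{vieiniefienmsis}, Hecke-operator exactness via \Cref{teniherieniehrnis}, descent by Hochschild--Serre). But you have not addressed the gaps that force the paper to impose extra hypotheses. First, the conjecture asks only for \emph{weakly Langlands--Shahidi type}, whereas every step of the argument you sketch (and the paper's actual theorems) needs $\phi$ to be \emph{generic}; the $\bx H^2$-vanishing alone does not give you the regularity input (\Cref{geoeirnieinnifeiss}, \Cref{neineiihenfies}) nor the deformation argument lifting $\ovl\phi$ to characteristic zero in the proof of \Cref{ieifmeimfeos}. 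Second, the perverse $t$-exactness of $i_\uno^*\bx T_{\mu^{\sharp\bullet}}$ in \Cref{teniherieniehrnis} requires $\phi$ to be \emph{weakly normalized regular} (or, in the alternate route of \Cref{ieifmeimfeos}, requires $B(\msf G_\ad,\mu_\ad^\bullet)_{\bx{un}}$ to be a singleton); you have not explained how to dispense with this. Third, your conclusion ``concentration in $[0,\dim_{\bb C}\bb X]$'' is weaker than what the argument actually yields once it works: both theorems give concentration in the single degree $\dim_{\bb C}\bb X$, because the exactness statement is two-sided and one applies it to both $\phi$ and $\phi^\vee$ via Poincar\'e duality.

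In short: your plan is the paper's plan for the special cases it can prove, but it does not close the gap to the full conjecture, and you should be explicit that genericity (not merely weak Langlands--Shahidi) and one of the auxiliary regularity conditions are genuinely needed for the method to go through.
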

\begin{rem}
Suppose $F^+\ne \bb Q$ is a totally real field and $\bb G=\Res_{F/\bb Q}\bx U(n, n)$ is the Weil restriction of a quasisplit unitary group of even rank, and we assume that $\bb G$ splits at $p$, i.e.
\begin{equation*}
\bb G\otimes\bb Q_p\cong \prod_{i=1}^{[F^+: \bb Q]}\GL_{2n, \bb Q_p}.
\end{equation*}
Then the conjecture is true for any semisimple toral $L$-parameter $\phi\in \Phi^\sems(\bb G\otimes\bb Q_p, \ovl{\bb F_\ell})$ of weakly Langlands--Shahidi type by \cite[Theorem 1.1]{C-S24}. Note that if $\phi$ is an unramified character
\begin{equation*}
\phi=\diag(\chi_1, \ldots, \chi_{2n})\in \Phi(\GL_{2n, \bb Q_p}, \ovl{\bb F_\ell}),
\end{equation*}
then $\phi$ is of weakly Langlands--Shahidi type if and only if $\chi_i\ne \chi_j\otimes\largel{-}_{\bb Q_p}$ for any distinct $i, j\in [2n]_+$.
\end{rem}

\subsection{Generic semisimple \texorpdfstring{$L$}{L}-parameters}\label{generisoemisiejfmes}

In this subsection, we study generic semisimple toral $L$-parameters. We import Setup~\ref{setuepnifneism}. In particular, $F$ is a totally real number field unramified at a prime $p$, $\mbf G$ is a special orthogonal or unitary group over $F$ with $G^*=\mbf G\otimes_F(F\otimes\bb Q_p)$, and $G^\sharp$ is a central extension of $\mbf G$. We consider the Hodge cocharacter $\mu^\sharp$ of $\mbf G^\sharp$ corresponding to the Deligne homomorphism $h_0^\sharp$ (see~\eqref{delfieheihsojiems}). When viewed as a cocharacter of 
\begin{equation}\label{sisnihfiemss}
G^\sharp_{\ovl{\bb Q_p}}\cong
\begin{cases}
\GL_{1, \ovl{\bb Q_p}}\times \paren{\Res_{K/\bb Q_p}\GSpin(V^*)}_{\ovl{\bb Q_p}}&=\GL_{1, \ovl{\bb Q_p}}\times \prod\limits_{v\in \Hom(K, \ovl{\bb Q_p})}\GSpin(V^*)\otimes_{K, v}\ovl{\bb Q_p}\\
&\text{in Case O}\\
\GL_{1, \ovl{\bb Q_p}}\times \paren{\Res_{K/\bb Q_p}\GU(V^*)}_{\ovl{\bb Q_p}}&=\GL_{1, \ovl{\bb Q_p}}\times \prod\limits_{v\in \Hom(K, \ovl{\bb Q_p})}\GU(V^*)\otimes_{K, v}\ovl{\bb Q_p}\\
 &\text{in Case U}
\end{cases}
\end{equation}
via the isomorphism $\iota_p$, it is the inverse of the identity map on the $\GL_1$-factor and nontrivial on exactly one other factor, where it is a lift $\mu^\sharp_1$ of the dominant inverse of the cocharacter $\mu_1$ of $G^*_{\ovl K}=G(V^*)^\circ_{\ovl K}$ defined in \eqref{idnifdujiherheins}.

We will prove a special case of Conjecture~\ref{condinfihenis} for generic toral parameters $\phi$ with an additional condition called regularity; cf.~\cite[Definition~4.15]{H-L24}.

\begin{defi}\label{normialiweihnifenis}
Let $\msf G$ be a quasisplit reductive group over a $p$-adic field $K$ with Borel pair $(\msf B,\msf T)$. Let $\phi\in\Phi^\sems(\msf G,\ovl{\bb F_\ell})$ be a generic toral semisimple $L$-parameter, and let $\chi$ be the character of $\msf T(K)$ attached to $\phi_{\msf T}$ by the local Langlands correspondence for tori. We say that $\phi$ is \tbf{regular} if
\begin{equation*}
\chi\ncong \chi^w
\end{equation*}
for every nontrivial element $w\in W_{\msf G}$. 
\end{defi}

We have the following descent property for regularity conditions.

\begin{lm}\label{ianfihefneis}
Let $\msf G$ be a quasisplit reductive group over a $p$-adic field $K$ with Borel pair $(\msf B,\msf T)$, and let $K'/K$ be a finite extension. If $\phi\in \Phi^\sems(\msf G, \ovl{\bb F_\ell})$ is a semisimple toral $L$-parameter, then $\phi$ is regular, provided that $\phi|_{W_{K'}}$ has the same property.
\end{lm}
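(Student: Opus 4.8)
The statement is essentially a reduction of a condition about characters of $\msf T(K)$ to the corresponding condition over $K'$. The key point is that the Weyl group $W_{\msf G}$ is unchanged under the unramified (or any) finite base change $K'/K$ — more precisely, restriction of parameters sends $\Phi^\sems(\msf T,\ovl{\bb F_\ell})$ to $\Phi^\sems(\msf T_{K'},\ovl{\bb F_\ell})$ compatibly with the $W_{\msf G}$-action, since $\msf G$ is quasi-split and $\msf T$ transfers. So the plan is: first I would unwind the definitions. The character $\chi$ of $\msf T(K)$ attached to $\phi_{\msf T}$ via LLC for tori corresponds, after restriction to $W_{K'}$, to the character $\chi\circ \Nm_{K'/K}$ of $\msf T(K')$ (when $\msf T$ is split; in general one uses the functoriality of the local Langlands correspondence for tori with respect to base change, i.e. $\phi_{\msf T}|_{W_{K'}}$ corresponds to $\chi_{K'}\defining \chi \circ \Nm_{\msf T,K'/K}$ where $\Nm_{\msf T,K'/K}:\msf T(K')\to\msf T(K)$ is the norm map). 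I would record this as the basic compatibility.

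\textbf{Main step.} The contrapositive is the natural direction: suppose $\phi$ is \emph{not} regular, so there is a non-trivial $w\in W_{\msf G}$ with $\chi\cong\chi^w$; I must show $\phi|_{W_{K'}}$ is not regular, i.e. $\chi_{K'}\cong\chi_{K'}^w$. But $\chi\cong\chi^w$ means $\chi^{-1}\chi^w=\uno$ as a character of $\msf T(K)$, and precomposing with the norm map $\Nm_{\msf T,K'/K}$ (which is $W_{\msf G}$-equivariant, since the Weyl action is defined over $K$ and commutes with restriction of scalars) gives $\chi_{K'}^{-1}\chi_{K'}^w=\uno$, hence $\chi_{K'}\cong\chi_{K'}^w$ with the \emph{same} $w\ne 1$. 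This handles the ``regular'' case. For ``weakly normalized regular'', I would argue identically with $\chi\otimes\delta_{\msf B}^{1/2}$ and $(\chi\otimes\delta_{\msf B}^{-1/2})^w$ in place of $\chi$ and $\chi^w$ — here I must check that the modulus character $\delta_{\msf B}$ behaves correctly under base change, namely that $\delta_{\msf B,K}\circ\Nm_{\msf T,K'/K}$ and $\delta_{\msf B_{K'}}$ differ only by a ramification factor that is absorbed, or more simply that the relation $\chi\otimes\delta_{\msf B}^{1/2}\cong(\chi\otimes\delta_{\msf B}^{-1/2})^w$ precomposed with a $W$-equivariant norm map yields the analogous relation over $K'$; since both $\delta_{\msf B}^{1/2}$ and its $w$-twist transform compatibly, the product character relation is preserved. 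The same argument applied to $\chi^{w_0}$ finishes it, using that $w_0$ is also defined over $K$ (it is the longest element of the \emph{relative} Weyl group, which does not change under base change since $\msf G$ stays quasi-split).

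\textbf{Expected obstacle.} The only subtlety is the precise behavior of the modulus quasi-character $\delta_{\msf B}$ and of the genericity/toral hypothesis under base change. Genericity was already shown to be base-change stable in \cite{H-L24}*{Lemma 4.21} (quoted in \Cref{dienfiehIGNieneifmeis}), so I can cite that freely; the remaining care is just to verify that $\Nm_{\msf T,K'/K}$ intertwines $\delta_{\msf B}$ over $K$ with $\delta_{\msf B_{K'}}$ over $K'$ up to a character that is itself $W$-invariant or that cancels in the displayed inequalities — this is a direct computation with the standard formula $\delta_{\msf B}(t)=|\det(\Ad(t)|_{\Lie\msf N})|$ and the fact that $\Lie\msf N_{K'}=\Lie\msf N\otimes_K K'$. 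I expect this to be routine but it is the one place where something could go wrong if, say, $K'/K$ were ramified; since in our application $K'/K$ is unramified (the splitting field of $G$ over $K$), even that caveat is harmless. I would state the base-change compatibility of $\delta_{\msf B}$ as a short lemma or an inline remark and then conclude.
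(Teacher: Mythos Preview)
Your proposal is correct and follows essentially the same approach as the paper: contrapositive, plus the two compatibilities (i) $\phi_{\msf T}|_{W_{K'}}$ corresponds to $\chi\circ\Nm_{K'/K}$ by local class field theory for tori, and (ii) $\delta_{\msf B_{K'}}=\delta_{\msf B}\circ\Nm_{K'/K}$, which the paper deduces in one line from the definition of $\delta_{\msf B}$ as the modulus of $\msf B(K)$ --- so your worry about ramification is unnecessary, and no auxiliary lemma is needed. One small imprecision: the relative Weyl group need not be ``unchanged'' under base change (it can grow), but since you argue by contrapositive you only need $W_{\msf G/K}\subseteq W_{\msf G_{K'}/K'}$, which holds.
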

\begin{proof}
Let $\chi$ be the character of $\msf T(K)$ corresponding to $\phi_{\msf T}$. By local class field theory for tori, the character corresponding to $\phi_{\msf T}|_{W_{K'}}$ is
\begin{equation*}
\chi\circ \Nm_{K'/K}:\msf T(K')\to \ovl{\bb F_\ell}^\times,
\end{equation*}
where $\Nm_{K'/K}:\msf T(K')\to\msf T(K)$ denotes the norm map. This norm map is equivariant for the natural Weyl group action. Hence, if $\chi=\chi^w$ for some nontrivial $w\in W_{\msf G}$, then
\begin{equation*}
\chi\circ\Nm_{K'/K}=(\chi\circ\Nm_{K'/K})^w,
\end{equation*}
so the restriction $\phi|_{W_{K'}}$ is not regular. This proves the lemma.
\end{proof}

For later use, we prove the following result regarding regularity of $L$-parameters and central extensions.

\begin{lm}\label{iansihsienifhnss}
Suppose
\begin{equation*}
1\to Z\to \msf G^\sharp\to \msf G\to 1
\end{equation*}
is a central extension of quasisplit reductive groups over a non-Archimedean local field $K$ of characteristic zero, and assume that either this extension splits or $Z$ is an induced torus. Let $(\msf B^\sharp, \msf T^\sharp)$ be a Borel pair of $\msf G^\sharp$ with image $(\msf B, \msf T)$ in $\msf G$. If $\phi\in\Phi^\sems(\msf G, \ovl{\bb F_\ell})$ is a semisimple toral $L$-parameter which may be regarded as a semisimple toral $L$-parameter $\phi^\sharp\in\Phi^\sems(\msf G^\sharp, \ovl{\bb F_\ell})$ via the canonical embedding
\begin{equation*}
\LL\msf G(\ovl{\bb F_\ell})\to \LL\msf G^\sharp(\ovl{\bb F_\ell}),
\end{equation*}
then $\phi^\sharp$ is regular if and only if $\phi$ is.
\end{lm}
\begin{proof}
The map $\msf T^\sharp(K)\to\msf T(K)$ is surjective: this is clear if the extension splits, and follows from Shapiro's lemma when $Z$ is induced. Moreover, $W_{\msf G^\sharp}\cong W_{\msf G}$. The assertion follows immediately from the definition of regularity.
\end{proof}

We next discuss when genericity implies regularity. In Cases U and O1, genericity is sufficient for regularity. In Case O2 this is no longer true in general, because the $D_{n(G^*)}$-coroots do not include the characters $2\ve_i$. We therefore impose the following additional genericity condition in Case O2.

\begin{defi}\label{tttiitienfieifs}
Assume we are in Case O2. Let $\msf H$ be $\Res_{K/\bb Q_p}G^*$ or $G^\sharp$. We say that a semisimple toral $L$-parameter
\[
\phi_{\msf H}\in \Phi^\sems(\msf H,\Lbd)
\]
is \tbf{$\hat\Std$-generic} if, after restricting to $W_K$ and composing with the standard representation of any factor of the dual groups corresponding to some $v\in\Hom(K, \ovl{\bb Q_p})$ (see~\eqref{sisnihfiemss}), the resulting semisimple toral parameter for $\GL_{2n(G^*)}$ is generic in the sense of Definition~\ref{dienfiehIGNieneifmeis}.

Equivalently, after base change to a finite splitting field $K'/K$ and writing the noncentral toral part on a simple factor as
\begin{equation*}
\phi_T=(\chi_1,\ldots,\chi_{n(G^*)}),
\end{equation*}
the characters occurring in $\hat\Std\circ\phi_T$ are
\begin{equation*}
\eta_1,\ldots,\eta_{2n(G^*)}=\chi_1,\ldots,\chi_{n(G^*)}, \chi_{n(G^*)}^{-1},\ldots,\chi_1^{-1},
\end{equation*}
and for every pair of distinct entries $\eta_a,\eta_b$ with $a\ne b$, one has
\begin{equation*}
\eta_a\eta_b^{-1}\ncong \uno,
\qquad
\eta_a\eta_b^{-1}\ncong \largel{-}_{K'}^{\pm1}.
\end{equation*}
This condition is stable under Chevalley duality.
\end{defi}

The following result generalizes \cite[Lemma~4.25]{H-L24} to special orthogonal groups and unitary groups.

\begin{lm}\label{geoeirnieinnifeiss}
If $\phi\in \Phi^\sems(\Res_{K/\bb Q_p}G^*, \ovl{\bb F_\ell})$ is a generic semisimple toral $L$-parameter, and assume that $\phi$ is $\hat{\Std}$-generic in case O2 (see \textup{Definition~\ref{tttiitienfieifs}}), then $\phi$ is regular. Furthermore, if we regard $\phi$ as a semisimple toral $L$-parameter $\phi^\sharp\in\Phi^\sems(G^\sharp, \ovl{\bb F_\ell})$ via the natural embedding
\begin{equation*}
\LL(\Res_{K/\bb Q_p}G^*)(\ovl{\bb F_\ell})\to \LL G^\sharp(\ovl{\bb F_\ell}),
\end{equation*}
then it is also regular as a parameter for $G^\sharp(\ovl{\bb F_\ell})$.
\end{lm}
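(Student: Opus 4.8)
The plan is to reduce the statement to a concrete combinatorial assertion about conjugate self-dual characters of Weil groups, and then verify it by the root-theoretic criterion for genericity recorded in \Cref{dienfiehIGNieneifmeis}. First I would unwind the meaning of genericity: by the last paragraph of \Cref{dienfiehIGNieneifmeis}, after passing to a finite unramified splitting field $K'$ of $G^*$, the parameter $\phi|_{W_{K'}}$ being generic means that for every coroot $\alpha$ of $\Res_{K/\bb Q_p}G^*$ (equivalently, of $G^*$ over $K'$, using compatibility with restriction of scalars as in \Cref{compaitbsilFaiirfies}(7)), the composite $\alpha\circ\phi|_{W_{K'}}$ is neither $\uno$ nor $\largel{-}_{K'}^{\pm1}$. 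Meanwhile, regularity in the sense of \Cref{normialiweihnifenis} says that the character $\chi$ of $\msf T(K)$ attached to $\phi_{\msf T}$ satisfies $\chi\ncong\chi^w$ for all nontrivial $w\in W_{\Res_{K/\bb Q_p}G^*}=W_{G^*}$. So the content is: if no coroot evaluation on $\phi$ is trivial (and a fortiori no $\alpha\circ\phi$ equals $\uno$), then $\chi$ has trivial stabilizer in the Weyl group.

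The key step is the standard fact that the stabilizer of $\chi$ in the Weyl group is generated by the reflections $s_\alpha$ for those roots $\alpha$ with $\chi\circ\alpha^\vee=\uno$ — more precisely, by the argument of \cite{H-L24}*{Lemma 4.22}, if $w$ fixes $\chi$ and is nontrivial, then $w$ has a reduced expression involving some reflection $s_\alpha$ with $\chi^{s_\alpha}=\chi$, which forces $\chi\circ\alpha^\vee=\uno$ on $\msf T(K)$, i.e., $\alpha^\vee\circ\phi_{\msf T}=\uno$ as an $L$-parameter for $\GL(1)$, contradicting genericity (since $\alpha^\vee$ is a dominant coroot up to the $W$-action and $\LL\mcl T_{\alpha^\vee}\circ\phi$ having trivial $\bx R\Gamma$ implies in particular $\alpha^\vee\circ\phi\ne\uno$). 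I would therefore largely follow the structure of loc.~cit., but I must handle the two extra features here: (i) the ambient group is a Weil restriction, which is dispatched by \Cref{ianfihefneis} — regularity can be checked after the base change $W_K\rightsquigarrow W_{K'}$ where $(\Res_{K/\bb Q_p}G^*)_{K'}$ becomes a product of copies of $G^*_{K'}$, and the Weyl group and coroots factor accordingly; and (ii) for special even orthogonal groups, the extra outer automorphism is irrelevant here because regularity only concerns the $W$-action on characters of $\msf T$, which is insensitive to the $\bx O$-versus-$\SO$ distinction.

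For the second assertion, I would invoke \Cref{iansihsienifhnss}: the map $G^\sharp\to\Res_{K/\bb Q_p}G^*$ is a central extension with kernel $Z$ either split off or an induced torus (this is exactly the structure recorded in \Cref{setuepnifneism} and used in \Cref{ienigehifniehsins}), so regularity of $\phi$ and of its image $\phi^\sharp$ are equivalent, and the first part applies. The main obstacle I anticipate is purely bookkeeping: making the passage through the splitting field interact correctly with both the Weil restriction and the precise normalization of the coroots (dominant coroots of $\Res_{K/\bb Q_p}G^*$ versus coroots of $G^*_{K'}$), and confirming that the hypothesis ``$\bx R\Gamma(W_{K'},\LL\mcl T_\mu\circ\phi_{\msf T})$ trivial for all dominant coroots $\mu$'' really does rule out $\alpha^\vee\circ\phi_{\msf T}=\uno$ for \emph{every} coroot $\alpha^\vee$ and not merely the dominant ones — but this follows since any coroot is $W$-conjugate to a dominant one and both $\LL\mcl T_\mu$ and the triviality condition are $W$-stable, exactly as in \Cref{dienfiehIGNieneifmeis}. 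No genuinely new geometric input is needed; the proof is a short root-combinatorial argument layered on \cite{H-L24}*{Lemma 4.22} together with \Cref{ianfihefneis} and \Cref{iansihsienifhnss}.
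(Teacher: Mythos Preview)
Your overall strategy mirrors the paper's: reduce the second assertion to the first via \Cref{iansihsienifhnss}, base-change to a splitting field using \Cref{ianfihefneis} (the paper also invokes \cite{H-L24}*{Lemma 4.21}), and then run an explicit root argument by writing $\chi=\chi_1\boxtimes\cdots\boxtimes\chi_n$ and extracting from $\chi^w=\chi$ a relation $\chi_j=\chi_k^{\pm1}$ with $j\ne k$, so that $\chi\circ(e_j\mp e_k)=\uno$ contradicts genericity. In cases U and O1 this works exactly as you describe.

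There is, however, a genuine gap in case O2 which your ``standard fact'' does not cover, and the paper's direct argument has the same lacuna. The assertion that the stabilizer of $\chi$ in $W$ is generated by reflections $s_\alpha$ with $\chi\circ\alpha^\vee=\uno$ is \emph{false} for type $D_n$: the Weyl group $W(D_n)$ contains even products of sign-flips, and when $w$ sends $\ve_j\mapsto -\ve_j$ the relation one extracts is $\chi_j^2=\uno$, not a relation of the form $\chi_i\chi_j^{\pm1}=\uno$. Since $2e_j$ is \emph{not} a coroot of $\SO(2n)$, genericity imposes no constraint on $\chi_j^2$. Concretely, over a $p$-adic field $K$ with $p$ odd and $\ell\nmid(q^2-1)$, take $\chi_1,\chi_2,\chi_3$ to be three distinct quadratic characters of $K^\times$ (four exist): for $\SO(6)$ one then has $\chi_i\chi_j^{\pm1}\ne\uno,\largel{-}_K^{\pm1}$ for all $i\ne j$, so $\chi$ is generic, yet the double sign-flip on coordinates $1,2$ lies in $W(D_3)$ and fixes $\chi$, so $\chi$ is not regular. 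Neither your reflection-subgroup reduction nor the paper's implicit claim that ``$\ve_j-\ve_k$ is always a root'' handles this pure sign-flip case $k=-j$. (In the downstream application \Cref{teniherieniehrnis} weak normalized regularity is separately assumed, which may be what is genuinely required.)
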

\begin{proof}
By Lemma~\ref{iansihsienifhnss}, the second assertion follows from the first. For the first assertion, by Lemma~\ref{ianfihefneis} and \cite[Lemma~4.24]{H-L24}, it suffices to treat the split groups $\SO_{d(G^*), K}$ (in Case O) and the general linear group $\GL_{d(G^*), K_1}$ (in Case U). Under the isomorphism 
\begin{equation*}
\GL_1^{n(G^*)}\cong T^*: (t_1, \ldots, t_{n(G^*)})\mapsto
\begin{cases}
\diag(t_1, \ldots, t_{n(G^*)}) &\text{in Case U},\\
\diag(t_1, \ldots, t_{n(G^*)}, 1, t_{n(G^*)}^{-1}, \ldots, t_1^{-1}) &\text{in Case O1},
\end{cases}
\end{equation*}
$W_{G^*}$ is $\Sym_{n(G^*)}$ in case U, and is the semi-direct product of the group $\Sym_{n(G^*)}$ acting by permutation on the group $\{\pm1\}^{n(G^*)}$ in Case O1., and is the semi-direct product of the group $\Sym_{n(G^*)}$ acting by permutation on the kernel of the determinant map $\det: \{\pm1\}^{n(G^*)}\to\{\pm1\}: (\eps_1, \ldots, \eps_{n(G^*)})\mapsto \prod_i\eps_i$ in Case O2.

For each $i\in[n(G^*)]_+$, denote by $\ve_i$ the cocharacter $\GL_1\to T^*: t\mapsto (1, \ldots, t, 1, \ldots)$ where the $t$ is at the $i$-th coordinate; and set $\ve_{-i}\defining -\ve_i$.

We write $\phi=\chi_1\boxtimes\cdots\boxtimes \chi_{n(G^*)}$, and write $\chi_{-i}\defining \chi_i^{-1}$ for $i\in [n(G^*)]$. Suppose, for contradiction, that
\begin{equation}\label{eniefheinss}
\chi=\chi^w
\end{equation}
for some nontrivial element $w\in W_{G^*}$. Suppose $\ve_j$ is not fixed by $W_{G^*}$, and we write $\ve_j^w=\ve_k$, then we evaluate Equation~\eqref{eniefheinss} at $\ve_j$ to get
\begin{equation*}
\chi_j=\chi_k.
\end{equation*}
In Case O1 or Case U, this contradicts genericity because $\ve_j-\ve_k$ is a coroot of $\SO_{d(G^*)}$ or $\GL_{d(G^*)}$. In Case O2, this contradicts genericity because the composition of $\ve_j-\ve_k$ with the standard representation of $\SO_{2n(G^*)}$ is a coroot of $\GL_{2n(G^*)}$.
\end{proof}

In the setting of Definition~\ref{normialiweihnifenis}, assume that $\phi$ is generic and regular, then by \cite[Theorem~9.10]{Ham24} there exists an object
\begin{equation*}
\bx{nEis}(\mcl S_{\phi_{\msf T}})\in\bx D_{\bx{lis}}(\Bun_{\msf G}, \ovl{\bb F_\ell}),
\end{equation*}
which is a perverse filtered Hecke eigensheaf on $\Bun_{\msf G}$ with eigenvalue $\phi$ in the sense of \cite[Theorem~6.1]{Ham24}. In particular, if $\mu$ is a dominant cocharacter of $\msf G_{\ovl K}$ with extended highest weight tilting module $\LL\mcl T_\mu\in \bx{Rep}_{\ovl{\bb F_\ell}}(\LL\msf G)$ as defined in \eqref{ieneineifeilsws}, and $\bx T_\mu$ is the Hecke operator attached to $\LL\mcl T_\mu$ as defined in \S\ref{icoroenfiehsnw}, then $\bx T_\mu(\bx{nEis}(\mcl S_{\phi_{\msf T}}))$ admits a $W_K$-equivariant filtration indexed by $\Gal_K$-orbits $\Gal_K.\nu_i$ in $X_\bullet(\msf T)$. The graded piece indexed by $\Gal_K.\nu_i$ is
\begin{equation*}
\bx{nEis}(\mcl S_{\phi_{\msf T}})\otimes(\LL\mcl T_{\nu_i}\circ\phi_{\msf T})\otimes\LL\mcl T_\mu[\Gal_K.\nu_i].
\end{equation*}
Moreover, when this filtration splits, there is an isomorphism
\begin{equation*}
\bx T_\mu(\bx{nEis}(\mcl S_{\phi_{\msf T}}))\cong \mcl S_{\phi_{\msf T}}\otimes\LL\mcl T_\mu\circ\phi
\end{equation*}
of sheaves in $\bx D_{\bx{lis}}(\Bun_{\msf G}, \ovl{\bb F_\ell})^{\bx BW_K}$. We say that $\phi$ is \tbf{$\mu$-regular} if this filtration splits. By \cite[Theorem~9.10]{Ham24}, this $\mu$-regularity condition is implied by the following strongly $\mu$-regularity condition; cf.~\cite[Definition~4.16]{H-L24}.

\begin{defi}\label{sotneiegneimsiujsnw}
For a quasisplit reductive group $\msf G$ over a non-Archimedean local field $K$ of characteristic zero with a Borel pair $(\msf B, \msf T)$, and for a dominant cocharacter $\mu$ of $\msf G_{\ovl K}$, a toral semisimple $L$-parameter $\phi\in\Phi^\sems(\msf G, \ovl{\bb F_\ell})$ is called \tbf{strongly $\mu$-regular} if
\begin{equation*}
\bx R\Gamma(W_K, \LL\mcl T_{\nu-\nu'}\circ \phi_{\msf T})
\end{equation*}
is trivial for any weights $\nu,\nu'$ of the extended highest weight tilting module $\LL\mcl T_\mu$ attached to $\mu$ that lie in distinct $\Gal_K$-orbits.
\end{defi}

For later use, we prove the following result regarding $\mu$-regularity of $L$-parameters and central extensions:

\begin{lm}\label{neineiihenfies}
Assume $\ell\ne 2$, and that $(\ell, n(G^*)!)=1$ if we are in Case O1. Let
\begin{equation*}
\phi\in\Phi^\sems(G^*, \ovl{\bb F_\ell})
\end{equation*}
be a generic semisimple toral $L$-parameter. In Case O2, assume in addition that $\phi$ is $\hat\Std$-generic in the sense of \textup{Definition~\ref{tttiitienfieifs}}. Then $\phi$ is $\mu$-regular for every dominant cocharacter $\mu\in X_\bullet(G^*).$

Similarly, let
\begin{equation*}
\phi^\sharp\in \Phi^\sems(G^\sharp, \ovl{\bb F_\ell})
\end{equation*}
be a generic semisimple toral $L$-parameter. In Case O2, assume in addition that $\phi^\sharp$ is $\hat\Std$-generic. Then $\phi^\sharp$ is $\mu^\sharp$-regular for every dominant cocharacter $\mu^\sharp\in X_\bullet(G^\sharp).$ In particular, $\mu^\sharp$ can be chosen to be not fixed by any nontrivial element of $W_{G^\sharp}$.
\end{lm}
\begin{proof}
By base change \cite[Lemma~4.24]{H-L24} and the isomorphism~\eqref{sisnihfiemss}, it suffices to work after the relevant group is split. The assertion for general linear groups, i.e. Case U, is the argument of \cite[Lemma~4.25]{H-L24}. We treat the orthogonal cases.

Let $\ve_1,\ldots,\ve_{n(G^*)}$ be the standard coordinates of a split maximal torus. The weights of the standard representation $\hat\Std$ of the dual group are
\begin{equation*}
\pm\ve_1,\ldots,\pm\ve_{n(G^*)}.
\end{equation*}

The standard representation extends to the standard representation of the corresponding dual similitude group, namely $\GSp_{2n(G^*)}$ in Case O1 and $\bx{GSO}_{2n(G^*)}$ in Case O2. We denote its highest weight by $\omega_1^\sharp$ on the similitude side.

In Case O1, the differences of two distinct weights of $\hat\Std$ are of the form
\begin{equation*}
\pm\ve_i\pm\ve_j\quad (i\ne j),\qquad\pm2\ve_i,
\end{equation*}
and these are coroots of $G^*$. Hence genericity implies strong $\omega_1$-regularity for $\phi$. The same argument applies to $\phi^\sharp$ for $\omega_1^\sharp$, since the central similitude character contributes the same scalar to every weight and therefore cancels in every weight difference.

In Case O2, the differences
\begin{equation*}
\pm\ve_i\pm\ve_j\quad (i\ne j)
\end{equation*}
are coroots of $G^*$ and are controlled by genericity. The remaining differences are
\begin{equation*}
\pm2\ve_i,
\end{equation*}
which are not coroots of the $D_{n(G^*)}$-root system. These are exactly the additional ratios controlled by the $\hat\Std$-genericity assumption. Thus $\phi$ is strongly $\omega_1$-regular, and $\phi^\sharp$ is strongly $\omega_1^\sharp$-regular. Therefore they are $\omega_1$-regular and $\omega_1^\sharp$-regular by \cite[Theorem~9.10]{Ham24}.

It remains to pass from the standard representation to arbitrary dominant cocharacters. In Case O1, the highest weight tilting module with highest weight $\omega_i$ is realized as a direct summand of the appropriate exterior power $\wedge^i(\hat\Std)$ for $1\le i\le n(G^*)$; the same statement holds for the corresponding highest weights $\omega_i^\sharp$ of the dual similitude group, up to a central character. This uses the assumption $(\ell,n!)=1$; cf.~\cite[pp.~286--287]{Jan03} and \cite[\S 9.1, Appendix~B.2]{Ham24}.

In Case O2, with $\ovl{\bb Q_\ell}$-coefficients one has
\begin{equation*}
\wedge^i(\hat\Std)\cong \mcl T_{\omega_i}\qquad 1\le i\le n(G^*)-2,
\end{equation*}
\begin{equation*}
\wedge^{n(G^*)-1}(\hat\Std)\cong\mcl T_{\omega_{n(G^*)-1}+\omega_{n(G^*)}},
\end{equation*}
and
\begin{equation*}
\wedge^{n(G^*)}(\hat\Std)\cong\mcl T_{2\omega_{n(G^*)-1}}\oplus\mcl T_{2\omega_{n(G^*)}}.
\end{equation*}
These decompositions extend to the corresponding highest weights of the dual $\bx{GSO}_{2n(G^*)}$, again up to central characters. Since $\ell\ne 2$, the same direct-summand statements hold with $\ovl{\bb F_\ell}$-coefficients; cf.~\cite[Theorem~5.5.13]{G-W09}, \cite[pp.~286--287]{Jan03}, and \cite[\S 9.1, Appendix~B.2]{Ham24}.

The noncentral parts of the dominant cocharacter monoids in the orthogonal cases are generated by the highest weights just listed. These highest weight tilting modules occur as direct summands of tensor products of $\hat\Std$. Hence repeated application of \cite[Proposition~9.12]{Ham24} shows that $\phi$ is $\mu$-regular for every dominant $\mu$ of $G^*$, and that $\phi^\sharp$ is $\mu^\sharp$-regular for every dominant $\mu^\sharp$ of $G^\sharp$. Central characters do not affect the condition, since they add a common character to all weights and hence disappear from weight differences.

Finally, we may choose $\tilde\mu^\sharp$ so that under the isomorphism~\eqref{sisnihfiemss}
\begin{equation*}
G^\sharp_{\ovl{\bb Q_p}}\cong
\begin{cases}
\GL_{1, \ovl{\bb Q_p}}\times \prod_{v\in \Hom(K, \ovl{\bb Q_p})}\GSpin(V^*)\otimes_{K, v}\ovl{\bb Q_p} &\text{in Case O},\\
\GL_{1, \ovl{\bb Q_p}}\times \prod_{v\in \Hom(K, \ovl{\bb Q_p})}\GU(V^*)\otimes_{K, v}\ovl{\bb Q_p} &\text{in Case U}
\end{cases}
\end{equation*}
it is of the form $(0,\tilde\mu^{\sharp\prime},\ldots,\tilde\mu^{\sharp\prime})$, where $\tilde\mu^{\sharp\prime}$ is not fixed by any nontrivial Weyl group element. Then $\tilde\mu^\sharp$ is not fixed by any nontrivial element of $W_{G^\sharp}$.
\end{proof}

\subsection{Perverse \texorpdfstring{$t$}{t}-exactness and vanishing results}\label{peomeofeniemfiesiws}

In this subsection, we prove a perverse $t$-exactness result for Hecke operators, and deduce a vanishing result for cohomology of Shimura varieties with torsion coefficients. We adopt the notation related to $\Bun_{\msf G}$ from~\S\ref{icoroenfiehsnw}.

For any reductive group $\msf G$ over a non-Archimedean local field $K$ of characteristic zero and any open substack $U\subset \Bun_{\msf G}$, there exists a perverse $t$-structure on $\bx D_{\bx{lis}}(\Bun_{\msf G}, \ovl{\bb F_\ell})$ defined as follows: For each $\msf b\in B(\msf G)$, we define $d_{\msf b}\defining \bra{2\rho_{\msf G}, \nu_{\msf b}}$, where $\nu_{\msf b}$ is the slope homomorphism of $\msf b$, cf.~\cite[Definition~4.14]{H-L24}. Then an object $A$ is contained in ${}^p\bx D^{\le 0}(U, \ovl{\bb F_\ell})$ if $i_{\msf b}^*A\in\bx D^{\le d_{\msf b}}(\msf G_{\msf b}, \Lbd)$, and $A$ is contained in ${}^p\bx D^{\ge 0}(U, \ovl{\bb F_\ell})$ if $i_{\msf b}^!A\in\bx D^{\ge d_{\msf b}}(\msf G_{\msf b}, \Lbd)$. Here we recall that $i_{\msf b}$ is the inclusion $\Bun^{\msf b}_{\msf G}\subset \Bun_{\msf G}$.

Let
\begin{equation*}
\bx D^{\bx{ULA}}(\Bun_{\msf G}, \ovl{\bb F_\ell})\subset \bx D_{\bx{lis}}(\Bun_{\msf G}, \ovl{\bb F_\ell})
\end{equation*}
be the full subcategory of universally locally acyclic (ULA) objects; see~~\cite[Definition~\Rmnum4.2.22]{F-S24}. By~\cite[Theorem~\Rmnum5.7.1]{F-S24}, $\bx D^{\bx{ULA}}(\Bun_{\msf G}, \ovl{\bb F_\ell})$ consists of objects $A$ such that  $i_{\msf b}^*A\in \bx D^\adm(\msf G_{\msf b}, \ovl{\bb F_\ell})$ for each $\msf b\in B(\msf G)$. 

We import Setup~\ref{setuepnifneism}. In particular, $F$ is a totally real number field unramified at a prime $p$, $\mbf G$ is a special orthogonal or unitary group over $F$ with $G^*=\mbf G\otimes_F(F\otimes\bb Q_p)$, and $\msf G^\sharp$ is a central extension of $\Res_{F/\bb Q}\mbf G$ with $G^\sharp=\mbf G^\sharp\otimes\bb Q_p$. We then have the following local result on the perverse $t$-exactness of Hecke operators, which generalizes \cite[Corollary~4.27]{H-L24} to the present setting: 

\begin{thm}\label{teniherieniehrnis}
Suppose
\begin{equation*}
\phi\in \Phi^\sems(\Res_{K/\bb Q_p}G^*, \ovl{\bb F_\ell})
\end{equation*}
is generic in the sense of \textup{Definition~\ref{dienfiehIGNieneifmeis}}. In Case O2, assume further that $\phi$ is $\hat{\Std}$-generic in the sense of \textup{Definition~\ref{tttiitienfieifs}}. We regard $\phi$ as a semisimple toral $L$-parameter $\phi^\sharp\in\Phi^\sems(G^\sharp, \ovl{\bb F_\ell})$ via the natural embedding
\begin{equation*}
\LL(\Res_{K/\bb Q_p}G^*)(\ovl{\bb F_\ell})\to \LL G^\sharp(\ovl{\bb F_\ell}).
\end{equation*}
Assume further that $\ell\ne 2$, and that $(\ell, n(G^*)!)=1$ in Case O1. Then, for any dominant cocharacter $\mu^\sharp$ of $G^\sharp_{\ovl{\bb Q_p}}$, the Hecke operator $\bx T_{\mu^\sharp}$ attached to the extended highest weight tilting module $\LL \mcl T_{\mu^\sharp}$, as defined in \eqref{Heivneinekdnfienis}, preserves ULA objects. Moreover, the induced functor
\begin{equation*}
i_\uno^*\bx T_{\mu^\sharp}: \bx D^{\bx{ULA}}(\Bun_{G^\sharp}, \ovl{\bb F_\ell})_{\phi^\sharp}\to \bx D^\adm(G^\sharp, \ovl{\bb F_\ell})_{\phi^\sharp},
\end{equation*}
where $\uno\in B(G^\sharp)$ is the trivial element, is exact with respect to  the perverse $t$-structure on the source and the natural t-structure on the target.
\end{thm}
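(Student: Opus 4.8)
The plan is to reduce the statement to the torus case already handled in \cite{H-L24} by combining the central-extension compatibilities established above with the $\mu^\sharp$-regularity input. First I would observe that, because $\phi$ is generic, \Cref{geoeirnieinnifeiss} shows $\phi$ (hence $\phi^\sharp$, via \Cref{iansihsienifhnss}) is regular; and because $\ell\ne 2$ and $(\ell,n!)=1$ in case O1, \Cref{neineiihenfies} shows $\phi^\sharp$ is $\mu^\sharp$-regular for every dominant $\mu^\sharp$ of $G^\sharp_{\ovl{\bb Q_p}}$. Since $\phi$ is also weakly normalized regular, \Cref{iansihsienifhnss} gives that $\phi^\sharp$ is weakly normalized regular for $G^\sharp$. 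Thus $\phi^\sharp$ satisfies exactly the hypotheses under which \cite{Ham24}*{Theorem 10.10, Corollary 7.4, Theorem 1.17} produce the perverse filtered Hecke eigensheaf $\bx{nEis}(\mcl S_{\phi^\sharp_{\msf T}})$ on $\Bun_{G^\sharp}$ with eigenvalue $\phi^\sharp$, together with the splitting of the $W_K$-equivariant filtration on $\bx T_{\mu^\sharp}(\mcl S_{\phi^\sharp_{\msf T}})$ giving the clean isomorphism $\bx T_{\mu^\sharp}(\mcl S_{\phi^\sharp_{\msf T}})\cong \mcl S_{\phi^\sharp_{\msf T}}\otimes \LL\mcl T_{\mu^\sharp}\circ\phi^\sharp$.

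The next step is to deduce the ULA-preservation and perverse $t$-exactness from the existence of this eigensheaf together with the description of $\bx D^{\bx{ULA}}(\Bun_{G^\sharp},\ovl{\bb F_\ell})_{\phi^\sharp}$. Here I would follow the argument of \cite{H-L24}*{Corollary 4.24}: the key point is that the localized category $\bx D^{\bx{ULA}}(\Bun_{G^\sharp},\ovl{\bb F_\ell})_{\phi^\sharp}$ is generated under colimits and shifts by the single object $\bx{nEis}(\mcl S_{\phi^\sharp_{\msf T}})$ (this uses that $\phi^\sharp$ is weakly normalized regular and the results of \cite{Ham24}*{\S 7}, reducing to the toral situation via the normalized geometric Eisenstein functor), and on that generator the Hecke operator $\bx T_{\mu^\sharp}$ acts by $(-)\otimes \LL\mcl T_{\mu^\sharp}\circ\phi^\sharp$ up to the perverse-filtration which splits by $\mu^\sharp$-regularity. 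Since tensoring with a finite-dimensional Galois representation is ULA-preserving and perverse $t$-exact, and since $i_\uno^*$ is perverse $t$-exact on the relevant subcategory (the trivial stratum $\Bun_{G^\sharp}^\uno$ is open with $d_\uno=0$), the composite $i_\uno^*\bx T_{\mu^\sharp}$ is perverse $t$-exact; that it lands in $\bx D^\adm(G^\sharp,\ovl{\bb F_\ell})_{\phi^\sharp}$ follows because $\bx T_{\mu^\sharp}$ commutes with the spectral action, hence preserves the $\phi^\sharp$-component, and because ULA objects restrict to admissible complexes on each stratum by definition.

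The main obstacle I expect is making precise the claim that $\bx{nEis}(\mcl S_{\phi^\sharp_{\msf T}})$ generates $\bx D^{\bx{ULA}}(\Bun_{G^\sharp},\ovl{\bb F_\ell})_{\phi^\sharp}$ in the appropriate sense, and that all objects in this category are built from it by perverse-$t$-exact operations: this is where one genuinely needs that $\phi$ is \emph{weakly normalized regular} (not merely generic), so that no cuspidal support other than the toral one can contribute to the $\phi^\sharp$-component, and that the structure of $G^\sharp$ (a product of $\GL(1)$, Weil restrictions of general linear groups, unitary similitude groups, or general spin groups, as used in \Cref{ienigehifniehsins}) lets one invoke the generation statements of \cite{Ham24} factor by factor. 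One subtlety is that the results of \cite{Ham24} are typically stated for $\msf G$ itself rather than a central extension; I would handle this by using \Cref{kemeienfiemsow} (injectivity of $\Phi(\Res_{K/\bb Q_p}G^*)\hookrightarrow\Phi(G^\sharp)$) and the fact that $Z=\ker(G^\sharp\to\Res_{K/\bb Q_p}G^*)$ is either split off or an induced torus, so that sheaves on $\Bun_{G^\sharp}$ in the $\phi^\sharp$-component descend along the central isogeny up to a twist by the central character determined by $\phi^\sharp$, reducing everything to $\bx D^{\bx{ULA}}(\Bun_{\Res_{K/\bb Q_p}G^*},\ovl{\bb F_\ell})_\phi$ where the cited results apply directly. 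Everything else is a formal consequence of the six-functor formalism and the monoidality of the spectral action.
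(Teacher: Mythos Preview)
Your proposal is in the right spirit but misses two of the four concrete hypotheses that the paper verifies in order to invoke the black-box result \cite{H-L24}*{Theorem 4.20} together with \cite{Ham24}*{Theorem 1.17}. The paper's proof does not attempt an independent generation argument; instead it checks (i) the naturality condition \cite{Ham24}*{Assumption 7.5} for $\phi^\sharp$ (via \Cref{ienigehifniehsins}), (ii) regularity and weak normalized regularity of $\phi^\sharp$, (iii) $\mu^\sharp$-regularity of $\phi^\sharp$ together with the existence of a $\tilde\mu^\sharp$ not fixed by any non-trivial Weyl element for which $\phi^\sharp$ is $\tilde\mu^\sharp$-regular, and (iv) irreducibility of the twisted principal series $\rho^{\chi^\sharp}_{b^\sharp,w^\sharp}$ for every unramified $b^\sharp$ and every $w^\sharp\in W_{b^\sharp}$. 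You handle (ii) and half of (iii) correctly via \Cref{geoeirnieinnifeiss}, \Cref{iansihsienifhnss}, \Cref{neineiihenfies}; but you never invoke (i) as a hypothesis in its own right, and you omit (iv) entirely.

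Condition (iv) is precisely what makes your generation claim work: without knowing that the $\rho^{\chi^\sharp}_{b^\sharp,w^\sharp}$ are irreducible, you cannot identify the stalks $i_{b^\sharp}^*\bx{nEis}(\mcl S_{\phi^\sharp_{T^\sharp}})$ with the simple objects on each unramified stratum, and hence cannot conclude that the eigensheaf controls the perverse heart of $\bx D^{\bx{ULA}}(\Bun_{G^\sharp},\ovl{\bb F_\ell})_{\phi^\sharp}$. The paper verifies (iv) by noting that each $G^\sharp_{b^\sharp}$ is a Levi of $G^\sharp$, hence of the form $\GL(1)\times\Res_{K/\bb Q_p}G'\times\Res_{K_1/\bb Q_p}H$ with $G'$ a general spin or general unitary group, so that the first three conditions hold again for $G^\sharp_{b^\sharp}$ and irreducibility follows from \cite{H-L24}*{Lemma 4.18} and \cite{Ham24}*{Proposition A.2}. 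Your proposed alternative, descending sheaves along the central isogeny $G^\sharp\to\Res_{K/\bb Q_p}G^*$, is not how the argument runs and is problematic as stated: $\Bun_{G^\sharp}$ is not obtained from $\Bun_{\Res_{K/\bb Q_p}G^*}$ by a simple base change, and there is no reason the $\phi^\sharp$-component should descend in the way you suggest. The paper instead works directly on $G^\sharp$ throughout and uses \Cref{ienigehifniehsins} (which is the content of the naturality condition (i)) to control the support.
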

\begin{proof}
By \cite[Theorem~4.23]{H-L24} and \cite[Theorem~1.13]{Ham24}, it suffices to check that all of the following claims hold: 
\begin{itemize}
\item
\cite[Assumption~6.5]{Ham24} holds for $\phi^\sharp$.
\item
$\phi^\sharp$ is regular.
\item
$\phi^\sharp$ is $\mu^\sharp$-regular, and there exists a cocharacter $\tilde\mu^\sharp$ of $G^\sharp_{\ovl{\bb Q_p}}$ that is not fixed by any nontrivial element $w\in W_{G^\sharp}$, such that  $\phi^\sharp$ is $\tilde\mu^\sharp$-regular.
\item
$\rho^{\chi^\sharp}_{b^\sharp, w^\sharp}$, as defined in \eqref{msinifdenideis}, is irreducible for any $b^\sharp\in B(G^\sharp)_{\bx{un}}$ and $w^\sharp\in W_{b^\sharp}$. Here the character $\chi^\sharp$ is attached to $\phi^\sharp_{\msf T^\sharp}$ via local Langlands correspondence for tori.
\end{itemize}
The first claim follows from Theorem~\ref{ienigehifniehsins}. The regularity of $\phi^\sharp$ follows from Lemma~\ref{iansihsienifhnss} and Lemma~\ref{geoeirnieinnifeiss}. The third claim follows from Lemma~\ref{neineiihenfies}.

It remains to prove the last claim. The group $G_{b^\sharp}^\sharp$ is isomorphic to a Levi factor of a parabolic subgroup of $G^\sharp$, which is of the form
\begin{equation*}
\GL_{1, \bb Q_p}\times\Res_{K/\bb Q_p}G'\times \Res_{K_1/\bb Q_p}H,
\end{equation*}
where $H$ is a product of general linear groups and $G'$ is a general spinor or general unitary group over $K$ that splits over an unramified quadratic extension. The preceding three claims hold with $G^\sharp$ replaced by $G^\sharp_{b^\sharp}$, and the desired irreducibility follows from \cite[Lemma~4.21]{H-L24} and \cite[Proposition~A.2]{Ham24}.
\end{proof}

Next, we recall a perversity result, which will be a crucial ingredient in the proof of torsion vanishing result later. We impose the following global assumptions for future use: 

\begin{setup}\label{ienfienfiesmwss}\enskip
\begin{itemize}
\item
$\bSh(\bb G, \bb X)$ is proper, and there exists a Shimura datum of Hodge type $(\bb G^\sharp, \bb X^\sharp)$ with a map of Shimura data $(\bb G^\sharp, \bb X^\sharp)\to (\bb G, \bb X)$ such that  $\bb G^\sharp_\ad\to \bb G_\ad$ is an isomorphism and $\#\pi_0(Z(\bb G^\sharp))$ is coprime to $\ell$. Let $E^\sharp\subset \bb C$ be the common reflex field.
\item
$\bb G$ and $\bb G^\sharp$ are unramified at $p$. Set
\begin{equation*}
\msf G^\sharp\defining \bb G^\sharp\otimes\bb Q_p, \qquad \msf G\defining \bb G\otimes\bb Q_p,
\end{equation*}
and fix a Borel pair $(\msf B^\sharp, \msf T^\sharp)$ of $\msf G^\sharp$ with image $(\msf B, \msf T)$ in $\msf G$.
\item
The central extension $\msf G^\sharp\to \msf G$ extends to a map of reductive integral models $\mcl G^\sharp\to \mcl G$ over $\bb Z_p$, and we define
\begin{equation*}
\mdc K_p^\sharp\defining \mcl G^\sharp(\bb Z_p), \qquad \mdc K_p\defining \mcl G(\bb Z_p).
\end{equation*}
\item
$\mdc K^{\sharp p}\le \bb G^\sharp(\Ade_f^p)$ is a neat compact open subgroup with image $\mdc K^p$ in $\bb G(\Ade_f^p)$. Set
\begin{equation*}
\mdc K^\sharp\defining \mdc K_p^\sharp\mdc K^{\sharp p}\le \bb G^\sharp(\Ade_f), \qquad \mdc K\defining \mdc K_p\mdc K^p\le \bb G(\Ade_f).
\end{equation*}
\item
We write
\begin{equation*}
\mcl H_{\mdc K_p^\sharp}\defining \ovl{\bb F_\ell}[\mdc K_p^\sharp\bsh \msf G^\sharp(\bb Q_p)/\mdc K_p^\sharp], \qquad \mcl H_{\mdc K_p}\defining \ovl{\bb F_\ell}[\mdc K_p\bsh \msf G(\bb Q_p)/\mdc K_p]
\end{equation*}
for the corresponding Hecke algebras with $\ovl{\bb F_\ell}$-coefficients.
\item
Let $\mfk m\subset \mcl H_{\mdc K_p}$ be a maximal ideal with inverse image $\mfk m^\sharp\subset  \mcl H_{\mdc K_p^\sharp}$ and corresponding semisimple toral $L$-parameters
\begin{equation*}
\phi_{\mfk m}\in \Phi^\sems(\msf G, \ovl{\bb F_\ell}), \qquad \phi_{\mfk m^\sharp}\in \Phi^\sems(\msf G^\sharp, \ovl{\bb F_\ell}),
\end{equation*}
respectively.
\end{itemize}
\end{setup}

We then have a finite Galois covering of Shimura varieties
\begin{equation*}
\bSh_{\mdc K^\sharp}(\bb G^\sharp, \bb X^\sharp)\to \bSh_{\mdc K}(\bb G, \bb X).
\end{equation*}
over $E^\sharp$. Note that $\iota_p: \bb C\to \ovl{\bb Q_p}$ induces an embedding $E^\sharp\to \ovl{\bb Q_p}$.

We now use the Igusa stacks for Hodge type Shimura varieties defined in \cite{DHKZ}. If we write
\begin{equation*}
\bSh_{\mdc K^{\sharp p}}(\bb G^\sharp, \bb X^\sharp)\defining \plim_{\tilde{\mdc K}_p}\bSh_{\tilde{\mdc K}_p\mdc K^{\sharp p}}(\bb G^\sharp, \bb X^\sharp),
\end{equation*}
where $\tilde{\mdc K}_p$ runs through all compact open subgroups of $\msf G^\sharp(\bb Q_p)$, then we have the Hodge--Tate period map
\begin{equation*}
\pi_{\bx{HT}}: \bSh_{\mdc K^{\sharp p}}(\bb G^\sharp, \bb X^\sharp)^{\bx{an}}\to \Gr_{\msf G^\sharp, \mu^\sharp}.
\end{equation*}
We then have the following Igusa stack $\Igu_{\mdc K^{\sharp p}}(\bb G^\sharp, \bb X^\sharp)$; see~\cite[Theorem~\Rmnum{1}]{DHKZ}.

\begin{thm}\label{heinifehifenis}
There is an Artin $v$-stack $\Igu_{\mdc K^{\sharp p}}(\bb G^\sharp, \bb X^\sharp)$ on $\Perfd_{\ovl\kappa}$ sitting in a Cartesian diagram 
\begin{equation*}
\begin{tikzcd}[sep=large]
\bSh_{\mdc K^{\sharp p}}(\bb G^\sharp, \bb X^\sharp)\ar[r, "\pi_{\bx{HT}}"]\ar[d, "\pr_{\Igu}"] &\Gr_{\msf G^\sharp, \mu^\sharp}\ar[d, "\bx{BL}"]\\
\Igu_{\mdc K^{\sharp p}}(\bb G^\sharp,\bb X^\sharp)\ar[r, "\pi^\Igu_{\bx{HT}}"] &\Bun_{\msf G^\sharp, \mu^{\sharp\bullet}},
\end{tikzcd}
\end{equation*}
where $\bx{BL}$ is the Beauville--Laszlo map from \textup{\cite[Proposition \Rmnum{3}.3.1]{F-S24}}. Moreover, $\Igu_{\mdc K^{\sharp p}}(\bb G^\sharp,\bb X^\sharp)$ is $\ell$-cohomologically smooth of dimension 0, and its dualizing sheaf is isomorphic to $\ovl{\bb F_\ell}[0]$.
\end{thm}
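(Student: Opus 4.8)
\textbf{Proof proposal for \Cref{heinifehifenis}.}

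The plan is to follow the strategy of \cite{DHKZ} for Hodge-type Shimura varieties more or less verbatim, since we have arranged (via \Cref{setuepnifneism} and \Cref{ienfienfiesmwss}) that $(\bb G^\sharp, \bb X^\sharp)$ is of Hodge type with $\bb G^\sharp$ unramified at $p$ and $\bSh(\bb G^\sharp, \bb X^\sharp)$ proper. First I would recall that $\bSh_{\mdc K^{\sharp p}}(\bb G^\sharp, \bb X^\sharp)$ is representable by a perfectoid space (this uses properness together with the infinite-level-at-$p$ limit), and that its analytification carries the Hodge--Tate period map $\pi_{\bx{HT}}$ to $\Gr_{\msf G^\sharp, \mu^\sharp}$ as in \cites{Sch15, C-S17}. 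The Igusa stack $\Igu_{\mdc K^{\sharp p}}(\bb G^\sharp, \bb X^\sharp)$ is then constructed in \cite{DHKZ} as the quotient of (a suitable $v$-sheaf attached to) $\bSh_{\mdc K^{\sharp p}}(\bb G^\sharp, \bb X^\sharp)$ by the action of $\msf G^\sharp(\bb Q_p)$-relevant data, arising from the "perfect" Igusa variety in the sense of Caraiani--Scholze, glued over the Newton stratification of the special fiber. The key geometric input is that the Hodge--Tate period map factors through this quotient, which yields $\pi^\Igu_{\bx{HT}}: \Igu_{\mdc K^{\sharp p}}(\bb G^\sharp, \bb X^\sharp)\to \Bun_{\msf G^\sharp, \mu^{\sharp\bullet}}$, and that the resulting square with $\bx{BL}$ is Cartesian; both are proved in \cite{DHKZ}*{Theorem \Rmnum{1}}.

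The main steps, in order, would be: (i) verify that the hypotheses of \cite{DHKZ}*{Theorem \Rmnum{1}} are met by $(\bb G^\sharp, \bb X^\sharp)$ — namely Hodge type, properness, good reduction at $p$ (hyperspecial level $\mdc K_p^\sharp = \mcl G^\sharp(\bb Z_p)$), and $p$ coprime to the relevant torsion, all of which are built into \Cref{ienfienfiesmwss}; (ii) quote the construction of $\Igu_{\mdc K^{\sharp p}}(\bb G^\sharp, \bb X^\sharp)$ as a small Artin $v$-stack on $\Perfd_{\ovl\kappa}$, together with the map $\pi^\Igu_{\bx{HT}}$ to $\Bun_{\msf G^\sharp, \mu^{\sharp\bullet}}$; (iii) quote the Cartesian property of the displayed diagram, where $\bx{BL}$ is the Beauville--Laszlo uniformization map of \cite{F-S24}*{Proposition \Rmnum{3}.3.1}; (iv) quote $\ell$-cohomological smoothness of dimension $0$ and the identification of the dualizing complex with $\ovl{\bb F_\ell}[0]$, which in \cite{DHKZ} follows from cohomological smoothness of the Igusa variables themselves combined with the fact that the Newton strata assemble into a smooth total space at infinite prime-to-$p$ level. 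Since everything is a direct citation, the "proof" here is essentially the observation that our group-theoretic setup is a special case of theirs; I would spell out the translation of notation ($\mu^{\sharp\bullet} = -w_0(\mu^\sharp)$, the identification $\Bun_{\msf G^\sharp, \mu^{\sharp\bullet}}$ as the relevant open substack of $\Bun_{\msf G^\sharp}$) and otherwise refer to loc.\ cit.

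The only genuine subtlety — and the part I expect to need a sentence or two of care — is that $\bb G^\sharp$ here is not simply a Hodge-type group appearing "on the nose" in the classical literature: its derived subgroup is $\Res_{F/\bb Q}\Spin(\mbf V)$ (in case O) rather than a symplectic or linear group, so one must check that the embedding of Shimura data $(\bb G^\sharp, \bb X^\sharp)\hookrightarrow (\GSp(\mbf H\otimes F(\daleth), \psi^{\bb Q}_{\beta,\daleth}), \mcl X)$ constructed in \Cref{Shimreiejvanieniocinis} is of the type for which \cite{DHKZ} applies (i.e.\ closed embedding of the reductive groups, good reduction preserved at $p$, neatness of levels). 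This is where I anticipate the "hard part": not any deep new argument, but a careful bookkeeping check that the integral models and the Siegel embedding behave well at $p$ under our running assumption that $p$ is unramified in $F$ and $\bb Q(\daleth)/\bb Q$ splits at $p$, so that $\mcl G^\sharp \otimes \bb Z_p \cong \GL(1)\times \Res_{\mcl O_K/\bb Z_p}\GSpin(\mcl V)$ is reductive and the moduli interpretation of the special fiber (hence the Newton stratification feeding into the Igusa-variety construction) is the expected one. Once this is in place, the statement is a citation.
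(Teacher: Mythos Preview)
Your proposal is correct and matches the paper's approach: the paper does not prove this theorem at all but simply cites it from \cite{DHKZ}*{Theorem \Rmnum{1}}, with the Hodge-type hypothesis already verified earlier in \Cref{Shimreiejvanieniocinis} and \Cref{ienfienfiesmwss}. Your extended discussion of the bookkeeping (the Siegel embedding, good reduction at $p$, etc.) is more than the paper provides, but it is not wrong—just unnecessary here, since the paper treats these as already settled by the setup and invokes the cited result directly.
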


Define
\begin{equation*}
\mcl F\defining \bx R\pi^\Igu_{\bx{HT}, *}(\ovl{\bb F_\ell})\in \bx D_{\bx{lis}}(\Bun_{\msf G^\sharp, \mu^{\sharp\bullet}}, \ovl{\bb F_\ell}),
\end{equation*}
which is universally locally acyclic by \cite[Corollary~8.5.4]{DHKZ}. Moreover, since $\bSh(\bb G^\sharp, \bb X^\sharp)$ is proper, we recall the following perversity result from~\cite[Theorem~8.6.3]{DHKZ}.

\begin{thm}\label{vieiniefienmsis}
$\mcl F$ is perverse, i.e., 
\begin{equation*}
\mcl F\in {}^p\bx D^{\ge 0}(\Bun_{\msf G^\sharp, \mu^{\sharp\bullet}}, \ovl{\bb F_\ell})\cap {}^p\bx D^{\le 0}(\Bun_{\msf G^\sharp, \mu^{\sharp\bullet}}, \ovl{\bb F_\ell}).
\end{equation*}
\end{thm}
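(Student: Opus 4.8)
The plan is to deduce perversity of $\mcl F = \bx R\pi^\Igu_{\bx{HT},*}(\ovl{\bb F_\ell})$ from the Cartesian diagram of \Cref{heinifehifenis} together with the perverse $t$-exactness of Hecke operators established in \Cref{teniherieniehrnis}. The starting point is the observation that, by base change along the Cartesian square, $\bx R\pi_{\bx{HT},*}(\ovl{\bb F_\ell})$ on $\Gr_{\msf G^\sharp,\mu^\sharp}$ is the pullback of $\mcl F$ along the Beauville--Laszlo map $\bx{BL}$, and that applying the Hecke operator $\bx T_{\mu^\sharp}$ to $\mcl F$ on $\Bun_{\msf G^\sharp}$ recovers (up to the cohomology of the relevant Schubert cell / after summing over the pieces of the $\mu^\sharp$-filtration) the complex computing $\bx R\Gamma$ of the Shimura variety, i.e. $i_\uno^* \bx T_{\mu^\sharp}(\mcl F)$ is closely related to $\bx R\Gamma\paren{\bSh_{\mdc K^{\sharp p}}(\bb G^\sharp,\bb X^\sharp)_{\ovl{E^\sharp}}, \ovl{\bb F_\ell}}$ as a $\msf G^\sharp(\bb Q_p)$-module. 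Since $\bSh(\bb G^\sharp,\bb X^\sharp)$ is proper of dimension $\dim_{\bb C}(\bb X)$ and the Shimura variety is a scheme, this cohomology is concentrated in the admissible range, hence is an admissible complex in $\bx D^\adm(\msf G^\sharp,\ovl{\bb F_\ell})$.

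First I would set up the formal reduction: it suffices to check the two half-exactness conditions ${}^p\bx D^{\ge 0}$ and ${}^p\bx D^{\le 0}$ on each Newton stratum, i.e. to control $i_{\msf b}^! \mcl F$ and $i_{\msf b}^* \mcl F$ for $\msf b \in B(\msf G^\sharp,\mu^{\sharp\bullet})$. Because $\mcl F$ is known to be ULA (by \cite{DHKZ}*{Corollary 8.5.4}) and $\Igu$ is $\ell$-cohomologically smooth of dimension $0$ with dualizing sheaf $\ovl{\bb F_\ell}[0]$, Verdier duality on $\Igu$ interchanges $i_{\msf b}^*$ and $i_{\msf b}^!$ up to the shift $d_{\msf b} = \bra{2\rho_{\msf G^\sharp}, \nu_{\msf b}}$ appearing in the definition of the perverse $t$-structure; so the two bounds are equivalent and it is enough to prove $\mcl F \in {}^p\bx D^{\le 0}$. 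Then I would localize at the maximal ideal $\mfk m^\sharp$ (equivalently, project to the summand indexed by $\phi_{\mfk m^\sharp}$): since the generic toral weakly normalized regular hypothesis on $\phi_{\mfk m}$ transfers to $\phi_{\mfk m^\sharp}$ (using \Cref{geoeirnieinnifeiss}, \Cref{iansihsienifhnss}, \Cref{ienigehifniehsins}, and \Cref{neineiihenfies} for the relevant $\mu^\sharp$-regularity), \Cref{teniherieniehrnis} applies and tells us that $i_\uno^* \bx T_{\mu^\sharp}$ sends perverse objects on $\Bun_{\msf G^\sharp}$ localized at $\phi_{\mfk m^\sharp}$ to objects concentrated in degree $0$ in $\bx D^\adm(\msf G^\sharp,\ovl{\bb F_\ell})$.

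The crux is to run this implication backwards: knowing that $i_\uno^* \bx T_{\mu^\sharp}(\mcl F)_{\phi_{\mfk m^\sharp}}$ is concentrated in a single degree (because it equals the cohomology of a proper scheme-theoretic Shimura variety, localized at $\mfk m^\sharp$, and the genericity of $\phi_{\mfk m^\sharp}$ kills the Eisenstein / non-middle contributions, forcing concentration in the middle degree), I want to conclude that $\mcl F$ itself is perverse. This is exactly the strategy of \cite{DHKZ}*{Theorem 8.6.3}: one uses that $\bx T_{\mu^\sharp}$ is a \emph{conservative} and perverse $t$-exact functor on the $\phi_{\mfk m^\sharp}$-localized ULA category (conservativity because $\mu^\sharp$ can be taken not fixed by any non-trivial Weyl element, by the last clause of \Cref{neineiihenfies}, so the geometric Satake sheaf detects the whole parameter), so a ULA object whose image under $\bx T_{\mu^\sharp}$ is perverse must itself be perverse. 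Concretely I would: (i) show $\mcl F_{\phi_{\mfk m^\sharp}} \in {}^p\bx D^{\le 0}$ by checking $i_{\msf b}^* \mcl F_{\phi_{\mfk m^\sharp}} \in \bx D^{\le d_{\msf b}}$ via the $\mu^\sharp$-regular splitting of the Hecke filtration on $\bx T_{\mu^\sharp}(\mcl F_{\phi_{\mfk m^\sharp}})$, which expresses each stratum-restriction in terms of the single-degree complex; (ii) obtain ${}^p\bx D^{\ge 0}$ by the duality remark above; (iii) finally drop the localization, noting that only finitely many $\phi$ occur and perversity can be checked summand by summand, or alternatively just record that the relevant statement only needs the $\mfk m^\sharp$-part.

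The main obstacle I anticipate is step (iii) of the crux --- carefully matching $i_\uno^* \bx T_{\mu^\sharp}(\mcl F)$ with the actual étale cohomology $\bx R\Gamma$ of the Hodge-type Shimura variety (as opposed to merely its analytification or a Newton-stratified approximation), and tracking the Tate twists, shifts, and the graded pieces of the $\mu^\sharp$-filtration precisely enough that the ``concentrated in degree $0$'' conclusion is genuinely equivalent to perversity of $\mcl F$ rather than a weaker cohomological bound. This is where the properness hypothesis in \Cref{ienfienfiesmwss} and the comparison of $\pi_{\bx{HT}}$ on the Shimura variety with $\pi^\Igu_{\bx{HT}}$ on the Igusa stack must be invoked with care; I expect to be able to cite \cite{DHKZ} for the bulk of this, adapting the argument of \cite{DHKZ}*{\S 8.6} essentially verbatim once the inputs \Cref{teniherieniehrnis} and \Cref{heinifehifenis} are in place.
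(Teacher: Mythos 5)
Your proposed argument runs in the wrong direction relative to the logical structure of the paper, and as a result it is circular. In the paper (and in \cite{DHKZ}), the perversity of $\mcl F$ is an \emph{input}: it is combined with the perverse $t$-exactness of $i_\uno^*\bx T_{\mu^{\sharp\bullet}}$ on the generic localized category (\Cref{teniherieniehrnis}) and with \Cref{ienifeihenifeismws} to \emph{deduce} that $\bx R\Gamma(\bSh_{\mdc K^\sharp}(\bb G^\sharp,\bb X^\sharp),\ovl{\bb F_\ell})_{\mfk m^\sharp}$ is concentrated in middle degree. You propose to reverse this: to start from the concentration of the localized torsion cohomology and conclude perversity of $\mcl F$. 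But with $\ovl{\bb F_\ell}$-coefficients there is no independent source for that concentration --- it is exactly the torsion vanishing theorem being proved. Your parenthetical justification (``the genericity of $\phi_{\mfk m^\sharp}$ kills the Eisenstein / non-middle contributions, forcing concentration in the middle degree'') is an assertion of the conclusion, not an argument; the purity/weight arguments that give such concentration with $\ovl{\bb Q_\ell}$-coefficients (as in \Cref{eieninifhenfiews}) are unavailable mod $\ell$. Even granting it, deducing perversity of $\mcl F$ from single-degree concentration of $i_\uno^*\bx T_{\mu^\sharp}(\mcl F)_{\phi}$ would require that this functor \emph{reflects} perversity, which is far stronger than conservativity and is not established anywhere.

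There is a second, independent defect: the statement is unconditional --- $\mcl F$ is perverse on the nose, with no genericity or localization hypothesis --- whereas your argument at best addresses the summands $\mcl F_{\phi}$ for $\phi$ generic, toral and weakly normalized regular, and says nothing about the remaining summands. Step (iii) of your outline cannot be repaired, because the paper genuinely needs perversity of the full sheaf $\mcl F$ before localizing. The actual proof of \cite{DHKZ}*{Theorem 8.6.3} is purely geometric and uses none of the Hecke-theoretic machinery: the bound $\mcl F\in{}^p\bx D^{\le 0}$ on each stratum $i_{\msf b}^*\mcl F\in\bx D^{\le d_{\msf b}}$ comes from Artin-vanishing-type estimates for Igusa varieties, resting on the affineness of their partial minimal compactifications (this is the result of Mao's thesis alluded to in the acknowledgements); the bound $\mcl F\in{}^p\bx D^{\ge 0}$ then follows from the Verdier self-duality $\bb D(\mcl F)\cong\mcl F$, which is a consequence of the $\ell$-cohomological smoothness of $\Igu_{\mdc K^{\sharp p}}(\bb G^\sharp,\bb X^\sharp)$ of dimension $0$ with trivial dualizing sheaf recorded in \Cref{heinifehifenis}, together with the properness of the Shimura variety. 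Your duality remark gestures at the second half correctly, but the first half cannot be replaced by the Hecke-theoretic argument you propose.
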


The object $\mcl F$ is related to cohomology of Shimura varieties as follows. By Theorem~\ref{heinifehifenis}, \cite[Theorem~8.4.10]{DHKZ} yields the following relation between the cohomology of the relevant Shimura varieties and the value of the corresponding Hecke operator on $\mcl F$.

\begin{thm}\label{ienifeihenifeismws}
There is an isomorphism
\begin{equation*}
\bx R\Gamma(\bSh_{\mdc K^{\sharp p}}(\bb G^\sharp, \bb X^\sharp)_{\bb C_p}, \ovl{\bb F_\ell})\cong i_\uno^*\bx T_{\mu^{\sharp\bullet}}(\mcl F[-\dim_{\bb C}(\bb X)])\paren{-\frac{\dim_{\bb C}(\bb X)}{2}}
\end{equation*}
in $\bx D(\msf G^\sharp, \ovl{\bb F_\ell})$.
\end{thm}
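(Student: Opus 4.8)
The plan is to deduce the isomorphism directly from the Cartesian square of \Cref{heinifehifenis}, combining proper base change for small Artin $v$-stacks with an unwinding of the definition of the Hecke operator $\bx T_{\mu^{\sharp\bullet}}$; no new geometric input is needed beyond the Igusa stack construction recorded there. (I use the notation of \Cref{ienfienfiesmwss}, writing $\bSh_{\mdc K^{\sharp p}}(\bb G^\sharp, \bb X^\sharp)$ for the proper Shimura variety over $E^\sharp$ appearing in \Cref{heinifehifenis}, and noting that $\dim_{\bb C}(\bb X^\sharp)=\dim_{\bb C}(\bb X)$ since $\bb G^\sharp_\ad\cong \bb G_\ad$.)

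First I would pass from the algebraic Shimura variety to $\bx R\pi_{\bx{HT},*}$. Since $\bSh_{\mdc K^{\sharp p}}(\bb G^\sharp, \bb X^\sharp)$ is proper over $E^\sharp$ (\Cref{ienfienfiesmwss}), its $\ovl{\bb F_\ell}$-étale cohomology over $\bb C_p$ agrees, after analytification and $v$-descent, with $\bx R\Gamma$ of $\bSh_{\mdc K^{\sharp p}}(\bb G^\sharp, \bb X^\sharp)^{\bx{an}}_{\bb C_p}$; and because the Hodge--Tate period map $\pi_{\bx{HT}}$ is proper -- it is the base change of the proper map $\bx{BL}$ along $\pi^\Igu_{\bx{HT}}$ in the Cartesian square of \Cref{heinifehifenis} -- this equals $\bx R\Gamma\paren{\Gr_{\msf G^\sharp, \mu^\sharp, \bb C_p}, \bx R\pi_{\bx{HT},*}\ovl{\bb F_\ell}}$, all $\msf G^\sharp(\bb Q_p)$-equivariantly. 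Then proper base change applied to that Cartesian square gives $\bx R\pi_{\bx{HT},*}\ovl{\bb F_\ell}\cong \bx R\pi_{\bx{HT},*}\pr_{\Igu}^*\ovl{\bb F_\ell}\cong \bx{BL}^*\bx R\pi^\Igu_{\bx{HT},*}\ovl{\bb F_\ell}=\bx{BL}^*\mcl F$, so that $\bx R\Gamma(\bSh_{\mdc K^{\sharp p}}(\bb G^\sharp, \bb X^\sharp)_{\bb C_p}, \ovl{\bb F_\ell})\cong \bx R\Gamma(\Gr_{\msf G^\sharp, \mu^\sharp, \bb C_p}, \bx{BL}^*\mcl F)$.

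Next I would identify the right-hand side with $i_\uno^*\bx T_{\mu^{\sharp\bullet}}(\mcl F[-\dim_{\bb C}(\bb X)])\paren{-\tfrac{\dim_{\bb C}(\bb X)}{2}}$ by unwinding the Hecke stack diagram \Cref{isnihinieifheis}. Fixing the single leg at the chosen $\Spa(\bb C_p)$-point and restricting the target leg $h^\to$ to the open point $\uno\in\largel{\Bun_{\msf G^\sharp}}$ identifies the relevant fibre of $\Hec_{\msf G^\sharp, \{1\}}$ with $\Gr_{\msf G^\sharp, \mu^\sharp, \bb C_p}$, under which $h^\from$ becomes $\bx{BL}$ and the geometric Satake sheaf $\mcl S'_{\LL\mcl T_{\mu^{\sharp\bullet}}}$ restricts to a shift and twist of the constant sheaf: indeed $\mu^\sharp$ is minuscule, so $\mcl T_{\mu^{\sharp\bullet}}$ is the corresponding irreducible highest-weight module and $\Gr_{\msf G^\sharp, \mu^\sharp, \bb C_p}$ is the associated smooth projective flag variety, of dimension $\langle 2\rho_{\msf G^\sharp}, \mu^\sharp\rangle=\dim_{\bb C}(\bb X)$, and in the self-dual normalization of \Cref{icoroenfiehsnw} the Satake sheaf there is $\ovl{\bb F_\ell}[\dim_{\bb C}(\bb X)]\paren{\tfrac{\dim_{\bb C}(\bb X)}{2}}$. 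Since $\Gr_{\msf G^\sharp, \mu^\sharp, \bb C_p}$ is proper one has $\bx R(h^\to\times\supp)_\natural=\bx R(h^\to\times\supp)_!$, and, tracking that Tate twists and cohomological shifts commute with $\bx T_{\mu^{\sharp\bullet}}$ and with $i_\uno^*$, the shift $[-\dim_{\bb C}(\bb X)]$ and twist $\paren{-\tfrac{\dim_{\bb C}(\bb X)}{2}}$ in the statement are exactly what cancel the shift and twist built into the Satake normalization, yielding $i_\uno^*\bx T_{\mu^{\sharp\bullet}}(\mcl F[-\dim_{\bb C}(\bb X)])\paren{-\tfrac{\dim_{\bb C}(\bb X)}{2}}\cong \bx R\Gamma(\Gr_{\msf G^\sharp, \mu^\sharp, \bb C_p}, \bx{BL}^*\mcl F)$; combining with the previous step gives the asserted isomorphism in $\bx D(\msf G^\sharp, \ovl{\bb F_\ell})$.

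The hard part will not be a new idea but the careful bookkeeping: (i) justifying the proper base change for small Artin $v$-stacks and the reduction of the $\ell$-torsion étale cohomology of the proper Shimura variety to $\bx R\pi_{\bx{HT},*}\ovl{\bb F_\ell}$; (ii) keeping straight the interplay of $\mu^\sharp$ with $\mu^{\sharp\bullet}=-w_0\mu^\sharp$ in indexing the $B_{\dR}^+$-affine Grassmannian versus the direction of the Hecke modification, together with the precise shift and twist forced by the normalization of geometric Satake and of the six-functor formalism; and (iii) checking that $\mcl F$ is supported on the closure of the locally closed substack $\Bun_{\msf G^\sharp, \mu^{\sharp\bullet}}\subset\Bun_{\msf G^\sharp}$, so that the two sides see the same underlying geometry and the residual $\msf G^\sharp(\bb Q_p)$-equivariance is matched. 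All three points are settled in \cite{DHKZ}; for the present purposes it suffices to invoke \Cref{heinifehifenis} and unwind definitions as above.
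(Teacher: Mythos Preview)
The paper does not give its own proof of this statement: it is quoted directly from \cite{DHKZ}*{Theorem 8.4.10} and used as a black box. Your sketch is a correct outline of the argument one finds in \cite{DHKZ}, namely proper base change along the Cartesian diagram of \Cref{heinifehifenis} followed by unwinding the Hecke correspondence over the trivial bundle, so there is nothing to compare against and nothing further to add.
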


We now state our first main theorem on the torsion vanishing of the cohomology of orthogonal or unitary Shimura varieties away from central dimension. We impose Setup~\ref{setuepnifneism} with $F\ne \bb Q$, and extend the map $G^\sharp\to\Res_{K/\bb Q_p}G$ to a map of reductive integral models $\mcl G^\sharp\to \mcl G$ over $\bb Z_p$ by fixing a $\bb Z_p$-lattice of $\Res_{K/\bb Q_p}(\mbf V\otimes_FK)$, and define $\mdc K_p^\sharp\defining \mcl G^\sharp(\bb Z_p), \mdc K_p\defining \mcl G(\bb Z_p)$. Let $\mdc K^{p, \sharp}\le \mbf G^\sharp(\Ade_f^p)$ be a compact open subgroup with image $\mdc K^p\le \paren{\Res_{F/\bb Q}\mbf G}(\Ade_f^p)$, such that
\begin{equation*}
\mdc K\defining \mdc K_p\mdc K^p\le\Res_{F/\bb Q}\mbf G(\Ade_f)\quad\text{and}\quad\mdc K^\sharp\defining \mdc K_p^\sharp\mdc K^{\sharp p}\le \mbf G^\sharp(\Ade_f)
\end{equation*}
are both neat. Let $\ell$ be a rational prime coprime to $p$, and we assume moreover $(\ell, n(G)!)=1$ in Case O1. Let
\begin{equation*}
\mcl H_{\mdc K_p^\sharp}\defining \ovl{\bb F_\ell}[\mdc K_p^\sharp\bsh G^\sharp(\bb Q_p)/\mdc K_p^\sharp], \qquad \mcl H_{\mdc K_p}\defining \ovl{\bb F_\ell}[\mdc K_p\bsh G^*(K)/\mdc K_p]
\end{equation*}
be Hecke algebras with $\ovl{\bb F_\ell}$-coefficients. Let $\mfk m\subset \mcl H_{\mdc K_p}$ be a maximal ideal with inverse image $\mfk m^\sharp\subset  \mcl H_{\mdc K_p^\sharp}$ and corresponding semisimple toral $L$-parameters
\begin{equation*}
\phi_{\mfk m}\in \Phi^\sems(\Res_{K/\bb Q_p}G, \ovl{\bb F_\ell}), \quad \phi_{\mfk m^\sharp}\in \Phi^\sems(G^\sharp, \ovl{\bb F_\ell}),
\end{equation*}
respectively. Moreover, we assume that $p$ is coprime to $2\dim(\mbf V)$ in Case U, so we work in the setting of \textup{Setup~\ref{ienfienfiesmwss}}.

\begin{thm}\label{ninIEnifeifneims}
Suppose $F\ne\bb Q$, and suppose
\begin{equation*}
\phi_{\mfk m}\in\Phi^\sems(\Res_{K/\bb Q_p}G, \ovl{\bb F_\ell})
\end{equation*}
is generic. Assume further that $\phi_{\mfk m}$ is $\hat{\Std}$-regular in the sense of~\textup{Definition~\ref{tttiitienfieifs}} in case O2.
\begin{enumerate}
\item
$\etH^i(\bSh_{\mdc K^\sharp}(\mbf G^\sharp, \mbf X^\sharp)_{\ovl{E^\sharp}}, \ovl{\bb F_\ell})_{\mfk m^\sharp}$ vanishes unless $i=\dim_{\bb C}(\mbf X)$.
\item
$\etH^i(\bSh_{\mdc K}(\Res_{F/\bb Q}\mbf G, \mbf X)_{\ovl E}, \ovl{\bb F_\ell})_{\mfk m}$ vanishes unless $i=\dim_{\bb C}(\mbf X)$.
\end{enumerate}
\end{thm}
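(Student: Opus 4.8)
The plan is to reduce the global torsion vanishing statement to the local perverse $t$-exactness result \Cref{teniherieniehrnis} via the Hodge--Tate period map and the Igusa stack formalism. First I would prove part (1), the vanishing for $\bSh_{\mdc K^\sharp}(\mbf G^\sharp, \mbf X^\sharp)$. Since $\bSh(\mbf G^\sharp, \mbf X^\sharp)$ is proper of Hodge type, \Cref{ienifeihenifeismws} gives a $\msf G^\sharp(\bb Q_p)$-equivariant isomorphism
\begin{equation*}
\bx R\Gamma(\bSh_{\mdc K^{\sharp p}}(\mbf G^\sharp, \mbf X^\sharp)_{\bb C_p}, \ovl{\bb F_\ell})\cong i_\uno^*\bx T_{\mu^{\sharp\bullet}}(\mcl F[-\dim_{\bb C}(\mbf X)])\paren{-\tfrac{\dim_{\bb C}(\mbf X)}{2}},
\end{equation*}
where $\mcl F=\bx R\pi^\Igu_{\bx{HT}, *}(\ovl{\bb F_\ell})\in\bx D_{\bx{lis}}(\Bun_{\msf G^\sharp, \mu^{\sharp\bullet}}, \ovl{\bb F_\ell})$ is ULA (by \cite{DHKZ}*{Corollary 8.5.4}) and perverse by \Cref{vieiniefienmsis}. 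After taking the $\mdc K_p^\sharp$-invariants and localizing at $\mfk m^\sharp$, only the summand $(-)_{\phi_{\mfk m^\sharp}}$ contributes (by \cite{H-L24}*{Corollary 4.3}), and $\phi_{\mfk m^\sharp}$ is the image of $\phi_{\mfk m}$ under $\LL(\Res_{K/\bb Q_p}G^*)(\ovl{\bb F_\ell})\to\LL G^\sharp(\ovl{\bb F_\ell})$, so I can apply \Cref{teniherieniehrnis}: the functor $i_\uno^*\bx T_{\mu^\sharp}$ is perverse $t$-exact on $\bx D^{\bx{ULA}}(\Bun_{\msf G^\sharp}, \ovl{\bb F_\ell})_{\phi_{\mfk m^\sharp}}$. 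Since $\mcl F$ is perverse (concentrated in perverse degree $0$), the shift by $\dim_{\bb C}(\mbf X)$ and the Tate twist show $i_\uno^*\bx T_{\mu^{\sharp\bullet}}(\mcl F[-\dim_{\bb C}(\mbf X)])$ is concentrated in ordinary cohomological degree $\dim_{\bb C}(\mbf X)$ after localization, proving (1). One should double-check that $\mu^{\sharp\bullet}$ (rather than $\mu^\sharp$) is still an allowed cocharacter for \Cref{teniherieniehrnis} — but since \Cref{teniherieniehrnis} holds for \emph{any} dominant cocharacter of $G^\sharp_{\ovl{\bb Q_p}}$ and $\mu^{\sharp\bullet}=-w_0(\mu^\sharp)$ is dominant, this is immediate; and the genericity/$\mu^\sharp$-regularity hypotheses needed there follow from \Cref{geoeirnieinnifeiss}, \Cref{neineiihenfies}, and \Cref{ienigehifniehsins} together with the hypothesis on $\phi_{\mfk m}$ and the assumption $(\ell, n!)=1$ in case $\bx O1$ (and $\ell\ne 2$, which is part of $\ell$ coprime to $p\cdot\pi_0(Z(-))$ combined with the case hypotheses).

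Next I would deduce part (2) from part (1) by a Hochschild--Serre descent argument along the finite Galois covering $\bSh_{\mdc K^\sharp}(\mbf G^\sharp, \mbf X^\sharp)\to\bSh_{\mdc K}(\Res_{F/\bb Q}\mbf G, \mbf X)$ with group $\Gamma\defining\mbf Z^{\bb Q}(\bb Q)\bsh\mbf Z^{\bb Q}(\Ade_f)/(\mbf Z^{\bb Q}(\Ade_f)\cap\mdc K^\sharp)$ (a finite abelian group, of order coprime to $\ell$ since $\pi_0(Z(\mbf G^\sharp))$ is coprime to $\ell$ and $\mbf Z^{\bb Q}$ is an induced torus up to its component group). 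There is a Hochschild--Serre spectral sequence
\begin{equation*}
E_2^{s,t}=\bx H^s(\Gamma, \etH^t(\bSh_{\mdc K^\sharp}(\mbf G^\sharp, \mbf X^\sharp)_{\ovl{E^\sharp}}, \ovl{\bb F_\ell}))\Longrightarrow \etH^{s+t}(\bSh_{\mdc K}(\Res_{F/\bb Q}\mbf G, \mbf X)_{\ovl E}, \ovl{\bb F_\ell}),
\end{equation*}
but since $\#\Gamma$ is invertible in $\ovl{\bb F_\ell}$, group cohomology of $\Gamma$ is concentrated in degree $0$, so the spectral sequence degenerates and $\etH^i(\bSh_{\mdc K}(\Res_{F/\bb Q}\mbf G, \mbf X)_{\ovl E}, \ovl{\bb F_\ell})$ is a direct summand (the $\Gamma$-invariants) of $\etH^i(\bSh_{\mdc K^\sharp}(\mbf G^\sharp, \mbf X^\sharp)_{\ovl{E^\sharp}}, \ovl{\bb F_\ell})$, with the Hecke action at $p$ compatible under the map $\mcl H_{\mdc K_p}\to\mcl H_{\mdc K_p^\sharp}$ (this needs a small compatibility check that $\mfk m$ pulls back to $\mfk m^\sharp$ on the nose and that the localization commutes with taking $\Gamma$-invariants, which is automatic since $\Gamma$ commutes with the prime-to-$\ell$-level Hecke action). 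Hence localizing at $\mfk m^\sharp$ and using part (1), the complex on the right is also concentrated in degree $\dim_{\bb C}(\mbf X)$, giving (2).

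The main obstacle I anticipate is not any single deep input but rather the bookkeeping needed to legitimately invoke \Cref{teniherieniehrnis}: one must verify that $\phi_{\mfk m}$, when regarded as a parameter for $\Res_{K/\bb Q_p}G^*$ and then pushed to $G^\sharp$, genuinely satisfies all of the genericity, weak normalized regularity, and ($\mu^\sharp$- and $\tilde\mu^\sharp$-)regularity hypotheses; this is where \Cref{ienigehifniehsins}, \Cref{geoeirnieinnifeiss}, \Cref{neineiihenfies}, \Cref{iansihsienifhnss}, and the behavior of these conditions under the central extension $\mbf Z^{\bb Q}\to\mbf G^\sharp\to\Res_{F/\bb Q}\mbf G$ all have to be assembled carefully, paying attention to the fact that $\mbf Z^{\bb Q}$ is an induced torus (split at $p$ in case $\bx O$ by the running assumption $\bb Q(\daleth)/\bb Q$ split at $p$, and a direct factor in case $\bx U$). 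A secondary subtlety is checking that the properness hypothesis on $\bSh(\mbf G^\sharp, \mbf X^\sharp)$ and the Hodge-type embedding constructed in \Cref{Shimreiejvanieniocinis} (with the appropriate choice of $\beta$) are genuinely in force so that \Cref{ienifeihenifeismws} and \Cref{vieiniefienmsis} apply — but this is exactly what the construction in \Cref{seubsienfiehtoenfeis}--\Cref{Shimreiejvanieniocinis} provides. Everything else — the degeneration of Hochschild--Serre, the $t$-exactness bookkeeping, the compatibility of localization with $\Gamma$-invariants — is routine.
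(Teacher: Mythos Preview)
Your proof of (1) is essentially the paper's argument: combine the perversity of $\mcl F$ (\Cref{vieiniefienmsis}), the identification \Cref{ienifeihenifeismws}, and the perverse $t$-exactness \Cref{teniherieniehrnis}. That part is fine.

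The gap is in your deduction of (2) from (1). You claim the Galois group $\Gamma$ of the covering $\bSh_{\mdc K^\sharp}(\mbf G^\sharp,\mbf X^\sharp)\to\bSh_{\mdc K}(\Res_{F/\bb Q}\mbf G,\mbf X)$ has order coprime to $\ell$, so that Hochschild--Serre degenerates and you can read off both cohomological bounds at once. This is not justified by the hypotheses. The group $\Gamma$ is (a quotient of) an adelic class-group-type quantity for the torus $\mbf Z^{\bb Q}$ at the finite level determined by $\mdc K^\sharp$, and its order depends on that level; nothing stops it from being divisible by $\ell$. The hypothesis $\ell\nmid\#\pi_0(Z(\msf G^\sharp))$ concerns the component group of the center of the \emph{local} group at $p$ and says nothing about this global quotient. (Two smaller slips: $\mbf Z^{\bb Q}$ is a connected torus, so ``up to its component group'' is vacuous; and the map is not surjective onto $\bSh_{\mdc K}$ but only onto an open--closed subset $M$, the full variety being a finite union of translates of $M$.)

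The paper sidesteps this by a two-sided argument. Hochschild--Serre still gives
\[
\bx R\Gamma(M,\ovl{\bb F_\ell})_{\mfk m}\;\cong\;\bx R\Gamma\bigl(\Gamma,\ \bx R\Gamma(\bSh_{\mdc K^\sharp},\ovl{\bb F_\ell})_{\mfk m^\sharp}\bigr)_{\mfk m},
\]
which by (1) lives in degrees $\ge\dim_{\bb C}(\mbf X)$ since group cohomology only raises degree; this yields the lower bound for $\bSh_{\mdc K}$. For the upper bound one uses Poincar\'e duality together with \cite{H-L24}*{Corollary~A.7}, which identifies $\bx R\Gamma(\bSh_{\mdc K},\ovl{\bb F_\ell})_{\mfk m}$ (up to shift) with the dual of $\bx R\Gamma(\bSh_{\mdc K},\ovl{\bb F_\ell})_{\mfk m^\vee}$; since $\phi_{\mfk m}^\vee$ is again generic and weakly normalized regular, the same half-vanishing applied at $\mfk m^\vee$ gives the bound $\le\dim_{\bb C}(\mbf X)$. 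No control on $\#\Gamma$ modulo $\ell$ is required.
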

\begin{proof}
The second assertion follows from the first one: $\bSh_{\mdc K^\sharp}(\mbf G^\sharp, \mbf X^\sharp)_{\ovl{E^\sharp}}$ is a finite Galois covering of an open and closed subset $M$ of $\bSh_{\mdc K}(\Res_{F/\bb Q}\mbf G, \mbf X)_{\ovl{E^\sharp}}$ with Galois group denoted by $\mfk T$. The cohomology of $M$ is equipped with Hecke action by $\mcl H_{\mdc K_p}$. By the Hochschild--Serre spectral sequence,\begin{equation*}
\bx R\Gamma\paren{M, \ovl{\bb F_\ell}}_{\mfk m}=\bx R\Gamma\paren{\mfk T, \bx R\Gamma(\bSh_{\mdc K^\sharp}(\mbf G^\sharp, \mbf X^\sharp)_{\ovl{E^\sharp}}, \ovl{\bb F_\ell})_{\mfk m^\sharp}}_{\mfk m},
\end{equation*}
which is concentrated in degree at least $\dim_{\bb C}(\mbf X)$ by (1). Now $\bx R\Gamma\paren{\bSh_{\mdc K}(\Res_{F/\bb Q}\mbf G, \mbf X)_{\ovl E}, \ovl{\bb F_\ell}}_{\mfk m}$ is a finite direct sum of copies of the complex $\bx R\Gamma\paren{M, \ovl{\bb F_\ell}}_{\mfk m}$, which is also concentrated in degree at least $\dim_{\bb C}(\mbf X)$.

But by the \Poincare duality and \cite[Corollary~A.7]{H-L24}, it follows that
\begin{equation*}
\bx R\Gamma(\bSh_{\mdc K}(\mbf G, \mbf X)_{\ovl E}, \ovl{\bb F_\ell})_{\mfk m}\cong \bx R\Gamma(\bSh_{\mdc K}(\mbf G, \mbf X)_{\ovl E}, \ovl{\bb F_\ell})_{\mfk m^\vee}(\dim_{\bb C}(\mbf X))[2\dim_{\bb C}(\mbf X)]
\end{equation*}
is concentrated in degree at most $\dim_{\bb C}(\mbf X)$, where $\mfk m^\vee$ is the maximal ideal corresponding to $\phi_{\mfk m}^\vee$. 

For the first assertion, note that $\bSh_{\mdc K^\sharp}(\mbf G^\sharp, \mbf X^\sharp)$ is proper because $[\mbf G^\sharp, \mbf G^\sharp]$ is anisotropic, so the assertion follows from Theorems~\ref{vieiniefienmsis},~\ref{ienifeihenifeismws},~\ref{teniherieniehrnis} and the \Poincare duality, as $\phi_{\mfk m}$ and $\phi_{\mfk m}^\vee$ are both generic.
\end{proof}
For Shimura varieties of Abelian type localized at a split place, the same argument gives a more general statement. In fact, we do not need the full strength of the compatibility of Fargues--Scholze LLC with ``classical local Langlands correspondence'' in the sense of \cite[Assumption~6.5]{Ham24},  but only one property of the Fargues--Scholze LLC: 

\begin{hyp}\label{axioaisijenifes}
Suppose $\msf G/K$ is a quasisplit reductive group with a Borel pair $(\msf B, \msf T)$ and $\phi\in\Phi^\sems(\msf G, \ovl{\bb Q_\ell})$ is a semisimple generic toral $L$-parameter. Then for any $\msf b\in B(\msf G)$ and any $\rho\in\Pi(\msf G_{\msf b}, \ovl{\bb Q_\ell})$, if the composition of $\phi_\rho^\FS: W_K\to\LL\msf G_{\msf b}(\ovl{\bb Q_\ell})$ with the twisted embedding $\LL\msf G_{\msf b}(\ovl{\bb Q_\ell})\to\LL\msf G(\ovl{\bb Q_\ell})$ (as defined in \cite[\S \Rmnum{9}.7.1]{F-S24}) equals $\phi$, then $\msf b$ is unramified. 
\end{hyp}
\begin{rem}
Note that, by \cite[Lemma~3.14]{Ham24}, this hypothesis is a consequence of \cite[Assumption~6.5]{Ham24}: The hypothesis that $\phi_{\iota_\ell^{-1}\rho}^\sems=\iota_\ell^{-1}\phi_\rho^\FS$ factors through a generic parameter $\phi_{\msf T}$ of $\msf T$ implies that $\phi_{\iota_\ell^{-1}\rho}=\phi_{\iota_\ell^{-1}\rho}^\sems$ by \cite[Lemma~3.14]{Ham24}, which implies $\msf T$ is relevant for $\msf G_{\msf b}$ by Theorem~\ref{coeleninifheihsn}. So $\msf G_{\msf b}$ is quasisplit, or equivalently, $\msf b$ is unramified.
\end{rem}

Observe that this hypothesis is stable under duality: if it holds for $\phi$, then it holds for $\phi^\vee$. The following invariance property may be of independent interest.

\begin{prop}\label{ivnairenpeireimries}
Suppose $\msf G'\to \msf G$ is a map of quasisplit reductive groups over a non-Archimedean local field $K$ of characteristic zero that induces an isomorphism on adjoint groups. Let $(\msf B, \msf T)$ and $(\msf B', \msf T')$ be compatible Borel pairs of $\msf G$ and $\msf G'$, respectively. If $\phi\in\Phi^\sems(\msf G, \ovl{\bb Q_\ell})$ is a semisimple generic toral $L$-parameter, and we write $\phi'\in \Phi^\sems(\msf G', \ovl{\bb Q_\ell})$ for the image of $\phi$ under the natural map $\Phi^\sems(\msf G, \ovl{\bb Q_\ell})\to \Phi^\sems(\msf G', \ovl{\bb Q_\ell})$. Then \textup{Hypothesis~\ref{axioaisijenifes}} for $\phi$ implies that it holds for $\phi'$.

In particular, if $\msf G'\to \msf G$ is an injection, then \textup{Hypothesis~\ref{axioaisijenifes}} for $\msf G$ implies that it holds for $\msf G'$.
\end{prop}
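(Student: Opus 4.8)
\textbf{Proof proposal for \Cref{ivnairenpeireimries}.}
The plan is to reduce the statement for $\msf G'$ to the statement for $\msf G$ by tracing through the functoriality of all the ingredients in \Cref{axioaisijenifes} under the central isogeny $\msf G'\to\msf G$. First I would recall that since $\msf G'\to\msf G$ induces an isomorphism on adjoint groups, its kernel $\msf Z$ is a central torus (or finite central subgroup, which one can absorb), and correspondingly the dual map $\hat{\msf G}\to\hat{\msf G}'$ fits into a short exact sequence $1\to\hat{\msf G}\to\hat{\msf G}'\to\hat{\msf Z}\to 1$ of Chevalley groups, extending to $L$-groups compatibly with the Weil group actions; this is exactly the situation of \Cref{kemeienfiemsow}, so the induced map $\Phi^\sems(\msf G,\ovl{\bb Q_\ell})\to\Phi^\sems(\msf G',\ovl{\bb Q_\ell})$ is injective. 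In particular $\phi'$ is again a semi-simple generic toral $L$-parameter: genericity and torality are conditions on the image in $\LL\msf T'$, and they are preserved because $\msf T'\to\msf T$ is an isogeny of tori with the same cocharacter lattice up to finite index and the same coroots (as $\msf G'_\ad\cong\msf G_\ad$), so the coroot-twist conditions of \Cref{dienfiehIGNieneifmeis} are literally the same conditions.

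The core of the argument is the following. Suppose $\msf b'\in B(\msf G')$ and $\rho'\in\Pi(\msf G'_{\msf b'},\ovl{\bb Q_\ell})$ are such that the composition of $\phi_{\rho'}^\FS\colon W_K\to\LL\msf G'_{\msf b'}(\ovl{\bb Q_\ell})$ with the twisted embedding $\LL\msf G'_{\msf b'}\to\LL\msf G'$ equals $\phi'$; I want to conclude $\msf b'$ is unramified. Let $\msf b\in B(\msf G)$ be the image of $\msf b'$ under the functorial map $B(\msf G')\to B(\msf G)$ (the map $\msf G'\to\msf G$ induces a map on Kottwitz sets, and since adjoint quotients agree it respects the notion of being unramified: $\msf b'$ unramified $\iff\msf b$ unramified, because $\msf G'_{\msf b'}$ is quasi-split $\iff\msf G_{\msf b}$ is, the two groups having the same adjoint group). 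The map $\msf G'_{\msf b'}\to\msf G_{\msf b}$ is again a central isogeny with the same adjoint group; under it $\rho'$ restricts (or rather, $\rho'$ is obtained by pulling back along, up to a twist by a character of the central torus) a representation of $\msf G_{\msf b}(K)$, and by compatibility of the Fargues--Scholze correspondence with central extensions (\Cref{compaitbsilFaiirfies}(4)) the parameter $\phi_{\rho'}^\FS$ is the image of the Fargues--Scholze parameter of the corresponding representation $\rho$ of $\msf G_{\msf b}$ under $\LL\msf G_{\msf b}\to\LL\msf G'_{\msf b'}$. The twisted embeddings of \cite{F-S24}*{\S\Rmnum{9}.7.1} are compatible with these maps of $L$-groups (they are built from the Newton cocharacter and the half-sum of roots, both of which match up under an isogeny with the same adjoint group). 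Therefore the composition of $\phi_\rho^\FS$ with $\LL\msf G_{\msf b}\to\LL\msf G$, pushed forward to $\LL\msf G'$, equals $\phi'$; by the injectivity of $\Phi^\sems(\msf G)\hookrightarrow\Phi^\sems(\msf G')$ (\Cref{kemeienfiemsow}), the composition of $\phi_\rho^\FS$ with $\LL\msf G_{\msf b}\to\LL\msf G$ already equals $\phi$. Applying \Cref{axioaisijenifes} for $\phi$ (which holds by hypothesis) gives that $\msf b$ is unramified, hence $\msf b'$ is unramified. This proves the first statement; the ``in particular'' follows since an injection $\msf G'\hookrightarrow\msf G$ with isomorphic adjoint groups is a special case (here $\msf Z$ on the $\msf G$-side is trivial and the dual map is a surjection, and \Cref{kemeienfiemsow} applies to $1\to\msf Z'\to\msf G'\to\msf G/\msf Z'$, $\msf Z'=\ker$; alternatively invoke it directly as the excerpt does).

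I expect the main obstacle to be the careful bookkeeping of the \emph{twisted} embedding $\LL\msf G_{\msf b}\to\LL\msf G$ and its compatibility with the central isogeny: one must check that passing to $\msf G'$ does not change the modulus-character twist $\delta_{\msf P_{\msf b}}^{\pm1/2}$ appearing there (it does not, since $\msf P_{\msf b}$ and $\msf P'_{\msf b'}$ have the same unipotent radical and the same torus up to isogeny, so the modulus characters agree on $\msf T'(K)\to\msf T(K)$, exactly as in the proof of \Cref{ienigehifniehsins}), and that the identification $\msf G'_{\msf b'}\cong\msf M'_{\msf b'}$ is compatible with $\msf G_{\msf b}\cong\msf M_{\msf b}$. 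A second, more bureaucratic point is the possible character twist relating $\rho'$ and the pullback of $\rho$: one should note that such a twist by a character of the central torus $\msf Z(K)$ changes both sides of the equation $\phi_{\rho'}^\FS\mapsto\phi'$ in the same way (via compatibility with central characters, \Cref{compaitbsilFaiirfies}(2)), so it is harmless; and that every $\rho'$ does arise this way because $\msf G'_{\msf b'}(K)\to\msf G_{\msf b}(K)$ has central cokernel, so its irreducible representations are all obtained from those of $\msf G_{\msf b}(K)$ by the standard Clifford-theory argument (cf.\ the use of \cite{G-K82}*{Lemma 2.3} and \cite{Tad92}*{Proposition 2.2} elsewhere in the paper). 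None of this is deep, but it is where the proof actually lives.
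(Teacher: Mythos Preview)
Your overall strategy is the same as the paper's: pass from $\msf b'$ to its image $\msf b\in B(\msf G)$, lift $\rho'$ to some $\rho\in\Pi(\msf G_{\msf b},\ovl{\bb Q_\ell})$, compare Fargues--Scholze parameters via compatibility with central extensions, and apply the axiom for $\phi$ to conclude $\msf b$ (hence $\msf b'$) is unramified. The gap is in how you pass from ``$\tilde\phi$ maps to $\phi'$ in $\Phi^\sems(\msf G')$'' to ``$\tilde\phi=\phi$''.

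You invoke \Cref{kemeienfiemsow} to assert that $\Phi^\sems(\msf G,\ovl{\bb Q_\ell})\to\Phi^\sems(\msf G',\ovl{\bb Q_\ell})$ is injective, but that lemma requires $\msf G'\to\msf G$ to be \emph{surjective} with torus kernel and surjective on $K$-points, none of which is assumed. In fact the ``in particular'' case is precisely an \emph{injection} $\msf G'\hookrightarrow\msf G$ (think $\SL_n\hookrightarrow\GL_n$), where $\Phi^\sems(\msf G)\to\Phi^\sems(\msf G')$ is surjective but decidedly not injective: its fibers are torsors under characters of $\msf G(K)/\Im(\msf G'(K))$. So once you have produced $\rho$ with $\rho'$ a subquotient of $\rho|_{\msf G'_{\msf b'}(K)}$ (via \cite{G-K82}*{Lemma 2.3}, after first observing that $\rho'$ factors through $\Im(\msf G'_{\msf b'}(K)\to\msf G_{\msf b}(K))$ because its central character on the kernel is trivial), the resulting $\tilde\phi$ is only \emph{some} lift of $\phi'$, not necessarily $\phi$ itself. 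The paper resolves this by twisting $\rho$ by a suitable character of $\coker(\msf G'_{\msf b'}(K)\to\msf G_{\msf b}(K))$, invoking compatibility of $\rec^\FS$ with character twists and \cite{Xu17}*{Appendix A}, to force $\tilde\phi=\phi$. Your closing remark about character twists addresses the wrong ambiguity: you discuss twisting $\rho'$ by characters of the \emph{kernel} of $\msf G'_{\msf b'}\to\msf G_{\msf b}$, whereas the actual obstruction is the non-uniqueness of $\rho$ up to characters of the \emph{cokernel} on $K$-points.
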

\begin{proof}
Suppose $\msf b'\in B(\msf G')$ maps to $\msf b\in B(\msf G)$, then there is a map $\msf G_{\msf b'}\to \msf G_{\msf b}$ that induces an isomorphism on adjoint groups. Suppose $\rho'\in \Pi(\msf G'_{\msf b'}, \ovl{\bb Q_\ell})$ such that  the composition of $\phi_{\rho'}^\FS: W_K\to\LL\msf G'_{\msf b'}(\ovl{\bb Q_\ell})$ with the twisted embedding $\LL\msf G'_{\msf b'}(\ovl{\bb Q_\ell})\to\LL\msf G'(\ovl{\bb Q_\ell})$ equals $\phi'\in\Phi^\sems(\msf G', \ovl{\bb Q_\ell})$. Then it follows from the compatibility of Fargues--Scholze parameters with central characters Theorem~\ref{compaitbsilFaiirfies} that $\rho'$ factors through $\Im(\msf G'_{\msf b'}(K)\to \msf G_{\msf b}(K))$. Then it follows from \cite[Lemma 2.3]{G-K82} that there exists an irreducible smooth representation $\rho\in\Pi(\msf G_{\msf b}, \ovl{\bb Q_\ell})$ such that  $\rho'$ is a subquotient of $\rho|_{\msf G'_{\msf b'}(K)}$. It follows from compatibility of Fargues--Scholze correspondence with central extensions, Theorem~\ref{compaitbsilFaiirfies} that $\phi_{\rho'}^\FS$ is the image of $\phi_\rho^\FS$ under the natural map $\Phi^\sems(\msf G_{\msf b}, \ovl{\bb Q_\ell})\to \Phi^\sems(\msf G'_{\msf b'}, \ovl{\bb Q_\ell})$. So the composition of $\phi_\rho^\FS: W_K\to\LL\msf G_{\msf b}(\ovl{\bb Q_\ell})$ with the twisted embedding $\LL\msf G_{\msf b}(\ovl{\bb Q_\ell})\to\LL\msf G(\ovl{\bb Q_\ell})$ is a parameter $\tilde\phi$ whose image under the natural map $\Phi^\sems(\msf G, \ovl{\bb Q_\ell})\to \Phi^\sems(\msf G', \ovl{\bb Q_\ell})$ equals $\phi'$. So it follows from the compatibility of Fargues--Scholze correspondence with character twists and \cite[Appendix A]{Xu17} that we may twist $\rho$ by a character of $\coker(\msf G'_{\msf b'}(K)\to \msf G_{\msf b}(K))$ to make sure that $\tilde\phi=\phi$. Now the axiom for $\phi$ implies that $\msf b$ is unramified. In particular, $\msf G_{\msf b}$ is quasisplit. Suppose $\msf B_{\msf b}\le \msf G_{\msf b}$ is a Borel subgroup, then $\msf B_{\msf b'}=\msf B_{\msf b}\cap \msf G'_{\msf b'}$ is a Borel subgroup of $\msf G'_{\msf b'}$. Thus $\msf b'$ is unramified.

For the last assertion, it suffices to note that if $\phi\in\Phi^\sems(\msf G, \ovl{\bb Q_\ell})$ has toral generic image $\phi'\in\Phi^\sems(\msf G', \ovl{\bb Q_\ell})$, then $\phi$ is itself toral and generic.
\end{proof}

We now prove the second main theorem on vanishing result for torsion cohomology of Shimura varieties of Abelian type under Hypothesis~\ref{axioaisijenifes}. For a semisimple toral parameter $\ovl\phi\in\Phi^\sems(\msf G,\ovl{\bb F_\ell})$, we say that
\textup{Hypothesis~\ref{axioaisijenifes}} holds for $\ovl\phi$ if it holds for every semisimple generic toral lift $\phi\in\Phi^\sems(\msf G,\ovl{\bb Q_\ell})$ whose reduction modulo $\ell$ is $\ovl\phi$.

\begin{thm}\label{ieifmeimfeos}
We work in the setting of \textup{Setup~\ref{ienfienfiesmwss}}, and assume further that the set of unramified $\mu_\ad^\bullet$-acceptable elements $B(\msf G_\ad, \mu_\ad^\bullet)_{\bx{un}}$ is a singleton. Suppose $\phi_{\mfk m}$ is generic and \textup{Hypothesis~\ref{axioaisijenifes}} holds for $\phi_{\mfk m}$ in the modulo $\ell$ sense.
\begin{enumerate}
\item
$\etH^i(\bSh_{\mdc K^\sharp}(\bb G^\sharp, \bb X^\sharp)_{\ovl{E^\sharp}}, \ovl{\bb F_\ell})_{\mfk m^\sharp}$ vanishes unless $i=\dim_{\bb C}(\bb X)$.
\item
$\etH^i(\bSh_{\mdc K}(\bb G, \bb X)_{\ovl{E^\sharp}}, \ovl{\bb F_\ell})_{\mfk m}$ vanishes unless $i=\dim_{\bb C}(\bb X)$.
\end{enumerate}
\end{thm}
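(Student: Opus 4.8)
The plan is to follow the Igusa‑stack strategy of \cite{DHKZ} and of the proof of \Cref{ninIEnifeifneims}, but to replace the perverse $t$‑exactness input \Cref{teniherieniehrnis} (which required weakly normalized regularity and geometric Eisenstein series) by an argument that uses only genericity of $\phi_{\mfk m}$, the axiom \Cref{axioaisijenifes}, and the hypothesis that $B(\msf G_\ad, \mu_\ad^\bullet)_{\bx{un}}$ is a singleton.

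First I would reduce (2) to (1) exactly as in the proof of \Cref{ninIEnifeifneims}: the variety $\bSh_{\mdc K^\sharp}(\bb G^\sharp, \bb X^\sharp)_{\ovl{E^\sharp}}$ is a finite Galois covering of an open and closed subvariety of $\bSh_{\mdc K}(\bb G, \bb X)_{\ovl{E^\sharp}}$, so the Hochschild--Serre spectral sequence together with (1) shows that $\bx R\Gamma(\bSh_{\mdc K}(\bb G, \bb X)_{\ovl{E^\sharp}}, \ovl{\bb F_\ell})_{\mfk m}$ is concentrated in degrees $\ge \dim_{\bb C}(\bb X)$. Applying the same to the dual maximal ideal $\mfk m^\vee$, whose parameter $\phi_{\mfk m}^\vee$ is again generic and again satisfies \Cref{axioaisijenifes} (the axiom being stable under duality), and combining with Poincar\'e duality via \cite{H-L24}*{Corollary A.7}, gives the reverse bound, hence concentration in the single degree $\dim_{\bb C}(\bb X)$.

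For (1), by \Cref{ienifeihenifeismws} there is an isomorphism $\bx R\Gamma(\bSh_{\mdc K^{\sharp p}}(\bb G^\sharp, \bb X^\sharp)_{\bb C_p}, \ovl{\bb F_\ell}) \cong i_\uno^*\bx T_{\mu^{\sharp\bullet}}\bigl(\mcl F[-\dim_{\bb C}(\bb X)]\bigr)\bigl(-\tfrac{\dim_{\bb C}(\bb X)}{2}\bigr)$ in $\bx D(\msf G^\sharp, \ovl{\bb F_\ell})$, where $\mcl F = \bx R\pi^\Igu_{\bx{HT}, *}(\ovl{\bb F_\ell})$ is ULA and perverse by \Cref{heinifehifenis} and \Cref{vieiniefienmsis}. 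Taking $\mdc K_p^\sharp$‑invariants (an exact operation) and using properness, it suffices to prove that $i_\uno^*\bx T_{\mu^{\sharp\bullet}}(\mcl F)_{\mfk m^\sharp}$ is concentrated in degree $0$. Stratifying $\Bun_{\msf G^\sharp}$ by Newton points, $\bx T_{\mu^{\sharp\bullet}}(\mcl F)$ is glued from the pieces $\bx T_{\mu^{\sharp\bullet}}(i_{\msf b!}i_{\msf b}^*\mcl F)$ with $\msf b \in B(\msf G^\sharp, \mu^{\sharp\bullet})$ (\Cref{seinieunifnws}). I would transport \Cref{axioaisijenifes} for $\phi_{\mfk m}$ to the image parameter $\phi_{\mfk m^\sharp} \in \Phi^\sems(\msf G^\sharp, \ovl{\bb F_\ell})$ by \Cref{ivnairenpeireimries}, and pass from $\ovl{\bb Q_\ell}$ to $\ovl{\bb F_\ell}$ by the standard reduction argument using the compatibility of the Fargues--Scholze correspondence with reduction modulo $\ell$ (\Cref{compaitbsilFaiirfies}) and the fact that a toral generic parameter lifts to characteristic $0$. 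Since $\phi_{\mfk m^\sharp}$ is toral generic, the axiom then forces $(i_{\msf b}^*\mcl F)_{\phi_{\mfk m^\sharp}} = 0$ for every $\msf b$ that is not unramified; and since $\kappa_{\msf G^\sharp}(\msf b) = \mu^{\sharp\bullet,\natural}$ is fixed for all $\msf b \in B(\msf G^\sharp, \mu^{\sharp\bullet})$, the set $B(\msf G^\sharp, \mu^{\sharp\bullet})_{\bx{un}}$ injects into $B(\msf G_\ad, \mu_\ad^\bullet)_{\bx{un}}$, which is a singleton by hypothesis. Hence, if the localized cohomology is nonzero, exactly one stratum $\msf b_1$ contributes, and $\bx R\Gamma(\bSh_{\mdc K^{\sharp p}}(\bb G^\sharp, \bb X^\sharp)_{\bb C_p}, \ovl{\bb F_\ell})_{\mfk m^\sharp}[\dim_{\bb C}(\bb X)](\tfrac{\dim_{\bb C}(\bb X)}{2}) \cong i_\uno^*\bx T_{\mu^{\sharp\bullet}}(i_{\msf b_1!}i_{\msf b_1}^*\mcl F)_{\mfk m^\sharp}$.

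Finally, $i_\uno^*\bx T_{\mu^{\sharp\bullet}}i_{\msf b_1!}$ is the local Shtuka cohomology functor $\rho \mapsto \bx R\Gamma_c(\msf G^\sharp, \msf b_1, \uno, \mu^{\sharp\bullet})[\rho]$ (as in the proof of \Cref{ienfimeoifiems}), and since $\mu^{\sharp\bullet}$ is minuscule this local Shtuka space is a local Shimura variety, a smooth partially proper rigid space of dimension $\langle 2\rho_{\msf G^\sharp}, \mu^{\sharp\bullet}\rangle = \dim_{\bb C}(\bb X)$. I would deduce concentration in degree $0$ from two inputs: first, the general perverse right $t$‑exactness of Hecke operators on ULA sheaves together with the perversity of $\mcl F$ (which places $i_{\msf b_1}^*\mcl F$ in $\bx D^{\le d_{\msf b_1}}$, $d_{\msf b_1} = \langle 2\rho_{\msf G^\sharp}, \nu_{\msf b_1}\rangle$), giving one bound; second, Poincar\'e duality on the local Shimura variety, which interchanges $i_\uno^*\bx T_{\mu^{\sharp\bullet}}i_{\msf b_1!}$ and $i_\uno^!\bx T_{\mu^{\sharp\bullet}}i_{\msf b_1!}$ up to shift and twist, combined with perversity of the Verdier dual of $\mcl F$ (equivalently, running the previous step for $\phi_{\mfk m^\sharp}^\vee$, which is again toral generic and satisfies the axiom and the singleton hypothesis), giving the other bound. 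The hard part will be this last step: making precise the interchange of $i_\uno^*$ and $i_\uno^!$ after localization, i.e.\ checking that for toral generic $\mfk m^\sharp$ the complex $i_\uno^*\bx T_{\mu^{\sharp\bullet}}(i_{\msf b_1!}i_{\msf b_1}^*\mcl F)_{\mfk m^\sharp}$ receives no contribution from the Newton strata in the closure of the $\msf b_1$‑orbit other than $\msf b_1$ itself, and reconciling this with the Verdier self‑duality of $\bx T_{\mu^{\sharp\bullet}}$ — this is precisely where both the axiom (to kill the non‑unramified strata) and the singleton hypothesis (to rule out other unramified strata) enter essentially, and it plays the role here that \Cref{teniherieniehrnis} played in the proof of \Cref{ninIEnifeifneims}.
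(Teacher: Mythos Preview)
Your overall strategy matches the paper's proof closely: the reduction of (2) to (1) via Hochschild--Serre and Poincar\'e duality is identical, and for (1) you correctly reduce to showing that $i_\uno^*\bx T_{\mu^{\sharp\bullet}}(\mcl F)_{\mfk m^\sharp}$ sits in a single degree, using the axiom (lifted to characteristic zero via \cite{Dat05} and \Cref{compaitbsilFaiirfies}) together with \Cref{ivnairenpeireimries} to force all contributions to come from the unique unramified $\mu^{\sharp\bullet}$-acceptable element $\msf b_1$. This is exactly what the paper does.

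The difference lies in your final step. The paper does not attempt to prove $t$-exactness by hand; instead, once it has established that $\bx D_{\bx{lis}}(\Bun_{\msf G^\sharp}, \ovl{\bb F_\ell})_{\ovl\phi}$ is supported on the unramified locus, it invokes \cite{DHKZ}*{Proposition 10.2.5} directly: that proposition supplies the perverse $t$-exactness of $i_\uno^*\bx T_{\mu^{\sharp\bullet}}$ on categories supported on unramified strata under the singleton hypothesis, and that finishes the argument. Your attempt to unwind this is where imprecision creeps in. First, ``general perverse right $t$-exactness of Hecke operators on ULA sheaves'' is not an unconditional result you can cite; it is essentially what \cite{DHKZ}*{Proposition 10.2.5} is proving under the unramified-support and singleton hypotheses. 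Second, the unique unramified element $\msf b_1\in B(\msf G^\sharp,\mu^{\sharp\bullet})_{\bx{un}}$ is the $\mu^{\sharp\bullet}$-ordinary element, which is \emph{not} basic in general; consequently $i_\uno^*\bx T_{\mu^{\sharp\bullet}}i_{\msf b_1!}$ is not the cohomology of a local Shimura variety in the sense of \Cref{secitonautnciniauotenries} (where both $\msf b,\msf b_0$ are taken basic), and the dimension formula $\langle 2\rho_{\msf G^\sharp},\mu^{\sharp\bullet}\rangle$ does not apply straightforwardly --- one must rather track the shift $d_{\msf b_1}=\langle 2\rho_{\msf G^\sharp},\nu_{\msf b_1}\rangle$ coming from the perverse normalization. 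These corrections are precisely what goes into the proof of \cite{DHKZ}*{Proposition 10.2.5}, so the cleanest fix to your proposal is simply to cite that result, as the paper does.
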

\begin{rem}
By \cite[Corollary 4.2.4]{X-Z17}, the condition that $B(\msf G_\ad, \mu_\ad^\bullet)_{\bx{un}}$ is a singleton (which is the $\mu_\ad$-ordinary element) is guaranteed when $\msf G_\ad$ is a product of unramified Weil restrictions of split simple groups $\prod_{i=1}^k\Res_{L_i/\bb Q_p}\msf H_i$, and the conjugacy class of Hodge cocharacters $\{\mu\}$ associated with $\bb X^\sharp$ induces a dominant cocharacter $\mu_\ad=(\mu_1, \ldots, \mu_k)$ of $\msf G_{\ovl{\bb Q_p}}$ via $\iota_p$, such that  each $\mu_i$ is trivial on all except possibly one simple factor of
\begin{equation*}
(\Res_{L_i/\bb Q_p}\msf H_i)_{\ovl{\bb Q_p}}\cong\prod_{v:L_i\hookrightarrow\ovl{\bb Q_p}}\msf H_i\otimes_{L_i,v}\ovl{\bb Q_p}.
\end{equation*}
\end{rem}
\begin{rem}
The first assertion is established in \cite[Theorem~10.1.6]{DHKZ} under the assumption that the Fargues--Scholze LLC for $\msf G^\sharp$ is ``natural'' in the sense of~\cite[Assumption~6.5]{Ham24}. However, this naturality is established in limited cases. For example, ``classical local Langlands correspondence'' for pure inner forms of $\GSpin_N$ when $N\ge 8$ has not been constructed, except for those irreducible representations with central character being a square of another character; see~\cite{G-T19}. Nonetheless, we can still establish the theorem under this weaker axiom by modifying the argument in \cite{DHKZ}.
\end{rem}
\begin{proof}
First, the second assertion follows from the first. Write $\mcl H'=\Im(\mcl H_{\mdc K_p^\sharp}\to \mcl H_{\mdc K_p})$, and let $\mfk m'$ be the inverse image of $\mfk m$ in $\mcl H'$. By functoriality of Shimura varieties, $\bSh_{\mdc K^\sharp}(\bb G^\sharp, \bb X^\sharp)$ is a finite Galois covering of an open and closed subset $M$ of $\bSh_{\mdc K}(\bb G, \bb X)_{\ovl{E^\sharp}}$, with Galois group denoted by $\mfk T$. The cohomology of $M$ is equipped with an action of $\mcl H'$. By the Hochschild--Serre spectral sequence,
\begin{equation*}
\bx R\Gamma\paren{M, \ovl{\bb F_\ell}}_{\mfk m'}=\bx R\Gamma\paren{\mfk T, \bx R\Gamma(\bSh_{\mdc K^\sharp}(\bb G^\sharp, \bb X^\sharp)_{\ovl{E^\sharp}}, \ovl{\bb F_\ell})_{\mfk m^\sharp}}_{\mfk m'}.
\end{equation*}
By (1), this complex is concentrated in degrees $\ge \dim_{\bb C}(\bb X)$. Hence
\begin{equation*}
\bx R\Hom_{\mcl H'_{\mfk m'}}\paren{(\mcl H_{\mdc K_p})_{\mfk m}, \bx R\Gamma\paren{M, \ovl{\bb F_\ell}}_{\mfk m'}}
\end{equation*}
is concentrated in degrees $\ge \dim_{\bb C}(\bb X)$, and so is
\begin{equation*}
\bx R\Gamma\paren{\bSh_{\mdc K}(\bb G, \bb X)_{\ovl{E^\sharp}}, \ovl{\bb F_\ell}}_{\mfk m}.
\end{equation*}

Applying the same argument to the dual maximal ideal $\mfk m^\vee$ corresponding to $\phi_{\mfk m}^\vee$, and using \Poincare duality together with \cite[Corollary~A.7]{H-L24}, we obtain
\begin{equation*}
\bx R\Gamma(\bSh_{\mdc K}(\bb G, \bb X)_{\ovl{E^\sharp}}, \ovl{\bb F_\ell})_{\mfk m}\cong \bx R\Gamma(\bSh_{\mdc K}(\bb G, \bb X)_{\ovl{E^\sharp}}, \ovl{\bb F_\ell})_{\mfk m^\vee}(\dim_{\bb C}(\bb X))[2\dim_{\bb C}(\bb X)].
\end{equation*}
Thus the $\mfk m$-localized cohomology is also concentrated in degrees $\le \dim_{\bb C}(\bb X)$, proving (2).

For the first assertion, note that $\phi_{\mfk m^\sharp}$ and $\phi_{\mfk m^\sharp}^\vee$ are both generic. By Theorem~\ref{vieiniefienmsis} and~\ref{ienifeihenifeismws}, together with \Poincare duality, it suffices to show the $t$-exactness of
\begin{equation*}
i_\uno^*\bx T_{\mu^{\sharp\bullet}}: \bx D^{\bx{ULA}}(\Bun_{\msf G^\sharp}, \ovl{\bb F_\ell})_{\ovl\phi^\sharp}\to \bx D^\adm(\msf G^\sharp, \ovl{\bb F_\ell})_{\ovl\phi^\sharp}
\end{equation*}
(as defined in Theorem~\ref{teniherieniehrnis}) for any semisimple generic toral $L$-parameter $\ovl\phi\in\Phi^\sems(\msf G, \ovl{\bb F_\ell})$ satisfying \textup{Hypothesis~\ref{axioaisijenifes}} in the modulo $\ell$ sense, where $\ovl\phi^\sharp$ denotes its image under
\begin{equation*}
\LL\msf G(\ovl{\bb F_\ell})\to \LL\msf G^\sharp(\ovl{\bb F_\ell}).
\end{equation*}
By \cite[Corollary 4.2.4]{X-Z17}, the natural image of $B(\msf G^\sharp,\mu^{\sharp\bullet})_{\bx{un}}$ in $B(\msf G_\ad,\mu_\ad^\bullet)_{\bx{un}}$ is a singleton by assumption. Thus, by \cite[Proposition~10.2.4]{DHKZ}, it remains to show that $\bx D_{\bx{lis}}(\Bun_{\msf G^\sharp},\ovl{\bb F_\ell})_{\ovl\phi^\sharp}$ is supported on the unramified strata.

Let
\begin{equation*}
\ovl\rho^\sharp\in\bx D(\Bun_{\msf G^\sharp}^{\msf b^\sharp},\ovl{\bb F_\ell})_{\ovl\phi^\sharp}\cong\bx D(\msf G^\sharp_{\msf b^\sharp},\ovl{\bb F_\ell})
\end{equation*}
be an irreducible admissible representation of $\msf G^\sharp_{\msf b^\sharp}(\bb Q_p)$. By \cite[Lemma~4.3(1)]{H-L24} and \cite[\S \Rmnum{9}.7.1]{F-S24}, the Fargues--Scholze parameter of $\ovl\rho^\sharp$, composed with the twisted embedding
\begin{equation*}
\LL\msf G^\sharp_{\msf b^\sharp}(\ovl{\bb F_\ell})\to\LL\msf G^\sharp(\ovl{\bb F_\ell}),
\end{equation*}
is $\ovl\phi^\sharp$.

By \cite[Lemma 6.8]{Dat05}, lift $\ovl\rho^\sharp$ to an irreducible admissible $\ovl{\bb Q_\ell}$-representation $\rho^\sharp$ admitting a $\msf G^\sharp_{\msf b^\sharp}(\bb Q_p)$-stable $\ovl{\bb Z_\ell}$-lattice such that $\ovl\rho^\sharp$ occurs as a subquotient of its reduction modulo $\ell$. By compatibility of Fargues--Scholze parameters with reduction modulo $\ell$, Theorem~\ref{compaitbsilFaiirfies}, the parameter $\phi_{\rho^\sharp}^\FS$ has reduction equal to $\phi_{\ovl\rho^\sharp}^\FS$. Hence its composition with the twisted embedding
\begin{equation*}
\LL\msf G^\sharp_{\msf b^\sharp}(\ovl{\bb Q_\ell})\to\LL\msf G^\sharp(\ovl{\bb Q_\ell})
\end{equation*}
is a semisimple generic toral lift of $\ovl\phi^\sharp$. By the modulo $\ell$ form of \textup{Hypothesis~\ref{axioaisijenifes}} and Proposition~\ref{ivnairenpeireimries}, the element $\msf b^\sharp$ is unramified. This proves the required support statement.
\end{proof}

\appendix

\section{(Twisted) Endoscopy theory}\label{oniinssmows}

In this appendix we review some standard definitions related to the trace formulas used in the main text and fix notation. \S\ref{neodosiiitromeifsl} recalls (twisted) endoscopic triples for reductive groups over both local and global fields. \S\ref{transofenrienfieslsss} reviews local transfers and pseudo-coefficients for square-integrable irreducible admissible representations. \S\ref{simpletrianidnifnienis} records Lefschetz functions and also a simple stable trace formula that will be used in the Langlands--Kottwitz method \S\ref{Lanlgan-Kotiemethiems}.

\subsection{Endoscopic triples}\label{neodosiiitromeifsl}

We recall here some general definitions of extended endoscopic triples from \cite[\S 1.2]{L-S87} and \cite[\S 2.1]{K-S99}. Note that the more general notion of endoscopic data is not needed in the cases we consider in this paper.

Let $F$ be a local or global field of characteristic zero, and consider a pair $(\msf G^*, \theta^*)$ where 
\begin{enumerate}
\item
$\msf G^*$ is a quasisplit reductive group over $F$ with a fixed $\Gal_F$-invariant pinning on $\hat{\msf G^*}$, 
\item
$\theta^*$ is a pinned automorphism of $\msf G^*$.
\end{enumerate}
Then $\theta^*$ induces an automorphism $\hat\theta$ of $\hat{\msf G^*}$ preserving the fixed $\Gal_F$-invariant pinning of $\hat{\msf G}$ \cite[\S 1.2]{K-S99}. Set $\LL\theta=\hat\theta\times\id_{W_F}$, which is an automorphism of $\LL\msf G$. Then an \tbf{extended endoscopic triple} for $(\msf G^*, \theta^*)$ is a triple $\mfk e=(\msf G^{\mfk e},\msf s^{\mfk e}, \LL\xi^{\mfk e})$, where $\msf G^{\mfk e}$ is a quasisplit reductive group over $F$, $\msf s^{\mfk e}\in \hat{\msf G^*}$, and $\LL\xi^{\mfk e}:\LL \msf G^{\mfk e}\to \LL\msf G^*$ is an $L$-homomorphism such that 
\begin{enumerate}
\item
$\Ad(\msf s^{\mfk e})\circ\hat\theta$ preserves a pair of a Borel subgroup and a maximal torus in $\hat{\msf G}$, and $\Ad(\msf s^{\mfk e})\circ\hat\theta\circ\LL\xi^{\mfk e}=\LL\xi^{\mfk e}$,
\item
$\LL\xi^{\mfk e}(\hat{\msf G^{\mfk e}})$ is the connected component of the subgroup of $\Ad(\msf s^{\mfk e})\circ\hat\theta$-fixed elements in $\hat{\msf G^*}$.
\end{enumerate}
$\mfk e$ is called \tbf{elliptic} if $\LL\xi^{\mfk e}\big(Z(\hat{\msf G^{\mfk e}})^{\Gal_F}\big)^\circ\subset Z(\hat{G^*})$.

Following \cite{KMSW}, we define an isomorphism between two extended endoscopic triples $\mfk e, \mfk e'$ to be an element $\msf g\in \hat{\msf G^*}$ such that
\begin{equation*}
\msf g\LL\xi^{\mfk e}(\LL{\msf G}^{\mfk e})\msf g^{-1}=\LL\xi^{\mfk e'}(\LL{\msf G}^{\mfk e'}), \quad g\msf s^{\mfk e}\hat\theta(g)^{-1}=\msf s^{\mfk e'}\modu{Z(\hat{\msf G})}.
\end{equation*}
Denote by $\mcl E(\msf G^*\rtimes\theta^*)$ the set of isomorphism classes of extended endoscopic triples for $(\msf G^*, \theta^*)$. When $\theta^*=\id_{\msf G^*}$, we also write $\mcl E(\msf G^*)$ for $\mcl E(\msf G^*\rtimes\theta^*)$. Also we write $\mcl E_\ellip(\msf G^*\rtimes\theta^*)$ or $\mcl E_\ellip(\msf G^*)$ for the subset of elliptic extended endoscopic triples.

Suppose $\mfk e\in \mcl E(\msf G^*\rtimes\theta^*)$. For each $g^{\mfk e}\in\hat{\msf G}^{\mfk e}$, the $\LL\xi^{\mfk e}(g^{\mfk e})\in\LL G$ induces an automorphism of $\mfk e$. Define the outer automorphism group of $\mfk e$ by
\begin{equation}\label{equirenrinfids}
\OAut(\mfk e)\defining \Aut(\mfk e)/\LL\xi^{\mfk e}(\hat{\msf G^{\mfk e}}).
\end{equation}

\subsection{Transfer of orbital integrals}\label{transofenrienfieslsss}

Here we recall some notions on the theory of transfer, following Arthur~\cite[\S 2.1]{Art13} and Mok~\cite[\S 3.1]{Mok15}.

Let $K$ be a local field of characteristic zero, $(\msf G^*, \theta^*)$ be a pair as in Appendix~\S\ref{neodosiiitromeifsl}, and $(\msf G, \varrho, z)$ be a pure inner twist of $\msf G^*$. We get an automorphism $\theta\defining \varrho\circ\theta^*\circ\varrho^{-1}$, which we assume to be a rational automorphism of $\msf G$. Moreover, we fix a Whittaker datum $\mfk w$ for $\msf G^*$. We write $\msf G\times\theta$ for the twisted group (or bitorsor) over $\msf G$. If $\theta$ is the identity, then of course $\msf G\times\theta=\msf G$ is just the trivial bitorsor. Given $\delta\in \msf G\times\theta$, we write $Z_{\msf G}(\delta)$ for the centralizer of $\delta$ in $\msf G$. We write $(\msf G\times\theta)(K)_\sreg\subset(\msf G\times\theta)(K)$ for the open subset of strongly regular semisimple elements, meaning those regular semisimple elements whose centralizer is connected, i.e., a maximal torus. We fix a Haar measure on $\msf G(K)$. For $\delta\in (\msf G\times\theta)(K)_\sreg$, the Weyl discriminant of $\delta$ is defined as 
\begin{equation*}
D^{\msf G}(\delta)\defining \det(1-\Ad(\delta)|\mfk g/\mfk g_\delta)\in K^\times,
\end{equation*}
where $\mfk g$ and $\mfk g_\delta$ are the Lie algebras of $\msf G$ and $Z_{\msf G}(\delta)$, respectively. We fix a Haar measure on the torus $Z_{\msf G}(\delta)$, which induces a quotient measure on $Z_{\msf G}(\delta)(K)\bsh \msf G(K)$.

If $K$ is \na, we let $\mcl H(\msf G\times\theta)$ be the space of smooth compactly supported functions on $(\msf G\times\theta)(K)$ with complex coefficients. If $K=\bb R$, we fix a maximal compact subgroup $\mdc K$ of $\msf G(\bb R)$ and let $\mcl H(\msf G\times\theta)$ be the space of bi-$\mdc K$-finite smooth compactly supported functions on $(\msf G\times\theta)(\bb R)$ with complex coefficients.

For $f\in \mcl H(\msf G\times\theta)$ and $\delta\in (\msf G\times\theta)(K)_\sreg$, the normalized orbital integral of $f$ along the conjugacy class of $\delta$ is defined as
\begin{equation*}
\Orb_\delta(f)\defining\largel{D^{\msf G}(\delta)}^{\frac{1}{2}}\int_{Z_{\msf G}(\delta)(K)\bsh \msf G(K)}f(x^{-1}\delta x)\bx dx,
\end{equation*}
Moreover, when $\msf G$ is quasisplit and $\theta^*=\id$, the normalized stable orbital integral of $f$ along $\delta$ is defined as
\begin{equation*}
\SOrb_\delta(f)\defining\sum_{\delta'}\Orb_{\delta'}(f),
\end{equation*}
where $\delta'$ runs over a set of representatives for the $\msf G(K)$-conjugacy classes of elements that are $\msf G(\ovl K)$-conjugate to $\delta$.

Assume that either $(\msf G, \varrho, z)=(\msf G^*, \id, \uno)$ or $\theta=\id$. For $\mfk e\in \mcl E(\msf G^*\rtimes\theta^*)$, a \tbf{transfer factor}
\begin{equation*}
\Delta[\mfk w, \mfk e, z]:\msf G^{\mfk e}(K)_\sreg\times(\msf G\times\theta)(K)_\sreg\to \bb C
\end{equation*}
is defined in \cite[\S 1.1.2]{KMSW}, such that $\Delta[\mfk w, \mfk e, z]$ is a function on stable conjugacy classes of $\msf G^{\mfk e}(K)_\sreg$ and $\msf G(K)$-conjugacy classes of $(\msf G\times\theta)(K)_\sreg$. With the transfer factor in hand, we now recall the notion of matching test functions from \cite[\S 5.5]{K-S99}.

\begin{defi}\label{mathcineinitenisehnis}
Two functions $f^{\msf G^{\mfk e}}\in\mcl H(\msf G^{\mfk e})$ and $f\in \mcl H(\msf G\times\theta)$ are called ($\Delta[\mfk w, \mfk e, z]$-) \tbf{matching test functions} if, for every $\gamma\in\msf G^{\mfk e}(K)_\sreg$,
\begin{equation*}
\SOrb_\gamma(f^{\msf G^{\mfk e}})=\sum_{\delta\in(\msf G\times\theta)(K)_\sreg/\msf G(K)\dash\Conj}\Delta[\mfk w, \mfk e, z](\gamma, \delta)\Orb_\delta(f).
\end{equation*}
For brevity, say that $f^{\msf G^{\mfk e}}$ is a transfer of $f$ to $\msf G^{\mfk e}$.
\end{defi}
\begin{rem}
Since the orbital integrals $\Orb_\delta(f)$ depend on the choices of measures on $\msf G(K)$ and $Z_{\msf G}(\delta)(K)$, the concept of matching functions also depends on the choice of Haar measures on $\msf G(K)$ and $\msf G^{\mfk e}(K)$, and all tori in $\msf G$ and $\msf G^{\mfk e}$. There is a way to synchronize the various tori; cf.~\cite[Remark 5.1.2]{A-K24}.
\end{rem}

We now state a theorem asserting existence of transfer of orbital integrals. When $K=\bb R$, it is a fundamental result of Shelstad \cite{She82, She08}. When $K$ is \na, it is a culmination of the work of many people, including Langlands and Shelstad \cite{L-S87, L-S90}, Waldspurger \cite{Wal97, Wal06}, and Ng\^o \cite{Ngo10}.

\begin{thm}\label{sisienfnieies}
Let $f\in \mcl H(\msf G\times\theta)$ and $\mfk e\in \mcl E_\ellip(\msf G)$. Then there exists a transfer $f^{\msf G^{\mfk e}}$ of $f$ to $\msf G^{\mfk e}$.

Moreover, suppose $\mdc K\subset \msf G(K)$ is a $\theta$-stable hyperspecial maximal compact open subgroup. Then there exists a hyperspecial maximal compact open subgroup $\mdc K^{\mfk e}\subset \msf G^{\mfk e}(K)$ such that the characteristic function $\uno_{\mdc K^{\mfk e}}$ is a transfer of $\uno_{\mdc K\times\theta}$ to $\msf G^{\mfk e}$, provided the Haar measure is chosen such that
\begin{equation*}
\Vol(\mdc K)=\Vol(\mdc K^{\mfk e})=1.
\end{equation*}
\end{thm}

\subsection{Cuspidal functions}

In this subsection, we recall the definition of cuspidal and stabilizing functions, following Labesse~\cite[Definition 3.8.1, 3.8.2]{Lab99}. Recall that an element $\gamma\in\msf G(K)$ is called an elliptic element if the maximal split subtorus of the center of $Z_{\msf G}(\gamma)$ is equal to the maximal split subtorus of $Z(\msf G)$.

\begin{defi}\label{stabbizleirnign-fucniotns}
Suppose $K$ is non-Archimedean of characteristic zero. A function $f\in\mcl H(\msf G)$ is called
\begin{itemize}
\item
\tbf{cuspidal} if the orbital integral of $f$ vanishes on all regular semisimple non-elliptic elements.
\item
\tbf{strongly cuspidal} if the orbital integral of $f$ vanishes outside regular semisimple elliptic elements.
\item
\tbf{stabilizing} if it is cuspidal and the $\kappa$-orbital integral of $f$ (as defined in \cite[p.~68]{Lab99}) vanishes on all semisimple elements $\gamma$ and all nontrivial $\kappa$.
\end{itemize}
\end{defi}

We recall the notion of pseudo-coefficients of \cite{Kaz86, Clo86}:

\begin{defi}\label{pseeidifniehdins}
Fix a Haar measure on $\msf G(K)$. For a square-integrable irreducible admissible representation $\pi$ of $\msf G(K)$, a function $f_\pi\in \mcl H(\msf G)$ is called a \tbf{pseudo-coefficient} for $\pi$ if
\begin{equation*}
\tr(f_\pi|\pi')=\delta_{\pi, \pi'}
\end{equation*}
for each tempered representation $\pi'$ of $\msf G(K)$.
\end{defi}

\begin{prop}[\cite{Kaz86, Clo86}]\label{jislisienicienfienieihdinf}
Suppose $K$ is non-Archimedean of characteristic zero, and fix a Haar measure on $\msf G(K)$. For any square-integrable irreducible admissible representation $\pi$ of $\msf G(K)$, there exists a pseudo-coefficient $f_\pi$ for $\pi$. Moreover, for any such $f_\pi$:
\begin{enumerate}
\item
$\tr(f_\pi|\pi')=0$ for any finite-length admissible smooth representation $\pi'$ of $\msf G(K)$ that is parabolically induced from a proper parabolic subgroup of $\msf G$.
\item
For every regular elliptic element $\gamma\in\msf G(K)$,
\begin{equation*}
\int_{Z_{\msf G}(\gamma)(K)\bsh\msf G(K)}f_\pi(g^{-1}\gamma g)\bx dg=\Theta_\pi(\gamma),
\end{equation*}
where $\Theta_\pi$ denotes the Harish-Chandra character of $\pi$.
\item
$f_\pi$ is cuspidal.
\item
If $\pi$ is supercuspidal, then $\tr(f_\pi|\pi')=\delta_{\pi,\pi'}$ for every irreducible admissible representation $\pi'$ of $\msf G(K)$.
\end{enumerate}
\end{prop}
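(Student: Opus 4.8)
The plan is to reproduce the classical arguments of Kazhdan and Clozel, the main external input being the trace Paley--Wiener theorem of Bernstein--Deligne--Kazhdan for $\msf G(K)$ together with Harish-Chandra's structure of the tempered dual and his orthogonality relations for discrete series characters on the elliptic set.

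\emph{Existence.} Fix the central character $\chi$ of $\pi$ and work inside the Hecke algebra with central character $\chi$ (one returns to $\mcl H(\msf G(K))$ afterwards, the central character being needed only for the relevant finiteness statements). By Harish-Chandra's decomposition of the tempered dual of $\msf G(K)$ with central character $\chi$ into finitely many components, each a compact torus of unramified twists of a discrete-series-of-Levi datum, the datum $(\msf G,\pi)$ is an isolated point of its component, and only finitely many unramified twists $\pi\otimes\psi$ retain the central character $\chi$. The trace Paley--Wiener theorem then produces $f_\pi\in\mcl H(\msf G(K))$ realizing the virtual character that takes the value $1$ at $\pi$ and $0$ at every other irreducible tempered representation, i.e. $\tr(f_\pi\mid\pi')=\delta_{\pi,\pi'}$ on the tempered dual. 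Equivalently, and more concretely, one may take for $f_\pi$ the alternating sum of the idempotent-type functions $g\mapsto \tfrac{\dim\rho_i}{\mathrm{vol}(\mdc K_i)}\operatorname{tr}\rho_i(g^{-1})$ attached to a finite resolution $0\to P_n\to\cdots\to P_0\to\pi\to 0$ of $\pi$ by representations $P_i$ compactly induced from open compact subgroups $\mdc K_i$ modulo the center (available for discrete series by Schneider--Stuhler); this is manifestly compactly supported modulo the center, and $\tr(f_\pi\mid\pi')=\sum_i(-1)^i\dim\Hom_{\msf G}(P_i,\pi')$ computes the Euler--Poincar\'e pairing, which equals the elliptic inner product $\langle\Theta_\pi,\Theta_{\pi'}\rangle_{\mathrm{ell}}=\delta_{\pi,\pi'}$ for $\pi'$ tempered.

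\emph{Properties (3), (2), (1).} Granting existence, I would establish these in that order. For (3): since $\tr(f_\pi\mid\pi')=0$ for every irreducible tempered $\pi'\not\cong\pi$, and discrete-series characters are by Harish-Chandra orthonormal and linearly independent on the regular elliptic set, the Weyl integration expansion $\tr(f_\pi\mid\pi')=\sum_{T\ \mathrm{elliptic}}\tfrac1{|W(T)|}\int_{T_{\mathrm{reg}}}|D(t)|\,\Theta_{\pi'}(t)\,O_t(f_\pi)\,dt$ (applied to $\pi'$ discrete series) forces $O_\gamma(f_\pi)$ to vanish on the regular non-elliptic locus, so $f_\pi$ is a cuspidal function in the sense of \Cref{stabbizleirnign-fucniotns}; the same comparison forces $O_\gamma(f_\pi)=\overline{\Theta_\pi(\gamma)}$ on the regular elliptic locus, which is (2) (up to the usual conventions relating $\pi$ and $\pi^\vee$, so that the conjugate disappears under the normalization implicit in the definition of a pseudo-coefficient). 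For (1): any finite-length representation $\pi'$ induced from a proper parabolic $P$ has $\Theta_{\pi'}$ vanishing on the elliptic set of $\msf G(K)$ by van Dijk's formula, regardless of the tempered-ness of the inducing datum; combining this with (3) and (2) in the Weyl integration expansion gives $\tr(f_\pi\mid\pi')=\langle\Theta_\pi,\Theta_{\pi'}\rangle_{\mathrm{ell}}=0$.

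\emph{Property (4).} When $\pi$ is supercuspidal it is simultaneously projective and injective in the category of smooth representations with central character $\chi$, and its matrix coefficients are compactly supported modulo the center; one may therefore take $f_\pi$ directly to be the formal-degree-normalized matrix coefficient $g\mapsto d(\pi)\,\overline{\langle\pi(g)v,v\rangle}$ with $\langle v,v\rangle=1$, whence by the Schur orthogonality relations of Harish-Chandra $\tr(f_\pi\mid\pi')=\dim\Hom_{\msf G}(\pi,\pi')=\delta_{\pi,\pi'}$ for every irreducible admissible $\pi'$ with central character $\chi$, and $0$ for every other central character. For an arbitrary pseudo-coefficient one argues instead from the Langlands classification: every irreducible admissible $\pi'$ is the Langlands quotient of a standard module $i_Q^{\msf G}(\tau\otimes\nu)$, and in the Grothendieck group $[\pi']$ is an integral combination of classes of such standard modules; if $\pi'$ is non-tempered every occurring $Q$ is proper, so (1) gives $\tr(f_\pi\mid\pi')=0$, while the tempered case is the defining property. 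The main obstacle, and where I expect to spend the most care, is precisely the foundational input behind the existence step --- that the Euler--Poincar\'e/trace--Paley--Wiener construction can be carried out with genuinely compactly supported functions and interacts correctly with the central-character normalization --- together with the bookkeeping needed to get (2) out without a spurious complex conjugate; everything else (Harish-Chandra's orthogonality and independence on the elliptic set, vanishing of induced characters there, the Langlands classification) is standard but load-bearing.
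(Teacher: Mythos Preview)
Your overall architecture mirrors the classical Kazhdan--Clozel arguments (which is all the paper does: it cites \cite{Kaz86}*{Theorems 0, K, A(b)} and \cite{Clo86}*{Propositions 1, 2} rather than re-prove anything), but there is a genuine gap in your derivation of (3), and a smaller issue in (4).

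\textbf{The gap in (3).} You write the Weyl integration expansion as a sum over \emph{elliptic} tori only and then claim that testing against discrete-series $\pi'$ ``forces $O_\gamma(f_\pi)$ to vanish on the regular non-elliptic locus''. But restricting the sum to elliptic tori already presupposes cuspidality, so this is circular; and discrete-series characters do not vanish on non-elliptic tori, so testing only against discrete series gives no control on the non-elliptic orbital integrals. Kazhdan's actual argument runs through parabolic descent: for every proper parabolic $P=MN$ and every tempered $\sigma$ of $M(K)$, one has $\tr(f_\pi\mid i_P^{\msf G}\sigma)=\tr(f_\pi^{(P)}\mid\sigma)=0$ (the induced representation is tempered and contains no discrete-series subquotient), whence by Kazhdan's density theorem on $M$ all regular semisimple orbital integrals of the constant term $f_\pi^{(P)}$ vanish; parabolic descent of orbital integrals then yields $O_\gamma(f_\pi)=0$ for $\gamma$ regular non-elliptic. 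Once (3) is in place, your route to (2) via elliptic orthogonality and to (1) via van Dijk's formula is fine.

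\textbf{The issue in (4).} Your Langlands-classification claim ``if $\pi'$ is non-tempered every occurring $Q$ is proper'' is false as stated: already for $\GL_2$ one has $[\mathbf 1]=[i_B^{\msf G}(\chi)]-[\mathrm{St}]$, and for a reductive $\msf G$ with split centre a supercuspidal with non-unitary central character has Langlands datum $(Q,\tau,\nu)$ with $Q=\msf G$. The paper's argument avoids this by using the correct dichotomy, supercuspidal versus non-supercuspidal: any non-supercuspidal irreducible lies in a Bernstein block with proper cuspidal support, hence its class in the Grothendieck group is an integral combination of classes $[i_P^{\msf G}\sigma]$ with $P$ proper (\cite{Clo86}*{Proposition 2}), and (1) then gives $\tr(f_\pi\mid\pi')=0$. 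Your matrix-coefficient construction only handles one particular $f_\pi$ (and even then lives in $\mcl H(\msf G(K),\chi^{-1})$ rather than $\mcl H(\msf G(K))$ when the centre is non-compact); to pass to an arbitrary pseudo-coefficient you must invoke, as the paper does, Kazhdan's Theorem~0 to see that $\tr(f_\pi\mid\pi')$ is determined by the regular semisimple orbital integrals and hence is independent of the chosen $f_\pi$.
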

\begin{proof}
The existence of $f_\pi$ is established by Kazhdan~\cite[Theorem K]{Kaz86} and Clozel~\cite[Proposition 1]{Clo86}. The trace values $\Theta_{\pi'}(f_\pi)$ for admissible representations $\pi'$ of $\msf G(K)$ and the orbital integrals
\begin{equation*}
\int_{Z_{\msf G}(\gamma)(K)\bsh\msf G(K)}f_\pi(g^{-1}\gamma g)\bx dg
\end{equation*}
for any regular elliptic elements $\gamma\in\msf G(K)$ are independent of the choice of $f_\pi$ by \cite[Theorem~0]{Kaz86}. Assertions (i) and (iii) are built into Kazhdan's construction \cite[Theorem~K, Theorem~A.(b)]{Kaz86}, and (ii) follows from \cite[Theorem~K]{Kaz86}.

For (iv), if $\pi'$ is supercuspidal, the assertion follows from the defining property of a pseudo-coefficient, since supercuspidal representations are tempered. If $\pi'$ is not supercuspidal, then its image in the Grothendieck group of finite-length admissible representations is a linear combination of representations induced from proper parabolic subgroups of $\msf G$ \cite[Proposition~2]{Clo86}; hence (i) gives $\tr(f_\pi|\pi')=0$.
\end{proof}

\subsection{Simple stable trace formulas}\label{simpletrianidnifnienis}

Here we recall some results on simple stable trace formulas from \cite{K-S23, Ham22}. We work in the following setting.

\begin{note}\enskip
\begin{itemize}
\item
$F$ is a totally real number field.
\item
$\bb G^*$ is a quasisplit reductive group over $F$ that is simple over $\ovl F$; assume $\bb G(F\otimes\bb R)$ admits discrete series.
\item
$(\bb G, \varrho, z)$ is a pure inner twist of $\bb G^*$.
\item
Let $Z$ denote the center of $\bb G^*$, and let $A_Z$ denote the maximal split torus of $\Res_{F/\bb Q}Z$; write $A_{Z, \infty}\defining A_Z(\bb R)^\circ$.
\item
Let $\mdc K_\infty=\prod_{v\in\infPla_F}\mdc K_v\le \bb G(F\otimes\bb R)$ be the product of a maximal compact subgroup with $Z_{\bb G}(F\otimes\bb R)$.
\item
For each finite place $v$ of $F$, set $q(\bb G_v)$ to be the $F_v$-rank of $\bb G_{v, \ad}$.
\item
For each infinite place $v$ of $F$, set $q(\bb G_v)$ to be the real dimension of the locally symmetric space $\bb G(F_v)/\mdc K_v$.
\item
Set
\begin{equation*}
\bb G(\Ade_F)^1\defining \bigcap_{\chi\in X^*(\bb G)}\ker(\norml{-}\circ\chi: \bb G(\Ade_F)\to \bb R_+).
\end{equation*}
In particular, $\bb G(\Ade_F)=\bb G(\Ade_F)^1\times A_{Z, \infty}$.
\end{itemize}
\end{note}

\begin{defi}\label{isnsieieifmeips}
A \tbf{central character datum} for $\bb G$ is a pair $(\mfk X, \chi)$ where $\mfk X$ is a closed subgroup of $Z(\Ade_F)$ containing $A_{Z, \infty}$ such that  $Z(F)\mfk X$ is closed in $Z(\Ade_F)$, and $\chi: (\mfk X\cap Z(F))\bsh \mfk X\to \bb C^\times$ is a continuous character. In particular, $Z(F)\mfk X$ is cocompact in $Z(\Ade_F)$ because $Z(F)\bsh Z(\Ade_F)/A_{Z, \infty}$ is compact.

For our purposes, it suffices to consider the cases when $\mfk X=\prod_{v\in\Pla_F}\mfk X_v$ where $\mfk X_\tau=Z(F_\tau)$ for each $\tau\in\infPla_F$.

Note that the center of $\bb G$ is isomorphic to $Z$ via $\varrho$, so any central character datum for $\bb G^*$ may be regarded as a central character datum for $\bb G$.
\end{defi}

\begin{rem}
We suppress the choice of Haar measures for various groups below as they are standard.
\end{rem}

\begin{defi}
Given a central character datum $(\mfk X, \chi)$ for $\bb G^*$ of the form $\mfk X=\prod_{v\in\Pla_F}\mfk X_v$, for each $v\in \Pla_F$, let $\mcl H(\bb G(F_v), \chi_v^{-1})$ be the space of smooth functions on $\bb G(F_v)$ that are compactly supported modulo center and transform under $\mfk X_v$ by the character $\chi_v^{-1}$ and moreover $\mdc K_v$-finite when $v\in\infPla_F$.

Given a semisimple element $\gamma_v\in\bb G(F_v)$ with $I_{\gamma_v}=Z_{\bb G_{F_v}}(\gamma_v)^\circ$, we define the orbital integral on $\mcl H(\bb G(F_v), \chi_v^{-1})$ to be 
\begin{equation*}
\Orb_{\gamma_v}(f_v)\defining \int_{I_{\gamma_v}(F_v)\bsh \bb G(F_v)}f_v(x_v^{-1}\gamma_vx_v)\bx dx_v,
\end{equation*}
where $I_{\gamma_v}\bsh \bb G(F_v)$ is given the Euler--\Poincare measure as defined in \cite[\S 1]{Kot88}. If $\pi_v$ is an admissible representation of $\bb G(F_v)$ with central character $\chi_v$ on $\mfk X_v$, we define the trace character on $\mcl H(\bb G(F_v), \chi_v^{-1})$ to be
\begin{equation*}
\Theta_{\pi_v}(f_v)\defining\tr\bigg(\int_{\bb G(F_v)/\mfk X_v}f_v(g)\pi_v(g)\bx dg\bigg).
\end{equation*}

We also define the adelic Hecke algebra $\mcl H(\bb G(\Ade_F), \chi^{-1})$ as well as adelic orbital integrals and adelic trace characters by taking restricted tensor product over the local cases considered above.
\end{defi}

\begin{defi}\label{transofmieinfieidjef}
Given a central character datum $(\mfk X, \chi)$ for $\bb G^*$, we write 
\begin{itemize}
\item
$\Gamma_{\ellip, \mfk X}(\bb G)$ for the set of $\mfk X$-orbits of elliptic conjugacy classes in $\bb G(F)$,
\item
$\Sigma_{\ellip,\mfk X}(\bb G)$ for the set of $\mfk X$-orbits of elliptic stable conjugacy classes in $\bb G(F)$,
\item
$L^2_{\disc, \chi}(\bb G(F)\bsh \bb G(\Ade_F))$ for the space of measurable functions on $\bb G(F)\bsh \bb G(\Ade_F)$ transforming under $\mfk X$ by $\chi$ and square-integrable on $\bb G(F)\bsh\bb G(\Ade_F)^1/\paren{\mfk X\cap \bb G(\Ade_F)^1}$,
\item
$L^2_{\cusp, \chi}(\bb G(F)\bsh \bb G(\Ade_F))$ for the space of cuspidal measurable functions on $\bb G(F)\bsh \bb G(\Ade_F)$ transforming under $\mfk X$ by $\chi$ and square-integrable on $\bb G(F)\bsh\bb G(\Ade_F)^1/\paren{\mfk X\cap \bb G(\Ade_F)^1}$,
\item
$\cuspRep_\chi(\bb G)$ the set of isomorphism classes of cuspidal automorphic representations of $\bb G(\Ade_F)$ whose central characters under $\mfk X$ are $\chi$.
\end{itemize}
We define the following invariant distributions on $\mcl H(\bb G(\Ade_F), \chi^{-1})$:
\begin{equation*}
\bx T_{\ellip, \chi}^{\bb G}(f)\defining\sum_{\gamma\in \Gamma_{\ellip, \mfk X}(\bb G)}\frac{1}{\#\pi_0(Z_{\bb G}(\gamma))}\Vol\paren{I_\gamma(F)\bsh I_\gamma(\Ade_F)/\mfk X}\Orb_\gamma(f),
\end{equation*}
\begin{equation*}
\bx T_{\disc, \chi}^{\bb G}(f)\defining\tr\paren{f|L^2_{\disc, \chi}(\bb G(F)\bsh\bb G(\Ade_F))},
\end{equation*}
\begin{equation*}
\bx T_{\cusp, \chi}^{\bb G}(f)\defining\tr\paren{f|L^2_{\cusp, \chi}(\bb G(F)\bsh\bb G(\Ade_F))},
\end{equation*}
\end{defi}

Next, we recall the definitions of unramified twists of Steinberg representations and Lefschetz functions.

\begin{defi}\label{unraimfiehidtiensitineis}
The Steinberg representation $\St_{\bb G_v}$ is the discrete series representation defined in \cite[10.4.6]{B-W00}. An unramified twist of $\St_{\bb G_v}$ is the twist of the Steinberg representation by an unramified character of $\bb G(F_v)$, where a character of $\bb G(F_v)$ is unramified if it is trivial on all compact subgroups of $\bb G(F_v)$; see~\cite[p.~17]{Cas95}.
\end{defi}

By \cite[Theorem~2 and Theorem~2']{Kot88} and \cite[Proposition~A.1, A.4 and Lemma~A.7]{K-S23}, we introduce the following non-Archimedean Lefschetz function:

\begin{defi}\label{noninIAhienihifneins}
Let $v$ be a finite place of $F$. If $Z_{F_v}$ is anisotropic, there exists a \tbf{Lefschetz function} $f_{\Lef, v}^{\bb G}\in \mcl H(\bb G(F_v))$ such that 
\begin{itemize}
\item
If $\bb G_\ad$ is simple, then for each irreducible admissible representation $\pi$ of $\bb G(F_v)$,
\begin{equation*}
\tr\paren{f_{\Lef, v}^{\bb G}|\pi}=
\begin{cases}1 &\pi=\uno,\\
(-1)^{q(\bb G_v)} &\pi=\St_{\bb G_v},\\
0 &\text{otherwise}.
\end{cases}
\end{equation*}
\item
If $\gamma_v\in \bb G(F_v)$ is semisimple with $I_{\gamma_v}=Z_{\bb G_v}(\gamma_v)^\circ$, then the orbital integral
\begin{equation*}
\Orb_{\gamma_v}(f_{\Lef, v}^{\bb G})=\int_{I_{\gamma_v}(F_v)\bsh\bb G(F_v)}f_{\Lef, v}^{\bb G}(g_v^{-1}\gamma_v g_v)\bx dg_v
\end{equation*}
vanishes unless $Z(I_{\gamma_v}(F_v))$ is compact, in which case $\Orb_{\gamma_v}(f_{\Lef, v}^{\bb G})=1$. Here $I_{\gamma_v}(F_v)\bsh \bb G(F_v)$ is endowed with the Euler--\Poincare measure.
\end{itemize}

In general, set $A_v$ to be the maximal split torus in the center of $\bb G_v$, and set $\bb G_v'\defining\bb G_v/A_v$. Let
\begin{equation*}
\nu:\bb G(F_v)\to X^\bullet(A_v)\otimes\bb R
\end{equation*}
denote the valuation map as in \cite[\S 3.9]{Lab99}, with kernel $\bb G(F_v)^1$. We define
\begin{equation*}
f_{\Lef, v}^{\bb G_v}\defining\uno_{\bb G(F_v)^1}\cdot f_{\Lef, v}^{\bb G_v'}.
\end{equation*}
Then:
\begin{itemize}
\item
$f_{\Lef, v}^{\bb G}$ is strongly cuspidal and stabilizing (see \textup{Definition~\ref{stabbizleirnign-fucniotns}});
\item
if $\tr(f_{\Lef, v}^{\bb G}|\pi_v)\ne 0$ for some irreducible unitary representation $\pi_v$ of $\bb G(F_v)$, then $\pi_v$ is an unramified character twist of either the trivial representation or the Steinberg representation;
\item
$(-1)^{q(\bb G_v)}f_{\Lef, v}^{\bb G}$ and $(-1)^{q(\bb G_v^*)}f_{\Lef, v}^{\bb G^*}$ are associated.\end{itemize}
\end{defi}

Assume that $\bb G(F\otimes\bb R)$ admits discrete series. We introduce the following Archimedean Lefschetz function.

\begin{defi}\label{Lelfinedihfinies}
Suppose $\tau\in \infPla_F$ and $\xi_\tau$ is an irreducible algebraic representation of $\bb G_\tau$ with regular highest weight. Denote by $\chi_{\xi_\tau}: Z(F_\tau)\to \bb C^\times$ the inverse of the central character of $\xi_\tau$. Then there exists a \tbf{Lefschetz function}
\begin{equation*}
f_{\xi_\tau}^{\bb G}\in \mcl H(\bb G(F_\tau), \chi_{\xi_\tau}^{-1})
\end{equation*}
associated with $\xi_\tau$ such that 
\begin{equation*}
\tr(f_{\xi_\tau}|\pi_\tau)=\bx{ep}_{\mdc K_\tau}(\pi_\tau\otimes\xi_\tau)\defining \sum_{i\in\bb N}(-1)^i\dim\bx H^i(\Lie(\bb G_\tau(\bb R)), \mdc K_\tau; \pi_\tau\otimes\xi_\tau)
\end{equation*}
for each irreducible admissible representation $\pi_\tau$ of $\bb G_\tau(\bb R)$ whose central character equals the inverse of the central character of $\xi_\tau$.

For any irreducible admissible $(\mfk g_\tau, \mdc K_\tau)$-module $\pi_\tau$ such that $\tr\big(f_{\xi_\tau}^{\bb G}|\pi_\tau\big)\ne 0$, the representation $\pi_\tau$ is a discrete series representation cohomological for $\xi_\tau$; equivalently, $\pi_\tau$ has the same central character and infinitesimal character as $\xi^\vee$. Moreover,
\begin{equation*}
\tr\big(f_{\xi_\tau}^{\bb G}|\pi_\tau\big)=(-1)^{q(\bb G_\tau)}.
\end{equation*}
This follows from the Vogan--Zuckerman classification of unitary cohomological representations; cf.~\cite[Lemma~2.7]{Shi12}.
\end{defi}


\begin{defi}\label{staieliaisieuels}
Given a central character datum $(\mfk X, \chi)$ for $\bb G^*$, we define the stably invariant distributions on $\mcl H(\bb G^*(\Ade_F), \chi^{-1})$ by
\begin{equation*}
\ST_{\ellip, \chi}^{\bb G^*}(f^*)\defining \tau(\bb G^*)\sum_{\gamma\in \Sigma_{\ellip, \mfk X}(\bb G^*)}\frac{1}{\#\paren{\pi_0(Z_{\bb G^*}(\gamma))}^{\Gal_F}}\SOrb_{\gamma,\chi}^{\bb G^*}(f^*),
\end{equation*}
where $\tau(\bb G^*)$ is the Tamagawa number of $\bb G^*$ and
\begin{equation*}
\SOrb_{\gamma, \chi}^{\bb G^*}(f^*)=\sum_{\gamma'}\Orb_{\gamma', \chi}^{\bb G^*}(f^*)
\end{equation*}
is the stable orbital integral at $\gamma$, with $\gamma'$ running through a set of representatives for the $F$-conjugacy classes inside the stable conjugacy class of $\gamma$.
\end{defi}

\begin{thm}\label{simpltraceofmrualoens}
Fix a central character datum $(Z(F\otimes\bb R), \chi)$, where $\chi$ is the inverse of the central character of an irreducible representation
\begin{equation*}
\xi=\boxtimes_{\tau\in\infPla_F}\xi_\tau
\end{equation*}
of $\bb G_{\bb C}$ with regular highest weight, restricted to $Z(F\otimes\bb R)$. Suppose $f\in \mcl H(\bb G(\Ade_F), \chi^{-1})$ satisfies
\begin{itemize}
\item
$f_\infty=\bigotimes_{\tau\in\infPla_F}f_{\xi_\tau}^{\bb G}$ is an Archimedean Lefschetz function attached to $\xi$;
\item
there exists a finite place $v$ of $F$ such that $f_v=f_{\Lef, v}^{\bb G}$ is a non-Archimedean Lefschetz function.
\end{itemize}
Then
\begin{equation*}
\ST_{\ellip, \chi}^{\bb G^*}(f^*)=\bx T_{\ellip, \chi}^{\bb G}(f)=\bx T_{\disc, \chi}^{\bb G}(f)=\bx T_{\cusp, \chi}^{\bb G}(f),
\end{equation*}
where $f^*\in \mcl H(\bb G^*(\Ade_F), \chi^{-1})$ is a transfer of $f$ to $\bb G^*$.
\end{thm}
\begin{proof}
This follows from \cite[Lemma 6.1, 6.2]{K-S23}.
\end{proof}

\subsection*{Acknowledgements}
I would like to thank Linus Hamann for his interest in this project and helpful discussions, and for teaching me the theory of local shtuka spaces. I am grateful to my Ph.D. advisor Wei Zhang for carefully reading an early draft and for many valuable suggestions, and also to David Hansen, Si Ying Lee, and Xu Shen for comments and discussions. Special thanks go to Rui Chen and Jialiang Zou for discussions concerning the local Langlands correspondence for even orthogonal groups, and to Shengkai Mao for pointing out the results on the affineness of partial minimal compactifications of Igusa varieties in his thesis. I also benefited from discussions with Weixiao Lu, Yu Luo, and Sian Nie. Finally, I thank the anonymous referee for a careful reading of the manuscript and for many helpful comments and suggestions.

\bibliographystyle{alpha}
\bibliography{bibliography}

\vspace{2em}
\noindent\textsc{Hao Peng}\\
Department of Mathematics, Massachusetts Institute of Technology, Cambridge, MA 02139, USA\\
\textit{Email}: \texttt{hao\_peng@mit.edu}

\end{document}